\numberwithin{equation}{section}
\newcommand{\bF}{\mathbb{F}}
\newcommand{\bN}{\mathbb{N}}
\newcommand{\bP}{\mathbb{P}}
\newcommand{\bQ}{\mathbb{Q}}
\newcommand{\bR}{\mathbb{R}}
\newcommand{\bZ}{\mathbb{Z}}
\newcommand{\MT}[2]{\bold{MT #1}(#2)}
\newcommand{\MTtheta}{\bold{MT \theta}}
\newcommand\lra{\longrightarrow}
\newcommand\lla{\longleftarrow}
\newcommand\Diff{\mathrm{Diff}}
\newcommand\Emb{\mathrm{Emb}}
\newcommand\End{\mathrm{End}}
\newcommand\Bun{\mathrm{Bun}}
\newcommand\Th{\mathrm{Th}}
\newcommand\colim{\operatorname*{colim}}
\newcommand\hocolim{\operatorname*{hocolim}}
\newcommand\Ker{\operatorname*{Ker}}
\newcommand{\X}{\mathbf{X}}
\newcommand{\K}{\mathcal{K}}
\newcommand{\Conn}{\mathbf{Conn}}
\newcommand{\Fib}{\mathrm{Fib}}
\newcommand{\bp}{^\bullet}
\newcommand{\ob}{\mathrm{ob}}
\newcommand{\map}{\mathrm{map}}
\newcommand{\R}{\bR}
\newcommand{\RP}{\R P}
\newcommand{\Int}{\mathrm{int}}
\newcommand{\st}{\mathrm{st}}
\newcommand{\dist}{\mathrm{dist}}
\newcommand{\IM}{\mathrm{Im}}
\newcommand{\moddd}{/\!\!/}
\renewcommand{\epsilon}{\varepsilon}
\newcommand{\Map}{\mathrm{Map}}
\newcommand{\supp}{\mathrm{supp}}
\newcommand{\manif}{M}
\theoremstyle{plain}
\newtheorem{MainThm}{Theorem}
\newtheorem{MainCor}[MainThm]{Corollary}
\newtheorem{theorem}{Theorem}[section]
\newtheorem{proposition}[theorem]{Proposition}
\newtheorem{lemma}[theorem]{Lemma}
\newtheorem{corollary}[theorem]{Corollary}
\theoremstyle{definition}
\newtheorem{definition}[theorem]{Definition}
\newtheorem{example}[theorem]{Example}
\theoremstyle{remark}
\newtheorem*{remark*}{Remark}
\title[Monoids of moduli spaces of manifolds]{Monoids of moduli spaces of manifolds}
\author{S{\o}ren Galatius}
\thanks{S. Galatius was partially supported by NSF grant DMS-0805843 and the Clay Mathematics Institute.}
\email{galatius@stanford.edu}
\address{Department of Mathematics\\
	Stanford University\\
	Stanford CA, 94305}
\author{Oscar Randal-Williams}
\thanks{O. Randal-Williams was supported by an EPSRC Studentship, DTA grant number EP/P502667/1}
\email{randal-w@maths.ox.ac.uk}
\address{Mathematical Institute\\
	24-29 St Giles'\\
	Oxford\\
	OX1 3LB\\
	United Kingdom}
\date{\today}
\subjclass[2000]{57R90, 57R15, 57R56, 55P47}
\begin{document}
\begin{abstract}
We study categories of $d$-dimensional cobordisms from the perspective of \cite{Tillmann} and \cite{GMTW}.  There is a category $\mathcal{C}_\theta$ of closed smooth $(d-1)$-manifolds and smooth $d$-dimensional cobordisms, equipped with generalised orientations specified by a fibration $\theta : \X \to BO(d)$. The main result of \cite{GMTW} is a determination of the homotopy type of the classifying space $B\mathcal{C}_\theta$.  The goal of the present paper is a systematic investigation of subcategories $\mathcal{D} \subseteq \mathcal{C}_\theta$ with the property that $B\mathcal{D} \simeq B\mathcal{C}_\theta$,  the smaller such $\mathcal{D}$ the better. 

We prove that in most cases of interest, $\mathcal{D}$ can be chosen to be a homotopy commutative monoid. As a consequence we prove that the stable cohomology of many moduli spaces of surfaces with $\theta$-structure is the cohomology of the infinite loop space of a certain Thom spectrum $\MTtheta$. This was known for certain special $\theta$, using homological stability results; our work is independent of such results and covers many more cases.
\end{abstract}
\maketitle

\section{Introduction and statement of results}

To state our results, we first recall from \cite{GMTW} the
definition of the $d$-dimensional cobordism category. Let us give the definition in outline here, and in full detail in \S\ref{sec:CobordismCategoryDefinition}.
A tangential structure is a fibration $\theta: \X \to
BO(d)$, and we write $\gamma \to BO(d)$ for the canonical vector bundle. A $\theta$-structure on a vector bundle is a bundle map (i.e.\ fibrewise linear isomorphism) from the vector bundle to $\theta^* \gamma$, and a $\theta$-structure on a manifold is a $\theta$-structure on its tangent bundle.

The objects of the cobordism category $\mathcal{C}_\theta$ are pairs $(M,\ell)$, where $M
\subseteq \R^\infty$ is a closed $(d-1)$-manifold and $\ell$ is a $\theta$-structure on $\epsilon^1 \oplus TM$. The non-identity morphisms of $\mathcal{C}_\theta$, $(M_0, \ell_0) \to (M_1, \ell_1)$, are pairs $(t,W)$ with $t > 0$ and $W \subseteq [0,t] \times
\R^\infty$ an embedded cobordism, together with a $\theta$-structure $\ell$ on $W$ that agrees with $\ell_0$ and $\ell_1$ over the boundary. There is a ``collar'' condition on $(W,\ell)$ near $\partial W$, and the
space of all such $W$ has a nice topology, described in detail in \S\ref{sec:CobordismCategoryDefinition} (and also \cite{GMTW}).  The homotopy type of morphism spaces is given by
\begin{align}\label{eq:MorphismsInCobCat}
  \mathcal{C}_\theta(M_0, M_1) \simeq \coprod_{W}
  B\Diff_\theta(W,\partial W),
\end{align}
where $W$ ranges over connected compact cobordisms from $M_0$ to $M_1$, one in each diffeomorphism class rel $\partial W$.  $\Diff(W,\partial W)$ is the topological
group of diffeomorphisms of $W$ which restrict to the identity on a
neighbourhood of $\partial W$, and $B\Diff_\theta(W,\partial W)$ denotes
the homotopy quotient
\begin{align*}
  B\Diff_\theta(W,\partial W) = \Bun^\partial(TW, \theta^*\gamma_d)
  \moddd \Diff(W,\partial W),
\end{align*}
where $\Bun^\partial(TW,\theta^*\gamma_d)$ is the space of bundle maps
fixed near the boundary. We will say that $B \Diff_\theta(W, \partial W)$ is the moduli space of $d$-manifolds with $\theta$-structure, with underlying manifold diffeomorphic to $W$.

Finally, we recall the main theorem of \cite{GMTW} (reproved below as Theorem \ref{thm:MainThmGMTW}, largely following Chapter 6 of \cite{galatius-2006}).  It determines the
homotopy type of the classifying space of the cobordism category,
\begin{align}
  B\mathcal{C}_\theta \simeq \Omega^{\infty-1} \MTtheta,
\end{align}
where $\MTtheta = \X^{-\theta}$ is the Thom spectrum of the inverse of
the vector bundle classified by $\theta: \X \to BO(d)$.

To state our results, let us consider a version of
$\mathcal{C}_\theta$ where manifolds have ``basepoints''. Pick once and for all a $\theta$-structure on the vector space $\bR^d$. This induces a $\theta$-structure on any framed manifold, which we shall call the \textit{standard $\theta$-structure} on that manifold.
\begin{definition}\label{defn:ThetaDotCategory}
Let $d \geq 2$.  Let $\mathcal{C}^\bullet_\theta$ be the subcategory of
  $\mathcal{C}_\theta$ where objects $M \subseteq \R^\infty$ are
  required to contain the interval $(-\epsilon,\epsilon)^{d-1} \times
  \{0\} \subseteq \R^\infty$ for some $\epsilon > 0$ and to be
  \emph{connected}.
  Morphisms $W \subseteq [0,t] \times \R^\infty$ are required to be
  connected and to contain the strip $[0,t] \times
  (-\epsilon,\epsilon)^{d-1} \times \{0\}$ for some $\epsilon > 0$.
  Furthermore, the tangential structure $l: TW \to \theta^*\gamma$ is
  required to be standard on the strip $[0,t] \times (-\epsilon,\epsilon)^{d-1} \times
  \{0\}$.
\end{definition}
Homotopically, the result of this condition is to replace \eqref{eq:MorphismsInCobCat}
by
\begin{align}\label{eq:CdotDefn}
  \mathcal{C}_\theta^\bullet(M_0, M_1) \simeq \coprod_{W}
  B\Diff_\theta(W,L \cup \partial W),
\end{align}
where $W$ ranges over connected compact cobordisms between connected manifolds $M_0$ and $M_1$, containing an embedded arc $L$ connecting the two boundary components, one such $W$ in each diffeomorphism class rel to $\partial W \cup L$.

Tangential structures and diffeomorphisms are
required to be fixed near the line $L \subseteq W$.  Thus we can cut
open the surface $W$ along $L$, so the spaces in~(\ref{eq:CdotDefn}) are
effectively moduli spaces of surfaces with \emph{one} boundary
component.

\begin{MainThm}\label{thm:SubcategoryConnected}
Let $d \geq 2$ and $\theta : \X \to BO(d)$ be a tangential structure such that $\X$ is path connected and $S^d$ admits a $\theta$-structure. Then the inclusion
$$B \mathcal{C}_\theta\bp \to B \mathcal{C}_\theta \simeq \Omega^{\infty-1}\MTtheta$$
is a weak homotopy equivalence.
\end{MainThm}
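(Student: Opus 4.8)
The plan is to compare $\mathcal{C}^\bullet_\theta$ with $\mathcal{C}_\theta$ through a chain of intermediate subcategories, each inclusion of which induces a weak equivalence on classifying spaces, using the standard philosophy that one may "thicken" a category by collaring or adding contractible choices without changing $B(-)$. First I would introduce the subcategory $\mathcal{C}'_\theta \subseteq \mathcal{C}_\theta$ of \emph{connected} objects and \emph{connected} morphisms (dropping the basepoint/strip condition). A morphism in $\mathcal{C}_\theta$ factors as a composite of a connected cobordism with some discs/handles, and more efficiently: every object of $B\mathcal{C}_\theta$ is connected by a zig-zag to one in the image of $B\mathcal{C}'_\theta$ since any closed $(d-1)$-manifold bounds a connected cobordism to a nonempty connected one (use that $\X$ is connected and $S^d$ admits a $\theta$-structure, so there are enough $\theta$-cobordisms; in particular the empty manifold and any single-component manifold are linked). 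The cleanest implementation is to build an explicit deformation/flow, as in Chapter 6 of \cite{galatius-2006} and in \cite{GMTW}: use the parametrised surgery/Gromov–Phillips techniques already invoked in the reproof of Theorem~\ref{thm:MainThmGMTW} to show $B\mathcal{C}'_\theta \to B\mathcal{C}_\theta$ is a weak equivalence.

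Next I would pass from $\mathcal{C}'_\theta$ to $\mathcal{C}^\bullet_\theta$. Here the point is purely about adding a basepoint arc with a standard $\theta$-structure along it. The space of ways to isotope a connected manifold $M$ so that it contains the coordinate interval $(-\epsilon,\epsilon)^{d-1}\times\{0\}$, and the space of ways to choose an embedded arc $L$ in a connected cobordism $W$ joining the two ends and to standardise the $\theta$-structure in a neighbourhood of $L$, are both (weakly) contractible: the first because the frame bundle argument identifies it with a space of embeddings of a disc which is contractible up to the choice of a germ of framing, and the $\theta$-structure on $S^d$ hypothesis guarantees the standard $\theta$-structure extends; the second for the same reason, an arc in a connected manifold is unique up to contractible choice, and a $\theta$-structure is always homotopic rel nothing to the standard one near a contractible set because $\X$ is path connected. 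Consequently the forgetful map from the "basepointed" morphism spaces \eqref{eq:CdotDefn} to the "unpointed connected" morphism spaces is a weak equivalence on each component, and the induced functor is essentially surjective up to the zig-zags already set up; a theorem on homotopy-equivalent categories (or a direct bisimplicial argument on nerves) then gives $B\mathcal{C}^\bullet_\theta \simeq B\mathcal{C}'_\theta$.

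Combining the two steps yields $B\mathcal{C}^\bullet_\theta \simeq B\mathcal{C}'_\theta \simeq B\mathcal{C}_\theta \simeq \Omega^{\infty-1}\MTtheta$, which is the assertion. The main obstacle, I expect, is the first step: showing that restricting to connected objects and connected morphisms does not change the classifying space. The subtlety is that $\mathcal{C}^\bullet_\theta$ has very few objects (all connected, all containing a fixed interval) and one must produce enough morphisms to connect an arbitrary object of $\mathcal{C}_\theta$ to this meagre collection inside $B\mathcal{C}_\theta$; the nonemptiness of the relevant moduli of connected $\theta$-cobordisms is exactly where the hypotheses "$\X$ path connected" and "$S^d$ admits a $\theta$-structure" enter (the latter lets one kill a component by capping with a $\theta$-disc, and more generally do $\theta$-surgery). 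I would handle this by the same flow/scanning argument used to prove Theorem~\ref{thm:MainThmGMTW}, organised so that it visibly stays within the connected subcategory, rather than by an abstract cofinality statement. The basepoint step, by contrast, is soft: it is a fibration-with-contractible-fibre argument and should pose no real difficulty once phrased correctly on the level of the (bi)simplicial nerves.
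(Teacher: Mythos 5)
Your overall decomposition, placing the weight on a connectivity step handled by parametrised surgery and treating the basepoint step as soft, is broadly in line with the paper, but the two approaches diverge in significant ways and the sketch has genuine gaps.

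The paper does not pass through a basepoint-free intermediate category $\mathcal{C}'_\theta$. Instead, Theorem \ref{thm:HtpyTypeCobCat} identifies $B\mathcal{C}^\bullet_\theta$ with the space-of-manifolds model $\psi^{nc}_\theta(\infty,1)^\bullet_{\mathbf{Conn}}$, and the remaining work is a chain of inclusions of subspaces of $\psi_\theta(\infty,1)$:
\begin{align*}
\psi^{nc}_\theta(\infty,1)^\bullet_{\mathbf{Conn}} \to \psi^{nc}_\theta(\infty,1)^\bullet \to \psi_\theta(\infty,1)^\bullet \to \psi_\theta(\infty,1).
\end{align*}
The basepoint condition is removed \emph{last} (Lemma \ref{lem:AddingBasepoint}), not first, and there is an intermediate step eliminating compact path components (Lemma \ref{lem:ElimCpctComponents}) which has no analogue in your plan. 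The ordering is not cosmetic: the parametrised $0$-surgery that proves the connectivity step (\S\ref{sec:ProofThmA}) uses the trivialised basepoint strip $L_\epsilon$ in an essential way --- one end of each surgery handle is chosen inside the strip (see the proof of Proposition \ref{prop:local-surg}), which is what makes the family of surgery data canonical enough to vary continuously over $D^k$. Your plan of doing the surgery in a basepoint-free category $\mathcal{C}'_\theta$ discards this tool and you would have to re-engineer the family of surgeries.

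There are two concrete gaps. For the connectivity step, you observe that "any closed $(d-1)$-manifold bounds a connected cobordism to a nonempty connected one" and treat this as close to sufficient; but that is only a $\pi_0$-statement, whereas the theorem requires vanishing of \emph{all} relative homotopy groups. You acknowledge that a parametrised argument is needed but refer to "the same flow/scanning argument used to prove Theorem \ref{thm:MainThmGMTW}"; in the paper this step is not scanning but a new construction of parametrised $0$-surgery (Propositions \ref{prop:local-surg} and \ref{prop:paths-exist}, Corollary \ref{corollary:homotopy-2}, and the supporting Lemmas \ref{lemma:existence-paths-point} and \ref{lemma:vector-flow}), and this is where essentially all the work of the proof lies. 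For the basepoint step, the claim that "an arc in a connected manifold is unique up to contractible choice" is false for $d=2$: the space of embedded arcs with fixed endpoints between the two ends of a connected surface cobordism is a disjoint union of infinitely many contractible components (for the annulus, indexed by $\bZ$ via winding). The paper avoids the issue entirely: Lemma \ref{lem:AddingBasepoint} proves that $\psi_\theta(n,1)^\bullet \to \psi_\theta(n,1)$ is a weak equivalence by an Eckmann--Hilton-style argument, exhibiting disjoint union with a fixed basepointed cylinder $W_0$ as a two-sided homotopy inverse. Your proposed fibration-with-contractible-fibre argument does not go through as stated in the $2$-dimensional case, which the theorem must cover.
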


Our main result is the following.  Let us say that two objects in
$\mathcal{C}^\bullet_\theta$ are \textit{cobordant} if they give the same element in $\pi_0 B \mathcal{C}_\theta\bp = \pi_{-1}\MTtheta$. In section \ref{sec:ReversingMorphisms} we show that this happens precisely when there is a morphism between the objects (not just a zig-zag).

\begin{MainThm}\label{thm:SubcategoryMonoids}
  Let $\theta: \X \to BO(2)$ be a tangential structure such that $\X$ is
  path connected and that $S^2$ admits a $\theta$-structure.  Let
  $\mathcal{D} \subseteq \mathcal{C}_\theta^\bullet$ be a full
  subcategory. Then the inclusion
  \begin{align*}
    B\mathcal{D} \to B\mathcal{C}_\theta\bp
  \end{align*}
  is a weak homotopy equivalence of each component of $B\mathcal{D}$ onto a component of $B\mathcal{C}_\theta\bp$.

If $\mathcal{D}$ has exactly one object, then it is homotopy commutative monoid; if it has at most one object in each cobordism class, then it is a disjoint union of homotopy commutative monoids.
\end{MainThm}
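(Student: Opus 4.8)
The plan is to build on Theorem~\ref{thm:SubcategoryConnected} and on the cobordism-category machinery, exploiting that for $d=2$ the basepoint condition reduces morphism spaces to moduli spaces of surfaces with \emph{one} boundary circle, so that composition becomes a ``pair of pants'' gluing, and that connectedness of $\X$ together with the existence of a $\theta$-structure on $S^2$ gives enough room to realize all relevant surfaces inside a single endomorphism monoid. First I would reduce to the case where $\mathcal{D}$ has a single object $(M,\ell)$: the general statement about a full subcategory with at most one object per cobordism class follows because, by the remark in \S\ref{sec:ReversingMorphisms}, there are no morphisms between objects in distinct cobordism classes, so $B\mathcal{D}$ splits as a disjoint union over the cobordism classes represented in $\mathcal{D}$, each summand being the classifying space of an endomorphism monoid of one object; and any two objects in the same cobordism class are connected by a morphism, hence (after we know $B\mathcal{C}_\theta\bp$ has the right components) the inclusion of a single-object subcategory is $\pi_0$-surjective onto the corresponding component. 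So the crux is: for a single object $(M,\ell)$, the monoid $\mathcal{M} := \mathcal{C}_\theta\bp(M,M)$ is homotopy commutative and $B\mathcal{M} \to B\mathcal{C}_\theta\bp$ is a weak equivalence onto the component it hits.

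For homotopy commutativity: an endomorphism of $(M,\ell)$ in $\mathcal{C}_\theta\bp$ is (up to the contractible choices of embedding data) a surface $W$ with the incoming and outgoing copies of $M=S^1$ both containing the marked arc, i.e.\ effectively a surface with one boundary circle carrying a $\theta$-structure standard near a marked point; composition $W_1 \circ W_2$ glues $W_2$ on top of $W_1$ along $M$, away from the marked strip. I would produce the homotopy $W_1 \circ W_2 \simeq W_2 \circ W_1$ by the standard ``move the pieces around in $\R^\infty$'' argument: since each $W_i$ contains the standard strip and its complement is an embedded surface, and since $\R^\infty$ has infinitely many coordinate directions, one can isotope $W_1$ and $W_2$ so that they are supported in disjoint half-spaces of $\R^\infty$ and then slide one past the other; the $\theta$-structures extend over the isotopy because they are standard on the strip where the gluing happens and $\X$ is path-connected (so the relevant space of $\theta$-structures extending a given one is connected — this is where the hypothesis that $S^2$, equivalently the disc, admits a $\theta$-structure also enters, to fill in any necessary caps). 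This is essentially the Eckmann--Hilton principle realized geometrically; I would phrase it as: the two compositions are both obtained by embedding $W_1 \sqcup W_2$ and a fixed ``union'' cobordism into $[0,t]\times\R^\infty$, and the space of such embeddings compatible with the boundary data is connected.

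For the equivalence $B\mathcal{M} \to B\mathcal{C}_\theta\bp$: by Theorem~\ref{thm:SubcategoryConnected} the target is $\Omega^{\infty-1}\MTtheta$, so it suffices to identify $B\mathcal{M}$ with the correct component. I would use a group-completion / surjectivity argument: the inclusion of the one-object subcategory $\mathcal{M}$ into $\mathcal{C}_\theta\bp$ induces a map on classifying spaces; since every object of $\mathcal{C}_\theta\bp$ cobordant to $M$ receives a morphism from $M$, and every morphism between such objects can be ``absorbed'' into an endomorphism of $M$ by pre- and post-composing with fixed cobordisms to and from $M$ (using that $d=2$ and the basepoint condition make all the relevant surfaces connected with one boundary component), the map $B\mathcal{M} \to B\mathcal{C}_\theta\bp$ is surjective on $\pi_0$-components and, on the component it hits, induces an isomorphism on homology — the latter by comparing the bar constructions and using that $\mathcal{M}$ is cofinal in the appropriate homotopical sense (this is the technical heart, and is where I expect to lean on a result of the form ``$B\mathcal{M}\simeq B\mathcal{C}$ when $\mathcal{M}$ is a sufficiently large full subcategory'' proved earlier in the paper for Theorem~\ref{thm:SubcategoryConnected}). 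Finally, a homotopy commutative monoid $\mathcal{M}$ with $\pi_0\mathcal{M}$ a group has $B\mathcal{M}$ an infinite loop space deloopable once, and $\Omega B\mathcal{M}\simeq \mathcal{M}$ by group completion; combined with the identification of components this pins down the homotopy type.

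The main obstacle I anticipate is the homology-equivalence step: showing that enlarging from the one-object monoid $\mathcal{M}$ to all of $\mathcal{C}_\theta\bp$ does not change the homotopy type of the classifying space. The geometric inputs (connectivity of $\X$, $\theta$-structure on $S^2$) are exactly what is needed to move surfaces around in $\R^\infty$ and to cap off boundary components, but assembling these into a clean statement that the bar construction of $\mathcal{M}$ computes $B\mathcal{C}_\theta\bp$ — rather than merely a localization of it — requires care, and is presumably handled by a careful cofinality or ``parametrized surgery'' argument building on the proof of Theorem~\ref{thm:SubcategoryConnected}.
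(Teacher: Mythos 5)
Your high-level decomposition (reduce to a single object, establish homotopy commutativity, then compare classifying spaces) matches the paper, and you correctly flag that the weak equivalence $B\mathcal{D}\to B\mathcal{C}_\theta\bp$ is the hard part. But that hard part is left as an acknowledged promissory note in your proposal, and the mechanism you gesture at — a bar-construction cofinality argument yielding a homology isomorphism after group completion — is not what is needed and would not suffice: the theorem asserts a weak homotopy equivalence on each component, not merely a homology equivalence, and $\pi_0\mathcal{M}$ is typically \emph{not} a group (e.g.\ $\bN$ in the oriented case), so there is no free group-completion equivalence to lean on. The paper instead proves this via Theorem~\ref{thm:HtpyTypeCobCat}, which models $B\mathcal{D}$ as the space $\psi^{nc}_\theta(\infty,1)^\bullet_\mathbf{C}$ of manifolds in $\psi^{nc}_\theta(\infty,1)^\bullet$ admitting some level set in the prescribed object set $\mathbf{C}$, and then shows (Theorem~\ref{thm:CToConn}) that the inclusion into $\psi^{nc}_\theta(\infty,1)^\bullet_\mathbf{Conn}$ kills relative homotopy groups by an explicit geometric modification: one glues a ``correction'' cobordism $D$ into the standard basepoint strip $L_{3\delta}$ so that, at each chosen regular value $a_i$, the level set gets multiplied by a fixed object $N_i$ with $[M_i]\cdot[N_i]\in\pi_0\mathbf{C}$, then deforms the height function on $D$ so that the modified family is relatively homotopic to the original. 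The existence of such $N_i$, and the fact that $D$ is path connected to the trivial strip, is exactly where the hypothesis on $S^2$ is used, via the morphism-reversal Proposition~\ref{prop:ReverseMorphisms} and Lemma~\ref{lem:InverseObjects}. None of this is present in your proposal, and it is quite different from the $0$-surgery argument that powers Theorem~\ref{thm:SubcategoryConnected}.

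Your homotopy commutativity argument also needs sharpening. Composition of endomorphisms in $\mathcal{C}_\theta\bp$ is temporal stacking, not disjoint union, so ``slide $W_1$ and $W_2$ into disjoint half-spaces of $\R^\infty$'' does not immediately give $W_1\circ W_2\simeq W_2\circ W_1$: one has to explain why stacking equals something disjoint-union-like. The paper does this by replacing $\End_{\mathcal{C}_\theta\bp}(C)$ with a weakly equivalent monoid $\mathcal{M}(\ell)$ of pairs $(W,t)$ agreeing with a fixed plane $B_\ell$ outside $(0,t)\times(0,1)\times\R^\infty$; for the trivial $\ell$ this visibly carries a little-2-cubes action compatible with the monoid structure, giving $E_2$, and general $\ell$ is reduced to the trivial case by a stabilization map $\ell_*$. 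Incidentally, homotopy commutativity (Proposition~\ref{prop:HomotopyCommutative}) holds for \emph{any} $\theta$ over $BO(2)$; it is only the weak-equivalence statement that needs connectivity of $\X$ and the $S^2$ hypothesis.
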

The assumption that $\X$ be path connected is innocent, since
a disconnected $\X$ could be considered one path component at a time.
The assumption that $S^2$ admits a $\theta$-structure is necessary for
our proof, which uses surgery techniques.  Under that assumption the
connected sum of two surfaces with $\theta$-structure will again have
a $\theta$-structure (in contrast, the connected sum of e.g.\
\emph{framed} surfaces is not framable; this corresponds to $\X =
EO(2)$).

Let us spell out our result explicitly in the case of ordinary orientation, (although it is not new in this case). Then $W$ in (\ref{eq:CdotDefn}) ranges over connected oriented
surfaces, and $B\Diff_\theta(W,\partial W \cup L)$ is homotopy
equivalent to $B\Gamma_{g,1}$, where $\Gamma_{g,1}$ is the mapping
class group of an oriented genus $g$ surface with one boundary
component.  The monoid $\mathcal{D}$ is homotopy equivalent to the
disjoint union
\begin{align*}
  \mathcal{D} \simeq \coprod_{g \geq 0} B\Gamma_{g,1},
\end{align*}
and the composition is the ``pair of pants'' composition of
\cite{Miller}.  Our result then says that this monoid has the same
classifying spaces as the full cobordism category of \cite{GMTW}. Our proof is entirely geometric and does not rely on Harer stability.

\subsection{Stabilisation and Madsen--Weiss' theorem}\label{sec:harer-stab-mads}

Let us explain an application of Theorem \ref{thm:SubcategoryMonoids} that highlights the advantage of homotopy commutativity. First we briefly discuss group completion of a homotopy commutative monoid $\mathcal{M}$, following McDuff and Segal \cite{McDuff-Segal}. There is a canonical map $\mathcal{M} \to \Omega B \mathcal{M}$ which is a homotopy equivalence if and only $\mathcal{M}$ is grouplike, i.e.\ $\pi_0\mathcal{M}$ is a group. There is an induced map in homology, 
$$H_*(\mathcal{M}) \to H_*(\Omega B \mathcal{M})$$
which sends the multiplicative subset $\pi_0\mathcal{M} \subset H_*(\mathcal{M})$ to units of $H_*(\Omega B \mathcal{M})$, so there is an induced map from the localisation
\begin{align}\label{eq:GpCompletion}
H_*(\mathcal{M})[\pi_0 \mathcal{M}^{-1}] \to H_*(\Omega B \mathcal{M}).
\end{align}
The main theorem about group completion for homotopy commutative monoids \cite{McDuff-Segal} is that (\ref{eq:GpCompletion}) is an isomorphism of rings.

In many cases of interest, $\pi_0 \mathcal{M}$ is a finitely generated monoid, so the localisation (\ref{eq:GpCompletion}) can be formed as a sequential direct limit 
$$H_*(\mathcal{M}) \to H_*(\mathcal{M}) \to H_*(\mathcal{M}) \to \cdots$$
over multiplication by an element $m$ which is the product of a set of generators. In fact, $\pi_0\mathcal{M}$ need not be finitely generated, it is only necessary that $\pi_0\mathcal{M}$ may be group completed by inverting finitely many elements. This direct limit is the homology of the space $\mathcal{M}_\infty$ obtained as the mapping telescope of the analogous direct system of spaces $\mathcal{M} \to \mathcal{M} \to \mathcal{M} \to \cdots$. Consequently we get a map $\mathcal{M}_\infty \to \Omega B \mathcal{M}$, inducing an isomorphism
$$H_*(\mathcal{M}_\infty) \to H_*(\Omega B \mathcal{M}).$$

If $k$ is a field, the isomorphism (\ref{eq:GpCompletion}) can be reinterpreted in terms of invariants of the action of $\pi_0\mathcal{M}$ on $H^*(\mathcal{M} ; k)$, namely that
\begin{align}\label{eq:GpCompletionCohomology}
H^*(\mathcal{M} ; k)^{\pi_0 \mathcal{M}} \cong H^*(\Omega_0 B \mathcal{M} ; k),
\end{align}
where $\Omega_0$ means the basepoint component of the loop space. Here the monoid $\pi_0\mathcal{M}$ acts on homology and cohomology of both spaces by translation. To deduce (\ref{eq:GpCompletionCohomology}) we take coinvariants of the isomorphism (\ref{eq:GpCompletion}) in $k$-homology and algebraically dualise, and note that the invariants of the action on $H^*(\Omega B \mathcal{M} ; k)$ is isomorphic to the cohomology of $\Omega_0 B \mathcal{M}$.

\vspace{2ex}

We now apply this to the homotopy commutative monoid $\mathcal{D}$ produced by Theorem \ref{thm:SubcategoryMonoids}. The assumption that $\pi_0\mathcal{D}$ may be group completed by inverting finitely many elements holds in many cases of interest. The homology of a component of $\mathcal{D}_\infty$ can be interpreted as the homology of $B\Diff_\theta(W_\infty)$, the moduli space of infinite genus $\theta$-surfaces with parametrised germ at infinity.

The cohomology $H^*(\mathcal{D})$ is the ring of characteristic classes of bundles of $\theta$-surfaces with one parametrised boundary component. Then $H^*(\Omega_0^\infty \MTtheta)$ is the ring of characteristic classes that are invariant under gluing a trivial bundle onto the boundary.

Ebert \cite{Ebert09} has recently proved that a similar result cannot hold for odd dimensional manifolds. Indeed he shows there are non-trivial rational classes in $H^*(\Omega^\infty \MT{SO}{2n+1} ; \bQ {} )$ which vanish in $H^*(B\Diff^+(M) ; \bQ {} )$ for all oriented $(2n+1)$-manifolds $M$.

\vspace{2ex}
 
We now give some particular cases of interest. Returning to the case of ordinary orientations, we reproduce the Madsen--Weiss theorem \cite{MW}, that there is a homology equivalence
\begin{equation}\label{thm:MW}
  \bZ \times B\Gamma_{\infty,1} \to \Omega^\infty \MT{SO}{2},
\end{equation}
where $\Gamma_{\infty,1}$ is the limit of the mapping class groups
$\Gamma_{g,1}$ as $g \to \infty$.
Again we point out that we prove this result \emph{without} quoting the homological stability results of Harer \cite{H} and Ivanov \cite{Ivanov}. The case of unoriented and spin surfaces can be treated similarly, cf.\ \S\ref{sec:Applications}.

For an oriented surface $F$, Cohen and Madsen \cite{CM} introduced spaces
$$\mathcal{S}_{g,n}(Y) = \Map^\partial(F_{g,n}, Y) \moddd \Diff^+(F_{g,n})$$
where $\Map^\partial$ is the space of maps taking $\partial F$ to the basepoint of $Y$, and $\Diff^+$ denotes diffeomorphisms that preserve orientation and boundary. These are morphism spaces in the category $\mathcal{C}_\theta$, for $\theta : BSO(2) \times Y \to BO(2)$, and it follows from \cite{CM} that, if $Y$ is simply connected, there is a homology equivalence
$$\bZ \times \mathcal{S}_{\infty,1}(Y) \to \Omega^\infty \MT{SO}{2} \wedge Y_+.$$
Here, $\pi_0(\mathcal{D}) = \bN \times H_2(Y;\bZ)$ and $\bZ \times \mathcal{S}_{\infty,1}(Y)$ is the direct limit $\mathcal{D} \to \mathcal{D} \to \dots$ over multiplication by an element corresponding to $(1,0) \in \bN \times H_2(Y,\bZ)$. Our result provides an analogue to this for all connected spaces $Y$ and also to surfaces with any tangential structure satisfying the assumption of Theorem \ref{thm:SubcategoryMonoids}, such as unoriented or spin surfaces.

The special case $Y = B\bZ = S^1$ can be interpreted in complete analogy with (\ref{thm:MW}).  Let $v \in
H^1(F_{g,1};\bZ)$ be a ``primitive'' cohomology class, i.e.\ one that can
be extended to a symplectic basis of $H^1$ (equivalently, the Poincar\'{e}
dual to an embedded non-separating curve).  Let $\Gamma'_{g,1} \leq
\Gamma_{g,1}$ denote the stabilizer of $v$.  Then our results give a
homology isomorphism
\begin{equation*}
\bZ \times B\Gamma'_{\infty,1} \to
\Omega^\infty \MT{SO}{2} \wedge S^1_+.
\end{equation*}

Similarly the case $X = B\bZ/n$ gives a homology equivalence
\begin{equation*}
\bZ \times B\Gamma'_{\infty,1}(n) \to \Omega^\infty \MT{SO}{2} \wedge B\bZ/n_+,
\end{equation*}
where $\Gamma'_{g,1}(n) \leq \Gamma_{g,1}$ denotes
the subgroup that stabilizes the mod $n$ reduction of the primitive
vector $v$.  The space $B\Gamma'_{g,1}(n)$ has the same homotopy type
as the moduli space of pairs $(\overline \Sigma, \Sigma)$ where
$\Sigma$ is a genus $g$ Riemann surface with one boundary component
and $\overline \Sigma \to \Sigma$ is an unbranched $n$-fold cyclic Galois cover.

For a completely general space $X$ the monoid $\pi_0 \mathcal{D}$
is difficult to understand explicitly, and our result must be stated
as an algebraic isomorphism $H_*(\mathcal{D})[\pi_0 \mathcal{D}^{-1}] \simeq H_*(\Omega^\infty
\MTtheta)$.



\section{Spaces of Manifolds}\label{SoM}

\subsection{Definitions}\label{sec:definitions}
\begin{definition}
  Let $U\subseteq \R^N$ be an open set.  Let $\Psi_d(U)$ be
  the set of subsets $\manif \subseteq U$ which are smooth $d$-dimensional
  submanifolds without boundary, and such that $\manif$ is closed as a
  subset. When the dimension is not important, we may simply write $\Psi(U)$.
\end{definition}
If $V \subseteq U$, there is a restriction map $\Psi(U) \to \Psi(V)$
given by $\manif \mapsto \manif \cap V$.  This makes $\Psi(U)$ into a sheaf of
sets.  We now define three topologies on $\Psi(U)$.  The first two are
used only as intermediate steps for defining the third.  In Theorem \ref{thm:sheaf} below we prove that the third topology makes $\Psi(-)$ into a \emph{sheaf of topological spaces}.

\textbf{Step 1.} We first define the \emph{compactly supported
  topology}.  We will write $\Psi(U)^{\mathit{cs}}$ for $\Psi(U)$
equipped with this topology.  In fact, $\Psi(U)^{\mathit{cs}}$ will
be an infinite-dimensional smooth manifold, in which a neighbourhood of
$\manif \in \Psi(U)^{\mathit{cs}}$ is homeomorphic to a neighbourhood of
the zero-section in the vector space $\Gamma_c(N\manif)$ consisting of
compactly supported sections of the normal bundle $N\manif$ of $\manif \subseteq
U$.

Let $C^\infty_c(\manif)$ denote the set of compactly supported smooth
functions on $\manif$.  Given a function $\epsilon: \manif \to (0,\infty)$ and
finitely many vector fields $X = (X_1, \dots, X_r)$ on $\manif$, let
$B(\epsilon, X)$ denote the set of functions such that $|(X_1 X_2
\dots X_r f)(x)| < \epsilon(x)$ for all $x$.  Declare the family of
sets of the form $f + B(\epsilon,X)$ a subbasis for the topology on
$C^\infty_c(\manif)$, as $f$ ranges over $C^\infty_c(\manif)$, $\epsilon$ over
functions $\manif \to (0,\infty)$, and $X$ over $r$-tuples of vector
fields, and $r$ over non-negative integers.  This makes
$C^\infty_c(\manif)$ into a locally convex vector space.

We define the normal bundle $N\manif$ to be the subbundle of $\epsilon^n$
which is the orthogonal complement to the tangent bundle $T \manif
\subseteq \epsilon^n$.  This identifies $\Gamma_c(N\manif)$ with a linear
subspace of $C^\infty_c(\manif)^{\oplus n}$.  We topologise it as a
subspace.

By the tubular neighbourhood theorem, the standard map $N\manif \to \R^n$
restricts to an embedding of a neighbourhood of the zero section.
Associating to a section $s$ its image $s(\manif)$ gives a partially
defined injective map
\begin{align}
  \Gamma_c(N\manif) \stackrel{c_\manif}{\dasharrow} \Psi(U)^{\mathit{cs}}
\end{align}
whose domain is an open set. Topologise $\Psi(U)^{\mathit{cs}}$ by
declaring the maps $c_M$ to be homeomorphisms onto open sets. This makes $\Psi(U)^{\mathit{cs}}$ into
an \emph{infinite dimensional manifold}, modelled on the topological
vector spaces $\Gamma_c(N\manif)$.

\textbf{Step 2.}  For each compact set $K$, we define a topology on
$\Psi(U)$, called the \textit{$K$-topology}.  We will write $\Psi(U)^K$ for
$\Psi(U)$ equipped with this topology.

Let
\begin{align*}
\Psi(U)^{\mathit{cs}} \stackrel{\pi_K}\to \Psi(K\subseteq U)
\end{align*}
be the quotient map that identifies elements of
$\Psi(U)^{\mathit{cs}}$ if they agree on a neighbourhood of $K$.  The
image of a manifold $\manif \in \Psi(U)^{\mathit{cs}}$ is the germ of $\manif$
near $K$, and we shall also write $\pi_K(\manif) = \manif|_{K}$.  Give
$\Psi(K\subseteq U)$ the quotient topology.

Now, let $\Psi(U)^K$ be the topological space with the same underlying
set as $\Psi(U)^{\mathit{cs}}$, and with the coarsest topology making
$\pi_K: \Psi(U)^K \to \Psi(K\subseteq U)$ continuous.  It is a formal
consequence of the universal properties of initial and quotient
topologies that the identity map $\Psi(U)^{L} \to \Psi(U)^K$ is
continuous when $K\subseteq L$ are two compact sets.  That is, the
$L$-topology is finer than the $K$-topology.

\textbf{Step 3.} Finally, let $\Psi(U)$ have the coarsest topology
finer than all the $K$-topologies.  In other words, $\Psi(U)$ is the
inverse limit of $\Psi(U)^K$ over larger and larger compact sets.

\begin{example}
The simplest example is taking $\{ t \} \times \bR \subseteq \bR^2$ as a function of $t$. This a a path in $\Psi^1(\bR^2)$, and as $t \to \infty$ it converges to the empty manifold $\emptyset$. This is because for each compact subset $K \subseteq \bR^2$, $K \cap
\{t\}\times \bR$ is empty for all sufficiently large $t$, so it converges to $\emptyset$ in the $K$-topology, for all $K$.
\end{example}

\subsection{Elementary properties and constructions}\label{sec:properties}

Let $V \subseteq U$.  The restriction map $\Psi(U)^{\mathit{cs}} \to
\Psi(V)^{\mathit{cs}}$ is not continuous.  We have the following
result instead.
\begin{lemma}\label{lem:r-open}
  The restriction map $r: \Psi(U)^{\mathit{cs}} \to
  \Psi(V)^{\mathit{cs}}$ is an open map.
\end{lemma}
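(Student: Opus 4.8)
The plan is to compute $r$ in the smooth manifold charts $c_M$ on both sides and recognise it, locally, as a form of extension‑by‑zero run backwards. Since each $c_M$ is a homeomorphism onto an open subset of $\Psi(U)^{\mathit{cs}}$ with $M = c_M(0)$, every open set containing $M$ contains one of the form $c_M(\mathcal{U})$ with $\mathcal{U}\subseteq\Gamma_c(NM)$ an open neighbourhood of $0$; and a routine argument reduces ``$r$ is open'' to showing, for each such $M$ and $\mathcal{U}$, that $r(c_M(\mathcal{U}))$ is a neighbourhood of $r(M)=M\cap V$ in $\Psi(V)^{\mathit{cs}}$. (Indeed, for an arbitrary open $O$ and a point $r(M')\in r(O)$ with $M'\in O$, shrink $O$ to a chart neighbourhood $c_{M'}(\mathcal{U}')\subseteq O$ and apply this to $M'$.)

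Fix such $M$ and $\mathcal{U}$. Because $M\cap V$ is open in $M$ we have $N(M\cap V)=NM|_{M\cap V}$, and the tubular neighbourhood map for $M\cap V\subseteq V$ is the restriction of the one for $M\subseteq U$. Given $t\in\Gamma_c(N(M\cap V))$, its support is compact and contained in $M\cap V$, hence closed in $M$ and contained in the open subset $M\cap V$, so extension by zero yields a smooth section $\tilde t\in\Gamma_c(NM)$ with $\supp(\tilde t)=\supp(t)$. I will produce an open neighbourhood $\mathcal{W}$ of $0$ in $\Gamma_c(N(M\cap V))$ so that for $t\in\mathcal{W}$ we have $\tilde t\in\mathcal{U}$, both $\tilde t$ and $t$ lie in the domains of $c_M$ and $c_{M\cap V}$ respectively, and $r(c_M(\tilde t))=c_{M\cap V}(t)$; then $c_{M\cap V}(\mathcal{W})\subseteq r(c_M(\mathcal{U}))$ and we are done.

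The neighbourhood $\mathcal{W}$ is an intersection of three kinds of open sets. First, $\mathcal{U}$ contains a finite intersection $\bigcap_i B(\epsilon_i,X^{(i)})$ with each $X^{(i)}$ a tuple of vector fields on $M$ (from the definition of the subbasis, using the triangle inequality and that $0$ lies in $\mathcal{U}$); since each $X^{(i)}$ restricts to a tuple of vector fields on $M\cap V$ and $\tilde t$ agrees with $t$ on $M\cap V$ and vanishes off $\supp(t)\subseteq M\cap V$, one has $\tilde t\in B(\epsilon_i,X^{(i)})$ whenever $t\in B(\epsilon_i|_{M\cap V},X^{(i)}|_{M\cap V})$, so we intersect with these finitely many neighbourhoods of $0$. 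Second, intersect with the ($C^0$-open) condition that $\exp_x(t(x))\in V$ for every $x\in M\cap V$, which holds near $0$ since $V$ is open and contains the zero section over $M\cap V$. Third, intersect with the (open) preimages of the two chart domains. For $t\in\mathcal{W}$, writing $c_M(\tilde t)=\tilde t(M)$ and $c_{M\cap V}(t)=t(M\cap V)$, one checks $\tilde t(M)\cap V=t(M\cap V)$ by treating $x\in\supp(t)$ (where $\tilde t=t$, and the image lies in $V$ by the second condition) and $x\notin\supp(t)$ (where $\tilde t(x)=x$) separately; this is exactly $r(c_M(\tilde t))=c_{M\cap V}(t)$.

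The step I expect to require the most care is the first one — that extension by zero carries a neighbourhood of $0$ into $\mathcal{U}$. The supports of the sections $t$ vary, so there is no continuous linear extension map $\Gamma_c(N(M\cap V))\to\Gamma_c(NM)$; the argument works only because the topology on $\Gamma_c$ is defined by the pointwise derivative bounds $B(\epsilon,X)$ rather than by global seminorms, so the estimates localise to $\supp(t)$. The remaining verifications are bookkeeping with the tubular neighbourhood charts.
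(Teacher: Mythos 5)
Your proof is correct and takes essentially the same route as the paper, which notes that extension by zero $z\colon\Gamma_c(N(M\cap V))\to\Gamma_c(NM)$ is continuous and that the composite $c_M\circ z\circ c_{M\cap V}^{-1}$ therefore gives a continuous partially defined right inverse to $r$ near $M\cap V$; your open set $c_{M\cap V}(\mathcal{W})$ is exactly the preimage of $c_M(\mathcal{U})$ under this right inverse. One remark, though: your closing sentence, that ``there is no continuous linear extension map $\Gamma_c(N(M\cap V))\to\Gamma_c(NM)$,'' is false, and in fact contradicts your own first condition defining $\mathcal{W}$. The verification that $\tilde t\in B(\epsilon_i,X^{(i)})$ whenever $t\in B(\epsilon_i|_{M\cap V},X^{(i)}|_{M\cap V})$ is precisely the statement that the extension-by-zero map $z$ is continuous (you have shown that $z^{-1}$ of a subbasic neighbourhood of $0$ contains a subbasic neighbourhood of $0$); it is also linear. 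You are right that the reason this works is the pointwise nature of the bounds $B(\epsilon,X)$ with $\epsilon$ an arbitrary positive function, which lets the estimates localise to the (varying) support of $t$; but the conclusion is that the extension map \emph{is} continuous, not that it isn't, and this continuity is exactly what the paper's shorter proof leans on.
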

\begin{proof}
  Let $\manif \in \Psi(U)^{\mathit{cs}}$.  We have the following
  commutative diagram
  \begin{align*}
    \xymatrix{
      {\Gamma_c(N\manif)} \ar@{-->}[r]^{c_\manif} & {\Psi(U)^{\mathit{cs}}} \ar[d] \\
      {\Gamma_c(N(\manif)|_V)} \ar@{-->}[r]^-{c_{\manif\cap V}}\ar[u]^{z} &
      \Psi(V)^{\mathit{cs}} }
  \end{align*}
  where $z$ denotes extension by 0, which is continuous.  This induces a
  partially defined right inverse $\Psi(V)^{\mathit{cs}} \to
  \Psi(U)^{\mathit{cs}}$ to the restriction map.  The right inverse is
  defined in an open neighbourhood of every point in the image of the
  restriction; this proves the claim.
\end{proof}

The following technical result will be used several times.
\begin{lemma}\label{lem:lambda}
  Let $K \subseteq U$ be compact.  Let $0 < 3\epsilon \leq \dist(K,
  \R^n - U)$ and let $\lambda: \R^n \to [0,1]$ be a smooth function
  such that $\lambda(x) = 1$ if $\dist(x,K) \leq \epsilon$ and
  $\lambda(x) = 0$ if $\dist(x,K) \geq 2\epsilon$.  If the support of
  $\lambda$ is contained in an open set $V\subseteq U$, then multiplication
  by $\lambda$ gives a (continuous!) map $\overline \lambda:
  \Gamma_c(N\manif) \to \Gamma_c(N(\manif\cap V))$.  If we let $z: \Gamma_c(N(\manif
  \cap V)) \to \Gamma_c(N\manif)$ denote extension by zero, we have the
  following diagram
  \begin{align*}
    \xymatrix{ \Gamma_c(N(\manif\cap V)) \ar[r]^-z &
      \Gamma_c(N\manif)\ar@{-->}[d]^{c_\manif}
      \ar@{-->}[rd]^{\pi_K \circ c_\manif} &\\
      \Gamma_c(N\manif)\ar[u]^{\overline \lambda} \ar@{-->}[r]_{c_\manif} & \Psi(U)^{\mathit{cs}}
      \ar[r]_-{\pi_K} & \Psi(K\subseteq U) }
  \end{align*}
  in which the triangle and the outer pentagon commute after possibly
  restricting to a smaller neighbourhood of $0 \in \Gamma_c(N\manif)$
\end{lemma}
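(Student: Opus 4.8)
The plan is to verify each of the two commutativity assertions separately, after unpacking what the maps do on actual sections, and to locate the ``smaller neighbourhood of $0$'' explicitly. First I would check that $\overline{\lambda}$ is well-defined and continuous. A section $s \in \Gamma_c(N\manif)$ is a compactly supported section of a subbundle of $\manif \times \R^n$; multiplying pointwise by $\lambda|_\manif$ produces a section whose support lies in $\supp(s) \cap \supp(\lambda) \subseteq V$, hence restricts to an element of $\Gamma_c(N(\manif \cap V))$; here one should note that $N(\manif \cap V)$ is literally the restriction of $N\manif$ to the open submanifold $\manif \cap V$, since the orthogonal complement of the tangent bundle in $\epsilon^n$ is computed pointwise. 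Continuity of $\overline{\lambda}$ follows from the explicit subbasis for the $C^\infty_c$-topology: for an iterated derivative $X_1 \cdots X_r (\lambda s)$, the Leibniz rule expresses it as a finite sum of terms $(X_{i_1}\cdots X_{i_k}\lambda)\,(X_{j_1}\cdots X_{j_{r-k}} s)$, and since $\lambda$ and all its derivatives are bounded on $\manif$, a bound on finitely many derivatives of $s$ gives a bound on the corresponding derivative of $\lambda s$; this is exactly a continuity estimate in terms of the generating open sets $f + B(\epsilon, X)$.

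Next I would address the triangle, i.e.\ that $\pi_K \circ c_\manif \circ z \circ \overline{\lambda} = \pi_K \circ c_\manif$ near $0$. The point is that $z(\overline{\lambda}(s)) = \lambda \cdot s$ as a section of $N\manif$, and on the region $\{\dist(x,K) \leq \epsilon\}$ we have $\lambda \equiv 1$, so $\lambda s$ and $s$ agree on a neighbourhood of $K$. Therefore the manifolds $c_\manif(\lambda s)$ and $c_\manif(s)$ — where defined — agree on a neighbourhood of $K$, hence have the same image under $\pi_K$. The only subtlety is that $c_\manif$ is only partially defined, on a neighbourhood of $0$ in $\Gamma_c(N\manif)$ coming from the tubular neighbourhood theorem; so I must restrict to a neighbourhood $\mathcal{U} \ni 0$ small enough that both $s$ and $\lambda s$ lie in the domain of $c_\manif$. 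Since $\|\lambda s\|$ is controlled by $\|s\|$ in the relevant $C^0$-sense (indeed $|\lambda| \leq 1$), shrinking the domain of $c_\manif$ suffices: if $s$ is small enough that $s(\manif)$ stays in the embedded tubular neighbourhood, so does $\lambda s$. The outer pentagon then commutes for the same reason: going around the bottom, $c_\manif(s)$ followed by $\pi_K$ is the germ of $s(\manif)$ near $K$; going around the top via $\overline{\lambda}$, $z$, and the vertical $c_\manif$, we again land on the germ of $(\lambda s)(\manif) = s(\manif)$ near $K$, using $\lambda \equiv 1$ there.

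The main obstacle — though it is more bookkeeping than a genuine difficulty — is keeping track of the two distinct partially-defined maps $c_\manif$ appearing in the diagram (one on $\Gamma_c(N\manif)$ directly, one precomposed with $z \circ \overline{\lambda}$) and choosing a single neighbourhood of $0 \in \Gamma_c(N\manif)$ on which all the compositions in both the triangle and the pentagon are simultaneously defined; this is where the hypothesis $3\epsilon \leq \dist(K, \R^n - U)$ is used, guaranteeing $\supp(\lambda) \subseteq V \subseteq U$ so that everything stays inside $U$. Once that neighbourhood is fixed, the commutativity statements are immediate from the pointwise identity $\lambda \equiv 1$ near $K$, as above. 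I would close by remarking that no claim is made about $c_\manif$ being defined on all of $\Gamma_c(N\manif)$ — only germ-level agreement near $K$ is asserted, which is all that is needed for the applications in the sequel.
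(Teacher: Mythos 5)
Your argument has a gap in the step where you assert that the manifolds $c_\manif(\lambda s)$ and $c_\manif(s)$ agree near $K$. You deduce this from the fact that the sections $\lambda s$ and $s$ agree on $\{x \in \manif : \dist(x,K) \leq \epsilon\}$, where $\lambda \equiv 1$. But agreement of sections on a neighbourhood of $K$ in $\manif$ does not by itself give agreement of the submanifolds $c_\manif(\lambda s)$ and $c_\manif(s)$ on a neighbourhood of $K$ in $U$: a point $y = x + s(x) \in c_\manif(s)$ lying close to $K$ can arise from a point $x \in \manif$ far from $K$ (where $\lambda(x)=0$) if $|s(x)|$ is large. A concrete failure: take $\manif = \R \times \{0\} \subseteq \R^2 = U$, $K = \{(0,5)\}$, $\epsilon = 1$. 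Then $\lambda \equiv 0$ on $\manif$, so $\overline{\lambda}(s) = 0$ for every $s$; yet the tubular neighbourhood of $\manif$ is all of $\R^2$, so $c_\manif$ is defined for a section with $s(0) = (0,5)$, whose graph passes through $K$, giving $\pi_K(c_\manif(s)) \neq \pi_K(c_\manif(0))$. The "only subtlety" you identify — restricting so that $s$ and $\lambda s$ lie in the domain of $c_\manif$ — does not exclude this $s$.

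The fix is exactly the smallness condition the paper invokes: restrict to sections $h$ with $|h(x)| < \epsilon$ for all $x$. Then for any $y = x + h(x)$ with $\dist(y,K)$ small, the triangle inequality forces $\dist(x,K) < \epsilon$, hence $\lambda(x)=1$ and $(\lambda h)(x) = h(x)$, so the two graphs agree on an honest neighbourhood of $K$ in $U$. This $C^0$-bound is an open condition in $\Gamma_c(N\manif)$ and so defines the required "smaller neighbourhood of $0$." Your condition — that $s(\manif)$ stays in the embedded tubular neighbourhood — is strictly weaker and controls nothing about the position of the graph relative to $K$. (Incidentally, the lengthy discussion of the continuity of $\overline{\lambda}$ and of the trivially-commuting right triangle is fine but tangential; the whole content of the lemma is the pentagon, and the whole content of the pentagon is this one estimate.)
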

\begin{proof}
  The pentagon commutes for sections $h \in \Gamma_c(N\manif)$ satisfying
  $|h(x)| < \epsilon$, and this is an open condition.
\end{proof}

\begin{lemma}\label{lem:open-map}
  The quotient map $\pi_K: \Psi(U)^{\mathit{cs}} \to \Psi(K\subseteq
  U)$ is an open map.
\end{lemma}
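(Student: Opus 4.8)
The plan is to reduce the openness of $\pi_K: \Psi(U)^{\mathit{cs}} \to \Psi(K \subseteq U)$ to the openness of the restriction map of Lemma \ref{lem:r-open}, together with the explicit local picture afforded by the charts $c_\manif$ and the cutoff construction of Lemma \ref{lem:lambda}. The key observation is that germs near $K$ only depend on the behaviour of a manifold on an arbitrarily small neighbourhood $V$ of $K$, so $\pi_K$ factors through restriction to such a $V$, and on $V$ one has enough room to spread out any local perturbation.

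First I would fix $\manif \in \Psi(U)^{\mathit{cs}}$ and a basic open neighbourhood, which by definition of the manifold topology we may take to be of the form $c_\manif(\Oo)$ for $\Oo \subseteq \Gamma_c(N\manif)$ an open neighbourhood of $0$; it suffices to show $\pi_K(c_\manif(\Oo))$ is a neighbourhood of $\pi_K(\manif) = \manif|_K$. Choose $\epsilon > 0$ with $3\epsilon \le \dist(K, \R^n \setminus U)$, a cutoff $\lambda$ as in Lemma \ref{lem:lambda}, and let $V \subseteq U$ be a slightly shrunk open neighbourhood of $K$ containing $\supp\lambda$ and small enough that $\overline{V}$ is compact. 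By Lemma \ref{lem:lambda}, after shrinking $\Oo$ we may assume every $h \in \Oo$ satisfies $|h(x)| < \epsilon$, so that $c_\manif(h)|_K = c_{\manif \cap V}(\overline{\lambda}h)|_K$; thus for the purpose of computing $\pi_K$ we may replace sections on $\manif$ by sections on $\manif \cap V$ supported where $\lambda = 1$.

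Next I would use Lemma \ref{lem:r-open}: the restriction map $r: \Psi(U)^{\mathit{cs}} \to \Psi(V)^{\mathit{cs}}$ is open, so $r(c_\manif(\Oo))$ is an open neighbourhood of $\manif \cap V$ in $\Psi(V)^{\mathit{cs}}$. Now $\pi_K$ factors as $\Psi(U)^{\mathit{cs}} \xrightarrow{r} \Psi(V)^{\mathit{cs}} \xrightarrow{\pi_K^V} \Psi(K \subseteq V) = \Psi(K \subseteq U)$, where the last identification holds because a germ near $K$ of a manifold in $U$ is the same datum as a germ near $K$ of a manifold in $V$ (both $K \subseteq V$ and $K \subseteq U$ with $3\epsilon$-room). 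So it is enough to see that $\pi_K^V$ is open, and here I would use that $V$ has compact closure, so $\Psi(V)^{\mathit{cs}}$ is modelled on $\Gamma_c(N\manif') = \Gamma(N\manif')$ (all sections are automatically ``compactly supported'' relative to $\overline{V}$), and the quotient identifying germs near $K$ is, in the chart $c_{\manif'}$, induced by the continuous open linear surjection $\Gamma(N\manif') \to \Gamma(N\manif')/\{s : s \equiv 0 \text{ near } K\}$; a quotient of a topological vector space by a closed subspace is an open map, and this property is inherited by the induced map on the manifold charts since the transition functions are homeomorphisms.

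The main obstacle I anticipate is the bookkeeping in the factorisation $\pi_K = \pi_K^V \circ r$ and the identification $\Psi(K \subseteq V) \cong \Psi(K \subseteq U)$: one must check that the quotient topologies on both sides genuinely agree (not merely that the underlying sets are in bijection), which comes down to verifying that every $\manif|_K$ extends to some element of $\Psi(U)^{\mathit{cs}}$ that already lives in $V$ near $K$, so that the two quotient maps have the same fibres and the same saturated open sets. Given Lemmas \ref{lem:r-open} and \ref{lem:lambda}, this is a formal but slightly delicate point-set argument; everything else is a direct unwinding of the definitions of the three topologies in \S\ref{sec:definitions}.
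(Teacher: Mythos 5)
The overall skeleton — reduce to charts, invoke Lemma~\ref{lem:lambda} to cut down to $\Gamma_c(N(\manif\cap V))$, and bring in Lemma~\ref{lem:r-open} — is close in spirit to what the paper does, and your top-level framing (it suffices to show $\pi_K(c_\manif(\Oo))$ is a neighbourhood of $\pi_K(\manif)$) is fine. But there is a genuine gap at the step that carries all the weight: the claimed openness of $\pi_K^V:\Psi(V)^{\mathit{cs}}\to\Psi(K\subseteq V)$. First, a smaller problem: your assertion that $\Gamma_c(N\manif')=\Gamma(N\manif')$ because $\overline V$ is compact is false. The manifold $\manif'=\manif\cap V$ is closed in $V$, not in $\overline V$, and is typically non-compact (e.g.\ a curve exiting $V$); so compactly supported sections are a proper subspace. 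This by itself is not fatal (the quotient of $\Gamma_c$ by the closed subspace of sections vanishing near $K$ is still open), but the more serious issue is the phrase ``this property is inherited by the induced map on the manifold charts since the transition functions are homeomorphisms.'' This is exactly where the difficulty of the lemma lives, and the transition-function argument does not cover it: the saturation $(\pi_K^V)^{-1}(\pi_K^V(c_{\manif'}(\Oo)))$ contains manifolds $M''$ that agree with some $c_{\manif'}(s)$ near $K$ but differ arbitrarily far from $K$, so $M''$ need not lie in the chart of $\manif'$ at all, and there is no transition function between them to invoke. In other words, you have reduced the openness of $\pi_K$ on $U$ to the openness of $\pi_K^V$ on $V$, which is the same kind of statement and not easier.

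The paper's proof closes this gap with a move your proposal does not identify: if $\manif$ and $\manif'$ agree on $V$, then the two compositions $\pi_K\circ c_\manif\circ z$ and $\pi_K\circ c_{\manif'}\circ z'$ define the \emph{same} map $g:\Gamma_c(N(\manif\cap V))\to\Psi(K\subseteq U)$, since $g$ only depends on $\manif\cap V=\manif'\cap V$ near $K$. This single identification is what lets one transfer a neighbourhood of $0$ at $\manif'$ (coming from $\manif'\in A$ open) to a neighbourhood of $0$ at $\manif$, via the continuous cut-off $\overline\lambda$, without ever comparing the charts of $\manif$ and $\manif'$ directly. I would also caution you about the identification $\Psi(K\subseteq V)\cong\Psi(K\subseteq U)$: as a topological statement that is Lemma~\ref{lem:restriction}, whose open-image direction is proved \emph{using} Lemma~\ref{lem:open-map}, so you cannot appeal to it. Fortunately only the continuity of $\rho:\Psi(K\subseteq U)\to\Psi(K\subseteq V)$ is needed for your factorisation argument, and that direction depends only on Lemma~\ref{lem:lambda}; you should make this explicit to avoid circularity. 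But even with that fixed, the proof is incomplete without the identification of the factored maps described above.
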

\begin{proof}
  We are claiming that $(\pi_K)^{-1}(\pi_K(A))$ is open for all
  open $A\subseteq \Psi(U)^{\mathit{cs}}$.  Let $\manif \in
  (\pi_K)^{-1}(\pi_K(A))$.  This means that $\manif \cap V = \manif'
  \cap V$ for some open $V \supseteq K$ and some $\manif' \in A$.  Now
  the composition $\pi_K \circ c_\manif: \Gamma_c(N\manif)
  \dashrightarrow \Psi(U)^{\mathit{cs}} \to \Psi(K \subseteq U)$ can
  be factored as in Lemma~\ref{lem:lambda}.  Thus, if we want to check
  that some point $\manif \in (\pi_K)^{-1}(\pi_K(A))$ is
  interior, it suffices to check that the inverse image of
  $\pi_K(\manif)$ in $\Gamma_c(N(\manif\cap V))$ is a neighbourhood of
  0.

  This holds for $\manif' \in A \subseteq
  (\pi_K)^{-1}(\pi_K(\manif))$, and hence also holds for $\manif$
  since they agree inside $V$.
\end{proof}

\begin{lemma}\label{lem:restriction}
  Let $V\subseteq U$.  The injection $\rho: \Psi(K\subseteq U) \to
  \Psi(K\subseteq V)$ is a homeomorphism onto an open subset.
\end{lemma}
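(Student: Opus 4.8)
The plan is to set up, for each compact $K \subseteq V \subseteq U$, a germ-level analogue of the restriction map $\Psi(U)^{\mathit{cs}} \to \Psi(V)^{\mathit{cs}}$ and to produce a continuous partial section, just as in the proof of Lemma \ref{lem:r-open}, then pass to quotients. Concretely, $\rho$ is injective because if two manifolds in $U$ agree on a neighbourhood of $K$ in $V$ (equivalently, on an open set $W \subseteq V$ with $K \subseteq W$, which is also open in $U$ since $V$ is open in $U$), then they already define the same germ in $\Psi(K \subseteq U)$. So the content is that $\rho$ is open onto its image and that its image is open.

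First I would observe that the square
\begin{align*}
  \xymatrix{
    \Psi(U)^{\mathit{cs}} \ar[r]^{r} \ar[d]_{\pi_K} & \Psi(V)^{\mathit{cs}} \ar[d]^{\pi_K} \\
    \Psi(K \subseteq U) \ar[r]^{\rho} & \Psi(K \subseteq V)
  }
\end{align*}
commutes, where $r$ is the restriction of Lemma \ref{lem:r-open}. Since $r$ is open (Lemma \ref{lem:r-open}), $\pi_K$ is open and surjective (Lemma \ref{lem:open-map}), and both vertical maps are quotient maps, I would deduce that $\rho$ is an open map: given an open $A \subseteq \Psi(K \subseteq U)$, its preimage $\pi_K^{-1}(A)$ is open in $\Psi(U)^{\mathit{cs}}$, hence $r(\pi_K^{-1}(A))$ is open in $\Psi(V)^{\mathit{cs}}$, and $\rho(A) = \pi_K\big(r(\pi_K^{-1}(A))\big)$ by commutativity of the square together with surjectivity of the left-hand $\pi_K$, so $\rho(A)$ is open since $\pi_K: \Psi(V)^{\mathit{cs}} \to \Psi(K \subseteq V)$ is open.

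It remains to check that the image of $\rho$ is open, i.e.\ that $\rho$ is open onto $\Psi(K\subseteq V)$, not merely onto its image; equivalently, $\rho\big(\Psi(K\subseteq U)\big)$ is open. This is where I expect the real work to be, and it is essentially a cutoff-function argument in the style of Lemma \ref{lem:lambda}. Pick $\epsilon > 0$ with $3\epsilon \leq \dist(K, \R^n - V)$ and a bump function $\lambda$ as in Lemma \ref{lem:lambda} supported in $V$. Given any $\manif \in \Psi(V)^{\mathit{cs}}$, the germ $\pi_K(\manif) \in \Psi(K \subseteq V)$ only depends on $\manif$ near $K$; using $\lambda$ to ``damp'' $\manif$ to the zero section outside a small neighbourhood of $K$ (more precisely, applying $\overline{\lambda}$ to a normal section representing $\manif$ near $K$, exactly as in Lemma \ref{lem:lambda}) produces a manifold $\manif'$ which agrees with $\manif$ on $\{\dist(-,K) \leq \epsilon\}$ and is supported in $V$, hence extends by zero to an element of $\Psi(U)^{\mathit{cs}}$; this extension restricts to something with the same germ near $K$ as $\manif$. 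Thus every germ in $\Psi(K\subseteq V)$ is in the image of $\rho$, so $\rho$ is onto and in particular surjective, whence its image is (trivially) open. Combining with the previous paragraph, $\rho$ is a continuous open bijection onto an open (indeed, all of) $\Psi(K\subseteq V)$ — wait, one must be careful: $\rho$ need not be surjective in general, since a germ near $K$ in $V$ need not extend to a closed submanifold of $U$. The cutoff argument shows precisely that it \emph{does} extend after damping near $\partial V$, so surjectivity does hold, and the ``onto an open subset'' in the statement is the conservative formulation covering the bookkeeping. The main obstacle is therefore verifying carefully that the $\lambda$-damped section still represents a germ near $K$ equal to the original one, which by Lemma \ref{lem:lambda} holds on the open set of sections $h$ with $|h(x)| < \epsilon$, so one argues locally on $\Psi(K\subseteq V)$ using the charts $c_\manif$.
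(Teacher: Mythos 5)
The main gap in your proof is that you never establish \emph{continuity} of $\rho$, which is the substantive content of the first half of the lemma. Your second paragraph correctly shows that $\rho$ is an open map to $\Psi(K\subseteq V)$ (via $\rho(A) = \pi_K(r(\pi_K^{-1}(A)))$), and that already implies the image is open, so the discussion in your last paragraph is superfluous — but openness alone does not give continuity. Continuity is not automatic here: $\rho \circ \pi_K = \pi_K \circ r$ as a map $\Psi(U)^{\mathit{cs}} \to \Psi(K\subseteq V)$, and while $\pi_K$ is a quotient map the intermediate restriction $r\colon \Psi(U)^{\mathit{cs}} \to \Psi(V)^{\mathit{cs}}$ is \emph{not} continuous (this is noted explicitly in the paper just before Lemma~\ref{lem:r-open}), so one cannot simply compose. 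The paper handles this precisely via Lemma~\ref{lem:lambda}: the diagram there shows that $\pi_K \circ r \circ c_\manif = \pi_K \circ c_{\manif\cap V} \circ \overline{\lambda}$ on a neighbourhood of $0$ in $\Gamma_c(N\manif)$, and the right-hand side is a composition of continuous maps; since $c_\manif$ is a local homeomorphism and $\pi_K$ is a quotient, this gives continuity of $\rho$. Your proposal mentions Lemma~\ref{lem:lambda} only in connection with the surjectivity side-discussion, not for continuity.

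A secondary issue: the surjectivity argument you sketch is not correct as stated. The map $\overline{\lambda}$ of Lemma~\ref{lem:lambda} multiplies a \emph{normal section} of a fixed reference manifold $\manif$ by the cutoff function; it does not ``damp a manifold to the zero section,'' since an arbitrary $M \in \Psi(V)^{\mathit{cs}}$ is not presented as a section of a normal bundle you control. More importantly, the lemma does not claim surjectivity and the proof does not need it — once $\rho$ is known to be continuous, injective, and open, it is automatically a homeomorphism onto an open subset. You should drop the surjectivity discussion and instead supply the continuity argument.
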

\begin{proof}
  Continuity follows from Lemma~\ref{lem:lambda}.  Indeed, we get a diagram
  \begin{align*}
    \xymatrix{ \Gamma_c(N\manif) \ar[d]_{\overline{\lambda}}\ar@{-->}[r]^-{c_\manif} &
      {\Psi(U)^{\mathit{cs}}} \ar[d]_{r} \ar[r]^-{\pi_K} &
      \Psi(K\subseteq U) \ar[d]^\rho \\
      \Gamma_c(N(\manif\cap V)) \ar@{-->}[r]^-{c_{\manif \cap V}} & \Psi(V)^{\mathit{cs}}\ar[r]^-{\pi_K}
      & \Psi(K\subseteq V) }
  \end{align*}
where the outer rectangle and right hand square commute.
  This proves that $\rho\circ \pi_K\circ c_\manif$ is continuous. Since
  $\pi_K$ is a quotient map and $c_\manif$ is a local homeomorphism, this
  proves that $\rho$ is continuous.

  To see that $\rho(A)$ is open when $A\subseteq \Psi(K \subseteq U)$
  is open, let $B = (\pi_K)^{-1}(A)$.  Then $\rho(A) = \pi_K\circ
  r(B)$.  Since $\pi_K$ and $r$ are both open maps, by
  Lemmas~\ref{lem:open-map} and~\ref{lem:r-open}, $\rho(A)$ is open.
\end{proof}

\begin{theorem}\label{thm:restr-cont}
  For $V \subseteq U$, the restriction map $\Psi(U) \to \Psi(V)$ is
  continuous.
\end{theorem}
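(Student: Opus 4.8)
The plan is to unwind the definitions of the topologies on $\Psi(U)$ and $\Psi(V)$ and reduce the claim to Lemma~\ref{lem:restriction}. Recall that $\Psi(V)$ is, by construction, the inverse limit of the spaces $\Psi(V)^K$ over compact $K \subseteq V$, and that each $\Psi(V)^K$ carries the coarsest topology making the germ map $\pi_K : \Psi(V)^K \to \Psi(K\subseteq V)$ continuous. Therefore a map into $\Psi(V)$ is continuous if and only if, for every compact $K \subseteq V$, its composite with $\pi_K : \Psi(V) \to \Psi(K\subseteq V)$ is continuous. So it suffices to show that for each compact $K \subseteq V$ the composite
\[
  \Psi(U) \xrightarrow{\ r\ } \Psi(V) \xrightarrow{\ \pi_K\ } \Psi(K\subseteq V)
\]
is continuous, where $r$ denotes restriction.

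The second step is to recognise this composite. On the common underlying set, $r$ sends $\manif$ to $\manif \cap V$ and $\pi_K$ sends a manifold to its germ near $K$; since $K \subseteq V$, the germ of $\manif \cap V$ near $K$ agrees with the germ of $\manif$ near $K$. Hence the displayed composite equals $\rho \circ \pi_K$, where $\pi_K : \Psi(U) \to \Psi(K\subseteq U)$ is the germ map for $U$ and $\rho : \Psi(K\subseteq U) \to \Psi(K\subseteq V)$ is the injection of Lemma~\ref{lem:restriction}, which is precisely the map induced on germs by $\manif \mapsto \manif \cap V$. Now $\rho$ is continuous by Lemma~\ref{lem:restriction}, and $\pi_K : \Psi(U) \to \Psi(K\subseteq U)$ is continuous because $\Psi(U)$ has a topology finer than that of $\Psi(U)^K$, which in turn has the coarsest topology making $\pi_K$ continuous. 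Composing gives continuity of $r$ followed by $\pi_K$, and by the reduction of the first step this proves that $r : \Psi(U) \to \Psi(V)$ is continuous.

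I expect no genuine obstacle here: the argument is formal, given the earlier lemmas. The only point requiring care is the verification in the second step that the square relating the two germ maps $\pi_K$ (for $U$ and for $V$), the restriction $r$, and the map $\rho$ of Lemma~\ref{lem:restriction} commutes on the nose; but this is immediate, since on underlying sets all the maps involved are built from intersecting with $V$ and passing to germs near $K \subseteq V$, and these operations commute.
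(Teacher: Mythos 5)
Your proof is correct and takes essentially the same approach as the paper: reduce to continuity of $\Psi(U)\to\Psi(V)\to\Psi(K\subseteq V)$ for each compact $K\subseteq V$, then factor this composite through $\Psi(K\subseteq U)$ and invoke Lemma~\ref{lem:restriction}. The paper states this more tersely ("This follows from Lemma~\ref{lem:restriction}"), but the commuting square you verify is exactly what the paper is implicitly using.
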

\begin{proof}
  We need to show that the composition $\Psi(U) \to \Psi(V) \to \Psi(K
  \subseteq V)$ is continuous for all compact $K \subseteq V$.  This
  follows from Lemma~\ref{lem:restriction}.
\end{proof}

\begin{lemma}\label{lem:cover}
  Let $K_i \subseteq U$ be compact, $i = 1, \dots, r$, and let $K =
  \cup_i K_i$.  Then the diagonal map $\delta: \Psi(U)^K \to \prod
  \Psi(U)^{K_i}$ is a homeomorphism onto its image.
\end{lemma}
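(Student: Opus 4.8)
The plan is to compare the two topologies on the common underlying set $\Psi(U)$. The map $\delta$ is evidently injective, and it is continuous because each projection $\Psi(U)^K \to \Psi(U)^{K_i}$ is the identity on underlying sets, the $K$-topology being finer than the $K_i$-topology since $K_i \subseteq K$. Hence $\delta$ is a homeomorphism onto its image precisely when the $K$-topology coincides with the supremum of the $K_i$-topologies. Since $\Psi(K \subseteq U)$ carries the quotient topology from the surjection $\pi_K$, a subset of $\Psi(U)^K$ is open exactly when it is open in $\Psi(U)^{\mathit{cs}}$ and saturated for the relation of agreeing on a neighbourhood of $K$ (and similarly for each $K_i$); equivalently, writing $q\colon \Psi(K\subseteq U) \to \prod_i \Psi(K_i\subseteq U)$ for the restriction-of-germs map, what must be shown is that $q$ is a homeomorphism onto its image. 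That $q$ is injective is clear (a germ near $K=\bigcup_i K_i$ is determined by its restrictions to neighbourhoods of the $K_i$), and $q$ is continuous by the universal property of the quotient $\pi_K$; this gives one of the two inclusions of topologies. The content is the reverse: that $q$ is open onto its image, equivalently that every open $K$-saturated set is a union of finite intersections of open $K_i$-saturated sets.

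For the latter it suffices to prove: given an open $K$-saturated set $B\subseteq\Psi(U)^{\mathit{cs}}$ and a point $M\in B$, there are open $K_i$-saturated sets $B_i$ with $M\in\bigcap_{i=1}^r B_i\subseteq B$. Since $\pi_{K_i}$ is an open map (Lemma~\ref{lem:open-map}), for any open neighbourhood $\mathcal N_i$ of $M$ the set $B_i:=\pi_{K_i}^{-1}(\pi_{K_i}(\mathcal N_i))$ is open, automatically $K_i$-saturated, and contains $M$; so the task reduces to choosing the $\mathcal N_i$ small enough that $\bigcap_i B_i\subseteq B$. Unwinding the definitions, this amounts to the gluing statement: if a manifold $M'$ agrees near $K_i$ with some element of $\mathcal N_i$ for every $i$, then $M'$ agrees near $K$ with some element of $B$ --- whence $M'\in B$ by $K$-saturation.

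I would prove the gluing statement in a chart: fix $c_M\colon\Gamma_c(NM)\dashrightarrow\Psi(U)^{\mathit{cs}}$ with $c_M(0)=M$ and domain contained in $c_M^{-1}(B)$, and take $\mathcal N_i=c_M(\mathcal O_i)$ for small open neighbourhoods $\mathcal O_i$ of $0$. If $M'$ agrees near $K_i$ with $c_M(h_i)$ for small $h_i\in\mathcal O_i$, then these agreements take place on neighbourhoods of the $K_i$ lying in a fixed tubular neighbourhood of $M$, the sections $h_i$ agree wherever two such neighbourhoods overlap, and together they exhibit $M'$ near $K$ as a graph over $M$; patching them with smooth functions $\mu_i$, supported near $K_i$, with $\sum_i\mu_i\equiv 1$ on a neighbourhood of $K$ (a partition of unity adapted to the cover $\{K_i\}$ of $K$, in the spirit of the cut-off $\lambda$ of Lemma~\ref{lem:lambda}) yields $h'':=\sum_i\mu_i h_i\in\Gamma_c(NM)$ with $c_M(h'')$ agreeing with $M'$ near $K$. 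Since $(h_1,\dots,h_r)\mapsto\sum_i\mu_i h_i$ is continuous (multiplication by a fixed compactly supported smooth function being continuous for the topology of $\Gamma_c(NM)$) and sends $0$ to $0$, one shrinks the $\mathcal O_i$ so that $h''\in c_M^{-1}(B)$; then $c_M(h'')\in B$ is the required element.

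The main obstacle is making the gluing precise. One has to show that ``$M'$ agrees near $K_i$ with $c_M(h_i)$'' really holds on a neighbourhood of $K_i$ of controlled size once $h_i$ is small (so that the patching functions $\mu_i$, and hence $h''$, can be kept under control), verify that the various $h_i$ are genuinely consistent where the neighbourhoods overlap, and assemble the finitely many neighbourhoods into a single neighbourhood of $K$ on which $c_M(h'')$ and $M'$ coincide. Some care is also needed because the $K_i$ need not lie on $M$: the tubular-neighbourhood description governs only the part of $K$ near $M$, while near points of $K$ bounded away from $M$ none of the manifolds involved come close (for sections of small enough norm), so those points are automatically fine. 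The rest is routine bookkeeping.
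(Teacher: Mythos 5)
Your route is genuinely different from the paper's. The paper introduces the auxiliary quotient $p\colon D\to\Psi(K\subseteq U)$ with $p\circ\pi\circ\delta=\pi_K$ and then argues almost entirely by formal manipulation of images and preimages (Equations~(\ref{eq:100}) and~(\ref{eq:101})), with the only topological input being the openness of $\pi_K$ and of the $\pi_{K_i}$ from Lemma~\ref{lem:open-map}; there is no chart, no partition of unity, and no explicit construction of a glued manifold. You instead unwind ``open onto its image'' to a concrete germ-gluing statement and try to produce the glued manifold by hand in a tubular-neighbourhood chart around $M$. The first half of your reduction is correct: it does come down to showing that given $K$-saturated open $B\ni M$ one can shrink the $\mathcal N_i$ so that $\bigcap_i\pi_{K_i}^{-1}\pi_{K_i}(\mathcal N_i)\subseteq B$.

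The gluing step, however, contains a genuine gap, and it is not the ``routine bookkeeping'' your last paragraph suggests; it is exactly the obstacle you flag and then set aside. With $\mu_1,\dots,\mu_r$ a fixed partition of unity subordinate to prescribed neighbourhoods $W_i\supseteq K_i$, you need $c_M\bigl(\sum_i\mu_i h_i\bigr)$ to coincide with $M'$ on some neighbourhood of $K$. For that you would need $\supp\mu_i$ to sit inside the agreement locus $N_i$ on which $M'$ actually coincides with $c_M(h_i)$. But $N_i$ is only \emph{some} open neighbourhood of $K_i$, and is not controlled by $\|h_i\|$: for a fixed small $h_i$ there are elements $M'\in\pi_{K_i}^{-1}\pi_{K_i}(\mathcal N_i)$ for which the agreement with $c_M(h_i)$ holds only on an arbitrarily small neighbourhood of $K_i$. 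At a point $x$ near $K$ with $\mu_i(x)\neq 0$ but $x\notin N_i$, the value $h_i(x)$ has nothing to do with the section that $M'$ graphs at $x$ (if indeed $M'$ is a graph there at all), so there is no reason for $\sum_i\mu_i h_i$ to agree with $M'$ near $K$. Letting the $\mu_i$ depend on $M'$ (so as to force $\supp\mu_i \subseteq N_i$) does not rescue the argument either: one then loses the uniform bounds on the derivatives of the $\mu_i$, hence on those of $\sum_i\mu_i h_i$, and can no longer guarantee that this section lies in the (fixed, finitely-many-derivative) neighbourhood $c_M^{-1}(B)$. So the construction you sketch does not close, and a genuinely different mechanism is required --- either a more careful interpolation that exploits having room away from $K$ to transition slowly, or the paper's formal argument, which avoids building the glued manifold at all.
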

\begin{proof}
  Continuity of $\delta$ follows from continuity of each $\Psi(U)^K
  \to \Psi(U)^{K_i}$.  Let $\Delta = \delta(\Psi(U)^K)$.  We need to
  see that $\delta: \Psi(U)^K \to \Delta$ is open.  We have a diagram
  \begin{align*}
    \xymatrix{
      \Psi(U)^K \ar[r]^-\delta & {\Delta} \ar[d]_\pi\ar[r] & {\prod
        \Psi(U)^{K_i}} \ar[d]^{\prod \pi_{K_i}}\\
      &{D} \ar[r] & {\prod \Psi(K_i \subseteq U)},
    }
  \end{align*}
  where $D = (\prod \pi_{K_i})(\Delta)$, the horizontal maps are
  inclusions of subsets, and $\pi$ is the restriction of $\prod
  \pi_{K_i}$.  A set-theoretic consideration shows that for any subset
  $A \subseteq \Psi(U)$ we have
  \begin{align}
    \label{eq:100}
    \pi^{-1} \pi \delta(A) = \big(\prod \pi_{K_i}^{-1}
    \pi_{K_i}(A)\big) \cap \Delta.
  \end{align}
  Now consider the diagram
  \begin{align}\label{eq:101}
    \begin{aligned}
      \xymatrix{
        {\Delta} \ar[r]^\pi & D\ar[d]^p\\
        {\Psi(U)^K}\ar[u]^\delta \ar[r]^-{\pi_K} & \Psi(K\subseteq U), }
    \end{aligned}
  \end{align}
  where $p$ is defined by commutativity of the diagram:
  $p(\pi\delta(\manif)) = \pi_K(\manif)$.  This is well defined because both
  $\pi$ and $\delta$ are surjective, and because if $\pi \delta(\manif) =
  \pi\delta(\manif')$, then $\manif$ and $\manif'$ agree in a neighbourhood of each
  $K_i$ and hence agree in a neighbourhood of $K$, so $\pi_K(\manif) =
  \pi_K(\manif')$.  By Lemma~\ref{lem:open-map}, the open neighbourhoods of
  $\manif \in \Psi(U)^K$ are precisely of the form $\pi_K^{-1}\pi_K(A)$,
  for $\manif \in A\subseteq \Psi(U)^{\mathit{cs}}$ open.  We need to prove
  that the set
  \begin{align*}
    \delta (\pi_K^{-1}\pi_K(A)) \subseteq \Delta
  \end{align*}
  is a neighbourhood of $\delta(\manif)$.  By diagram~(\ref{eq:101}), we can
  replace $\pi_K(A)$ by $p\pi\delta(A)$, and replace
  $\delta\pi_K^{-1}$ by $\pi^{-1}p^{-1}$.  Using~(\ref{eq:100}) this
  gives
  \begin{align*}
    \delta (\pi_K^{-1}\pi_K(A)) = \pi^{-1}p^{-1}p\pi\delta(A)
    \supseteq \pi^{-1}\pi \delta(A) = \big( \prod \pi_{K_i}^{-1}
    \pi_{K_i}(A) \big) \cap \Delta,
  \end{align*}
  which is an open subset of $\Delta$, containing $\delta(\manif)$.
\end{proof}

\begin{theorem}\label{thm:sheaf}
  Let $f: X \to \Psi(U)$ be a map such that each $x \in U$ has an open
  neighbourhood $U_x\subseteq U$ such that the composition $X \to
  \Psi(U) \to \Psi(U_x)$ is continuous.  Then $X \to \Psi(U)$ is
  continuous.
\end{theorem}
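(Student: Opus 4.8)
The plan is to peel off, one at a time, the three topologies introduced in \S\ref{sec:definitions}, reducing continuity of $X \to \Psi(U)$ to the given local hypothesis by means of Lemmas~\ref{lem:cover} and~\ref{lem:restriction}.

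First, by Step 3 the topology on $\Psi(U)$ is the initial (inverse limit) topology for the identity maps $\Psi(U) \to \Psi(U)^K$ over compact $K \subseteq U$, so it suffices to show that each composition $X \to \Psi(U) \to \Psi(U)^K$ is continuous. Fix such a $K$. Using the hypothesis I would cover $K$ by finitely many of the open sets $U_x$, say $U_{x_1}, \dots, U_{x_r}$, and then write $K = \bigcup_{i=1}^r K_i$ as a union of compact sets with $K_i \subseteq U_{x_i}$; this is possible because $K$ is a compact subset of $\R^N$ (shrink each $U_{x_i}$ to a closed ball it contains, pass to a finite subcover of $K$ by the corresponding open balls, and intersect these closed balls with $K$).

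Next, Lemma~\ref{lem:cover} says the diagonal $\delta \colon \Psi(U)^K \to \prod_{i} \Psi(U)^{K_i}$ is a homeomorphism onto its image. Since a map into a subspace of a product is continuous iff all of its components are, and the $i$-th component of $\delta$ composed with $X \to \Psi(U)^K$ is precisely $X \to \Psi(U) \to \Psi(U)^{K_i}$, it is enough to check that each such map is continuous. By Step 2, $\Psi(U)^{K_i}$ carries the initial topology for $\pi_{K_i} \colon \Psi(U)^{K_i} \to \Psi(K_i \subseteq U)$, so in turn it suffices to prove that $X \to \Psi(K_i \subseteq U)$ is continuous. Finally I would invoke Lemma~\ref{lem:restriction} with $V = U_{x_i}$ (legitimate since $K_i \subseteq U_{x_i}$): the map $\rho \colon \Psi(K_i \subseteq U) \to \Psi(K_i \subseteq U_{x_i})$ is a homeomorphism onto an open subset. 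On underlying sets the composite $X \to \Psi(K_i \subseteq U) \xrightarrow{\rho} \Psi(K_i \subseteq U_{x_i})$ sends $\manif \mapsto (\manif \cap U_{x_i})|_{K_i}$, hence it coincides with $X \to \Psi(U) \to \Psi(U_{x_i}) \to \Psi(K_i \subseteq U_{x_i})$, whose first arrow is continuous by hypothesis and whose remaining part is continuous by construction (Steps 2--3). Since $\rho$ is a homeomorphism onto its image, $X \to \Psi(K_i \subseteq U)$ is continuous, as desired.

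I do not expect a genuine obstacle here: once Lemmas~\ref{lem:cover} and~\ref{lem:restriction} are in hand the argument is a formal diagram chase through the initial/quotient/subspace topologies of \S\ref{sec:definitions}. The only step that needs a little care is the purely point-set one of refining the open cover $\{U_x\}$ to a finite family of compact sets $K_i \subseteq U_{x_i}$ covering $K$; everything downstream of that is automatic.
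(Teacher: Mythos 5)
Your proposal is correct and follows essentially the same route as the paper's proof: both cover the compact set $K$ by finitely many of the $U_x$, shrink to compact $K_i \subseteq U_{x_i}$, use Lemma~\ref{lem:restriction} to see that each $X \to \Psi(K_i \subseteq U)$ (hence each $X \to \Psi(U)^{K_i}$) is continuous, and then apply Lemma~\ref{lem:cover} to conclude continuity of $X \to \Psi(U)^K$. The only cosmetic difference is that you invoke Lemma~\ref{lem:cover} before Lemma~\ref{lem:restriction} while the paper states them in the opposite order, but the underlying argument is identical.
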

\begin{proof}
  We need to show that $f: X \to \Psi(U)^{K}$ is continuous for any
  compact $K \subseteq U$.  $K$ is covered by finitely many of the
  $U_x$'s, say $U_1, \dots, U_r$.  Pick $K_i \subseteq U_i$ compact
  with $K \subseteq \cup K_i$.  Then the composition
  \begin{align*}
    X \xrightarrow{f} \Psi(U)^{K_i} \to \Psi(K_i \subseteq U) \to
    \Psi(K_i \subseteq U_i)
  \end{align*}
  is continuous for each $i$ by assumption.  By Lemma~\ref{lem:restriction}, the composition $X \to \Psi(U) \to \Psi(K_i
  \subseteq U)$ is continuous, and hence each $X \to \Psi(U)^{K_i}$ is
  continuous.  It now follows from Lemma~\ref{lem:cover} that $X \to
  \Psi(U)^K$ is continuous as required.
\end{proof}

\begin{lemma}
  Let $\Emb(U,V)$ denote the space of embeddings of one open subset of $\bR^N$ into another, and let $j_0 \in
  \Emb(U,V)$.  Then there exists a partially defined map
  \begin{align*}
    \Emb(U,V) \stackrel{\varphi}{\dashrightarrow} \Diff_c(U),
  \end{align*}
  defined in an open neighbourhood of $j_0$, such that $j(x) = j_0
  \circ (\varphi(j))(x)$ for $x$ in a neighbourhood of $K$.
\end{lemma}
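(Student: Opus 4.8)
The plan is to realise the germ near the given compact set $K\subseteq U$ of $j_0^{-1}\circ j$ by an explicit compactly supported diffeomorphism, obtained by damping the perturbation $j_0^{-1}\circ j-\id$ with a fixed bump function. Since $U$ and $V$ are open subsets of $\bR^N$, invariance of domain shows that $j_0(U)$ is open in $V$ and that $j_0\colon U\to j_0(U)$ is a diffeomorphism. First I would choose an open set $U'$ with $K\subseteq U'$ and $\overline{U'}\subseteq U$ compact, together with a smooth function $\lambda\colon U\to[0,1]$ with $\lambda\equiv 1$ on a neighbourhood of $K$ and $\supp\lambda\subseteq U'$. For an embedding $j$ sufficiently $C^0$-close to $j_0$ one has $j(\overline{U'})\subseteq j_0(U)$ — an open condition, since $j_0(U)$ is open and $\overline{U'}$ compact — so that $g_j:=j_0^{-1}\circ j\colon U'\to U$ is defined; set
\[
  \varphi(j)(x)=x+\lambda(x)\bigl(g_j(x)-x\bigr)\quad\text{for }x\in U',\qquad \varphi(j)(x)=x\quad\text{for }x\notin\supp\lambda .
\]
This is a well defined smooth self-map of $\bR^N$ which agrees with the identity outside $\supp\lambda$, and $\varphi(j_0)=\id$.

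Next I would show that, after possibly shrinking the neighbourhood of $j_0$, $\varphi(j)$ lies in $\Diff_c(U)$. Writing $w_j=\varphi(j)-\id$, which is supported in $\supp\lambda$, its differential on $\supp\lambda$ is bounded by a constant depending only on $\lambda$ times $\sup_{\overline{U'}}\bigl(|g_j-\id|+\|Dg_j-I\|\bigr)$. Since $g_{j_0}=\id$, for $j$ near $j_0$ this supremum is $<\epsilon$ (again an open condition), and choosing $\epsilon$ small forces $\sup_{\bR^N}\|Dw_j\|<1$. A smooth map $\id+w$ of $\bR^N$ with $w$ compactly supported and $\sup\|Dw\|<1$ is injective (mean value inequality), everywhere a local diffeomorphism (its differential $I+Dw$ is invertible), and proper, hence a diffeomorphism of $\bR^N$; being the identity off the compact set $\supp\lambda\subseteq U$, it fixes $\bR^N\setminus U$ pointwise and therefore restricts to an element of $\Diff_c(U)$ supported in $\supp\lambda$.

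Then I would check continuity and the stated identity. Continuity of $\varphi$ in the relevant $C^\infty$ topologies follows by exhibiting it as a composite of continuous maps: the restriction $j\mapsto j|_{U'}$, post-composition with the fixed diffeomorphism $j_0^{-1}$ (continuous on the open set of maps $U'\to j_0(U)$), and the map $g\mapsto \id+\lambda\cdot(g-\id)$ with values in $\Diff_c(U)\subseteq C^\infty(U,U)$. Finally, on the neighbourhood of $K$ where $\lambda\equiv 1$ we have $\varphi(j)(x)=g_j(x)=j_0^{-1}(j(x))$, so $j_0\circ\varphi(j)(x)=j(x)$ there, which is the asserted equality of germs.

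The one point requiring genuine care is the elementary fact that the identity plus a sufficiently small, compactly supported perturbation of $\bR^N$ is a diffeomorphism (and, in the parametrised setting, that the neighbourhood of $j_0$ on which this holds is open); the rest is bookkeeping with bump functions and with continuity in function-space topologies. One could alternatively produce $\varphi(j)$ by applying the parametrised isotopy extension theorem to a path of embeddings $U'\hookrightarrow U$ from the inclusion to $g_j$, but the explicit damping above is more transparent and manifestly continuous.
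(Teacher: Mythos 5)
Your proof is correct and takes essentially the same approach as the paper: interpolate between the identity and $j_0^{-1}\circ j$ using a fixed bump function $\lambda$ that is $1$ near $K$ and compactly supported, and observe that for $j$ close to $j_0$ the result is a compactly supported diffeomorphism. The paper states only the one-line formula $(\varphi(j))(x)=(1-\lambda(x))x+\lambda(x)(j_0^{-1}\circ j)(x)$ and leaves the verification that this is a diffeomorphism implicit; your write-up supplies the missing details (openness of the domain, the $\sup\|Dw_j\|<1$ estimate, injectivity via the mean value inequality) carefully and correctly.
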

\begin{proof}
  Pick a compactly supported function $\lambda: U \to [0,1]$ with $K
  \subseteq \Int(\lambda^{-1}(0))$.  Then let
  \begin{align*}
    (\varphi(j))(x) = (1-\lambda(x))x + \lambda(x)(j_0^{-1} \circ j)(x),
  \end{align*}
  which defines a compactly supported diffeomorphism $U \to U$ for all
  $j$ sufficiently close to $j_0$.
\end{proof}
\begin{theorem}\label{thm:action-cont}
  The map
  \begin{align*}
    \Emb(U,V) \times \Psi(V) \to \Psi(U),
  \end{align*}
  given by $(j,\manif) \mapsto j^{-1}(\manif)$, is continuous.
\end{theorem}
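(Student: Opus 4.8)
The plan is to verify continuity at an arbitrary point $(j_0,M_0)$ of $\Emb(U,V)\times\Psi(V)$. Since $\Psi(U)$ carries the coarsest topology making all the maps $\Psi(U)\to\Psi(K\subseteq U)$ continuous, it is enough to show that $(j,M)\mapsto j^{-1}(M)|_K$ is continuous at $(j_0,M_0)$ as a map into $\Psi(K\subseteq U)$ for every compact $K\subseteq U$; fix such a $K$.

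First I would apply the preceding lemma to obtain an open neighbourhood $\mathcal N\ni j_0$ in $\Emb(U,V)$ and a continuous map $\varphi\colon\mathcal N\to\Diff_c(U)$ with $j=j_0\circ\varphi(j)$ on a fixed open set $W\supseteq K$; note that the explicit formula there gives $\varphi(j_0)=\id_U$. For $j\in\mathcal N$, $M\in\Psi(V)$ and $x\in W$ we have $x\in j^{-1}(M)$ if and only if $\varphi(j)(x)\in j_0^{-1}(M)$, so $j^{-1}(M)$ and $\varphi(j)^{-1}\bigl(j_0^{-1}(M)\bigr)$ have the same germ near $K$. Consequently the restriction of our map to $\mathcal N\times\Psi(V)$ factors as
$$\mathcal N\times\Psi(V)\xrightarrow{\,(\varphi,\ j_0^{-1})\,}\Diff_c(U)\times\Psi(U)\xrightarrow{\,(\phi,N)\,\mapsto\,\phi^{-1}(N)\,}\Psi(U)\longrightarrow\Psi(K\subseteq U),$$
where $j_0^{-1}\colon\Psi(V)\to\Psi(U)$ is $M\mapsto j_0^{-1}(M)$. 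The last arrow is continuous by definition of the topology and $\varphi$ is continuous, so it remains to check that (i) $j_0^{-1}\colon\Psi(V)\to\Psi(U)$ is continuous and (ii) the action map $\Diff_c(U)\times\Psi(U)\to\Psi(U)$, $(\phi,N)\mapsto\phi^{-1}(N)$, is continuous (it suffices to know this near $\{\id_U\}\times\Psi(U)$, since $\varphi(j_0)=\id_U$). Granting these, the composite is continuous at $(j_0,M_0)$.

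For (i), by invariance of domain $j_0(U)$ is open in $\R^N$, and it lies in $V$, so restriction gives a continuous map $\Psi(V)\to\Psi(j_0(U))$ by Theorem \ref{thm:restr-cont}; it then remains to see that transport along the \emph{fixed} diffeomorphism $j_0\colon U\xrightarrow{\ \cong\ }j_0(U)$ induces a continuous map $\Psi(j_0(U))\to\Psi(U)$ (it is in fact a homeomorphism, but continuity is all that is needed). This point and assertion (ii) are instances of one and the same local statement, which is the heart of the matter: working in the manifold charts $c_M\colon\Gamma_c(NM)\dashrightarrow\Psi(-)^{\mathit{cs}}$, one must show that pushing a manifold presented as the graph of a small normal section of a fixed $M$ through a diffeomorphism $C^1$-close to the identity (allowed to vary continuously with parameters) again gives the graph of a small normal section of a fixed nearby manifold, with the induced reparametrisation of sections continuous in the $C^\infty_c$-topology. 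Concretely I would fix tubular neighbourhoods of $M_0$ in $V$ and of $W_0:=j_0^{-1}(M_0)$ in $U$; the condition that a point lie in the transported manifold becomes, in these coordinates, an equation for its normal coordinate which holds at the base point and is solved nearby, smoothly in all parameters, by the inverse function theorem. Multiplying the solution by a bump function supported near $K$ makes it compactly supported without changing its germ near $K$, so the transported germ is $c_{W_0}$ of a continuously varying compactly supported section.

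The routine-but-unavoidable obstacle is exactly this chart computation; everything else is formal manipulation of the initial, quotient and inverse-limit topologies set up in \S\ref{sec:definitions}--\S\ref{sec:properties} together with the preceding lemma.
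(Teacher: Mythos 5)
Your proposal matches the paper's proof: both reduce to the $K$-topology, invoke the preceding lemma to replace $j$ near $j_0$ by $j_0\circ\varphi(j)$ with $\varphi(j)\in\Diff_c(U)$, and then appeal to continuity of the $\Diff_c(U)$-action on $\Psi(U)$. You have merely spelled out the factorisation through $\Diff_c(U)\times\Psi(U)$ (including the intermediate pushforward $j_0^{-1}\colon\Psi(V)\to\Psi(U)$) and flagged the chart computation underlying that action's continuity, both of which the paper asserts without further detail.
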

\begin{proof}
  It suffices to see that the composition
  \begin{align*}
    \Emb(U,V) \times \Psi(V) \to \Psi(U) \to \Psi(K \subseteq U)
  \end{align*}
  is continuous in a neighbourhood of each $\{j_0\} \times \Psi(V)$, for each
  $K\subseteq U$ compact.  But this follows from the previous lemma,
  which implies that the map factors through $\Diff_c(U)$, which acts continuously on $\Psi(U)$.
\end{proof}

\subsection{Tangential structures}

We will study analogues of the point-set topological results from the
previous section, where all manifolds $\manif \in \Psi_d(U)$ are equipped
with some tangential structure. Examples are orientations, spin structures, almost complex structures, etc.

\begin{definition}
  Let $\theta: \X \to BO(d)$ be a Serre fibration.  A $\theta$-structure on $\manif \in \Psi_d(U)$ is a bundle map (i.e.\ fibrewise linear
  isomorphism) $\ell: T\manif \to \theta^*\gamma$.  Let $\Psi_\theta(U)$
  denote the set of pairs $(\manif,\ell)$, where $\manif \in \Psi_d(U)$ and $\ell$ is a
  $\theta$-structure on $\manif$.

More generally, if $\theta : \X \to BO(d+k)$ is a Serre fibration, a $\theta_d$-structure on $\manif \in \Psi_d(U)$ is a bundle map $\ell : \epsilon^k \oplus T\manif \to \theta^*\gamma$. Let $\Psi_{\theta_d}(U)$ denote the set of pairs $(\manif, \ell)$, where $\manif \in \Psi_d(U)$ and $\ell$ is a
  $\theta$-structure on $\manif$.
\end{definition}
Often we will omit the tangential structure and just write $M \in \Psi_\theta(\bR^n)$ instead of $(M, \ell) \in \Psi_\theta(\bR^n)$. The second case is a simple generalisation of the first, so we will only discuss the face of a fibration over $BO(d)$.

Clearly $\Psi_\theta$ is again a sheaf of sets.  We wish to endow it
with a topology so that it is a sheaf of topological spaces.  We start by
defining the analogue of the compactly supported topology.  For
$\manif \in \Psi(U)^{\mathit{cs}}$, let $\Bun(T\manif,\theta^*
\gamma)$ be the space of bundle maps, topologised in the
compact-open topology, and let $\Gamma_c(N\manif) \times
\Bun(T\manif,\theta^*\gamma)$ have the product topology.  For $s \in
\Gamma(N\manif)$ close to the zero section, we have $c_\manif(s) =
s(\manif) \in\Psi(U)^{\mathit{cs}}$.  We also get the bundle
isomorphism $Ds: T\manif \to T(c_\manif(s))$ and hence $(Ds)^{-1}
\circ l$ is a $\theta$-structure on $c_\manif(s)$.  This gives a map
\begin{align*}
  c_\manif^\theta: \Gamma_c(N\manif)\times \Bun(T\manif,\theta^*\gamma)
  \dashrightarrow \Psi_\theta(U)^{\mathit{cs}}
\end{align*}
viz.\ $(s,\ell) \mapsto (s(\manif), (Ds)^{-1} \circ \ell)$.  It is clear that
$c_\manif^\theta$ is injective and that the image is $u^{-1}(\IM(c_\manif))$,
where $u: \Psi_\theta(U) \to \Psi(U)$ is the forgetful map $(\manif,\ell)
\mapsto \manif$.  Topologise $\Psi_\theta(U)^{\mathit{cs}}$ by declaring
the maps $c_\manif^\theta$ homeomorphisms onto open sets.  

Then define $\Psi_\theta(K\subseteq U)$, $\Psi_\theta(U)^K$ and
$\Psi_\theta(U)$ from $\Psi_\theta(U)^{\mathit{cs}}$ as in ``Step 2''
and ``Step 3'' in \S\ref{sec:definitions}.  It is not hard to
modify the proofs of Theorems~\ref{thm:restr-cont}, \ref{thm:sheaf}
and \ref{thm:action-cont} to work also in the presence of $\theta$
structures.  We summarise the result in the following theorem.
\begin{theorem}
  $\Psi_\theta$ is a sheaf of topological spaces on $\R^n$.  The map
  \begin{align*}
    \Emb(U,V)\times \Psi_\theta(U) \to \Psi_\theta(V)
  \end{align*}
  is continuous.
\end{theorem}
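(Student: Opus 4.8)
The plan is to upgrade each of the three key point-set results for $\Psi(-)$, namely Theorems~\ref{thm:restr-cont}, \ref{thm:sheaf}, and~\ref{thm:action-cont}, to their $\theta$-decorated analogues, by carrying the bundle-map factor along in every argument. The guiding principle is that $\Psi_\theta(U)^{\mathit{cs}}$ is, near $(\manif,\ell)$, modelled on $\Gamma_c(N\manif) \times \Bun(T\manif,\theta^*\gamma)$ via $c_\manif^\theta$, and every chart-level construction in \S\ref{sec:properties} was linear (or affine) in the $\Gamma_c(N\manif)$-variable and can be performed fibrewise. Concretely, I would first record the $\theta$-analogues of the elementary Lemmas~\ref{lem:r-open}, \ref{lem:lambda}, \ref{lem:open-map}, \ref{lem:restriction}, and~\ref{lem:cover}: the forgetful map $u: \Psi_\theta(U) \to \Psi(U)$ is, in charts, the projection $\Gamma_c(N\manif)\times\Bun(T\manif,\theta^*\gamma) \to \Gamma_c(N\manif)$, so openness statements pull back along $u$ once one checks that the cut-off operations $\overline\lambda$ and the extension-by-zero $z$ act on the bundle-map factor in a way compatible with restriction. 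For $\overline\lambda$, one uses that a $\theta$-structure on $\manif\cap V$ is just the restriction of the given one (no cutting-off of $\ell$ is needed — only of $s$), and for $z$ one extends $\ell$ arbitrarily, which is possible because $\theta$ is a Serre fibration (so $\Bun(T\manif,\theta^*\gamma) \to \Bun(T(\manif\cap V),\theta^*\gamma)$ is a fibration, in particular surjective on the relevant components). The restriction map $\Psi_\theta(U)^{\mathit{cs}} \to \Psi_\theta(V)^{\mathit{cs}}$ being open then follows exactly as in Lemma~\ref{lem:r-open}, the diagram there acquiring a $\Bun$-factor on which $z$ acts by extension.

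With these in hand, the proofs of the three theorems go through verbatim. For continuity of $\Psi_\theta(U) \to \Psi_\theta(V)$ one reduces, as in Theorem~\ref{thm:restr-cont}, to continuity of $\Psi_\theta(U) \to \Psi_\theta(K\subseteq V)$, which follows from the $\theta$-version of Lemma~\ref{lem:restriction}; the sheaf property (Theorem~\ref{thm:sheaf}) is a purely formal consequence of Lemmas~\ref{lem:cover} and~\ref{lem:restriction} and uses nothing about $\theta$ beyond their analogues. For the action map $\Emb(U,V)\times\Psi_\theta(V) \to \Psi_\theta(U)$, one observes that an embedding $j$ does not merely pull back the underlying manifold $\manif \mapsto j^{-1}(\manif)$ but also transports the $\theta$-structure via the derivative: $\ell \mapsto \ell\circ Dj$. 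The factorisation lemma preceding Theorem~\ref{thm:action-cont} still applies to produce, near $j_0$, a compactly supported diffeomorphism $\varphi(j)\in\Diff_c(U)$ with $j = j_0\circ\varphi(j)$ near $K$; so the action factors through $\Diff_c(U)\times\Psi_\theta(U) \to \Psi_\theta(U)$, and one checks this is continuous by noting that $\Diff_c(U)$ acts on charts $\Gamma_c(N\manif)\times\Bun(T\manif,\theta^*\gamma)$ through its (continuous, smooth in the manifold variable) action on $\Psi(U)^{\mathit{cs}}$ together with the induced action $\ell\mapsto \ell\circ D(\varphi^{-1})$ on bundle maps, which is continuous in the compact-open topology.

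The one genuine subtlety — and the step I expect to be the main obstacle — is the behaviour of the $\Bun(T\manif,\theta^*\gamma)$-factor under the gluing in Lemma~\ref{lem:cover} and under extension-by-zero. In the undecorated setting, a germ near $K=\cup K_i$ is determined by its germs near the $K_i$, a statement about subsets; with structures one must also know that a $\theta$-structure on a germ near $K$ is determined, and can be reconstructed, from its restrictions to germs near the $K_i$ — i.e.\ that $\Bun(-,\theta^*\gamma)$ satisfies descent along the cover $\{U_i\}$. This is again automatic because $\Bun(T\manif,\theta^*\gamma) = \Map_{BO(d)}(\manif, \X)$ is a space of sections of a fibration over $\manif$ (pull back $\theta$ along the Gauss map), and section-spaces of fibrations form a sheaf in the compact-open topology; the Serre fibration hypothesis on $\theta$ guarantees the local triviality needed for the gluing of bundle maps over overlaps to be continuous. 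Once this descent statement is isolated and checked, the diagram-chase in Lemma~\ref{lem:cover} is identical to the one already given, with every $\Psi$ replaced by $\Psi_\theta$, and the theorem follows. The remaining generalisation to $\theta_d$-structures ($\epsilon^k\oplus T\manif \to \theta^*\gamma$ for $\theta$ over $BO(d+k)$) is purely notational: replace $T\manif$ by $\epsilon^k\oplus T\manif$ throughout, noting $\Bun(\epsilon^k\oplus T\manif,\theta^*\gamma)$ is still a section space of a fibration over $\manif$.
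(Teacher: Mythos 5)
Your overall plan—upgrade Lemmas~\ref{lem:r-open}, \ref{lem:lambda}, \ref{lem:open-map}, \ref{lem:restriction}, \ref{lem:cover} and then Theorems~\ref{thm:restr-cont}, \ref{thm:sheaf}, \ref{thm:action-cont} by carrying a $\Bun$-factor through all the chart-level diagrams—is exactly the paper's proof sketch, and the final paragraph on the action map agrees with the paper's treatment too.

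There is, however, one concrete error in how you carry the $\Bun$-factor. In the $\theta$-analogue of Lemma~\ref{lem:lambda} you propose to replace $\Gamma_c(N(\manif\cap V))$ by $\Gamma_c(N(\manif\cap V))\times\Bun\big(T(\manif\cap V),\theta^*\gamma\big)$, so that $\overline\lambda$ restricts the bundle map and $z$ must \emph{extend} it back from $\manif\cap V$ to $\manif$; you justify this by asserting that $\Bun(T\manif,\theta^*\gamma)\to\Bun\big(T(\manif\cap V),\theta^*\gamma\big)$ is a fibration because $\theta$ is a Serre fibration. That justification is false: restriction of section spaces over an \emph{open} subset $\manif\cap V\hookrightarrow\manif$ is generally not a fibration (an open inclusion is not a cofibration), and indeed not even an open map in the compact-open topology, so no continuous and compatible extension $\ell'\mapsto\ell_2$ exists in general. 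The paper avoids the problem entirely: its prescription is to replace \emph{all three} spaces of sections in the diagram of Lemma~\ref{lem:lambda} by their product with the same factor $\Bun(T\manif,\theta^*\gamma)$, so that $\overline\lambda$ and $z$ act only on the section coordinate and the original bundle map on $\manif$ is carried along untouched; no cutting down and no re-extension of $\theta$-structures ever occurs. With that correction your remaining steps for $\pi_K$, $\rho$, the cover lemma, and Theorems~\ref{thm:restr-cont} and~\ref{thm:sheaf} fall into place as you describe. Similarly, in your decoration of Lemma~\ref{lem:r-open} you again let $z$ "act by extension" on the bundle-map coordinate; that step needs the same repair.
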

\begin{proof}[Proof sketch]
  First we have, with the same proof, an analogue of
  Lemma~\ref{lem:lambda} where the two $\Psi$ are replaced by
  $\Psi_\theta$ and all three spaces of sections of normal bundles in
  the diagram are replaced with their product with $\Bun(T\manif,
  \theta^*\gamma)$.  Once that is established, the $\theta$
  analogues of Theorems~\ref{thm:restr-cont} and \ref{thm:sheaf} are
  proved in the exact same way.  This proves that $\Psi_\theta$
  is a sheaf of spaces.

  An embedding $j: U \to V$ gives a map $\Psi_\theta(U) \to
  \Psi_\theta(V)$ by $(\manif,\ell) \mapsto (j^{-1}\manif, \ell \circ
  (Dj|_{j^{-1}(\manif)}))$.  This defines the map in the theorem, and its
  continuity is proved exactly as in Theorem~\ref{thm:action-cont}.
\end{proof}

\subsection{Smooth maps}

In practice it can be tedious to check continuity of a map $X \to \Psi_\theta(\bR^n)$. If $X$ is smooth there is an easier property to verify.

\begin{definition}
Let $f: X \lra \Psi_\theta(U)$ be a continuous map, and write $f(x) = (M_x, \ell_x)$. Define the \textit{graph} of $f$ to be the space of pairs
$$\Gamma(f) = \cup_{x \in X} \{x\} \times  M_x  \subset X \times U$$
and the vertical tangent bundle to be
$$T^v \Gamma(f) = \cup_{x \in X} \{x\} \times TM_x \subset X \times TU,$$
both with the subspace topology. The $\ell_x$ determine a bundle map $\ell(f) : T^v \Gamma(f) \lra \theta^*\gamma$.
\end{definition}

\begin{definition}
If $X$ is a manifold, say a continuous map $f : X \to \Psi_\theta(\bR^n)$ is \textit{smooth} if $\Gamma(f) \subseteq X \times U$ is a smooth submanifold and $\pi: \Gamma(f) \lra X$ is a submersion. 

More generally say it is \textit{smooth near} $(x, u) \in X \times U$ if there are neighbourhoods $A \subset X$ of $x$ and $B \subset U$ of $u$ such that $A \to X \to \Psi_\theta(U) \to \Psi_\theta(B)$ is smooth. For a closed $C \subset X \times U$ say $f$ is \textit{smooth near} $C$ if it is smooth near every point of $B$.
\end{definition}

\begin{lemma}\label{lem:SmoothMapsAreContinuous}
If $X$ is a $k$-dimensional smooth manifold, there is a bijection between the set of smooth maps $f:X \to \Psi_\theta(U)$ and the set of pairs $(\Gamma, \ell)$ where $\Gamma \subseteq X \times U$ is a smooth $(d+k)$-dimensional submanifold that is closed as a subspace and such that $\pi_X : \Gamma \to X$ is a submersion, and $\ell : \Ker(D\pi_X : T \Gamma \to T X) \to \theta^*\gamma$ is a bundle map.
\end{lemma}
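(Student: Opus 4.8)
The plan is to write down the two obvious constructions relating smooth maps $X \to \Psi_\theta(U)$ and pairs $(\Gamma,\ell)$, and to check that they are mutually inverse; all the real content lies in a point-set continuity statement for the construction that recovers a map from a pair. I would first treat the passage from a smooth map to a pair. Given a smooth $f \colon X \to \Psi_\theta(U)$, set $\Gamma = \Gamma(f)$ and $\ell = \ell(f)$; by the definition of smoothness $\Gamma$ is a smooth $(d+k)$-submanifold with $\pi_X$ a submersion, and $\ell$ is a bundle map on $T^v\Gamma = \Ker(D\pi_X)$, so every condition holds except possibly that $\Gamma$ be closed in $X \times U$. To see this, suppose $(x_i,u_i) \in \Gamma$ converge to $(x,u)$ with $u \notin M_x$; pick $\rho > 0$ with $\bar B(u,2\rho) \subseteq U$ and $M_x \cap \bar B(u,2\rho) = \emptyset$, and put $K = \bar B(u,2\rho)$. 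The set of germs in $\Psi_\theta(K \subseteq U)$ that are disjoint from $\bar B(u,\rho)$ is open: by the quotient property of $\pi_K$ it suffices that $\{\manif \in \Psi_\theta(U)^{\mathit{cs}} : \manif \cap \bar B(u,\rho) = \emptyset\}$ be open, and in a chart $c^\theta_{\manif'}$ around such a $\manif'$ this condition is implied by $|s(y)| < \tfrac12\dist(y, \bar B(u,\rho))$ for all $y$, an open condition on $\Gamma_c(N\manif')$ by the very definition of its topology. Since $f$ followed by the restriction $\Psi_\theta(U) \to \Psi_\theta(K \subseteq U)$ is continuous and carries $x$ into this open set, some neighbourhood $A \ni x$ has $M_{x'} \cap \bar B(u,\rho) = \emptyset$ for all $x' \in A$, contradicting $x_i \to x$, $u_i \to u$. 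So $\Gamma$ is closed, and as $f$ is manifestly recovered from $(\Gamma(f), \ell(f))$ this assignment is injective.

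Conversely, given a pair $(\Gamma, \ell)$, I would define $f(x) = (\Gamma \cap (\{x\} \times U), \ell|)$, identifying $\{x\} \times U$ with $U$: closedness of $\Gamma$ makes the fibre closed in $U$, the submersion hypothesis makes it a $d$-manifold with tangent bundle $\Ker(D\pi_X)|$, and $\ell$ restricts to a $\theta$-structure, so $f(x) \in \Psi_\theta(U)$. Once $f$ is shown continuous it is automatically smooth, since its graph is the given $\Gamma$ and $\ell(f) = \ell$, and then the two constructions are patently inverse to each other. By the definition of the topology on $\Psi_\theta(U)$ and locality of continuity in the source, it is enough to show that for each $x_0 \in X$ and each compact $K \subseteq U$ the composite $X \to \Psi_\theta(U) \to \Psi_\theta(K \subseteq U)$ is continuous near $x_0$. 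Write $\manif_0 = \Gamma_{x_0}$. If $\manif_0 \cap K = \emptyset$, then by closedness of $\Gamma$ and compactness of $K$ we have $\Gamma_x \cap K = \emptyset$ for all $x$ near $x_0$, so the composite is locally constant at the empty manifold.

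If instead $\manif_0 \cap K \neq \emptyset$, choose a compact neighbourhood $K'$ of $K$ and a tubular neighbourhood of $\manif_0$ in $U$, and transport $\Gamma$ through it to a submanifold of $X \times N\manif_0$ on which $\Gamma_{x_0}$ is the zero section. Because $\pi_X$ is a submersion, projecting this submanifold to $X \times \manif_0$ (forgetting the normal coordinate) is a local diffeomorphism along the compact set $\{x_0\} \times (\manif_0 \cap K')$; hence, after shrinking $X$ to a neighbourhood $A$ of $x_0$, the fibre $\Gamma_x$ is near $\manif_0 \cap K'$ the graph of a section $s_x$ of $N\manif_0$, with $s_{x_0} = 0$ and $(x,p) \mapsto s_x(p)$ smooth. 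Multiplying by a cutoff as in Lemma~\ref{lem:lambda} yields $\tilde s_x \in \Gamma_c(N\manif_0)$, smooth in $x$, equal to $s_x$ near $K$, with $\tilde s_{x_0} = 0$; recording along $\Gamma_x$ a bundle map $T\manif_0 \to \theta^*\gamma$ (interpolated back to $\ell|_{\manif_0}$ away from $K$) then gives a continuous map $A \to \Gamma_c(N\manif_0) \times \Bun(T\manif_0, \theta^*\gamma)$. Post-composing with $c^\theta_{\manif_0}$ — defined near $x_0$ since $\tilde s_{x_0} = 0$ — and with $\pi_K$ produces a continuous map that agrees with $\pi_K \circ f|_A$, because $c^\theta_{\manif_0}(\tilde s_x, -)$ and $\Gamma_x$ coincide near $K$. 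Hence $f$ is continuous.

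I expect the main obstacle to be precisely the production of the smooth family $s_x$ of normal sections — the local triviality of the submersion $\pi_X|_\Gamma$ along a compact part of a possibly non-compact fibre — together with the bookkeeping needed because a tubular neighbourhood of $\manif_0$ is an honest tube only over compact sets whereas the charts $c^\theta_\manif$ demand compactly supported normal sections; the cutoff of Lemma~\ref{lem:lambda} is exactly what reconciles these. The tangential structures play no active role: they are carried along throughout, just as in the sheaf-theoretic results for $\Psi_\theta$ established above.
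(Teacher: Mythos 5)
Your proof is correct, and the route to the key point — continuity of the map $f$ recovered from the pair $(\Gamma,\ell)$ — is genuinely different from the paper's. The paper uses the local submersion theorem to place $\Gamma$ locally in Euclidean coordinates as a family of embeddings $\bR^d \hookrightarrow \bR^n$, runs through a chain of spaces $\Emb(\bR^d,\bR^n)\dashrightarrow \Emb(\bR^d,\bR^d)\times C^\infty(\bR^d,\bR^{n-d}) \dashrightarrow \Gamma_c(Ne(B_d(2)))$, and finally invokes Theorem~\ref{thm:sheaf} to upgrade continuity near each point of $U$ to continuity into $\Psi_\theta(U)$. You instead keep the fibre $\manif_0$ whole: a tubular neighbourhood of $\manif_0$, together with the submersion hypothesis and a compactness argument along $\manif_0 \cap K'$, lets you realise $\Gamma_x$ near $K$ as the graph of a smoothly varying section of $N\manif_0$, which you then cut off (Lemma~\ref{lem:lambda}) and feed to the chart $c^\theta_{\manif_0}$. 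Because you check continuity directly into each $\Psi_\theta(K\subseteq U)$, which is literally what the inverse-limit topology on $\Psi_\theta(U)$ demands, you bypass the sheaf theorem altogether. You also spell out the verification that $\Gamma(f)$ is closed in $X\times U$, a point the paper compresses into ``having the required properties.'' The two arguments are of comparable length; yours is slightly more intrinsic and self-contained, the paper's slightly more concrete and explicit. Both should attend to the possibility of other sheets of $\Gamma_x$ drifting near $K$ as $x$ varies — your closedness-of-$\Gamma$-plus-compactness-of-$K$ observation is precisely what excludes this, so your version of that step is marginally easier to make airtight.
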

\begin{proof}
A smooth map $f$ determines a graph $\Gamma(f)$, and a bundle map $\ell(f): T^v\Gamma(f) \to \theta^*\gamma$, having the required properties.

Given a pair $(\Gamma, \ell)$, there is certainly a map of sets $f : X \to \Psi_\theta(U)$ given by $x \mapsto (\pi_X^{-1}(x),\ell|_{\pi_X^{-1}(x)} : T\pi_X^{-1}(x) \to \theta^*\gamma)$. Then $\Gamma(f) = \Gamma$, $T^v\Gamma(f) = T^v \Gamma$ and $\ell(f) = \ell$. We must just check that this $f$ is continuous.

By the local submersion theorem, we can choose local coordinates $\bR^k \to X$ and $\bR^n \to U$ inside which $\Gamma \subseteq X \times U$ is the image of an embedding $\bR^{d+k} \to \bR^k \times \bR^n$ such that $\bR^{d+k} \to \bR^k$ is the projection onto the first $k$ coordinates. This produces a continuous map $\bR^k \to \Emb(\bR^d, \bR^n)\times \Bun(T\bR^d, \theta^*\gamma)$, which is smooth on the first coordinate. We may suppose that the origin in $\bR^k$ is sent to the standard embedding $e:\bR^d \hookrightarrow \bR^n$. There is a map, defined near $e$, from $\Emb(\bR^d, \bR^n)$ to $\Emb(\bR^d, \bR^d) \times C^\infty(\bR^d, \bR^{n-d})$, and we identify the second factor with the space $\Gamma(Ne(\bR^d))$. Let us write $B_k(r)$ for the ball of radius $r$ inside $\bR^k$. There is also a map $(-)^{-1} : \Emb(\bR^d, \bR^d) \dashrightarrow \Emb(B_d(2), \bR^d)$ given on a neighbourhood of the identity by $e \mapsto e^{-1} : B_d(2) \to \bR^d$, as on a neighbourhood of the identity embeddings $e$ contain the ball $B_d(2)$ in their image.

Composing with the action via pullback gives a map
\begin{eqnarray*}
\Emb(\bR^d, \bR^n) &\dashrightarrow & \Emb(\bR^d, \bR^d) \times \Gamma(Ne(\bR^d)) \\ &\dashrightarrow & \Emb(B_d(2), \bR^d) \times \Gamma(Ne(\bR^d)) \to \Gamma(Ne(B_d(2)))
\end{eqnarray*}
which sends an embedding $f$ to a section of $Ne(B_d(2))$ having the same image inside $B_n(1) \subset \bR^n$.

Choosing a compactly supported function $\varphi : B_d(2) \to [0,1]$ that is identically 1 inside the unit ball and multiplying sections by it gives a continuous map to $\Gamma_c(Ne(B_d(2)))$, so we have a sequence of continuous maps
$$\bR^k \to \Emb(\bR^d, \bR^n) \dashrightarrow \Gamma(Ne(B_d(2))) \overset{\cdot \varphi}\to \Gamma_c(Ne(B_d(2))).$$
In total, we obtain a continuous map
$$\bR^k \dashrightarrow \Gamma_c(Ne(B_d(2))) \times \Bun(TB_d(2), \theta^*\gamma) \dashrightarrow \Psi_\theta(B_n(2))$$
defined near the origin, that agrees with $\bR^k \to X \overset{f}\to \Psi_\theta(U) \to \Psi_\theta(\bR^n)$ after restricting both to the unit ball $B_n(1) \subset \bR^n$. In particular, by Theorem \ref{thm:sheaf} $f$ is continuous.
\end{proof}

\begin{lemma}
  Let $X$ be a smooth manifold and $f: X \to \Psi(U)$ be a continuous
  map.  Let $V \subseteq X \times U$ be open, and $W \subseteq X
  \times U$ be such that $\overline{V} \subseteq \Int(W)$.  Then there
  exists a homotopy $F: [0,1]\times X \to \Psi(U)$ starting a $f$,
  which is smooth on $(0,1] \times V \subseteq [0,1] \times X \times
  U$ and is the constant homotopy outside $W$.  Furthermore, if $f$ is
  already smooth on an open set $A \subseteq V$, then the homotopy can
  be assumed smooth on $[0,1] \times A$.
\end{lemma}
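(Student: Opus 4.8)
The plan is to prove this by a standard partition-of-unity and ``smoothing'' argument, working locally and patching. First I would cover $X \times U$ by countably many charts $(\phi_\alpha, \psi_\alpha) : A_\alpha \times B_\alpha \hookrightarrow \mathbb{R}^k \times \mathbb{R}^n$ (with $A_\alpha \subseteq X$, $B_\alpha \subseteq U$) chosen locally finite, and so that each closure $\overline{A_\alpha \times B_\alpha}$ is either contained in $W$ or disjoint from $\overline{V}$; enumerate the former as $\alpha = 1, 2, \ldots$. Pick a subordinate partition of unity, or rather nested open sets, and plan to smooth $f$ successively over the first patch, then the second, and so on, taking the homotopy on the time interval $[1 - 2^{-\alpha+1}, 1 - 2^{-\alpha}]$ (or a mapping-telescope style reparametrisation) so that the infinite concatenation is a genuine homotopy ending at time $1$; local finiteness of the cover guarantees that near any point only finitely many smoothing steps are nontrivial, so the concatenated homotopy is continuous by Theorem~\ref{thm:sheaf}.

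The single-patch step is the technical heart. Fixing one chart, after translating everything into $\mathbb{R}^k \times \mathbb{R}^n$ I would use the graph description: the restriction of $f$ corresponds to a continuous family $\{M_x\}_{x \in \mathbb{R}^k}$ of closed submanifolds of $\mathbb{R}^n$ together with $\theta$-structures, and ``smooth near a point'' is exactly the condition that the associated graph $\Gamma(f)$ is a smooth submanifold submersing onto $X$, as in Lemma~\ref{lem:SmoothMapsAreContinuous}. To smooth, I would mollify in the fibre direction of $\Psi^{\mathit cs}$: using the local model $c_M : \Gamma_c(NM) \dashrightarrow \Psi(U)^{\mathit cs}$, the family is locally given by a continuous map $x \mapsto s_x \in \Gamma_c(NM_0)$ (a section-valued function), and I convolve this with a smooth approximate-identity kernel in the $x$-variable, cutting off by a bump function $\lambda$ (as in Lemma~\ref{lem:lambda}) supported in the chart so that the modification is trivial outside a compact set and, by the collar/overlap control, leaves already-smooth regions unchanged. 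The convolved family is then smooth in $x$; one must check it still defines an element of $\Psi$ near the chart (the mollified sections stay small, hence remain in the domain of $c_M$), which is an open condition.

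Two subtleties I would treat carefully. First, the requirement that the homotopy be \emph{constant outside $W$} and that \emph{regions where $f$ is already smooth on $A \subseteq V$ stay smooth}: the cutoff functions $\lambda_\alpha$ must be taken identically zero outside $W$, and when smoothing I should only perturb where $f$ fails to be smooth --- concretely, interpolate between $f$ and its mollification using a bump function that vanishes on (a slight shrinking of) $A$ and on the complement of $W$, so that on $A$ and off $W$ the homotopy is literally constant. Second, the bookkeeping of infinitely many smoothing steps needs the time-reparametrisation to be such that the limit homotopy $F : [0,1] \times X \to \Psi(U)$ is continuous: I would verify this by checking continuity into each $\Psi(K \subseteq U)$ for $K$ compact, where only finitely many steps are relevant. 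The main obstacle is the fibrewise mollification: one must be sure that convolving a continuous family of embedded-submanifold-germs yields again such a family with the correct $\theta$-structure (the structure is mollified along with the section via the derivative $Ds$), and that this is compatible with the sheaf topology of Theorem~\ref{thm:sheaf} rather than merely the compactly-supported topology --- this is exactly the kind of estimate packaged in Lemmas~\ref{lem:lambda} and~\ref{lem:SmoothMapsAreContinuous}, so I would lean on those rather than redo the estimates.
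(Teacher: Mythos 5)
Your local, single-chart smoothing step (mollification in the chart/graph model, cut off by a bump function, as in Lemmas~\ref{lem:lambda} and~\ref{lem:SmoothMapsAreContinuous}) is essentially the same move the paper makes, and I don't see a problem with it. The gap is in how you assemble the local homotopies into a global one.

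You run the smoothing over chart $\alpha$ on the time interval $[1-2^{-\alpha+1}, 1-2^{-\alpha}]$ and concatenate over $\alpha = 1, 2, \dots$. Local finiteness may well give you continuity of the resulting $F$, and at time $t=1$ the map $F(1,-)$ will be smooth on $V$. But the lemma requires $F$ to be smooth on \emph{all} of $(0,1]\times V$: for every $t>0$, the map $F(t,-)$ must already be smooth near every point of $V$. In your scheme this fails. Take a point $(x_0,u_0)\in V$ lying only in charts with large index $\alpha$. For small $t>0$ none of the smoothings affecting $(x_0,u_0)$ have begun, so $F(t,-)$ coincides with $f$ near $(x_0,u_0)$, which by hypothesis need not be smooth there. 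The concatenation produces a homotopy that is only \emph{eventually} smooth, not smooth for all positive times.

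To satisfy the ``for all $t>0$'' requirement one must perform the local smoothings \emph{simultaneously}, and simultaneous smoothings on overlapping charts do not commute, so overlap must be avoided. This is exactly what the paper's triangulation argument is for: choose a triangulation of $X\times U$ with every simplex in a small open set, and let $W^p$ be the union of the open stars of the $p$-simplices. Each $W^p$ is a \emph{disjoint} union of small open sets (so smoothings there can be run simultaneously), and since $X\times U$ has finite dimension there are only finitely many layers $W^0,\dots,W^k$. One then iterates over the finitely many layers, invoking the ``furthermore'' clause to preserve smoothness already achieved, ending with a multi-parameter homotopy $[0,1]^{k+1}\times X\to\Psi(U)$ restricted to the diagonal; for any $t>0$ all layers have made progress, so smoothness on $V$ holds. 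Your locally finite cover, by contrast, carries no partition into finitely many disjoint subfamilies, and the infinite time-reparametrisation is precisely what loses the uniform quantifier over $t$. The triangulation is the missing ingredient.
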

\begin{proof}
  Let us say that the open set $W \subseteq X \times U$ is
  \emph{small} if there are closed sets $K \subseteq X$, $L \subseteq
  U$, such that $W \subseteq K \times L$ and the composition
  \begin{align*}
    K \to X \to \Psi(U) \to \Psi(L)
  \end{align*}
  factors through a continuous map $K \to \Psi(U)^\mathit{cs}$.  Then
  we can use the manifold structure on $\Psi(U)^\mathit{cs}$ to find a
  homotopy $[0,1] \times K \to \Psi(U)^\mathit{cs}$ which is smooth on
  $(0,1] \times K$.  Using a suitable bump function we can ensure that
  it is the constant homotopy outside $W \subseteq K \times U$, and
  hence extends to a homotopy $[0,1]\times X \to \Psi(U)$ which is
  constant outside $W$.

  Next we consider the case where $W$ is the disjoint union of small
  open sets.  This is easy, since we can superimpose the corresponding
  homotopies.  After restricting the homotopy to $[0,\epsilon] \times
  X$ and composing with the linear diffeomorphism $[0,1] \times
  [0,\epsilon]$, we can assume that the homotopy is arbitrarily small.

  For general W, we first triangulate $X \times U$ such that every
  simplex is contained in a small open set.  For each $p$-simplex
  $\sigma \subseteq X \times U$, let $\st(\sigma) \subseteq X \times
  U$ be the open star of $\sigma$, thought of as a vertex of the
  barycentric subdivision.  Let $W^p \subseteq X \times U$ be the
  union of $\st(\sigma)$ over all $p$-simplices $\sigma$.  Then $X
  \times U$ is the (finite) union of the $W^p$.  We can pick slightly
  smaller open sets $V^p \subseteq W^p$ such that $\overline{V^p}
  \subseteq \Int(W^p)$ and such that the $V^p$ still cover $X \times
  U$.  Using the previous cases we can now proceed by induction as
  follows: First find a homotopy $F^0: [0,1] \times X \to \Psi(U)$
  which is constant outside $W^0 \subseteq X \times U$ and smooth on
  $(0,1] \times V^0$.  By construction, $[0,1]\times W^1$ is again a disjoint union of small sets, so we may find a homotopy $F^1: [0,1]^2 \times X \to
  \Psi(U)$ starting at $F^0$ which is constant outside $[0,1] \times
  W^1$ and smooth on $(0,1] \times [0,1] \times V^1$, and so on.  In
  the end we get a map $[0,1]^k \times X \to \Psi(U)$, and we can let
  $F: [0,1] \times X \to \Psi(U)$ be the restriction to the diagonal.
\end{proof}

\section{The homotopy type of spaces of manifolds}\label{sec:SoaM}

This section is a self-contained proof of the main theorem of \cite{GMTW}.

\subsection{Constructions with tangential structures}

We describe certain constructions that can be made relating $\theta$-manifolds and their submanifolds, using the tangential structure $\theta_{d-1}$ on $(d-1)$-manifolds.

\begin{definition}\label{defn:cross-with-R}
  If $M \in \Psi_{\theta_{d-1}}(\R^{n-1})$, let $\R \times M \subseteq
  \Psi(\R^n)$ have the $\theta$-structure induced by composing the obvious
  bundle map
  \begin{align*}
    T(\R^1\times M) \to \epsilon^1 \oplus TM
  \end{align*}
  with the bundle map $\epsilon^1 \oplus TM \to \theta^*\gamma$ defining the $\theta_{d-1}$-structure on $M$.
  This defines a map
  \begin{align}\label{eq:7:defn:cross-with-R}
\begin{aligned}
\Psi_{\theta_{d-1}}(\R^{n-1}) & \to \Psi_\theta(\R^n) \\
M &\mapsto \bR \times M.
\end{aligned}
  \end{align}
\end{definition}
The construction in Definition~\ref{defn:cross-with-R} can be
generalised a bit. Let $f : \bR \to \Psi_{\theta_{d-1}}(\bR^{n-1})$ be a smooth map, having graph $\Gamma(f) \subset \bR \times \bR^{n-1}$. Differentiating the first coordinate $x_1 : \Gamma(f) \to \bR$ gives a short exact sequence of vector bundles
\begin{align*}
  T^v(\Gamma(f)) \to T(\Gamma(f)) \to \epsilon^1
\end{align*}
and the standard inner product on $\bR^n$ gives an
inner product on $T(\Gamma(f))$, and so a
canonical splitting
\begin{align}\label{eq:20}
  \epsilon^1 \oplus T^v(\Gamma(f)) \xrightarrow{\cong} T(\Gamma(f)).
\end{align}
The $\theta_{d-1}$-structure on each fibre of $\Gamma(f)$ is described by a vector bundle map
\begin{align}
  \label{eq:21}
  \epsilon^1 \oplus T^v(\Gamma(f)) \to \theta^*\gamma.
\end{align}

\begin{definition}\label{defn:take-theta-graph}
  Let
  \begin{align*}
C^\infty(\R,\Psi_{\theta_{d-1}}(\R^{n-1})) &\to
    \Psi_\theta(\R^n)\\
    f & \mapsto \Gamma(f)
  \end{align*}
  be the map described above, where $\Gamma(f)$ is given the $\theta$
  structure obtained by composing (\ref{eq:21}) with the inverse of~(\ref{eq:20}).
\end{definition}

The process in Definition~\ref{defn:cross-with-R} is the special case
of Definition~\ref{defn:take-theta-graph} where the path $f$ is constant.
There is a partially defined reverse process which decreases
dimensions of manifolds by one.  Let $M \in
\Psi_\theta(\R^n)$, and again let $x_1: M \to \R$ be the projection to the
first coordinate in $\bR^n$.  If $a \in \R$ is a regular value of $x_1$, then
$M_a = M \cap x_1^{-1}(a)$ is a smooth manifold and we have a short exact
sequence
\begin{align*}
  TM_a \to TM|_{M_a} \to \epsilon^1.
\end{align*}
Using the inner product on $TM$ induced by $M \subseteq \R^n$ we get a
splitting
\begin{align}
  \label{eq:22}
  \epsilon^1 \oplus TM_a \xrightarrow{\cong} TM|_{M_a}.
\end{align}

\begin{definition}\label{defn:IntersectionMap}
  Let
  \begin{align}\label{eq:6:defn:IntersectionMap}
	\begin{aligned}
\Psi_\theta(\R^n) & \dashrightarrow
    \Psi_{\theta_{d-1}}(\R^{n-1})\\
    M & \mapsto M_a = M \cap x_1^{-1}(a) 
	\end{aligned}
  \end{align}
  be the partially defined map which gives $M_a$ the
  $\theta$-structure induced by composing~(\ref{eq:22}) with the
  bundle map $TM \to \theta^*\gamma$ defining the $\theta$-structure
  on $M$.
\end{definition}

Finally we discuss an extended functoriality of $\Psi_\theta$. Let $M \in \Psi_\theta(\R^n)$ and let $F: \R^n \to \R^n$ be a
map which is transverse to $M$.  Then $W = F^{-1}M$ is again a
$d$-manifold, but in general has no induced $\theta$-structure. For example if $F: \R^3
\to \R^3$ given by $F(x) = (0,0,|x|^2)$ then $F^{-1}(\R^2 \times
\{1\}) = S^2$, but the canonical framing of $\R^2 \times \{1\}$ cannot
induce a framing of $S^2$.

It is, however, possible to define such an induced structure in special cases.  The following will be needed below.  Let $\varphi: \R \to \R$ be a
smooth map with $\varphi' \geq 0$.  Let $F = \varphi \times \mathrm{Id}: \R^n \to
\R^n$.  If $M \in \Psi(\R^n)$ and $F$ is transverse to $M$, then $W =
F^{-1}(M)$ is again a $d$-manifold.  Both bundles $TW$ and $F^*(TM)$
are subbundles of $\epsilon^n$, and they are related via the
endomorphism $DF: \epsilon^n \to \epsilon^n$ by the equation
\begin{align*}
  TW = DF^{-1}(F^*TM).
\end{align*}
An exercise in linear algebra shows that the composition
\begin{align*}
  TW \hookrightarrow \epsilon^n \twoheadrightarrow F^*TM,
\end{align*}
the inclusion followed by orthogonal projection, defines an
isomorphism $TW \to F^*TM$.  Using this isomorphism to give $W$ an
induced $\theta$-structure, we get a partially defined map
\begin{align}
\begin{aligned}
  \label{eq:10}
\Psi_\theta(\R^n) & \dashrightarrow \Psi_\theta(\R^n)\\
  M & \mapsto F^{-1}M =   (\varphi \times \mathrm{Id})^{-1}(M).
\end{aligned}
\end{align}

A typical application of this construction is given by the following
lemma.  Let us say that an $M \in \Psi_\theta(\R^n)$ is
\emph{cylindrical} in $x_1^{-1}(a,b)$ if there is an $N \in
\Psi_{\theta_{d-1}}(\R^{n-1})$ such that
\begin{align*}
  (\R \times N)|_{x_1^{-1}(a,b)} = M |_{x_1^{-1}(a,b)} \in \Psi_\theta(
  x_1^{-1}(a,b)).
\end{align*}

\begin{lemma}\label{Cylindrical}
  Let $f: X \to \Psi_\theta(\R^n)$ be continuous, and let $U,V
  \subseteq X$ be open, with $\overline{U} \subseteq V$.  Let $a \in
  \R$ be a regular value for $x_1: f(x) \to \R$ for all $x \in V$.
  Let $\epsilon > 0$.  Then there is a homotopy
  \begin{align*}
    f_t: X \to \Psi_\theta(\R^n),\quad t \in [0,1]
  \end{align*}
  with $f_0 = f$, and
  \begin{enumerate}[(i)]
  \item $f_1(x)$ cylindrical in $x_1^{-1}([a-\epsilon,a+\epsilon])$
    for $x \in U$.
  \item The restriction to $[0,1] \times (X - V) \to
    \Psi_\theta(\R^n)$ is a constant homotopy.
  \item The composition $[0,1]\times X \to \Psi_\theta(\R^n) \to
    \Psi_\theta(\R^n - x_1^{-1}([a-2\epsilon,a+2\epsilon]))$ is a
    constant homotopy.
  \end{enumerate}
\end{lemma}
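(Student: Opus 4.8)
The plan is to build $f_t$ by applying the construction~\eqref{eq:10} fibrewise over $X$, for a suitable non-decreasing reparametrisation $\varphi$ of the $x_1$-axis that collapses a neighbourhood of $a$ onto $a$, and then to interpolate with the identity in the $X$-variable so that the homotopy becomes trivial outside $U$.

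First I would fix a smooth $\varphi : \R \to \R$ with $\varphi' \geq 0$, equal to $\id$ outside $(a-2\epsilon, a+2\epsilon)$, equal to the constant $a$ on $[a-\epsilon,a+\epsilon]$, and with $\varphi'$ vanishing only on $[a-\epsilon,a+\epsilon]$ (such a $\varphi$ clearly exists: on each transition interval $\varphi'$ must exceed $1$ somewhere, which is harmless since \eqref{eq:10} only requires $\varphi' \geq 0$). Write $F = \varphi \times \id : \R^n \to \R^n$. The differential $DF$ is non-surjective exactly over $x_1^{-1}([a-\epsilon,a+\epsilon])$, and there $F$ has image in the hyperplane $x_1^{-1}(a)$; hence whenever $a$ is a regular value of $x_1|_M$ the map $F$ is transverse to $M$, so \eqref{eq:10} defines $F^{-1}(M) \in \Psi_\theta(\R^n)$. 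Over $x_1^{-1}(\R \setminus (a-2\epsilon,a+2\epsilon))$ one has $F = \id$, so $F^{-1}(M)$ agrees with $M$, $\theta$-structure included. Over $x_1^{-1}(a-\epsilon,a+\epsilon)$ the map $F$ collapses onto the slice, so $F^{-1}(M) = (\R \times M_a)|_{x_1^{-1}(a-\epsilon,a+\epsilon)}$ as submanifolds, and since the isomorphism used in \eqref{eq:10} does not depend on the first coordinate there, the $\theta$-structure does not either; thus $F^{-1}(M)$ is cylindrical over $x_1^{-1}(a-\epsilon,a+\epsilon)$.

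To localise in $X$, pick open sets $U \subseteq \chi^{-1}(1) \subseteq V' \subseteq \overline{V'} \subseteq V$ and a continuous $\chi : X \to [0,1]$ with the indicated behaviour, set $\varphi_{s,x} = (1-s\chi(x))\,\id + s\chi(x)\,\varphi$, and define $f_s(x) = (\varphi_{s,x} \times \id)^{-1}(M_x)$ with the $\theta$-structure of \eqref{eq:10}. Then $\varphi_{s,x}' \geq 0$, and it vanishes only where $s\chi(x) = 1$ and $x_1 \in [a-\epsilon,a+\epsilon]$; in that case $x \in V'$, so $a$ is a regular value of $x_1|_{M_x}$ and $\varphi_{s,x} \times \id$ is transverse to $M_x$, so $f_s$ is well defined for all $(s,x)$. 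One has $f_0 = f$; for $x \notin V$ one has $\chi(x) = 0$, hence $\varphi_{s,x} = \id$ and $f_s(x) = f(x)$, which gives (ii); over $x_1^{-1}(\R \setminus (a-2\epsilon,a+2\epsilon))$ one has $\varphi_{s,x} = \id$ for all $(s,x)$, which gives (iii); and for $x \in U$ we have $\varphi_{1,x} = \varphi$, constant on $[a-\epsilon,a+\epsilon]$, so $f_1(x)$ is cylindrical there by the previous paragraph, which gives (i).

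The real content is continuity of $(s,x) \mapsto f_s(x)$, which I expect to be the main obstacle. On the open set where $s\chi(x) < 1$ the function $\varphi_{s,x}$ is a diffeomorphism of $\R$ (it has positive derivative and is the identity near $\pm\infty$), so there $f_s(x)$ is the pullback of $M_x$ along a continuously varying diffeomorphism of $\R^n$, up to a continuously varying bundle automorphism arising from comparing the isomorphism of \eqref{eq:10} with the derivative of $\varphi_{s,x} \times \id$; continuity then follows from (the $\theta$-version of) Theorem~\ref{thm:action-cont}. Near the locus $s\chi(x) = 1$ I would argue via smoothness instead: first apply the smoothing lemma (the last result of \S\ref{SoM}) to replace $f$ by a homotopic map smooth over $\overline{V'} \times x_1^{-1}(J)$, for a fixed open interval $J$ with $[a-\epsilon,a+\epsilon] \subseteq J$ and $\overline{J} \subseteq (a-2\epsilon,a+2\epsilon)$, with this first homotopy supported in $X \times x_1^{-1}((a-2\epsilon,a+2\epsilon))$ so that (ii) and (iii) are not affected; then, over $x_1^{-1}(R)$ for a suitable open interval $R$ with $[a-\epsilon,a+\epsilon] \subseteq R$, $\overline{R} \subseteq J$, and $\varphi_{s,x}(\overline{R}) \subseteq J$ for all $(s,x)$, the graph of $f_s$ is the transverse preimage of the now-smooth graph $\Gamma(f)$ under the smooth map $((s,x),p) \mapsto (x, \varphi_{s,x}(p_1), p_2, \dots, p_n)$, so Lemma~\ref{lem:SmoothMapsAreContinuous} gives continuity over $x_1^{-1}(R)$. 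Patching the open sets $x_1^{-1}(R)$ and $x_1^{-1}(\R \setminus [a-\epsilon,a+\epsilon])$ by Theorem~\ref{thm:sheaf} yields continuity on $\Psi_\theta(\R^n)$; concatenating the smoothing homotopy with $(f_s)$ gives the desired $f_t$. The fussiest part of all this is choosing the nested intervals so that the preliminary smoothing respects conclusions (ii) and (iii).
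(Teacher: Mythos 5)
Your construction is the same as the paper's: you interpolate between the identity and a non-decreasing map $\varphi$ that collapses $[a-\epsilon,a+\epsilon]$ onto $a$, with the interpolation parameter scaled by a bump function on $X$ supported in $V$, and pull $f(x)$ back under $\varphi_{s,x}\times\mathrm{Id}$ via~\eqref{eq:10}. The paper's proof consists only of exhibiting that formula (in the slightly different form $\varphi_\tau(s)=(1-\tau)s+\tau\epsilon\lambda(\tfrac{s-a}{\epsilon})$, which as written is the identity outside $(a-2\epsilon,a+2\epsilon)$ only when $a=0$; your version avoids that wrinkle). Where you genuinely add content is in arguing continuity of $(s,x)\mapsto f_s(x)$, which the paper leaves implicit and which is not literally a consequence of Theorem~\ref{thm:action-cont}, since $\varphi_{s,x}\times\mathrm{Id}$ fails to be an embedding on the collapse locus. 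Your patch — handling the diffeomorphism locus via Theorem~\ref{thm:action-cont} and the degenerate locus by first smoothing $f$ and then invoking Lemma~\ref{lem:SmoothMapsAreContinuous} on a transverse preimage of the smooth graph — is a reasonable way to close that gap, and the transversality at the degenerate locus does indeed reduce to $a$ being a regular value of $x_1|_{f(x)}$, as you observe.

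Two caveats worth flagging. First, your continuity argument requires $X$ to be a smooth manifold (both the smoothing lemma and Lemma~\ref{lem:SmoothMapsAreContinuous} assume this), whereas the statement of Lemma~\ref{Cylindrical} allows arbitrary $X$; this is harmless for the paper's applications (where $X=D^m$) but should be stated. Second, after the preliminary smoothing of $f$ on $\overline{V'}\times x_1^{-1}(J)$ you continue to use the hypothesis that $a$ is a regular value of $x_1\colon f(x)\to\R$ for $x\in V'$, but the smoothing homotopy changes $f(x)$ precisely near level $a$, so you need to argue this regularity survives — e.g.\ by making the smoothing homotopy small and observing that regularity is an open condition on the germ near the (compact, in the cases of interest) level set, or by choosing the smoothing to fix $f$ near $x_1^{-1}(a)$. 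As written this point is elided. Also, you phrase the support of the smoothing homotopy as being in $X\times x_1^{-1}((a-2\epsilon,a+2\epsilon))$, which would preserve (iii) but not (ii); you of course want the support inside $V\times x_1^{-1}((a-2\epsilon,a+2\epsilon))$, which the smoothing lemma permits.
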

\begin{proof}
  Choose once and for all a smooth function $\lambda: \R \to \R$ with
  $\lambda(s) = 0$ for $|s| \leq 1$, $\lambda(s) = s$ for $|s| \geq
  2$, and $\lambda'(s) > 0$ for $|s|> 1$.  For $\tau \in [0,1]$, let
  $\varphi_\tau(s) = (1-\tau) s + \tau \epsilon\lambda(\frac{s-a}{\epsilon})$. Then
  pick a function $\rho: X \to [0,1]$ with $\supp(\rho) \subseteq V$
  and $U \subseteq \rho^{-1}(1)$ and define the homotopy by
  \begin{align*}
    [0,1] \times X & \to \Psi_\theta(\R^n)\\
    (t,x) &\mapsto (\varphi_{t\rho(x)} \times \mathrm{Id})^{-1} (f(x)).\qedhere
  \end{align*}
\end{proof}

Finally, let us introduce subspaces of $\Psi_\theta(\R^n)$ which are
very important in the sequel.
\begin{definition}\label{defn:SpacesOfManifolds}
  Let $\psi_\theta(n,k) \subseteq \Psi_\theta(\R^n)$ be the subspace
  defined by the condition $M \subseteq \R^k \times (-1,1)^{n-k}$. Let $\psi_{\theta_{d-1}}(n-1,k-1) \subseteq \Psi_{\theta_{d-1}}(\bR^{n-1})$ be the subspace defined by the condition $M \subseteq \bR^{k-1} \times (-1,1)^{n-k}$.
\end{definition}
We see that the map~(\ref{eq:7:defn:cross-with-R}) and the partially defined
map~(\ref{eq:6:defn:IntersectionMap}) restrict to maps
\begin{align}
\begin{aligned}\label{eq:17}
\psi_{\theta_{d-1}}(n-1,k-1) &\to \psi_\theta(n,k)\\
M &\mapsto \bR \times M,
\end{aligned}
\end{align}
\begin{align}
\begin{aligned}\label{eq:19}
\psi_\theta(n,k) &\dashrightarrow \psi_{\theta_{d-1}}(n-1,k-1) \\
M &\mapsto M_a = M \cap x_k^{-1}(a).
\end{aligned}
\end{align}

\begin{proposition}\label{prop:IdentifyingComponents}
  For $k > 1$, the map~(\ref{eq:17}) induces an isomorphism on
  $\pi_0$, with inverse induced by~(\ref{eq:19}).  Consequently $\pi_0
  \big(\psi_\theta(n,k)\big) \cong \pi_0 \big( \psi_{\theta_{d-k+1}}(n-k+1,1)\big)$.
\end{proposition}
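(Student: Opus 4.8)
The plan is to prove that the maps~(\ref{eq:17}) and~(\ref{eq:19}) are mutually inverse on $\pi_0$, by exhibiting explicit homotopies. First I would check that~(\ref{eq:19}) is defined on a subspace hitting every path component: given any $M \in \psi_\theta(n,k)$, the coordinate $x_k: M \to \bR$ has a regular value $a$ (by Sard), and moving $a$ around does not change the component of $M_a$ since regular values in an interval give cobordant, in fact isotopic, slices. More usefully, I would first apply an ambient homotopy of the form~(\ref{eq:10}), i.e.\ precompose with $\varphi \times \mathrm{Id}$ for a suitable non-decreasing $\varphi: \bR \to \bR$ acting on the $x_k$-coordinate, to arrange that $M$ is \emph{cylindrical} near $x_k^{-1}(a)$; this is exactly Lemma~\ref{Cylindrical} applied with $X = \{\ast\}$, $U = V = X$. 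Such a homotopy stays inside $\psi_\theta(n,k)$ because stretching the $x_k$-coordinate keeps $M$ inside $\bR^{k-1} \times (-1,1)^{n-k}$ only after we also arrange, via the hypothesis $k > 1$, that we may slide things within the noncompact directions — here I should be careful, and the cleanest route is to first use the extra noncompact coordinate $x_{k-1}$ (available since $k - 1 \geq 1$) to push the manifold off to infinity outside a slab, exactly as in the Example with $\{t\} \times \bR \subseteq \bR^2$ converging to $\emptyset$.

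The two composites to analyse are then: (a) start with $N \in \psi_{\theta_{d-1}}(n-1,k-1)$, form $\bR \times N$, then slice at $x_k = a$ — this recovers $N$ on the nose (the $\theta$-structures match because the splittings~(\ref{eq:22}) and the one used in Definition~\ref{defn:cross-with-R} are both the orthogonal ones), so this composite is literally the identity; and (b) start with $M \in \psi_\theta(n,k)$, slice at a regular value $a$ to get $M_a$, then form $\bR \times M_a$ — here I must produce a path in $\psi_\theta(n,k)$ from $M$ to $\bR \times M_a$. The construction: by the cylindrical normalisation above I may assume $M$ agrees with $\bR \times M_a$ on $x_k^{-1}(a - \epsilon, a + \epsilon)$; then apply the stretching homotopy~(\ref{eq:10}) with $\varphi_\tau$ interpolating from the identity to a map that expands $(a-\epsilon, a+\epsilon)$ to fill all of $\bR$ (the graph of $\varphi_\tau$ flattening out), so that in the limit $M$ becomes cylindrical everywhere — which is to say $M$ becomes $\bR \times M_a$. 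One checks this stretching keeps $M$ inside the required slab $\bR^{k-1} \times (-1,1)^{n-k}$, again using that $x_k$ is the stretched coordinate and the $(-1,1)^{n-k}$ constraint is on the \emph{other} coordinates, hence untouched.

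Once both composites are shown to be homotopic to identities through maps of pairs respecting the subspace conditions, the map on $\pi_0$ induced by~(\ref{eq:17}) is a bijection with inverse induced by~(\ref{eq:19}). The final "consequently" is then just induction on $k$: iterating the isomorphism $\pi_0(\psi_\theta(n,k)) \cong \pi_0(\psi_{\theta_{d-1}}(n-1,k-1))$ a total of $k-1$ times lowers both arguments by $k-1$ and replaces $\theta = \theta_d$ by $\theta_{d-k+1}$, giving $\pi_0(\psi_\theta(n,k)) \cong \pi_0(\psi_{\theta_{d-k+1}}(n-k+1,1))$; at each stage the hypothesis needed is that the current "$k$" exceeds $1$, which holds for all intermediate values down to $2$, and the last step goes from second argument $2$ to second argument $1$, still legitimate.

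The main obstacle I anticipate is \textbf{continuity and the subspace constraints}, not the $\pi_0$-level bookkeeping: one must verify that the stretching homotopies~(\ref{eq:10}), while they manifestly move points of the manifold, genuinely define \emph{continuous} paths into $\psi_\theta(\bR^n)$ and never violate the condition $M \subseteq \bR^{k-1} \times (-1,1)^{n-k}$. The first point is handled by Theorem~\ref{thm:action-cont} (the action map $\Emb(U,V) \times \Psi_\theta(V) \to \Psi_\theta(U)$ is continuous) together with the smoothness criterion of Lemma~\ref{lem:SmoothMapsAreContinuous}; the second is where the assumption $k>1$ is essential and where I would be most careful — the point being that the image of a manifold that is cylindrical in the $x_k$-direction stays inside a fixed slab in the remaining coordinates precisely because stretching $x_k$ does nothing to those coordinates, but making $M$ cylindrical in the first place (step (b) above) may require first using the spare noncompact direction $x_{k-1}$ to translate parts of $M$ out to infinity, à la the Example.
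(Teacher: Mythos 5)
The gap is in your first paragraph, where you claim that ``moving $a$ around does not change the component of $M_a$ since regular values in an interval give cobordant, in fact isotopic, slices.'' This is false: if $a$ and $b$ are regular values of $x_1\colon M\to\bR$ but the interval $[a,b]$ contains critical values, the slices $M_a$ and $M_b$ need not be isotopic, and typically are not even diffeomorphic (think of the torus in $\bR^3$ with its height function, whose level sets range over $\emptyset$, $S^1$, and $S^1\amalg S^1$). So isotopy is simply unavailable. Cobordism \emph{does} hold (via $M\cap x_1^{-1}([a,b])$), but ``cobordant $\Rightarrow$ same path component of $\psi_{\theta_{d-1}}(n-1,k-1)$'' is not free: it is essentially the content of Corollary~\ref{cor:grouplike}, which in the paper comes \emph{after} and partly \emph{depends on} this very proposition, so invoking it here would be circular. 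Well-definedness of~(\ref{eq:19}) on $\pi_0$ is the crux of the proposition and your proof never actually establishes it.

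The paper's proof handles this directly: given regular values $a<b$, one first perturbs $M$ rel $x_1^{-1}(\{a,b\})$ so that $x_1$ is Morse, then perturbs again so the (isolated) critical points in $x_1^{-1}([a,b])$ avoid the slab $[a,b]\times\{0\}^{k-1}\times[-1,1]^{n-k}$, and finally stretches the $k-1$ free coordinates $x_2,\dots,x_k$ via $h_t=\mathrm{Id}\times e_t^{k-1}\times\mathrm{Id}$ (a path in $\psi_\theta(n,k)$) so as to push every critical point out of $x_1^{-1}([a,b])$ entirely. After this stretch, $x_1$ has no critical values in $[a,b]$, so the slice varies by an isotopy as $a$ slides to $b$, and one concatenates these paths. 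This is exactly where the hypothesis $k>1$ is used: you need at least one free unbounded coordinate besides $x_1$ to give the critical points somewhere to escape to. Your instinct to ``use the spare noncompact direction to push parts of $M$ off to infinity, as in the Example'' is precisely the right idea, but you applied it to the subspace constraint (where it is a non-issue, since the stretching of $x_k$ or of $x_2,\dots,x_k$ leaves the bounded coordinates $x_{k+1},\dots,x_n$ untouched) rather than to the real problem of eliminating critical values between $a$ and $b$. Your treatment of the two composites and the iteration in the ``consequently'' step are fine, but without the Morse-theoretic argument the map~(\ref{eq:19}) is not known to be well-defined on $\pi_0$, and the claimed bijection does not follow.
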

\begin{proof}
By Sard's theorem, every element $M \in \psi_\theta(n,k)$ is in the domain of the map (\ref{eq:19}) for some $a$. We will show that the element $[M_a] \in \pi_0 \big(\psi_{\theta_{d-1}}(n-1,k-1)\big)$ is independent of $a$.

Let $M \in \psi_\theta(n,k)$ and $a < b$ be two regular values of $x_1 : M \to \bR$. Near $x_1^{-1}(a)$ and $x_1^{-1}(b)$, the function $x_1$ has no critical points. We can perturb it, relative to these subsets, to be a Morse function (as they are dense in the space of smooth functions) and still give an embedding (as the space of embeddings is open in the space of all smooth maps). We then obtain a manifold $M'$, such that $x_1$ has isolated critical points. We may then perturb $M'$ slightly so that there are no critical points in $[a, b] \times \{0\}^{k-1} \times  [-1,1]^{n-k}$. Let us still call this $M'$. As critical points are isolated, there is an $\epsilon > 0$ such that there are no critical points in $[a,b] \times (-\epsilon, \epsilon)^{k-1} \times [-1,1]^{n-k}$. Choosing an isotopy of embeddings $e_t : \bR \to \bR$ from the identity to a diffeomorphism onto $(-\epsilon, \epsilon)$, we can form $h_t = \bR \times e_t^{k-1} \times [-1,1]^{n-k}$ and let $M'(t) = h_t^{-1}(W')$.

This gives a path from $M' = M'(0)$ to a manifold $M'(1)$ such that $x_1$ has no critical values in $[a,b]$. Thus there are paths
$$M_a = M'_a \overset{M_a'(t)}\leadsto M_a'(1) \overset{M_t'(1)}\leadsto M_b'(1) \overset{M_b'(t)}\leadsto M'_b = M_b.$$

We have shown that (\ref{eq:19}) gives a well defined map $\psi_\theta(n,k) \to \pi_0 \big(\psi_{\theta_{d-1}}(n-1,k-1)\big)$. This map is locally constant so it factors through $\pi_0\big( \psi_{\theta}(n,k) \big)$. The composition
$$\pi_0\big(\psi_{\theta_{d-1}}(n-1, k-1)\big) \to \pi_0\big(\psi_\theta(n,k)\big) \to \pi_0\big(\psi_{\theta_{d-1}}(n-1, k-1)\big)$$
is the identity map, so the first map is injective. To see surjectivity let $M \in \psi_\theta(n,k)$ and $a$ be a regular value of $x_1 : M \to \bR$. Similarly to the proof of Lemma \ref{Cylindrical}, let $\varphi_t : \bR \to \bR$, $t \in [0,1]$ be given by $\varphi_t(s) = (1-t)\cdot (s-a) + a$. Then $M(t) = (\varphi_t \times \mathrm{Id})^{-1}(W)$ gives a path from $M$ to the cylindrical manifold $\bR \times M_a$, where tangential structures are handled using the extended functoriality of (\ref{eq:10}). Thus the first map is also surjective, and so a bijection.
\end{proof}

\subsection{The cobordism category}\label{sec:CobordismCategoryDefinition}

Let us give a definition of the embedded cobordism category
$C_\theta(\R^n)$ from \cite{GMTW}, using the topological sheaf $\Psi_\theta$
from the previous section.  There are several versions of the
definition, but they all give homotopy equivalent categories, where we say a functor $F: C \to D$ is a \textit{homotopy equivalence of categories} if $N_kF: N_k C \to N_k D$
is a homotopy equivalence for all $k$.  

\begin{definition}\label{defn:CobordismCategory}
Let $C_\theta(\bR^n)$ have object space $\psi_{\theta_{d-1}}(n-1,0)$.  The set of
  non-identity morphisms from $M_0$ to $M_1$ is the set of $(t,W) \in
  \R \times \psi_{\theta}(n,1)$ such that $t > 0$ and such that there exists an
  $\epsilon > 0$ such that
  \begin{align*}
    W|_{(-\infty,\epsilon) \times \R^{n-1}} &= (\R \times
    M_0)|_{(-\infty,\epsilon)\times\R^{n-1}} \in \Psi_\theta((-\infty, \epsilon)\times \bR^{n-1})\\
    W|_{(t-\epsilon,\infty) \times \R^{n-1}} &= (\R \times
    M_1)|_{(t-\epsilon,\infty)\times\R^{n-1}} \in \Psi_\theta((t-\epsilon, \infty)\times \bR^{n-1}),
  \end{align*}
  where $\R \times M_\nu \in \Psi_\theta(\bR^n)$ is as explained in~(\ref{eq:7:defn:cross-with-R}).  Composition in the category is defined by
  \begin{align*}
    (t,W) \circ (t',W') = (t + t', W''),
  \end{align*}
  where $W''$ agrees with $W$ near $(-\infty,t] \times \R^{n-1}$ and
  with $t \cdot e_1 + W'$ near $[t,\infty) \times \R^{n-1}$.  The total space
  of morphisms is topologised as a subspace of $(\{0\} \amalg
  (0,\infty)) \times \psi_\theta(n,1)$, where $(0,\infty)$ is given
  the usual topology.
\end{definition}
Of course, the important part of a morphism $(a_0 < a_1, M)$ is the
part of $M$ that lies in $[a_0,a_1]\times \R^{n-1}$, since it uniquely determines the rest of $M$.

The second version of the definition of the cobordism category is a topological
poset.
\begin{definition}
  Let
  \begin{align*}
    D_\theta(\bR^n) \subseteq \R \times \psi_\theta(n,1)
  \end{align*}
  denote the space of pairs $(t, M)$ such that $t$ is a regular value of
  $f: M \to \R$.  Topologise $D_\theta(\bR^n)$ as a subspace and order it by
  declaring $(t,M) \leq (t',M')$ if and only if $M = M'$ and $t \leq
  t'$.
\end{definition}

\begin{theorem}\label{thm:CobordismCategoryEquivalence}
There is a zig-zag
$$C_\theta(\bR^n) \overset{c}\lla D_\theta^\perp(\bR^n) \overset{i}\lra D_\theta(\bR^n)$$
of homotopy equivalences of categories. In particular, the classifying spaces $BC_\theta(\bR^n)$ and $BD_\theta$ are homotopy equivalent.
\end{theorem}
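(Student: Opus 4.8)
The plan is to exhibit two zig-zag functors and verify that they induce levelwise homotopy equivalences on nerves. I will treat the two maps $c$ and $i$ separately, since they are of rather different character. The intermediate category $D_\theta^\perp(\bR^n)$ should be defined as the subspace of $D_\theta(\bR^n)$ consisting of pairs $(t,M)$ such that $M$ is cylindrical in $x_1^{-1}(t-\epsilon,t+\epsilon)$ for some $\epsilon > 0$; the functor $i$ is then simply the inclusion, and $c$ sends $(t,M)$ to the object $M_t = M \cap x_1^{-1}(t) \in \psi_{\theta_{d-1}}(n-1,0)$ on objects, and on a pair of composable elements $(t_0 \leq t_1 \leq \dots \leq t_k, M)$ produces the morphism of $C_\theta(\bR^n)$ recorded by the relevant piece of $M$ between consecutive levels (after an affine reparametrisation of the $x_1$-coordinate to normalise the first level to $0$). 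One must check these are continuous functors, which is where Theorem~\ref{thm:action-cont} and the sheaf property Theorem~\ref{thm:sheaf} get used.

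For the map $i: D_\theta^\perp(\bR^n) \to D_\theta(\bR^n)$, the strategy is a levelwise deformation retraction. Given a point of $N_k D_\theta(\bR^n)$, i.e.\ a manifold $M \in \psi_\theta(n,1)$ together with regular values $t_0 \leq \dots \leq t_k$ of $x_1$, I want to push $M$ to a manifold that is cylindrical near each $t_j$, compatibly as the $t_j$ and $M$ vary, and without moving $M$ outside small neighbourhoods of the levels. This is precisely the content of Lemma~\ref{Cylindrical} (applied near each $t_j$, with the open sets $U \subseteq V$ chosen so the regular-value condition persists); one applies it once for each of the finitely many levels, on disjoint $x_1$-intervals so the homotopies do not interfere. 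The only subtlety is handling the locus where two of the $t_j$ collide — but there the cylindricity required near $t_j$ and near $t_{j+1}$ is the same condition, so nothing bad happens. This produces a deformation retraction of $N_k D_\theta(\bR^n)$ onto $N_k D_\theta^\perp(\bR^n)$, hence $i$ is a levelwise homotopy equivalence.

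For the map $c: D_\theta^\perp(\bR^n) \to C_\theta(\bR^n)$, I would argue that on nerves it is a homotopy equivalence by producing an explicit homotopy inverse. A $k$-simplex of $N_k C_\theta(\bR^n)$ is a chain of composable morphisms $(t_1,W_1),\dots,(t_k,W_k)$ together with their common objects; gluing the $W_j$ end-to-end and inserting a long cylinder $(-\infty,0]\times M_0$ on the left and $[\sum t_j,\infty)\times M_k$ on the right produces a single element of $\psi_\theta(n,1)$ together with the obvious regular values $0 \leq t_1 \leq t_1+t_2 \leq \dots$, which is cylindrical near each — this defines a section $s$ of $N_k c$. Then $c \circ s = \id$ on the nose, and $s \circ c$ is connected to the identity on $N_k D_\theta^\perp(\bR^n)$ by a homotopy that stretches the cylindrical collars near each level to infinite length and reparametrises; concretely one uses a family of orientation-preserving maps $\varphi_\tau: \bR \to \bR$ with $\varphi_\tau' \geq 0$ and applies the extended functoriality~(\ref{eq:10}), exactly as in the proof of Lemma~\ref{Cylindrical}, to translate the finite collars out towards $\pm\infty$ while leaving the ``core'' cobordisms between consecutive levels unchanged. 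Continuity of all of this follows from Theorems~\ref{thm:action-cont} and~\ref{thm:sheaf}. Finally, $BC_\theta(\bR^n) \simeq BD_\theta(\bR^n)$ follows formally, since a levelwise homotopy equivalence of simplicial spaces induces a homotopy equivalence of realisations.

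The main obstacle I expect is not any single step in isolation but the bookkeeping of continuity and compatibility in families: making sure the cylindricalisation homotopy of Lemma~\ref{Cylindrical} can be applied simultaneously at all $k+1$ (a priori colliding) levels with the open sets $U \subseteq V$ chosen uniformly, and that the resulting maps are continuous on the nerve with its subspace topology from $(\{0\}\amalg(0,\infty))\times\psi_\theta(n,1)$ rather than just pointwise. The collision locus of the $t_j$ is the delicate point, and the reason the argument goes through is that the only role of the $t_j$ being regular values and suitably separated is to cut $M$ into pieces, and when two levels coincide the corresponding ``piece'' is just an identity morphism, which is harmless.
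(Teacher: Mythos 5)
Your approach is essentially the paper's: define $D_\theta^\perp(\bR^n)$ by the cylindricity condition, note $i$ is a levelwise equivalence by Lemma~\ref{Cylindrical}, and show $c$ is a levelwise equivalence by producing a section $s$ (gluing the $W_j$ end to end with infinite cylinders) together with a homotopy from $s\circ c$ to the identity given by reparametrising the $x_1$-coordinate so as to push the outer collars to $\pm\infty$. The paper packages the same argument slightly differently, viewing $N_l C_\theta(\bR^n)$ as the subspace of $N_l D_\theta^\perp(\bR^n)$ where $t_0=0$ and $M$ is cylindrical on $x_1^{-1}(-\infty,0)$ and $x_1^{-1}(t_l,\infty)$, and showing the composite is the self-map $h_1$ homotopic to the identity via $h_s$; but these are the same homotopy.

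There is one real gap. You propose to define the homotopy (and, implicitly, the functor $c$ itself) by applying the extended functoriality~(\ref{eq:10}) to a family of monotone reparametrisations $\varphi_\tau$. But~(\ref{eq:10}) requires $\varphi$ to be \emph{smooth}, and the reparametrisation you need --- equal to the identity on $[t_0,t_l]$ and constant outside a neighbourhood of that interval --- cannot be smooth: at $t_0$ its derivative jumps from $0$ to $1$. The same is true for every $h_s$ with $s>0$, not just the endpoint. You cannot smooth the corner within a uniform interval either, because how much room there is depends on the (varying, not uniformly bounded below) size $\epsilon$ of the cylindrical collar of $M$. This is precisely where the paper uses the cylindricity hypothesis in the definition of $D_\theta^\perp$ in an essential way: it has a separate digression showing that for the \emph{non-smooth} map $\varphi_s(a,b)$ (linear, with corners at $a-s,a,b,b+s$), the pullback $(\varphi_s(a,b)\times\mathrm{Id})^{-1}(M)$ is still a smooth $\theta$-manifold \emph{provided} $M$ is cylindrical near $a$ and $b$ --- the cylinder on $x_1^{-1}[a-s,a]$ agrees with $\bR\times M_a$, and the $\theta$-structure is transported via that identification rather than by differentiating $\varphi_s$. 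Your write-up treats the cylindricity of $D_\theta^\perp$ merely as a device to make the level set $M_t$ vary continuously, and leans on~(\ref{eq:10}) for the $\theta$-structures; the paper's point is that~(\ref{eq:10}) does not apply here and the cylinders are what rescue the $\theta$-structure. Everything else in your outline, including your (correct, and more careful than the paper's) observation that Lemma~\ref{Cylindrical} handles the collision locus of the $t_j$ for free, is fine.
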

\begin{proof}
We must first describe $D_\theta^\perp(\bR^n)$. It is defined as $D_\theta(\bR^n)$, except that we only allow pairs $(t, M)$ such that $M$ is cylindrical in $x_1^{-1}(t-\epsilon, t+\epsilon)$ for some $\epsilon>0$. The functor $D_\theta^\perp(\bR^n) \lra D_\theta(\bR^n)$ is by inclusion. It is a levelwise homotopy equivalence on simplicial nerves by Lemma \ref{Cylindrical}.

Let us digress for a moment to a construction similar to (\ref{eq:10}). Let $\varphi_s(a,b)$ be the function as in the following diagram, where we allow $s=\infty$ to be the obvious limit. Note that it is smooth away from the set of points $\{a-s, a, b, b+s\}$, and a diffeomorphism on $(-\infty, a-s] \cup [a,b] \cup [b+s, \infty)$.

\begin{center}
\includegraphics[bb = 152 608 274 718, scale=1]{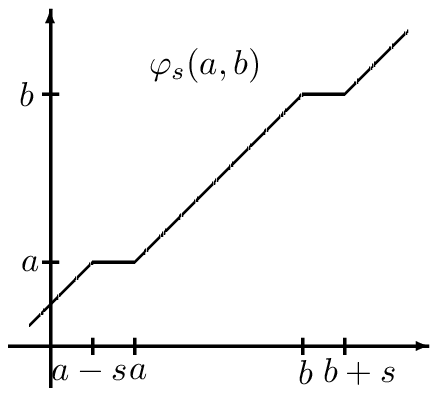}
\end{center}

Suppose $M \in \psi_\theta(n, 1)$ is cylindrical near $a$ and $b$, then $(\varphi_s(a,b) \times \mathrm{Id})^{-1}(M)$ defines an element of $\psi_\theta(n, 1)$ as follows. It certainly defines a smooth submanifold of $\bR \times (-1,1)^{n-1}$ that agrees with the underlying manifold of $M$ inside $x_1^{-1}(a,b)$, is cylindrical in $x_1^{-1}(a-s,a)$ and $x_1^{-1}(b, b+s)$, and agrees with translated copies of the underlying manifold $M$ on $x_1^{-1}(-\infty, a-s]$ and $x_1^{-1}[b+s, \infty)$. The $\theta$-structure is defined to be that of $M$ on $x_1^{-1}((-\infty, a-s) \cup (a,b) \cup (b+s, \infty))$ where there are diffeomorphisms to $M$. On $x_1^{-1}[a-s,a]$ the underlying manifold is cylindrical and so agrees with $\bR \times x_1^{-1}\{a\}$. The $\theta$-structure on $x_1^{-1}(\{a-s,a\})$ also agrees, so we can use this identification to give a $\theta$-structure on $x_1^{-1}[a-s,a]$. Similarly $x_1^{-1}[b,b+s]$.

\vspace{2ex}

The functor $D_\theta^\perp(\bR^n) \lra C_\theta(\bR^n)$ sends an object $(t, M)$ to $x_1^{-1}(t)$ as in (\ref{eq:6:defn:IntersectionMap}). On non-identity morphisms it is
$$(t_0 < t_1, M) \mapsto (t_1-t_0, (\varphi_\infty(t_0, t_1) \times \mathrm{Id})^{-1}(M) -t_0 \cdot e_1),$$
where $e_1 \in \bR^n$ is the first basis vector and $-t_0\cdot e_1$ denotes the parallel translation.

On simplicial nerves there is a map in the reverse direction, $N_l C_\theta(\bR^n) \to N_l D_\theta^\perp(\bR^n)$, that is the inclusion of those $(0 = t_0 < t_1 < \cdots < t_{l}, M)$ such that $M$ is cylindrical on $x_1^{-1}(-\infty, 0)$ and on $x_1^{-1}(t_{l}, \infty)$. The functor can then be viewed as the self map $h_1$ of $N_l D_\theta^\perp(\bR^n)$ that sends $(t_0 < t_1 < \cdots < t_{l}, M)$ to
$$(0 < t_1-t_0 < \cdots < t_{l}-t_0, (\varphi_\infty(t_0, t_{l}) \times \mathrm{Id})^{-1}(M) - t_0 \cdot e_1).$$
This is isotopic to the identity via the homotopy $h_s$, where $h_s$ sends $(t_0 < t_1 < \cdots < t_{l}, M)$ to
$$(t_0 - s \cdot t_0 < t_1-s \cdot t_0 < \cdots < t_{l}-s\cdot t_0, \left(\varphi_{\tfrac{s}{1-s}}(t_0, t_{l}) \times \mathrm{Id}\right)^{-1}(M) - s\cdot t_0 \cdot e_1).$$
\end{proof}

Finally we determine the homotopy type of $BC_\theta(\bR^n) \simeq
BD_\theta(\bR^n)$.
\begin{theorem}\label{thm:PosetModel}
  The forgetful map induces a weak equivalence
  \begin{align*}
    BD_\theta(\R^n) \stackrel{u}{\to} \psi_\theta(n,1).
  \end{align*}
\end{theorem}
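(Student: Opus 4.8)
The plan is to exhibit $BD_\theta(\R^n)$ as a homotopy colimit over the poset $D_\theta(\R^n)$ and to identify this with $\psi_\theta(n,1)$ by showing the fibres of the forgetful functor are contractible. The key observation is that the poset $D_\theta(\R^n)$ maps to $\psi_\theta(n,1)$ by $(t,M) \mapsto M$, and the preimage of a fixed $M$ is the poset of regular values of $x_1 : M \to \R$, ordered by the usual order on $\R$. This is a non-empty (by Sard's theorem) open subset of $\R$, and more importantly it is a \emph{totally ordered} set in each connected component, so its nerve is a union of simplices glued along faces in a way that is contractible — indeed the nerve of any directed poset whose underlying space of objects is an interval is contractible. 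So the strategy is a standard ``fibrewise contractibility'' argument for realisations of (topological) posets.

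First I would set up the technical framework: $BD_\theta(\R^n)$ is the geometric realisation of the simplicial space $N_\bullet D_\theta(\R^n)$, whose space of $p$-simplices consists of chains $(t_0 \leq t_1 \leq \cdots \leq t_p, M)$ with each $t_i$ a regular value of $x_1 : M \to \R$. Projecting to $M$ gives a map of simplicial spaces to the constant simplicial space $\psi_\theta(n,1)$, hence a map $u : BD_\theta(\R^n) \to \psi_\theta(n,1)$ after realisation. To prove $u$ is a weak equivalence I would use a version of the ``group-completion/realisation fibration'' machinery: it suffices to show that $u$ is a quasifibration with contractible fibres, or equivalently to produce, locally over $\psi_\theta(n,1)$, a deformation retraction of the relevant piece of $BD_\theta$ onto a point. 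Concretely, over a point $M$, the fibre is $|N_\bullet P_M|$ where $P_M \subseteq \R$ is the poset of regular values of $x_1|_M$; since $P_M$ is an open subset of $\R$ with the standard total order, $|N_\bullet P_M|$ is contractible (each component is, as a totally ordered set with no maximum or minimum obstruction, the realisation contracts by ``flowing to the right'', or one simply notes it is the nerve of a filtered poset).

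The main obstacle is making the fibrewise statement uniform in families, i.e. promoting ``each fibre is contractible'' to ``$u$ is a weak equivalence''. The space $\psi_\theta(n,1)$ is infinite-dimensional and the regular values of $x_1|_M$ vary with $M$ in a merely semicontinuous way, so $u$ is not a fibre bundle and one must argue it is a quasifibration, or bypass this by a direct cofinality/contractibility argument on realisations. The cleanest route is probably the following: build an explicit section-up-to-homotopy and a fibrewise contraction simultaneously. Given any compact family $K \to \psi_\theta(n,1)$, one can (after subdividing $K$) choose locally constant regular values, producing a lift $K \to BD_\theta(\R^n)$, and using the ``push to the right'' homotopies of the type constructed in Lemma~\ref{Cylindrical} and in the proof of Theorem~\ref{thm:CobordismCategoryEquivalence} (the functions $\varphi_s$), one contracts any two such lifts to each other. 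This shows $u$ induces a bijection on $\pi_k$ for all $k$ by a standard compactness argument. I would therefore structure the proof as: (1) reduce to showing $u_*$ is surjective and injective on all homotopy groups using compactly-supported representatives; (2) for surjectivity, lift a sphere's worth of manifolds to a sphere's worth of (manifold, regular value) pairs after a small perturbation making regular values locally constant; (3) for injectivity, given a lift of a disc, use the $\varphi_s$-reparametrisation homotopies to slide all chosen regular values together and then off to $+\infty$, exhibiting a nullhomotopy in $BD_\theta$ covering the given one in $\psi_\theta(n,1)$. The delicate point throughout is keeping the $\theta$-structures under control during these reparametrisations, but this is exactly what the extended functoriality~(\ref{eq:10}) and the construction in the proof of Theorem~\ref{thm:CobordismCategoryEquivalence} were designed to handle.
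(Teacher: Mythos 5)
Your proposal correctly identifies the overall framework — the fibres of $u$ are simplices whose vertices are regular values of $x_1$, non-empty by Sard's theorem, hence contractible, and a partition-of-unity/compactness argument should produce lifts. Up to that point you are in step with the paper. But the step where your argument diverges, and where there is a genuine gap, is the injectivity/homotopy step: you propose using the $\varphi_s$-reparametrisation homotopies from the proof of Theorem~\ref{thm:CobordismCategoryEquivalence} to slide regular values off to $+\infty$. This does not do the required job. Those $\varphi_s$ homotopies move the underlying manifold $M$ in $\psi_\theta(n,1)$, so the homotopy they produce in $BD_\theta(\R^n)$ covers a non-constant path in the base; it is \emph{not} a homotopy through lifts of the given fixed family $f|_{\partial D^m}$, which is what the relative-$\pi_m$ argument requires. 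You would be simultaneously changing the boundary data you are trying to fix, and then need to undo the change, at which point nothing has been gained. It also reintroduces exactly the $\theta$-structure bookkeeping that you flag as ``delicate'', for no benefit.

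The key observation that the paper uses — and which your proposal never invokes — is that the fibre over $M$ is not just contractible but sits inside the infinite simplex $B(\R,\leq)$ with a genuine affine structure (points of the fibre are formal affine combinations of regular values). Consequently any two lifts of a fixed $f$ can be joined by the \emph{straight-line} homotopy in this affine coordinate, which is automatically fibrewise (it never touches the underlying manifold $M$). This removes all reparametrisation of manifolds and all $\theta$-structure concerns from the proof in one stroke: one lift $g$ of $f$ comes from a partition of unity as you suggest, and the straight-line homotopy then connects $g|_{\partial D^m}$ to the given $\hat f$ through lifts, which closes the relative homotopy group computation. Your proposal would need to be rewritten around this affine structure (or some equivalent fibrewise mechanism) to actually close the argument.
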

\begin{proof}
  The inverse image of a point $M \in \psi_\theta(n,1)$ is a subspace
  of the infinite simplex $B(\R,\leq)$.  It is the simplex whose
  vertices is the space of regular values of $x_1: M \to \R$.  Points
  in the inverse image of $M$ can be represented as formal affine
  combinations (i.e.\ formal linear combinations where the
  coefficients are non-negative and sum to 1) of regular values of
  $x_1$.  This inverse image is contractible (the space of vertices is
  non-empty by Sard's theorem) which suggests the map might be a weak
  equivalence.  To give a rigorous proof we calculate the relative
  homotopy groups.

  Let
  \begin{align*}
    f: D^m &\to \psi_\theta(n,1) \\
    \hat f: \partial D^m &\to BD_\theta(\R^n)
  \end{align*}
  be continuous maps with $u \circ \hat f = f|_{\partial D^m}$.

  For $a \in \R$, let $U_a \subseteq D^m$ be the set of points $x$
  such that $a$ is a regular value of $x_1: f(x) \to \R$.  This is an
  open subset of $D^m$, so by compactness we can pick finitely many
  $a_1, \dots, a_k \in \R$ such that the $U_{a_i}$ cover $D^m$.  Pick
  a partition of unity $\lambda_1, \dots, \lambda_k: D^m \to [0,1]$
  subordinate to the cover.  Using $\lambda_i$ as a formal coefficient
  of $a_i$ gives a map
  \begin{align*}
    g: D^m \to BD_\theta(\R^n)
  \end{align*}
  which lifts $f$, i.e.\ $u \circ g = f$.  Finally we produce a
  homotopy between the two maps $g |_{\partial D^m}$ and $\hat f$.
  Since they are both lifts of $f|_{\partial D^m}$, we can just use the affine
  structure on the fibres of $u$ to give the straight-line homotopy.

  This proves that the relative homotopy groups (of $BD_\theta(\R^n)$
  as a subspace of the mapping cylinder of $u$) vanish and hence the
  map $u$ is a weak equivalence.
\end{proof}

We can now calculate the set of path components of $\psi_\theta(n,1)$.
We can define a product of two elements $W_1$ and $W_2$ of the space
$\psi_\theta(n,1)$ in the following way.  Take the union of the
disjoint manifolds $W_1$ and $W_2 + e_2$ and scale the second
coordinate by $1/2$.  This product
\begin{align*}
  \psi_\theta(n,1) \times \psi_\theta(n,1) \to \psi_\theta(n,1)
\end{align*}
makes $\psi_\theta(n,1)$ into an $H$-space (in fact it is an $E_{n-1}$
space) and hence $\pi_0 (\psi_\theta(n,1))$ is a monoid.  We have the
following corollary.
\begin{corollary}\label{cor:grouplike}
  The monoid $\pi_0 \psi_\theta(n,1)$ is isomorphic to the monoid of
  cobordism classes of $\theta_{d-1}$ manifolds in $\R^{n-1}$.  In
  particular the monoid is a group.
\end{corollary}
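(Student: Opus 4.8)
The plan is to read off this statement from the cobordism category, using the two equivalences just established, and then to exhibit inverses geometrically. By Theorem~\ref{thm:PosetModel} and Theorem~\ref{thm:CobordismCategoryEquivalence} there are natural bijections
\[
\pi_0 \psi_\theta(n,1) \;\cong\; \pi_0 BD_\theta(\bR^n) \;\cong\; \pi_0 BD_\theta^\perp(\bR^n) \;\cong\; \pi_0 BC_\theta(\bR^n),
\]
so it suffices to understand $\pi_0 BC_\theta(\bR^n)$. For any topological category $\mathcal{C}$, $\pi_0 B\mathcal{C}$ is the quotient of $\pi_0(\mathrm{ob}\,\mathcal{C})$ by the equivalence relation generated by the morphisms. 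In $\mathcal{C} = C_\theta(\bR^n)$ the objects are the $\theta_{d-1}$-manifolds in $\bR^{n-1}$ (elements of $\psi_{\theta_{d-1}}(n-1,0)$), the non-identity morphisms are exactly the $\theta$-cobordisms between them, and a path in the object space gives rise, after the smoothing of Lemma~\ref{lem:SmoothMapsAreContinuous} and the graph construction of Definition~\ref{defn:take-theta-graph}, to such a cobordism. Hence the relation generated by the morphisms is precisely the cobordism relation, and $\pi_0 BC_\theta(\bR^n)$ is the set of cobordism classes of $\theta_{d-1}$-manifolds in $\bR^{n-1}$.

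Next I would check that the $H$-space product on $\psi_\theta(n,1)$ corresponds, under these bijections, to disjoint union of cobordism classes. Tracing an element $[W]$ through $u^{-1}$ (a formal affine combination of regular values), and then through $i$ and $c$ after deforming $W$ to be cylindrical near a chosen regular value $a$ of $x_1$ by Lemma~\ref{Cylindrical} (which does not change its path component), shows that $[W]$ is sent to the cobordism class of $W \cap x_1^{-1}(a)$. With this description the claim is a matter of bookkeeping with regular values: given $W_1, W_2 \in \psi_\theta(n,1)$, Sard's theorem provides an $a$ that is a regular value of $x_1$ on both, and the level set of the side-by-side product $W_1 \cdot W_2$ at $a$ is (after rescaling the second coordinate) the disjoint union of the level sets of $W_1$ and $W_2$, i.e.\ the sum of the corresponding classes.

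Finally, for the ``in particular'' clause I would exhibit an inverse for each class. Given $M \in \psi_{\theta_{d-1}}(n-1,0)$ with structure $\ell \colon \epsilon^1 \oplus TM \to \theta^*\gamma$, let $\overline{M}$ denote $M$ equipped with the structure $\ell \circ \big((-1) \oplus \id_{TM}\big)$, obtained by negating the $\epsilon^1$-summand; this is again a $\theta_{d-1}$-structure. Then the ``bent cylinder'' $M \times [0,1]$, carrying the $\theta$-structure on $T(M \times [0,1]) \cong \epsilon^1 \oplus TM$ given by $\ell$ (using the $[0,1]$-coordinate for the $\epsilon^1$-summand), can be embedded in $[0,t] \times \bR^{n-1}$ as a morphism of $C_\theta(\bR^n)$ from $M \sqcup \overline{M}$ to $\emptyset$. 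This shows $[M] + [\overline{M}] = [\emptyset] = 0$ in $\pi_0 BC_\theta(\bR^n)$, so every element is invertible and the monoid is a group.

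I expect the main obstacle to be the routine but fiddly compatibility verifications rather than any conceptual difficulty: namely, that the displayed chain of bijections genuinely respects products (the regular-value bookkeeping above), and that the bent cylinder can be embedded with the correct collar behaviour, i.e.\ that its $\theta$-structure really does restrict to the standard cylindrical structures $\bR \times (M \sqcup \overline{M})$ and $\bR \times \emptyset$ near its two ends, so that it is a bona fide morphism of $C_\theta(\bR^n)$.
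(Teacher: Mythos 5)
Your argument is essentially the same as the paper's: reduce to $\pi_0 BC_\theta(\bR^n)$ via Theorems \ref{thm:PosetModel} and \ref{thm:CobordismCategoryEquivalence}, identify the monoid with $\theta_{d-1}$-cobordism classes under disjoint union, and exhibit inverses via a bent cylinder. The one (cosmetic) difference is that the paper's inverse $-M$ is defined by reflecting $\bR\times M$ in the first two $\bR^n$-coordinates --- which is precisely what the explicit bend $e\times\mathrm{Id}_{I^{n-2}}$, an embedding of $\bR\times I^{n-1}$ into itself, deposits at the far arm --- whereas your $\overline{M}$ keeps the underlying submanifold of $\bR^{n-1}$ fixed and only negates the $\epsilon^1$-summand of the structure map; the two descriptions agree as abstract $\theta_{d-1}$-manifolds (identify them via that reflection), which also resolves the embeddability worry you flag at the end, since your literal $\overline{M}$ cannot be placed disjointly from $M$ in $\bR^{n-1}$ without either performing that reflection or spending an extra ambient coordinate to untwist the bend.
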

\begin{proof}
  By Theorem \ref{thm:PosetModel} and Proposition \ref{prop:IdentifyingComponents} we have
  \begin{align*}
    \pi_0(\psi_\theta(n,1)) = \pi_0(BD_\theta(\R^n)) =
    \pi_0(BC_\theta(\R^n)),
  \end{align*}
  which can be identified by the set objects of $C_\theta$, modulo the
  equivalence relation generated by the morphisms.  This proves the
  first claim.

  The monoid structure on $\pi_0(BC_\theta(\R^n))$ comes from an
  $H$-space structure on the objects and morphisms of $C_\theta$,
  defined by disjoint union in the second coordinate direction of
  $\R^n$, as for $\psi_\theta(n,1)$.  To see that it is a group, let
  $M \in \psi_{\theta_{d-1}}(n-1,0)$ be an object, and let $\R \times
  M \in \psi_\theta(n,1)$ be the corresponding cylindrical element.
  There is another object ``$-M$'', such that $\R \times (-M)$ is
  obtained from $\R \times M$ by changing signs of the first two
  coordinates in $\R^n$.

There is an embedding $e$ of $\bR \times I$ into itself given by the following picture. Then $(e \times I^{n-2}) (\bR \times M)$ gives a morphism from $M \coprod -M$ to the empty manifold, showing that $[M]$ and $[-M]$ are inverse points in the monoid structure on $\pi_0(B\mathcal{C}_\theta(\bR^n))$.
\begin{center}
\includegraphics[bb = 145 597 264 721, scale=1]{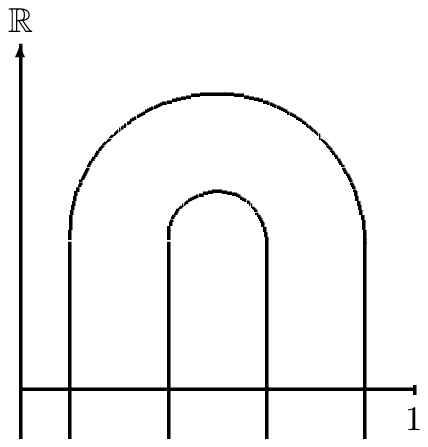}
\end{center}
\end{proof}

\subsection{The homotopy type of the space of all $\theta$-manifolds}
In this section we finish our proof of the main theorem of \cite{GMTW}. There is a space $\X(\bR^n)$ is defined by the fibre product
\begin{diagram}
\X(\bR^n) & \rTo & \X \\
\dTo^{\theta_{n}} & & \dTo^{\theta} \\
Gr_d(\bR^{n}) & \rTo^{inc} & BO(d)
\end{diagram}
where we consider $BO(d)$ as the infinite Grassmannian $Gr_d(\bR^\infty)$ and the inclusion map as that induced by $\bR^n \subset \bR^\infty$. Thus $\X(\bR^n)$ is simply that part of $\X$ which lies over those $d$-planes which are contained in $\bR^n \subset \bR^\infty$.
\begin{theorem}\label{thm:MainThmGMTW}
There is a weak homotopy equivalence
$$B \mathcal{C}_\theta(\bR^n) \simeq \psi_\theta(n, 1) \overset{\simeq} \lra \Omega^{n-1}\Th\big(\theta_{n}^*(\gamma_{d,n}^\perp) \to \X(\bR^n)\big).$$
Letting $n$ go to infinity, we get the main theorem of \cite{GMTW},
$$B \mathcal{C}_\theta \simeq \psi_\theta(\infty, 1) \overset{\simeq} \lra \Omega^{\infty-1} \MTtheta.$$
\end{theorem}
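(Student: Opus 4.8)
The plan is to establish the scanning/parametrised-Pontryagin--Thom equivalence $\psi_\theta(n,1) \simeq \Omega^{n-1}\Th(\theta_n^*(\gamma_{d,n}^\perp))$ and then combine it with the identifications already proved in Theorems \ref{thm:CobordismCategoryEquivalence} and \ref{thm:PosetModel}. Indeed, those two results give $B\mathcal{C}_\theta(\bR^n) \simeq BD_\theta(\bR^n) \simeq \psi_\theta(n,1)$, so the entire content of the theorem is the second weak equivalence, after which one lets $n\to\infty$ and uses that $\Th(\theta_n^*(\gamma_{d,n}^\perp) \to \X(\bR^n))$ is the $(n)$-th space of the spectrum $\MTtheta = \X^{-\theta}$ (its structure maps coming from the inclusions $\bR^n \subset \bR^{n+1}$), so that $\colim_n \Omega^{n-1}\Th(\cdots) = \Omega^{\infty-1}\MTtheta$ by commuting the colimit past the loops, which is legitimate since the relevant connectivity grows with $n$.

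For the core equivalence I would proceed by a downward induction on $k$, showing that each of the maps
\begin{align*}
\psi_\theta(n,k) \to \Omega \psi_\theta(n,k+1)
\end{align*}
adjoint to an appropriate scanning construction (using a regular value of $x_{k+1}$, i.e.\ the partially defined map~(\ref{eq:19}) thought of in a family) is a weak equivalence for $k \geq 1$, together with the base case identifying $\psi_\theta(n,n)$ with the Thom space $\Th(\theta_n^*(\gamma_{d,n}^\perp))$. The base case is essentially a definition-chase: an element of $\psi_\theta(n,n)$ is a compact $\theta$-manifold in $(-1,1)^n$ together with its structure, and a small-manifold/graphical argument (in the spirit of Lemma~\ref{lem:SmoothMapsAreContinuous}) identifies the one-point compactification with the Thom space of the complementary bundle over $\X(\bR^n)$. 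For the inductive step one uses Proposition~\ref{prop:IdentifyingComponents} to handle $\pi_0$, and for higher homotopy groups one runs a relative-homotopy-group argument exactly parallel to the proof of Theorem~\ref{thm:PosetModel}: given a sphere's worth of families in $\psi_\theta(n,k+1)$ with a nullhomotopy on the boundary, use Sard's theorem and compactness to find finitely many common regular values, use Lemma~\ref{Cylindrical} to make the families cylindrical near those slices without changing anything away from them, and then the cylindrical pieces exhibit the required lift and homotopy in $\psi_\theta(n,k)$.

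The main obstacle is the inductive step, specifically making the ``scanning map is a weak equivalence'' argument precise while juggling the tangential structures under the various graph and intersection operations of Definitions~\ref{defn:cross-with-R}, \ref{defn:take-theta-graph} and \ref{defn:IntersectionMap}. Concretely, one must check that cutting along a regular value and re-crossing with $\bR$ are mutually inverse up to the canonical homotopies supplied by Lemma~\ref{Cylindrical}, and that these homotopies can be performed coherently over a compact family and rel boundary; this is where the bulk of the technical bookkeeping lives, and where one needs to be careful that the splittings~(\ref{eq:20}),~(\ref{eq:22}) and the extended functoriality~(\ref{eq:10}) genuinely assemble into a continuous family of $\theta$-structures. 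The passage to the limit $n\to\infty$ is comparatively routine: one notes the maps $\psi_\theta(n,1) \to \Omega\psi_\theta(n+1,1)$ are highly connected (connectivity tending to infinity, by a transversality/general-position count on the codimension of the ``bad'' locus), so $\psi_\theta(\infty,1) := \colim_n \psi_\theta(n,1)$ has the homotopy type of the colimit of the loop spaces, which is $\Omega^{\infty-1}\MTtheta$ by definition of the Thom spectrum $\X^{-\theta}$.
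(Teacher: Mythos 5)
Your plan has the right three-step shape (deloop to $\Omega^{n-1}\psi_\theta(n,n)$, identify $\psi_\theta(n,n)$ with the Thom space, pass to the colimit), but several of the details you supply are not what the paper does, and at least two of them do not work as stated.

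First, you conflate the delooping map~(\ref{eq:18}) with the intersection map~(\ref{eq:19}). The map whose weak-equivalence status is asserted in Theorem~\ref{thm:main-cpt-3} is the adjoint of the \emph{translation} map $(t,M)\mapsto M - t\cdot e_k$, which keeps both $n$ and $d$ fixed and only relaxes the bound in the $k$-th coordinate. The partially defined map~(\ref{eq:19}) is $M\mapsto M\cap x_k^{-1}(a)$, which drops both the ambient dimension $n$ and the manifold dimension $d$ by one; it is used to identify $\pi_0$ (Proposition~\ref{prop:IdentifyingComponents}) but is not the map being delooped. An argument built around~(\ref{eq:19}) ``in families'' is analyzing the wrong map.

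Second, the paper does not prove the delooping by a relative-homotopy-group computation on~(\ref{eq:18}). It proves it by a group-completion argument: a simplicial space $N_\bullet\psi_\theta(n,k-1)$ is constructed, its realization is identified with $\psi_\theta^0(n,k)$ (Proposition~\ref{prop:IdentifyPsi0}), the inclusion $\psi_\theta^0(n,k)\to\psi_\theta^\emptyset(n,k)$ is shown to be a weak equivalence (Proposition~\ref{prop:Psi0IntoPsiDot}), and then Lemma~\ref{lem:monoid} closes the argument. This last step crucially uses that $\pi_0\psi_\theta(n,1)$ is a group (Corollary~\ref{cor:grouplike}); your proposal never mentions grouplikeness, and without it no version of this argument can close. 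Moreover, the map~(\ref{eq:18}) is not an inclusion, so the relative-$\pi_*$ technique of Theorem~\ref{thm:PosetModel} does not apply to it directly. The one place a relative-$\pi_*$ argument appears (Proposition~\ref{prop:Psi0IntoPsiDot}) is between two subspaces of $\psi_\theta(n,k)$, and the essential step there is not merely ``make the slices cylindrical via Lemma~\ref{Cylindrical}'' but to use the contractibility of the $V_i$ to choose homotopies $\Lambda_i$ from $\lambda_i$ to the empty manifold and then push the relevant level sets off to $\emptyset$; cylindricality alone is only preparatory.

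Third, $\psi_\theta(n,n)=\Psi_\theta(\bR^n)$ is the space of \emph{all} closed (as subsets, generally non-compact) $d$-submanifolds of $\bR^n$ with $\theta$-structure, not ``compact $\theta$-manifolds in $(-1,1)^n$'' --- you have described $\psi_\theta(n,0)$. The identification of Theorem~\ref{thm:Scanning} is therefore more than a definition-chase: one deformation-retracts the subspace of manifolds containing the origin onto the space $L^\theta\simeq\X(\bR^n)$ of linear $\theta$-planes by a scaling homotopy, embeds a neighbourhood of this subspace via the normal bundle, and separately contracts the complement (manifolds lacking a unique closest point to the origin) to $\emptyset$ by pushing to infinity.
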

Theorem \ref{thm:MainThmGMTW} will be proved in Theorems \ref{thm:main-cpt-3} and \ref{thm:Scanning} below. First we construct the relevant maps. Consider the map
\begin{align}\label{eq:16}
\begin{aligned}
  \R \times \psi_\theta(n,k-1) &\to \psi_\theta(n,k)\\
  (t,M) &\mapsto M - t \cdot e_k,
\end{aligned}
\end{align}
where $e_k \in \R^n$ is the $k$th standard basis vector and $M - t \cdot e_k$
denotes the inverse image of $M$ under the diffeomorphism $x \mapsto x
+ t \cdot e_k$. This is a continuous map of spaces as it is induced by the action of the subgroup $\bR \subseteq \Diff(\bR^n)$.

If we let $\emptyset \in \psi_\theta(n,k)$ be the
basepoint, then~(\ref{eq:16}) extends uniquely to a continuous map
$S^1 \wedge \psi_\theta(n,k-1) \to \psi_\theta(n,k)$ with adjoint map
\begin{align}\label{eq:18}
  \psi_\theta(n,k-1) \to \Omega \psi_\theta(n,k).
\end{align}
\begin{theorem}\label{thm:main-cpt-3}
  The map~(\ref{eq:18}) is a homotopy equivalence for $k \geq 2$.
  Consequently there is a homotopy equivalence
  \begin{align*}
    \psi_\theta(n,1) \stackrel{\simeq}{\longrightarrow} \Omega^{n-1} \psi_\theta(n,n).
  \end{align*}
\end{theorem}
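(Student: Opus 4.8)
The plan is to reduce the ``consequently'' to the first assertion, and then to prove the first assertion by a scanning argument.

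\emph{The ``consequently''.} The displayed map $\psi_\theta(n,1)\to\Omega^{n-1}\psi_\theta(n,n)$ is the composite
\[
  \psi_\theta(n,1)\xrightarrow{(\ref{eq:18})}\Omega\psi_\theta(n,2)\xrightarrow{\Omega(\ref{eq:18})}\Omega^2\psi_\theta(n,3)\to\cdots\to\Omega^{n-1}\psi_\theta(n,n),
\]
whose $j$-th arrow is $\Omega^{j-1}$ applied to the instance of~(\ref{eq:18}) with $k=j+1$. Granting the first assertion each arrow is a weak equivalence; since $\Omega$ preserves weak equivalences (the spaces occurring here are sufficiently well-behaved, e.g.\ have the homotopy type of CW complexes by the smoothing results of \S\ref{SoM}) the composite is one too.

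\emph{Strategy for the first assertion.} Regard $x_k$ as a ``time'' coordinate and identify $\psi_\theta(n,k-1)$ with the closed subspace of $\psi_\theta(n,k)$ consisting of those $M$ with $x_k(M)\subseteq(-1,1)$; then~(\ref{eq:18}) sends such an $M$ to the loop $t\mapsto M-t\cdot e_k$, which is based at $\emptyset$ because $M$ is bounded in the $x_k$-direction and so leaves every compact set as $|t|\to\infty$. I would show this scanning map is a weak equivalence by the usual quasifibration device: construct an auxiliary space $\mathcal E$ --- morally a space of half-infinite $\theta$-cobordisms in $\R^k\times(-1,1)^{n-k}$ carrying the extra datum of a real number $t$ beyond which the cobordism has been pushed off toward $x_k=+\infty$ --- with a forgetful map $\pi\colon\mathcal E\to\psi_\theta(n,k)$, arranged so that: (i) $\pi$ is a quasifibration; (ii) $\mathcal E$ is weakly contractible, via the homotopy pushing ever further toward $x_k=+\infty$; (iii) $\pi^{-1}(\emptyset)$ is identified with $\psi_\theta(n,k-1)$; and (iv) the connecting map of $\pi$ is homotopic to~(\ref{eq:18}). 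The long exact sequence of $\pi$ then yields $\psi_\theta(n,k-1)=\pi^{-1}(\emptyset)\simeq\Omega\psi_\theta(n,k)$.

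\emph{The main obstacle.} The real content is the construction of $\mathcal E$ together with the proof that $\pi$ is a quasifibration, i.e.\ that locally over $\psi_\theta(n,k)$ the map $\pi$ is, up to homotopy, a product projection. This is where the apparatus of \S\ref{SoM} and \S\ref{sec:SoaM} enters: the sheaf property of $\Psi_\theta$ (Theorem~\ref{thm:sheaf}) to reduce to local statements along the $x_k$-line, the cylindricalisation Lemma~\ref{Cylindrical} and the stretching maps~(\ref{eq:10}) to build the required local deformations compatibly with tangential structures, and Sard's theorem applied to $x_k$ in families to keep regular values available --- the overall shape being that of the proof of Theorem~\ref{thm:PosetModel}, but more elaborate. (One could instead try to exhibit $\psi_\theta(n,k)$ as a classifying space of $\psi_\theta(n,k-1)$ equipped with the grouplike, homotopy-associative $H$-space structure given by disjoint union in the $x_k$-direction --- grouplike by the argument of Corollary~\ref{cor:grouplike} --- and invoke the group-completion theorem; but making a bar resolution interact correctly with the behaviour of manifolds near $x_k=\pm\infty$ runs into the same difficulties.)
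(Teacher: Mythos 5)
Your reduction of the ``consequently'' to the $k$-by-$k$ case is correct and matches the paper's intent. For the first assertion, however, your main route is different from the paper's and is not carried to completion.

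The paper does not use a quasifibration argument with an auxiliary space $\mathcal E$. Instead it builds a bar-like simplicial space $N_\bullet\psi_\theta(n,k-1)$ (Definition~\ref{defn:cat-as-poset}), whose $l$-simplices are pairs $(0<t_0\le\dots\le t_l<1,\,M)$ with $M\in\psi_\theta^A(n,k)$ for $A=\R-\{t_0,\dots,t_l\}$. Lemma~\ref{lem:monoid} (a Segal-type delooping criterion for group-like simplicial spaces, with group-likeness supplied by Corollary~\ref{cor:grouplike}) gives $\psi_\theta(n,k-1)\simeq\Omega|N_\bullet\psi_\theta(n,k-1)|$; Proposition~\ref{prop:IdentifyPsi0} identifies $|N_\bullet\psi_\theta(n,k-1)|$ with the subspace $\psi_\theta^0(n,k)$ of manifolds having an empty level set of $x_k$ inside $(0,1)$; and Proposition~\ref{prop:Psi0IntoPsiDot} shows $\psi_\theta^0(n,k)\hookrightarrow\psi_\theta^\emptyset(n,k)$ is a weak equivalence, using Lemma~\ref{Cylindrical} to make families cylindrical at prescribed heights and then contracting the cylindrical slice to $\emptyset$ via graphs of paths in $\psi_{\theta_{d-1}}(n-1,k-1)$. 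This is precisely the approach you parenthetically consider and set aside as running ``into the same difficulties''; in fact the paper's Propositions~\ref{prop:IdentifyPsi0}--\ref{prop:Psi0IntoPsiDot} are exactly the resolution of the difficulty you flag about behaviour near $x_k=\pm\infty$, so your dismissal of route (b) is a misjudgment.

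As for your primary route (a), there is a genuine gap: the auxiliary space $\mathcal E$ is described only ``morally'', and neither its construction nor the proof that $\pi\colon\mathcal E\to\psi_\theta(n,k)$ is a quasifibration is given. You acknowledge that this is ``the real content,'' which it is --- all of the geometric difficulty (sheaf-theoretic patching, cylindricalisation, keeping regular values in families, compatibility of $\theta$-structures with the stretching maps~(\ref{eq:10})) would have to be redone in that framework, and there is no shortcut around a local-to-global argument of roughly the same complexity as Proposition~\ref{prop:Psi0IntoPsiDot}. Until $\mathcal E$ is constructed and the Dold--Thom criterion is verified on an appropriate filtration of $\psi_\theta(n,k)$, the first assertion of the theorem remains unproved on your route.
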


The homotopy equivalence~(\ref{eq:18}) will be proved in three steps, Propositions \ref{prop:DeloopPsi}, \ref{prop:IdentifyPsi0} and \ref{prop:Psi0IntoPsiDot}.  Let $k \geq 2$ be fixed.  The idea of the proof is to instead prove a homotopy
equivalence $B\psi_\theta(n,k-1) \simeq \psi_\theta(n,k)$ for a suitable ``monoid
structure'' on $\psi_\theta(n,k-1)$.  In fact it is convenient to work with
something which is not quite a monoid structure, but which contains the same
homotopical information.  We recall the following well known lemma.
\begin{lemma}\label{lem:monoid}
  Let $X_\bullet$ be a simplicial space such that the face maps induce
  a homotopy equivalence $X_k \simeq X_1 \times \dots \times X_1$.
  (When $k = 0$ this means that $X_0$ is contractible).  Then the
  natural map
  \begin{align*}
    X_1 \to \Omega \|X_\bullet\|
  \end{align*}
  is a homotopy equivalence if and only if $X_\bullet$ is
  \emph{group-like} i.e.\ $\pi_0 X_1$ is a group with respect to the
  product induced by $d_1: X_2 \to X_1$.
\end{lemma}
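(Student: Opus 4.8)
The statement to prove is Lemma~\ref{lem:monoid}, the well-known delooping criterion for group-like simplicial spaces.

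\medskip

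\textbf{Proof proposal.} The plan is to invoke the standard theory of classifying spaces of simplicial spaces, reducing the statement to the ``realization fibration'' / group-completion package. First I would recall that for \emph{any} simplicial space $X_\bullet$ there is a natural map $X_1 \to \Omega\|X_\bullet\|$, obtained for instance by viewing $X_1$ as the fibre over a point of the map from the path space, or more concretely by the inclusion $X_1 = \|X_\bullet^{(1)}\|$ of the $1$-skeleton composed with the canonical map to loops coming from the two degeneracies $X_0 \rightrightarrows X_1$ collapsing to the basepoint (using that $X_0$ is contractible, which follows from the hypothesis in the case $k=0$). The key structural input is Segal's theorem: since the Segal maps $X_k \to X_1 \times \cdots \times X_1$ are homotopy equivalences, $X_\bullet$ is a \emph{special} (Segal) simplicial space, hence determines up to homotopy an $A_\infty$- (indeed $E_1$-) structure on $X_1$, and the bar construction $\|X_\bullet\|$ is a model for its classifying space $BX_1$.

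\medskip

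The main step is then the comparison $X_1 \to \Omega B X_1$. For this I would use the theorem of Segal (and May, and the treatment in \cite{GMTW} or \cite{galatius-2006}) that when $X_\bullet$ is special, the square
\begin{equation*}
\begin{CD}
X_1 @>>> PX_\bullet\\
@VVV @VVV\\
\ast @>>> \|X_\bullet\|
\end{CD}
\end{equation*}
is homotopy cartesian provided a path-lifting condition holds, and that condition is satisfied precisely when $\pi_0 X_1$ is a group under the induced product $d_1 \colon X_2 \to X_1$; this is the ``group completion'' statement. Concretely: one always has a fibration-up-to-homotopy with fibre the homotopy fibre $F$ of $X_1 \to \Omega\|X_\bullet\|$-type map, and a Mayer--Vietoris / group-completion argument (as in McDuff--Segal) identifies $H_*(\Omega\|X_\bullet\|)$ with the localization $H_*(X_1)[\pi_0^{-1}]$; when $\pi_0$ is already a group this localization is trivial and the map is a homology equivalence, and since $\Omega\|X_\bullet\|$ is a loop space (hence simple) and $X_1$ has the appropriate fundamental group acting trivially, a homology equivalence upgrades to a weak equivalence. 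Conversely, if $\pi_0 X_1$ is \emph{not} a group, then $\pi_0(\Omega\|X_\bullet\|)$ is the group completion of $\pi_0 X_1$, which differs from $\pi_0 X_1$, so the map cannot be an equivalence — giving the ``only if'' direction for free.

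\medskip

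The hard part, and the one I would spend the most care on, is the group-completion step: verifying that specialness plus the group condition on $\pi_0$ forces the homotopy-cartesian square above, i.e. that the natural map from $X_1$ to the homotopy fibre of $PX_\bullet \to \|X_\bullet\|$ is an equivalence. This requires either citing the appropriate form of the group-completion theorem for (non-commutative, $A_\infty$) topological monoids, or running the spectral-sequence argument by hand using that the simplicial space is good (degeneracies are cofibrations) so that $\|X_\bullet\|$ is computed by the bar spectral sequence with $E^2_{p,q} = H_p(\text{bar of } \pi_0 X_1; \mathcal{H}_q)$, and then observing that when $\pi_0 X_1$ is a group this collapses to give the stated loop-space identification. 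Everything else — the existence and naturality of the map $X_1 \to \Omega\|X_\bullet\|$, the reduction via the Segal maps, the contractibility of $X_0$ — is formal and I would treat it briskly.
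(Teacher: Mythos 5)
Your proposal has the right overall shape and, at the top level, matches what the paper does: both reduce the lemma to the classical statement that $M \simeq \Omega BM$ for a group-like topological ($A_\infty$-) monoid, and the paper disposes of the whole thing by citing Segal's Proposition~1.5 from \emph{Categories and cohomology theories}. Your ``only if'' direction (via $\pi_1\|X_\bullet\|$ being the group completion of $\pi_0 X_1$) is fine, and your observation that the Segal condition makes $\|X_\bullet\|$ a model for $BX_1$ is exactly the right reduction.

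Where I would push back is on the mechanism you propose for the ``if'' direction. You reach for the McDuff--Segal group-completion theorem, identifying $H_*(\Omega\|X_\bullet\|)$ with $H_*(X_1)[\pi_0^{-1}]$ and then arguing that when $\pi_0 X_1$ is a group the localization is trivial. But the McDuff--Segal localization statement requires the left and right $\pi_0$-actions on $H_*(X_1)$ to be compatible in a way that in practice comes from homotopy commutativity of the monoid; a Segal space $X_\bullet$ gives only an $A_\infty$-structure, with no commutativity, so that theorem is not directly available. Fortunately it is also unnecessary: when $\pi_0$ is already a group you do not need group completion at all. The correct and more elementary route (and the one Segal's Prop.~1.5 takes, as does May's treatment via quasi-fibrations) is to show directly that the ``path space'' simplicial space $PX_\bullet \to \|X_\bullet\|$ is a quasi-fibration with fibre $X_1$ precisely because $\pi_0 X_1$ acts by equivalences (i.e.\ is group-like); no homology localization enters. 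You gesture at this with the aside about ``the appropriate form of the group-completion theorem for non-commutative $A_\infty$ monoids,'' but that is really a different theorem, and the argument you actually sketch is the commutative one, so as written there is a genuine gap. One smaller point: the paper is careful to define $\Omega\|X_\bullet\|$ as the space of paths beginning and ending somewhere in $X_0 \subset \|X_\bullet\|$ (and then records the further remark that $\Omega_{a,b}\|X_\bullet\|$ is independent of $a,b \in X_0$ up to equivalence); this is needed to make the natural map from $X_1$ well-defined and should be stated explicitly rather than left implicit.
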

\begin{proof}
  Here $\Omega \|X_\bullet\|$ should be taken to mean the space of
  paths in the fat realisation $\|X_\bullet\|$ which start and end
  somewhere in $X_0 \subset \|X_\bullet\|$.  This space receives a natural map
  from $X_1$.  The lemma is a well known variant of the fact that $M
  \simeq \Omega BM$ if $M$ is a group-like topological monoid.  See \cite[Prop. 1.5]{SegalCatCohom}.

Given any pair $a, b \in X_0$, the space $\Omega_{a,b} \|X_\bullet\|$ of paths starting at $a$ and ending at $b$ is homotopy equivalent to $\Omega \|X_\bullet\|$ defined above.
\end{proof}
Instead of defining an actual monoid structure on $\psi_\theta(n,k-1)$ we
can define a simplicial space satisfying the lemma.  By abuse of
notation we will call it $N_\bullet \psi_\theta(n,k-1)$ since it plays the
role of the nerve of a monoid.  First we have a preliminary
definition.
\begin{definition}
  Let $A \subseteq \R$ be an open subset, and $x_k : \R^n \to \R$ denote the projection on the $k$th coordinate.  Let
  $\psi_\theta^A(n,k)\subseteq \psi_\theta(n,k)$ denote the subset defined by
  requiring $M \cap x_k^{-1}(\R - A) = \emptyset$.  In particular
  $\psi_\theta(n,k-1) = \psi_\theta^{(-1,1)}(n,k)$.

  Let $\psi_\theta^A(n,k)' \subseteq \psi_\theta(n,k)$ be the subset defined by the
  condition $M \cap [-1,1]^n \cap x_k^{-1}(\R - A) = \emptyset$.
\end{definition}

\begin{lemma}\label{lem:monoid-properties}
\ 
  \begin{enumerate}[(i)]
  \item\label{item:4:lem:monoid-properties} The inclusion $\psi_\theta^A(n,k) \to \psi_\theta^A(n,k)'$ is a homotopy equivalence.
  \item\label{item:1:lem:monoid-properties} If $A_1, \dots, A_l \subseteq \R$ are disjoint
    open sets and $A = \cup A_i$ then union of subsets defines a
    homeomorphism
    \begin{align*}
      \prod \psi_\theta^{A_i}(n,k) \xrightarrow{\cong} \psi_\theta^A(n,k).
    \end{align*}
  \item\label{item:2:lem:monoid-properties} If $A = (a_0,a_1)$ is an open interval then
    $\psi_\theta^A(n,k) \cong \psi_\theta(n,k-1)$.
  \item\label{item:3:lem:monoid-properties} If $A = (-\infty,a)$ or $A= (a,\infty)$ for some
    $a \in \R$, then $\psi_\theta^A(n,k)$ and $\psi_\theta^A(n,k)'$ are
    contractible.
  \end{enumerate}
\end{lemma}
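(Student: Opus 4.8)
\textbf{Proof plan for Lemma~\ref{lem:monoid-properties}.}

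The plan is to treat the four parts essentially independently, with part~(\ref{item:2:lem:monoid-properties}) being purely formal and the others following from the scanning/translation techniques already developed for $\psi_\theta(n,k)$. First, for part~(\ref{item:2:lem:monoid-properties}): an affine orientation-preserving diffeomorphism $\bR \to \bR$ carrying $(a_0,a_1)$ to $(-1,1)$ induces, via the extended functoriality of the form $(\varphi \times \mathrm{Id})^{-1}$ from~(\ref{eq:10}) (here $\varphi$ is an actual diffeomorphism, so there is no transversality issue and the induced $\theta$-structure is unambiguous), a homeomorphism $\psi_\theta^{(a_0,a_1)}(n,k) \cong \psi_\theta^{(-1,1)}(n,k) = \psi_\theta(n,k-1)$; continuity in both directions is an instance of Theorem~\ref{thm:action-cont}. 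Part~(\ref{item:1:lem:monoid-properties}) is equally formal: if the $A_i$ are disjoint then a manifold $M \in \psi_\theta^A(n,k)$ is the disjoint union of its pieces $M \cap x_k^{-1}(A_i) \in \psi_\theta^{A_i}(n,k)$, and conversely disjoint unions over disjoint open strips are again embedded closed submanifolds; the map and its inverse are continuous by the sheaf property (Theorem~\ref{thm:sheaf}) together with Lemma~\ref{lem:cover}, since the relevant germs are detected on compacta lying in the separate strips.

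For part~(\ref{item:3:lem:monoid-properties}), say $A = (a,\infty)$ (the other case is symmetric under $x_k \mapsto -x_k$). The idea is to contract by pushing all of $M$ off to $x_k = +\infty$, exactly as in the Example after Step~3 in \S\ref{sec:definitions} where $\{t\}\times\bR$ converges to $\emptyset$. Concretely, choose a smooth family $\varphi_\tau : \bR \to \bR$, $\tau \in [0,1)$, of orientation-preserving embeddings with $\varphi_0 = \mathrm{Id}$ and with $\varphi_\tau$ mapping $\bR$ diffeomorphically onto $(a + \tfrac{\tau}{1-\tau}, \infty)$ (or similar), applied in the $x_k$-coordinate via $(\varphi_\tau \times \mathrm{Id})^{-1}$; as $\tau \to 1$ the image of any compact set eventually misses $M$, so the family extends continuously to $\tau = 1$ with value $\emptyset$. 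This gives a contraction of $\psi_\theta^A(n,k)$; the same family preserves the condition ``$M \cap [-1,1]^n \cap x_k^{-1}(\bR - A) = \emptyset$'' once one checks $\varphi_\tau(\bR - A) \subseteq \bR - A$ up to reparametrisation (it does, since $\varphi_\tau$ only pushes things in the positive direction), so $\psi_\theta^A(n,k)'$ is contractible by the same argument. Finally, part~(\ref{item:4:lem:monoid-properties}): the inclusion $\psi_\theta^A(n,k) \hookrightarrow \psi_\theta^A(n,k)'$ differs only in whether the constraint on $M$ is imposed everywhere or just inside the box $[-1,1]^n$; a homotopy inverse is given by first applying a slight shrink in the $x_k$-direction (again of the form $(\varphi \times \mathrm{Id})^{-1}$ with $\varphi$ an orientation-preserving embedding fixing $[-1,1]$ pointwise but compressing $\bR - [-1,1]$) so that the part of $M$ violating the global condition is swept into a region where it can be discarded, followed by a genuine deletion using a construction like that in Lemma~\ref{Cylindrical}.

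The main obstacle I expect is part~(\ref{item:4:lem:monoid-properties}): producing the deformation retraction of $\psi_\theta^A(n,k)'$ onto $\psi_\theta^A(n,k)$ requires one to continuously remove the portion of each $M$ lying outside $[-1,1]^n$ but inside $x_k^{-1}(\bR - A)$, and naive deletion is not continuous (removing a closed piece of a manifold is generically discontinuous). The honest fix is to combine a shrinking diffeomorphism in the $x_k$-direction with the techniques already established — one shrinks $\bR - A$ towards $\pm\infty$ so the offending part escapes every compact set and hence ``disappears'' in the inverse-limit topology exactly as in the Example — so that no actual cutting is performed, only the translation-type maps $(\varphi \times \mathrm{Id})^{-1}$ that we already know to be continuous by Theorem~\ref{thm:action-cont}. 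Verifying that this family is continuous at the limiting parameter, and that it is compatible with the $\theta$-structures (here it is, since $\varphi$ can be taken a genuine diffeomorphism wherever it is not eventually irrelevant), is the one place where some care is genuinely needed.
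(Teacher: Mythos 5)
Your treatments of parts (ii) and (iii) match the paper, and the idea in part (iv) -- pushing the manifold to infinity in the $x_k$-direction -- is right, but the specific family is off: a diffeomorphism $\varphi_\tau:\bR\to (a+\tfrac{\tau}{1-\tau},\infty)$ has $\varphi_\tau^{-1}(M)$ with $x_k$-values spread over all of $\bR$, so the homotopy leaves $\psi_\theta^A(n,k)$. The paper just translates, $(t,M) \mapsto M - \tfrac{t}{1-t}\cdot e_k$, which visibly preserves the constraint.

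The genuine gap is in part (i). You propose to shrink in the $x_k$-direction, fixing $[-1,1]$ and compressing the rest, in the hope of pushing the offending part of $M$ off to infinity. But the offending part does not have large $x_k$-coordinate. In the case where the lemma is actually used (Definition~\ref{defn:cat-as-poset}, where $A = \bR-\{t_0,\dots,t_l\}$ with all $t_i \in (0,1)$), any point $p\in M$ violating the global constraint has $x_k(p)\in \bR - A\subset(-1,1)$; it lies outside $[-1,1]^n$ solely because some coordinate among $x_1,\dots,x_{k-1}$ is large (the coordinates $x_{k+1},\dots,x_n$ are always in $(-1,1)$ by definition of $\psi_\theta(n,k)$). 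A deformation of the $x_k$-axis fixing $[-1,1]$ never moves such points, so it cannot remove them. The paper instead applies the shrinking isotopy in the \emph{first} $k-1$ coordinates: $j_t = (e_t)^{k-1}\times(\mathrm{Id})^{n-k+1}$, where $e_t$ is an isotopy of embeddings of $\bR$ from the identity to a diffeomorphism onto $(-1,1)$. Then $M\mapsto j_t^{-1}(M)$ is of the standard $\Emb\times\Psi\to\Psi$ form and hence continuous by Theorem~\ref{thm:action-cont}; at $t=1$ exactly the points with some $x_i\notin(-1,1)$ for $i<k$ are lost, the remainder lies in $[-1,1]^n$, and the $\psi_\theta^A(n,k)'$ condition then forces its $x_k$-values into $A$. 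No ``deletion'' is ever performed, and your fallback of shrinking $\bR - A$ itself to $\pm\infty$ is also unavailable when $\bR-A$ is bounded: any such $\varphi$ necessarily carries a bounded swath of $A$ along with it and does not preserve either $\psi_\theta^A(n,k)$ or $\psi_\theta^A(n,k)'$.
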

\begin{proof}
  For part~(\ref{item:4:lem:monoid-properties}), we prove that the inclusion is a deformation
  retract.  Pick an isotopy $e_t: \R \to \R$, $t \in [0,1]$ of
  embeddings, with $e_0$ the identity and $e_1(\R) = (-1,1)$.  Let
  \begin{align*}
    j_t = (e_t)^{k-1} \times (\mathrm{Id})^{n-k+1}: \R^n \to \R^n.
  \end{align*}
  This gives a path $[0,1] \to \Emb(\R^n,\R^n)$, and we define a
  homotopy
  \begin{align*}
    \psi_\theta(n,k) \stackrel{h_t}{\longrightarrow} \psi_\theta(n,k),\quad t \in [0,1] 
  \end{align*}
  by $h_t(M) = e_t^{-1}(M)$.  This restricts to a deformation
  retraction because the homotopy preserves $\psi_\theta^A(n,k)$ and
  $\psi_\theta^A(n,k)'$, starts at the identity, and $h_1$ maps $\psi_\theta^A(n,k)'$
  into $\psi_\theta^A(n,k)$.

  (\ref{item:1:lem:monoid-properties}) is obvious and for~(\ref{item:2:lem:monoid-properties}), a homeomorphism is
  obtained by stretching the $k$th coordinate using the affine map $f: \R
  \to \R$ with $f(a_0)=-1$ and $f(a_1)=1$.  For~(\ref{item:3:lem:monoid-properties}) it suffices to consider $\psi_\theta^A(n,k)$ with $A =
  (-\infty,a)$.  We define a homotopy by
  \begin{align*}
    [0,1] \times \psi_\theta^{A}(n,k) &\to \psi_\theta^{A}(n,k)\\
    (t,M) & \mapsto M - \tfrac{t}{1-t} \cdot e_k \\
    (1,M) & \mapsto \emptyset.
  \end{align*}
  This contracts $\psi_\theta^A(n,k)$ to the point $\emptyset$.
\end{proof}
\begin{definition}\label{defn:cat-as-poset}
  Let $N_\bullet\psi_\theta(n,k-1)$ be the simplicial space defined by
  letting $N_l\psi_\theta(n,k-1)\subseteq \R^{l+1} \times \psi_\theta(n,k)$ be the
  set of pairs $(t,M)$ such that $0 < t_0 \leq \dots \leq t_l < 1$ and
  such that $M \in \psi_\theta^A(n,k)$ with $A = \R - \{t_0 , \dots t_l\}$.
\end{definition}
\begin{lemma}
  $N_\bullet \psi_\theta(n,k-1)$ satisfies the assumptions of Lemma~\ref{lem:monoid}.
\end{lemma}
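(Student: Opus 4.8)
The plan is to verify the two hypotheses of Lemma~\ref{lem:monoid} for $X_\bullet=N_\bullet\psi_\theta(n,k-1)$: that for every $l$ the ``spine'' map $N_l\psi_\theta(n,k-1)\to N_1\psi_\theta(n,k-1)^{\times l}$, assembled from the face maps retaining only the consecutive pairs of markings $t_{i-1}\le t_i$, is a homotopy equivalence (the case $l=0$ being the assertion that $N_0\psi_\theta(n,k-1)$ is contractible), and that the induced monoid structure on $\pi_0 N_1\psi_\theta(n,k-1)$ is a group. The case $l=0$ is easy: $N_0\psi_\theta(n,k-1)$ is the space of pairs $(t_0,M)$ with $0<t_0<1$ and $M\in\psi_\theta^{\R-\{t_0\}}(n,k)$, and translating in the $x_k$-direction so as to carry $t_0$ to $1/2$ deformation retracts it onto $\{1/2\}\times\psi_\theta^{\R-\{1/2\}}(n,k)$; by Lemma~\ref{lem:monoid-properties}(\ref{item:1:lem:monoid-properties}) this last space is $\psi_\theta^{(-\infty,1/2)}(n,k)\times\psi_\theta^{(1/2,\infty)}(n,k)$, which is contractible by Lemma~\ref{lem:monoid-properties}(\ref{item:3:lem:monoid-properties}).

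For $l\ge 1$, let $N_l^\circ\subseteq N_l\psi_\theta(n,k-1)$ be the open subspace on which $t_0<t_1<\dots<t_l$. Over this locus, Lemma~\ref{lem:monoid-properties}(\ref{item:1:lem:monoid-properties}) provides, naturally in the markings, a homeomorphism
\begin{align*}
\psi_\theta^{\R-\{t_0,\dots,t_l\}}(n,k)\;\cong\;\psi_\theta^{(-\infty,t_0)}(n,k)\times\Big(\prod_{i=1}^{l}\psi_\theta^{(t_{i-1},t_i)}(n,k)\Big)\times\psi_\theta^{(t_l,\infty)}(n,k),
\end{align*}
whose two outer factors are contractible by Lemma~\ref{lem:monoid-properties}(\ref{item:3:lem:monoid-properties}) and whose inner factors are identified with $\psi_\theta(n,k-1)$ by affine rescaling of the coordinate $x_k$, using Lemma~\ref{lem:monoid-properties}(\ref{item:2:lem:monoid-properties}). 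Since the base $\{0<t_0<\dots<t_l<1\}$ is contractible, projecting onto the inner factors exhibits a homotopy equivalence $N_l^\circ\to\psi_\theta(n,k-1)^{\times l}$, and this map factors as $N_l^\circ\to (N_1^\circ)^{\times l}\to\psi_\theta(n,k-1)^{\times l}$, where the first map is the spine map and the second is the $l$-th power of the corresponding equivalence in the case $l=1$. By the two-out-of-three property the spine map $N_l^\circ\to (N_1^\circ)^{\times l}$ is therefore a homotopy equivalence. As the spine map of $N_l\psi_\theta(n,k-1)$ lands in $(N_1^\circ)^{\times l}$ and restricts there to this one, it remains to prove --- and this is the crux of the argument --- that the inclusion $N_l^\circ\hookrightarrow N_l\psi_\theta(n,k-1)$ is a homotopy equivalence for each $l$.

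To do this I will deform $N_l\psi_\theta(n,k-1)$ into $N_l^\circ$ using the monotone-reparametrisation functoriality~(\ref{eq:10}). Given $(t_0\le\dots\le t_l,M)$, choose --- continuously in the markings --- a path $\varphi_s\colon\R\to\R$, $s\in[0,1]$, of smooth non-decreasing maps with $\varphi_0=\id$ such that $\varphi_1$ collapses a small closed interval around each distinct marking onto that marking, and is a diffeomorphism elsewhere. Then $(\varphi_1\times\id)^{-1}(M)\in\psi_\theta(n,k)$ avoids an open slab about each of the hyperplanes $x_k=t_i$ --- simply because $\varphi_1$ carries that slab into the hyperplane, which $M$ misses, regardless of how $M$ may accumulate onto it --- so the $l+1$ markings can be re-distributed inside these slabs to become pairwise distinct. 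Running $s$ from $0$ to $1$ gives the desired homotopy from the identity of $N_l\psi_\theta(n,k-1)$ to a map into $N_l^\circ$, which on $N_l^\circ$ is homotopic to the identity. The delicate point, which is where I expect the real work to lie, is arranging these reparametrisations, and the accompanying redistribution of markings, to depend continuously on $(t_0,\dots,t_l)$ across the strata where markings collide.

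Finally, for group-likeness: the face map $d_1\colon N_2\psi_\theta(n,k-1)\to N_1\psi_\theta(n,k-1)$ realises the monoid product on $\pi_0 N_1\psi_\theta(n,k-1)\cong\pi_0\psi_\theta(n,k-1)$ as stacking of manifolds in the $x_k$-direction, which on path components agrees with disjoint union. By Proposition~\ref{prop:IdentifyingComponents} and Corollary~\ref{cor:grouplike} this is the group of cobordism classes of the relevant $(d-k+1)$-dimensional manifolds with tangential structure, so $N_\bullet\psi_\theta(n,k-1)$ is group-like and Lemma~\ref{lem:monoid} applies.
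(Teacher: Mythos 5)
Your overall plan is exactly the unpacking that the paper's one--sentence proof (``This follows immediately from Lemma~\ref{lem:monoid-properties}.'') must intend: the $l=0$ contractibility, the decomposition
$\psi_\theta^{\R-\{t_0,\dots,t_l\}}(n,k)\cong\prod\psi_\theta^{A_i}(n,k)$
over the open locus $N_l^\circ$ of distinct markings, the identification of inner factors with $\psi_\theta(n,k-1)$ and of outer factors with contractible spaces, the commuting--square/two--out--of--three reduction, and the appeal to Corollary~\ref{cor:grouplike} for group--likeness are all correct. (One imprecision in phrasing: the spine map of $N_l$ does \emph{not} land in $(N_1^\circ)^{\times l}$ in general --- only its restriction to $N_l^\circ$ does --- but your subsequent reduction is the correct commuting--square argument, so this is only a wording slip.)

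The genuine gap is the step you yourself flag: you reduce everything to the assertion that the inclusion $N_l^\circ\hookrightarrow N_l\psi_\theta(n,k-1)$ is a weak equivalence, describe the natural deformation via the monotone functoriality~(\ref{eq:10}) (collapse a small interval about each distinct marking, then redistribute), and then explicitly leave the continuity ``across the strata where markings collide'' as an acknowledged open issue. That is precisely where the content lies: as described, the collapsing map $\varphi_1$ has a combinatorial shape (one collapsed interval per \emph{distinct} marking) that jumps discontinuously as markings coalesce, so continuity of the retraction is not at all automatic. A clean way to close the gap is to avoid the variable combinatorics altogether: fix reference values $0<\hat t_0<\dots<\hat t_l<1$ and retract $N_l$ onto the slice with markings $(\hat t_0,\dots,\hat t_l)$, namely onto $\psi_\theta^{\R-\{\hat t_0,\dots,\hat t_l\}}(n,k)\subseteq N_l^\circ$. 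To do so, build $\varphi_s$ for $s\in[0,1]$ by a \emph{single} formula depending continuously on $(s,t_0,\dots,t_l)$, smooth and non-decreasing, with $\varphi_0=\id$ and $\varphi_1(\hat t_i)=t_i$, arranged so that any critical values of $\varphi_s$ lie in $\{t_0,\dots,t_l\}$ (which $M$ avoids), so that $\varphi_s\times\id$ is automatically transverse to $M$ and~(\ref{eq:10}) applies. Because the formula is uniform, as the $t_i$ collide the map merely degenerates continuously to having a flat plateau on $[\hat t_i,\hat t_{i+1}]$, and the retraction is continuous across all strata. It restricts to a retraction of $N_l^\circ$ onto the same slice (the interpolated markings remain distinct when the $t_i$ are), which gives $N_l^\circ\hookrightarrow N_l$ the desired two--out--of--three. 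Without some such argument the proof as written is incomplete at its crux.
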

\begin{proof}
  This follows immediately from Lemma~\ref{lem:monoid-properties}.
\end{proof}

It remains to see that $\pi_0N_1 \psi_\theta(n,k-1)$ is a group, but this follows
from Corollary \ref{cor:grouplike}.  We have proved
\begin{proposition}\label{prop:DeloopPsi}
  The natural map
  \begin{align*}
    \psi_\theta(n,k-1) \to \Omega |N_\bullet \psi_\theta(n,k-1)|
  \end{align*}
  is a homotopy equivalence.\qed
\end{proposition}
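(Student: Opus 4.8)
The plan is to deduce the proposition from Lemma~\ref{lem:monoid}, applied to the simplicial space $X_\bullet=N_\bullet\psi_\theta(n,k-1)$ of Definition~\ref{defn:cat-as-poset}. Thus I would check the two hypotheses of that lemma --- that the face maps make $X_l$ homotopy equivalent to $X_1\times\dots\times X_1$ (with $X_0$ contractible), and that $X_\bullet$ is group-like --- and in addition identify $X_1=N_1\psi_\theta(n,k-1)$ with $\psi_\theta(n,k-1)$ up to homotopy, compatibly with the two maps into $\Omega|X_\bullet|$, so that the conclusion of Lemma~\ref{lem:monoid} reads as the statement of the proposition.

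The Segal condition is where Lemma~\ref{lem:monoid-properties} is used. An element of $N_l\psi_\theta(n,k-1)$ is a pair $(t,M)$ with $0<t_0\leq\cdots\leq t_l<1$ and $M\in\psi_\theta^A(n,k)$ for $A=\R-\{t_0,\dots,t_l\}$. Writing $A$ as the disjoint union of the open intervals $(-\infty,t_0),(t_0,t_1),\dots,(t_{l-1},t_l),(t_l,\infty)$ and applying parts~(\ref{item:1:lem:monoid-properties}), (\ref{item:2:lem:monoid-properties}) and~(\ref{item:3:lem:monoid-properties}) of Lemma~\ref{lem:monoid-properties}, the fibre of $N_l\psi_\theta(n,k-1)$ over a fixed tuple $t$ becomes homeomorphic to a product of two contractible spaces with $l$ copies of $\psi_\theta(n,k-1)$; since the space of admissible tuples $t$ is convex, hence contractible, this yields the homotopy equivalences $N_l\psi_\theta(n,k-1)\simeq\psi_\theta(n,k-1)^{\times l}$ induced by the face maps, and in particular $N_0\psi_\theta(n,k-1)\simeq *$ and $N_1\psi_\theta(n,k-1)\simeq\psi_\theta(n,k-1)$ (this is precisely the statement that $N_\bullet\psi_\theta(n,k-1)$ satisfies the assumptions of Lemma~\ref{lem:monoid}).

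It then remains to verify that $X_\bullet$ is group-like, i.e.\ that $\pi_0 N_1\psi_\theta(n,k-1)$ is a group under the product induced by $d_1\colon N_2\to N_1$ --- the one input here that is not purely formal. Under the identification above this product is induced by disjoint union of submanifolds, so $\pi_0 N_1\psi_\theta(n,k-1)\cong\pi_0\psi_\theta(n,k-1)$ as monoids; and since $k\geq 2$ this monoid is identified with $\pi_0\psi_{\theta'}(m,1)$ for a suitable tangential structure $\theta'$ and integer $m$ (trivially when $k=2$, and by iterating Proposition~\ref{prop:IdentifyingComponents} when $k>2$), an isomorphism of monoids because crossing with $\R$ and slicing by a regular value both commute with disjoint union. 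By Corollary~\ref{cor:grouplike} this last monoid is the monoid of cobordism classes of $\theta'$-manifolds, which is a group. Feeding group-likeness back into Lemma~\ref{lem:monoid} gives that $\psi_\theta(n,k-1)\simeq N_1\psi_\theta(n,k-1)\to\Omega|N_\bullet\psi_\theta(n,k-1)|$ is a homotopy equivalence. Within this proposition the only delicate points are bookkeeping ones --- matching the ``natural map'' of the statement with the map supplied by Lemma~\ref{lem:monoid}, and checking that the multiplication relevant to group-likeness is the cobordism multiplication --- since the genuine geometric content has already been isolated, in the bending argument of Corollary~\ref{cor:grouplike} and in Proposition~\ref{prop:IdentifyingComponents}.
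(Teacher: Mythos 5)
Your proposal is correct and follows essentially the same route as the paper: verify the hypotheses of Lemma~\ref{lem:monoid} for $N_\bullet\psi_\theta(n,k-1)$ using Lemma~\ref{lem:monoid-properties} for the Segal condition, and Corollary~\ref{cor:grouplike} for group-likeness. The one place where you supply a detail that the paper glosses over is the group-likeness step when $k-1>1$: the paper simply invokes Corollary~\ref{cor:grouplike} (stated only for $\pi_0\psi_\theta(n,1)$), whereas you explicitly iterate Proposition~\ref{prop:IdentifyingComponents} to identify $\pi_0\psi_\theta(n,k-1)$ with $\pi_0\psi_{\theta'}(m,1)$ as monoids; this is the intended but unwritten step in the paper's argument, so it is a useful clarification rather than a departure.
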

\begin{proposition}\label{prop:IdentifyPsi0}
  The forgetful map $(t,M) \mapsto M$ defines a homotopy equivalence
  \begin{align*}
    |N_\bullet\psi_\theta(n,k-1)| \to \psi_\theta^0(n,k),
  \end{align*}
  where $\psi_\theta^0(n,k)\subseteq \psi_\theta(n,k)$ is the subset where $M$
  satisfies $M \cap [-1,1]^n \cap x_k^{-1}(t) = \emptyset$ for some $t
  \in (0,1)$.
\end{proposition}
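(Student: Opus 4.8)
The plan is to show that the forgetful map $p\colon |N_\bullet\psi_\theta(n,k-1)|\to\psi_\theta^0(n,k)$, $(t,M)\mapsto M$, is a weak homotopy equivalence, by the relative‑homotopy‑group method used in the proof of Theorem~\ref{thm:PosetModel}: one shows that for every $m$ and every pair of maps $f\colon D^m\to\psi_\theta^0(n,k)$ and $\hat f\colon\partial D^m\to|N_\bullet\psi_\theta(n,k-1)|$ with $p\circ\hat f=f|_{\partial D^m}$, there is — after a homotopy of $f$ rel $\partial D^m$ — a lift $g\colon D^m\to|N_\bullet\psi_\theta(n,k-1)|$ with $p\circ g=f$ and with $g|_{\partial D^m}$ homotopic to $\hat f$ over $f|_{\partial D^m}$; this forces the relative homotopy groups of $|N_\bullet\psi_\theta(n,k-1)|$ inside the mapping cylinder of $p$ to vanish. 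The picture to keep in mind is that $|N_\bullet\psi_\theta(n,k-1)|$ can be identified with the space of pairs $(M,\mu)$ where $M\in\psi_\theta(n,k)$ and $\mu$ is a finitely supported probability measure on $U_M:=\{t\in(0,1)\colon M\cap x_k^{-1}(t)=\emptyset\}$, with $p$ the first projection; the fibre over $M$ is the classifying space of $(U_M,\le)$, which, being a subposet of a linear order and hence directed, is contractible whenever non‑empty. Verifying this identification (so that the formulas below are continuous through the simplicial identifications) is routine but should be recorded.

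The first step I would carry out aligns the condition defining $\psi_\theta^0(n,k)$ (``$M$ misses some level inside the box $[-1,1]^n$'') with the one built into $N_\bullet\psi_\theta(n,k-1)$ (``$M$ misses some level globally''). The image of $p$ lies in the subspace $\widetilde\psi_\theta^0(n,k)\subseteq\psi_\theta^0(n,k)$ of those $M$ with $U_M\neq\emptyset$. I would use the homotopy $h_\tau$ from the proof of Lemma~\ref{lem:monoid-properties}(\ref{item:4:lem:monoid-properties}), which compresses the first $k-1$ coordinates and leaves the $k$th fixed, arranged so that $e_\tau([-1,1])\subseteq(-1,1)$ for $\tau>0$. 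It preserves $\psi_\theta^0(n,k)$ and $\widetilde\psi_\theta^0(n,k)$, and $h_1$ carries $\psi_\theta^0(n,k)$ into $\widetilde\psi_\theta^0(n,k)$: since the coordinate‑compression underlying $h_1$ sends the first $k-1$ coordinates into $(-1,1)$, the equality $M\cap[-1,1]^n\cap x_k^{-1}(t)=\emptyset$ yields $h_1(M)\cap x_k^{-1}(t)=\emptyset$. Thus $\widetilde\psi_\theta^0(n,k)\hookrightarrow\psi_\theta^0(n,k)$ is a deformation retract, and, as $h_\tau$ does not move the parameters $t_i$, it also defines a compatible deformation of $|N_\bullet\psi_\theta(n,k-1)|$. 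Replacing $f$ and $\hat f$ by their composites with these deformations, we may assume $f$ takes values in $\widetilde\psi_\theta^0(n,k)$.

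The remaining and essential work is to lift such an $f$. For each $x$ the set $U_{M_x}$ is a non‑empty — but in general neither open nor closed — subset of $(0,1)$, and I would use the freedom to homotope $f$ rel $\partial D^m$, together with a further compression of the remaining coordinates towards the interior of $(-1,1)^{n-k}$ and the fact that the graph $\Gamma(f)\subseteq D^m\times(\R^k\times(-1,1)^{n-k})$ is closed (so meets each compact set in a compact set), to arrange that near each $x_0$ there is a neighbourhood $W$ and a continuous $t^W\colon W\to(0,1)$ with $M_x\cap x_k^{-1}(t^W(x))=\emptyset$ for $x\in W$. Compactness of $D^m$ then yields finitely many such pairs $(W_\alpha,t^{W_\alpha})$ covering $D^m$, and a subordinate partition of unity $\{\lambda_\alpha\}$ produces the lift $g(x)=\bigl(M_x,\ \sum_\alpha\lambda_\alpha(x)\,\delta_{t^{W_\alpha}(x)}\bigr)$, in the spirit of the construction in the proof of Theorem~\ref{thm:PosetModel}. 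Finally, over a point $x\in\partial D^m$ both $\hat f(x)$ and $g(x)$ are probability measures supported on the non‑empty set $U_{M_x}$, so the straight‑line homotopy $s\mapsto(1-s)\,\hat f(x)+s\,g(x)$ of measures gives a homotopy from $\hat f$ to $g|_{\partial D^m}$ lying over $f|_{\partial D^m}$, as needed.

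I expect the main obstacle to be exactly this middle step: making a continuous local choice of cut levels. The difficulty is that $U_{M_x}$ does not vary semicontinuously — a nearby manifold can grow a ``finger at infinity'' hitting a previously missed level, so the naive sets $\{x\colon M_x\cap x_k^{-1}(a)=\emptyset\}$ need not be open — and this is precisely why one must first pass to $\widetilde\psi_\theta^0(n,k)$, compress away from the boundary $\R^k\times\partial([-1,1]^{n-k})$, and exploit the freedom to move $f$ within its relative homotopy class; the rest of the argument is formal or follows the template of Theorem~\ref{thm:PosetModel}.
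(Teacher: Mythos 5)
Your overall plan is viable and closely parallels the paper's, but you have misidentified where the real subtlety lies and why your first step already resolves it; as written the middle paragraph is somewhat muddled.

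The paper avoids your ``middle step'' difficulty entirely by a different packaging: it replaces $N_\bullet\psi_\theta(n,k-1)$ by the primed simplicial space $N_\bullet\psi_\theta(n,k-1)'$ built from the spaces $\psi_\theta^A(n,k)'$, whose defining condition only looks inside the box $[-1,1]^n$. By Lemma~\ref{lem:monoid-properties}(\ref{item:4:lem:monoid-properties}) the inclusion is a levelwise equivalence, and then the lifting argument of Theorem~\ref{thm:PosetModel} goes through verbatim because the condition $M\cap[-1,1]^n\cap x_k^{-1}(t)=\emptyset$ is manifestly open in $(t,M)$ — it is a condition on $M$ restricted to a compact set. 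There is no need to deform $f$ at all. Your approach deforms $f$ first (via the compression $h_\tau$) and then lifts; this can work, but you should realise that the compression $e_\tau$ acts on the \emph{first $k-1$} coordinates — the unbounded directions transverse to the cut — and that after $h_1$ the image manifold lies in $(-1,1)^{k-1}\times\R\times(-1,1)^{n-k}$, so its intersection with any level $x_k^{-1}(t)$ is confined to the compact set $[-1,1]^{k-1}\times\{t\}\times[-1,1]^{n-k}$. Consequently, for such compressed manifolds the ``global miss'' condition $M\cap x_k^{-1}(t)=\emptyset$ coincides with the boxed one and \emph{is already open}; the partition-of-unity lift then goes through exactly as in Theorem~\ref{thm:PosetModel}, and no further deformation is needed.

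Your proposed remedies in the middle paragraph — ``a further compression of the remaining coordinates towards the interior of $(-1,1)^{n-k}$'' and appealing to closedness of the graph — are not the relevant ingredients: the coordinates $x_{k+1},\dots,x_n$ are already bounded by the definition of $\psi_\theta(n,k)$, and closedness of the graph alone does not give compactness in the unbounded $x_1,\dots,x_{k-1}$ directions. What you actually need, and what you already have after step one, is boundedness in those first $k-1$ coordinates. You should also be a little careful with the claim that landing in $\widetilde\psi_\theta^0(n,k)$ is the right target: that set contains uncompressed manifolds for which $U_M$ does not vary semicontinuously, so it is not $\widetilde\psi_\theta^0$ itself that makes the condition open but rather the compressed image of $h_1$. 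If you reformulate the second and third paragraphs to say explicitly that after applying $h_1$ the map $f$ lands in compressed manifolds, for which the sets $U_a=\{x\colon f(x)\cap x_k^{-1}(a)=\emptyset\}$ are open, the lift construction and the straight-line homotopy to $\hat f$ will be rigorous.
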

\begin{proof}
  There is a simplicial space $N_\bullet \psi_\theta(n,k-1)'$ defined as in
  Definition~\ref{defn:cat-as-poset}, but with $\psi_\theta^A(n, k-1)'$ instead of
  $\psi_\theta^A(n,k-1)$.  By Lemma~\ref{lem:monoid-properties}(\ref{item:4:lem:monoid-properties}), the
  inclusion $N_\bullet\psi_\theta(n,k-1) \to N_\bullet\psi_\theta(n,k-1)'$ is a levelwise homotopy equivalence, so it suffices to prove that $|N_\bullet
  \psi_\theta(n,k-1)'| \to \psi_\theta^0(n,k)$ is a homotopy equivalence.

  The fibre over $M \in \psi_\theta^0(n,k)$ is the classifying space of the
  poset of $t \in (0,1)$ such that $M \cap [-1,1]^n \cap x_k^{-1}(t) =
  \emptyset$, ordered as usual.  This is a totally ordered
  non-empty set, so the realisation is a simplex. Therefore the map is a weak homotopy equivalence as in the proof of Theorem \ref{thm:PosetModel}.
\end{proof}
\begin{proposition}\label{prop:Psi0IntoPsiDot}
  Let $\psi_\theta^\emptyset(n,k)\subseteq \psi_\theta(n,k)$ denote the path component
  of empty set.  Then the inclusion
  \begin{align*}
    \psi_\theta^0(n,k) \to \psi_\theta^\emptyset(n,k)
  \end{align*}
  is a weak homotopy equivalence.
\end{proposition}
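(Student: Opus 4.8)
I would prove that every relative homotopy group $\pi_m\big(\psi_\theta^\emptyset(n,k),\psi_\theta^0(n,k)\big)$ vanishes; that is, given a map of pairs $f\colon (D^m,\partial D^m)\to(\psi_\theta^\emptyset(n,k),\psi_\theta^0(n,k))$ I want to homotope it, through such maps, to one landing in $\psi_\theta^0(n,k)$. First I would record that the inclusion $\psi_\theta^0(n,k)\subseteq\psi_\theta^\emptyset(n,k)$ really holds: if $M\cap[-1,1]^n\cap x_k^{-1}(t)=\emptyset$ for some $t\in(0,1)$, then a nearby regular value $t'$ gives a slice $M\cap x_k^{-1}(t')$ which misses $[-1,1]^{n-1}$, and iterating this (using $k\geq2$) together with Proposition~\ref{prop:IdentifyingComponents} shows $[M]=[\emptyset]$ in $\pi_0\psi_\theta(n,k)$; alternatively this is immediate from Proposition~\ref{prop:IdentifyPsi0}, which identifies $\psi_\theta^0(n,k)$ with the connected space $|N_\bullet\psi_\theta(n,k-1)|$. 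Using Lemma~\ref{lem:SmoothMapsAreContinuous} and the smoothing results of Section~\ref{SoM} I would then assume $f$ is smooth, so that it is given by a smooth $(d+m)$-dimensional submanifold $\mathcal{W}\subseteq D^m\times\R^k\times(-1,1)^{n-k}$ submersing onto $D^m$ and carrying a $\theta$-structure on its vertical tangent bundle, such that $W_x:=\mathcal{W}\cap(\{x\}\times\R^n)$ has an empty slice in the cube for $x$ near $\partial D^m$.

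Next I would reduce to a single slab. For $(a,b)\subseteq(0,1)$ let $C_{(a,b)}\subseteq\psi_\theta(n,k)$ be the open set of $M$ with $M\cap[-1,1]^n\cap x_k^{-1}([a,b])=\emptyset$. Since an empty point-slice in the compact cube is automatically an empty slab, the sets $C_{(a,b)}$ form an open cover of $\psi_\theta^0(n,k)$; each lies in $\psi_\theta^\emptyset(n,k)$ by the slicing argument above; finite intersections $C_{(a_1,b_1)}\cap\dots\cap C_{(a_r,b_r)}$ are sets of the same shape; and the resulting poset is cofiltered, so its nerve is contractible. A standard patching (homotopy-colimit / nerve-lemma) argument then reduces the proposition to showing that each inclusion $C_{(a,b)}\hookrightarrow\psi_\theta^\emptyset(n,k)$ is a weak homotopy equivalence.

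For that, I would once more pass to relative homotopy groups and a smooth family $\mathcal{W}$ which over a neighbourhood of $\partial D^m$ already misses the slab $x_k\in[a,b]$ inside the cube. After a small perturbation rel $\partial D^m$ (parametrised transversality) I may assume $a$ and $b$ are regular values of $x_k$ on every $W_x$, and then apply the $x_k$-analogue of Lemma~\ref{Cylindrical} to make each $W_x$ cylindrical near $x_k=a$ and near $x_k=b$, with cross-sections $N^a_x,N^b_x\in\psi_{\theta_{d-1}}(n-1,k-1)$; the piece $W_x\cap x_k^{-1}([a,b])$ is then a cobordism between them. Because the whole family lies in the empty component, the bordism class of $N^a_x$ vanishes in $\pi_0\psi_{\theta_{d-1}}(n-1,k-1)$ by Proposition~\ref{prop:IdentifyingComponents}; this null-bordism, together with the room afforded by the unbounded coordinates $x_1,\dots,x_k$ (here $k\geq2$ is used), is what lets one isotope and sweep the cobordism so that its intersection with $[a,b]\times[-1,1]^{n-1}$ becomes empty, whence $W_x\in C_{(a,b)}$. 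I expect this to be organised as an induction on $k$: making the family of cross-sections $N_x$ miss $[-1,1]^{n-1}$ is the corresponding lower-dimensional statement, the induction bottoming out at $k=1$, where the manifold genuinely separates at the empty slice and its bordism class over $(0,1)$ is visibly trivial.

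The main obstacle is precisely this geometric \emph{compression} step: sliding a whole parametrised family of null-bordant manifolds out of a window of the cube, continuously in $x\in D^m$, constantly near $\partial D^m$, and using only motions of the ambient space rather than abstract surgery --- which the self-contained treatment of \cite{GMTW} in this section must avoid. Making the induction fit together with the transversality and the cover-patching bookkeeping is where the real work lies; the reductions in the first two paragraphs are formal.
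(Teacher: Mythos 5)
Your outline through finding regular heights, cylindricalising near them, and using Proposition~\ref{prop:IdentifyingComponents} to get null-concordant cross-sections matches the paper. But the two central steps of your plan go off the rails.

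\textbf{The compression step is wrong in spirit, not just in detail.} You want to get rid of the cobordism inside the window $x_k\in[a,b]$ by ``isotoping and sweeping'' it out, ``using only motions of the ambient space rather than abstract surgery.'' Ambient isotopies preserve the diffeomorphism type of the piece $W_x\cap x_k^{-1}([a,b])$, and that piece remains attached at both ends to the cylinders over $N^a_x$ and $N^b_x$; no amount of sliding makes its intersection with the cube vanish while keeping the attachment. What is actually needed is to \emph{replace} the manifold near a regular value $a_i$: having made it cylindrical there so that it agrees with $\bR\times\lambda_i(x)$, one chooses a smooth path $\Lambda_i\colon[0,1]\times V_i\to\psi_{\theta_{d-1}}(n-1,k-1)$ from $\lambda_i$ to $\emptyset$ (possible because $\lambda_i$ lies in the empty component, by Proposition~\ref{prop:IdentifyingComponents} and contractibility of $V_i$), and then inserts this null-concordance into the ambient space via the graph construction of Definition~\ref{defn:take-theta-graph}: $b\mapsto\Lambda_i(t\tau_i(x)\rho(b-a_i),x)$. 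At $t=1$ this makes the level set at $a_i$ literally empty. That graph-of-a-path-of-manifolds trick is the heart of the proof, and it is missing from your proposal; this is not ``bookkeeping.'' Relatedly, there is no induction on $k$ in the paper, and your proposed base case at $k=1$ doesn't fit (the result being established is only used for $k\geq2$, and the statement of Proposition~\ref{prop:IdentifyingComponents} requires $k>1$).

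\textbf{The reduction to slabs is also not right.} You propose an open cover of $\psi_\theta^0(n,k)$ by the $C_{(a,b)}$, then a nerve-lemma/homotopy-colimit argument. But finite intersections $C_{(a_1,b_1)}\cap\dots\cap C_{(a_r,b_r)}$ are not of the same shape (they consist of manifolds missing several, possibly disjoint, slabs), and the resulting poset is not cofiltered (two disjoint slabs have no common refinement among single slabs). To push that through you would need to show the inclusion of every such intersection into $\psi_\theta^\emptyset(n,k)$ is a weak equivalence, which is not easier than the original claim. The paper avoids this entirely: it covers the \emph{domain} $D^m$ by finitely many contractible opens $V_i$ with associated regular values $a_i$, performs the graph-insertion homotopy supported near each $a_i$ using bump functions $\tau_i$ with $\bigcup_i\tau_i^{-1}(1)=D^m$, and checks directly that these homotopies are relative (using that they are supported in disjoint windows $(a_i-2\delta,a_i+2\delta)$). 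No patching of open subsets of the target is required.
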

\begin{proof}
  For ease of notation we will switch the roles of the coordinates
  $x_1$ and $x_k$ in this proof.  Thus $\psi_\theta^0(n,k)$
  becomes the subspace consisting of manifolds $M$ satisfying $M \cap x_1^{-1}(a) \cap [-1,1]^n = \emptyset$ for some $a$.  We prove
  that the relative homotopy groups vanish.  Let
  \begin{align*}
    f: (D^m, \partial D^m) \to (\psi_\theta^\emptyset(n,k),
    \psi_\theta^0(n,k))
  \end{align*}
  represent an element of relative $\pi_m$. We may assume $f$ is smooth.

  For each $a \in \R$, let $U_a \subseteq D^m$ the set of points $y
  \in D^m$ such that $x_1: f(y) \to \R$ has no critical points
  in $\{a\} \times I^{k-1} \times \R^{n-k}$.  This is an open
  condition on $f(y)$, so all $U_a\subseteq D^m$ are open.  As in the proof of Lemma \ref{lem:monoid-properties}(\ref{item:4:lem:monoid-properties}), pick an
  isotopy of embeddings $e_t: \R \to \R$, $t \in [0,1]$ starting at
  $e_0 = \mathrm{Id}$ and ending at a diffeomorphism $e_1: \R \to
  (-1,1)$.  Let $h_t: \R^n \to \R^n$ be given by $h_t = \mathrm{Id}
  \times e_t^{k-1} \times \mathrm{Id}$ and define a homotopy $f_t: D^m
  \to \psi_\theta^\emptyset(n,k)$, $t \in [0,1]$ by
  \begin{align*}
    f_t(x) = h_t^{-1}(f(x)).
  \end{align*}
  This gives a relative homotopy that starts at $f_0 = f$, and ends at a map
  $f_1$, where in $f_1(x) = h_1^{-1}(f(x))$ we have ``stretched''
  the space
  \begin{align*}
    \{a\} \times (-1,1)^{k-1} \times \R^{n-k}
  \end{align*}
  to be all of $\{a\} \times \R^{n-1}$.  Therefore $x_1: f_1(x) \to
  \R$ now has no critical points in $\{a\} \times \R^{n-1}$ for all $x
  \in U_a$.  We now replace our old $f$ by the homotopic $f_1$.

  By compactness of $D^m$, we can refine the cover by the $U_a$ to a
  cover by finitely many contractible sets $V_1, \dots, V_r \subseteq
  D^m$, with corresponding regular values $a_i \in \R$.  After
  possibly perturbing the $a_i$, we can assume they are different. We may choose an $\epsilon>0$ such that the intervals $(a_i-2\epsilon, a_i+2\epsilon)$ are disjoint.  By
  Lemma \ref{Cylindrical} we can suppose, after possibly changing $f$ by
  a homotopy concentrated in $(a_i-2\epsilon, a_i+2\epsilon)$, that for all $y \in V_i$, the element $f(y) \in
  \psi_\theta(n,k)$ is cylindrical in $x_1^{-1}(a_i-\epsilon, a_i+\epsilon)$, i.e.\ that there is an element $\lambda_i(y)
  \in \psi_{\theta_{d-1}}(n-1,k-1)$ such that the two elements
  \begin{align*}
    f(y) \text{ and }  (\R \times \lambda_i(y)) \in \Psi_\theta(\R^n)
  \end{align*}
  become equal in $\Psi_\theta(x_1^{-1}(a_i-\epsilon,a_i+\epsilon))$.
  Since $\pi_0 \big(\psi_\theta(n-1,k-1)\big) = \pi_0 \big(\psi_\theta(n,k)\big)$ by Proposition
  \ref{prop:IdentifyingComponents}, the element $\lambda_i(y)$ must be in the
  basepoint component of $\psi_\theta(n-1,k-1)$ and since the $V_i$ are
  contractible, we can pick a smooth homotopy
  \begin{align*}
[0,1] \times V_i \overset{\Lambda_i}\lra \psi_{\theta_{d-1}}(n-1,k-1),
  \end{align*}
  with $\Lambda_i(0,-) = \lambda_i$ and $\Lambda_i(1,-) = \emptyset$.

  Pick a $\delta > 0$ with $3 \delta < \epsilon$, and a smooth
  function $\rho: \R \to [0,1]$ which is 1 on $(-\delta,\delta)$ and
  has support in $(-2\delta,2\delta)$.  Finally pick $\tau_i: V_i \to
  [0,1]$ with compact support and with $\cup_i \tau_i^{-1}(1) = D^m$ and define a homotopy by
  \begin{align*}
    V_i & \overset{h_t^i}\to C^\infty(\R,\psi_{\theta_{d-1}}(n-1,k-1)), \quad t \in [0,1]\\
    x & \mapsto (b \mapsto \Lambda_i(t \tau_i(x)\rho(b - a_i),x)).
  \end{align*}
The homotopy starts with the map $h_0^i$ which sends all $x$ to the constant path at
  $\lambda_i(x)$. At any time $t$ the map $h_t^i(x) : \bR \to \psi_{\theta_{d-1}}(n-1,k-1)$ is constant outside of $(a_i - 2\delta,a_i + 2\delta)$. The homotopy ends at $h_1^i(x)$, which maps $(a_i-\delta, a_i+\delta)$ to the empty set.

By taking graphs (i.e.\
  composing with the function $\Gamma$ from Definition \ref{defn:take-theta-graph})
  we get a homotopy of maps
  \begin{align}\label{eq:27}
    \begin{aligned}
      V_i & \to \psi_\theta(n,k), \quad t \in [0,1]\\
      x & \mapsto \Gamma(h_t^i(x))
    \end{aligned}
  \end{align}
  which at $t=0$ is $x \mapsto \R \times \lambda_i(x)$, so it agrees
  with $f$ on $x_1^{-1}(a_i-3\delta, a_i+3\delta)$, and at $t=1$ maps any $x \in \tau_i^{-1}(1)$ to an
  element which is empty inside $x_1^{-1}(a_i-\delta,a_i+\delta)$.  At
  any time $t$, it agrees with $f$ when restricted to $x_1^{-1}(
  (a_i-3\delta,a_i+3\delta) - (a_i-2\delta,a_i+2\delta))$ so by the sheaf
  property of $\Psi^\theta$ we can define a homotopy of maps $V_i \to
  \psi_\theta(n,k)$ whose restriction to $x_1^{-1}(\R -
  (a-2\delta,a+2\delta))$ is the constant homotopy of $f|_{V_i}$ and
  whose restriction to $x_1^{-1}(a_i-3\delta,a_i+3\delta)$ is the
  homotopy~(\ref{eq:27}).  This homotopy is constant outside a compact
  subset of $V_i$, so it extends to a homotopy of the map $f$ which at
  time $t=1$ maps $\tau_i^{-1}(1)$ into $\psi_\theta^\emptyset(n,k)$.
  We have only changed $f$ inside
  $x_1^{-1}(a_i-2\delta,a_i+2\delta)$, so we can carry out this
  construction for other $a_j$'s as well.  In the end we have
  homotoped $f$ into a map to $\psi_\theta^\emptyset(n,k)$ as desired.

It remains to show that this is a relative homotopy. Suppose we have an $x \in \partial D^m$, so $f(x) \in \psi^0_\theta(n,k)$. Then there is a $t \in (0,1)$ that is a regular value of $x_1 : f(x) \to \bR$ such that $x_1^{-1}(t) = \emptyset$ (after we have replaced $f$ by $f_1$ as described in the second paragraph). If $t$ is not in $\coprod_i (a_i-\epsilon, a_i+\epsilon)$, then all the homotopies we perform are constant near height $t$, so the level set at $t$ is always empty. If $t \in (a_i-\epsilon, a_i+\epsilon)$ then $\lambda_i = \emptyset$, so we can choose $\Lambda_i$ to be constantly $\emptyset$. The homotopy is then constant near height $t$, so the level set at $t$ is always empty and we remain in $\psi^0_\theta(n,k)$.
\end{proof}
\begin{proof}[Proof of Theorem~\ref{thm:main-cpt-3}]
  We combine the three propositions to get the weak homotopy equivalences
  \begin{align*}
    \psi_\theta(n,k-1) \to \Omega|N_\bullet \psi_\theta(n,k-1)| \to \Omega
    \psi_\theta^0(n,k) \to \Omega \psi_\theta^\emptyset (n,k) = \Omega \psi_\theta(n,k).
  \end{align*}
\end{proof}

We will now explain how to identify the homotopy type of $\psi_\theta(n,n) = \Psi_\theta(\bR^n)$. 

\begin{theorem}\label{thm:Scanning}
There is a homotopy equivalence
$$\Psi_\theta(\bR^n) \simeq \Th(\theta_{n}^*(\gamma_{d,n}^\perp) \to \X(\bR^n)),$$
where $\gamma_{d,n}^\perp$ is the orthogonal complement to the tautological bundle over $Gr_d(\bR^{n})$.
\end{theorem}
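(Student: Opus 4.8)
The plan is to realise the Thom space on the right as a subspace of $\Psi_\theta(\bR^n)$ and then to show that the inclusion is a weak homotopy equivalence by checking that all relative homotopy groups of the pair vanish. First I would identify the total space of $\theta_{n}^*(\gamma_{d,n}^\perp) \to \X(\bR^n)$ with the space of pairs $(\xi,v)$, where $\xi \in \X(\bR^n)$ lies over the $d$-plane $W = \theta_n(\xi) \subseteq \bR^n$ and $v \in W^\perp$. Sending such a pair to the affine $d$-plane $W+v \subseteq \bR^n$, equipped with the translation-invariant $\theta$-structure determined by $\xi$, gives a map which is a homeomorphism onto the subspace $L \subseteq \Psi_\theta(\bR^n)$ consisting of the empty manifold together with all affine $d$-planes carrying a translation-invariant $\theta$-structure; moreover, as $(\xi,v)$ leaves every compact subset of the total space the plane $W+v$ leaves every compact subset of $\bR^n$ and hence converges to $\emptyset$, so the map extends continuously over the basepoint of $\Th(\theta_{n}^*(\gamma_{d,n}^\perp))$. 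It then remains to prove that $L \hookrightarrow \Psi_\theta(\bR^n)$ is a weak homotopy equivalence; both sides have the homotopy type of CW complexes, so this gives the homotopy equivalence asserted.

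Given a representative $f : (D^m,\partial D^m) \to (\Psi_\theta(\bR^n), L)$ of a relative homotopy group I would first make it smooth, rel $\partial D^m$, by the smoothing lemma of \S\ref{SoM}; by Lemma~\ref{lem:SmoothMapsAreContinuous} the graph $\Gamma(f) \subseteq D^m \times \bR^n$ is then a smooth submanifold. A parametrised transversality argument (Sard's theorem for the projection $\Gamma(f) \to \bR^n$), carried out relative to $\partial D^m$, lets me assume that this projection is transverse to $\{0\}$, so that the incidence locus $Z = \{x \in D^m : 0 \in f(x)\}$ is a compact submanifold of codimension $n-d$ along which the tangent plane $T_0 f(x)$ and its $\theta$-structure vary smoothly, and so that on a tubular neighbourhood $N$ of $Z$ the family is controlled: in suitable coordinates $f(x)$ is, near the origin, a small graph over $T_0 f(z)$ (where $z$ is the foot of $x$ in $Z$) whose value at $0$ is the normal coordinate $u$ of $x$ in $N$. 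The main step is then a parametrised Pontryagin--Thom collapse: over $Z$ one dilates $f(x)$ about the origin so that near $0$ it becomes the affine plane $T_0 f(x)$ (a point of the zero section of $L$); over $N$ one straightens $f(x)$ to the affine plane $T_0 f(z) + \phi(u)$, for a fixed fibrewise diffeomorphism $\phi$ of the normal bundle of $Z$ onto $\gamma_{d,n}^\perp$ taking $\infty$ to $\infty$, so that as $x$ leaves $N$ the plane runs off to $\emptyset$; and outside $N$, where the origin is bounded away from $f(x)$, one simply pushes $f(x)$ off to infinity. These prescriptions agree (both giving $\emptyset$) along $\partial N$, so by the sheaf property (Theorem~\ref{thm:sheaf}) they assemble into a homotopy through maps to $\Psi_\theta(\bR^n)$; since affine $\theta$-planes are carried to affine $\theta$-planes throughout, it can be taken constant on $\partial D^m$. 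This homotopes $f$ rel $\partial D^m$ into $L$, so the relative homotopy groups vanish.

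I expect the hard part to be the continuity of the deformation across the transition region, that is, near the incidence locus $Z$: a manifold passing close to, but not through, the origin is close in $\Psi_\theta(\bR^n)$ to one through the origin, yet their naive dilations about the origin are far apart, so no uniform rescaling defines a continuous homotopy. Organising the collapse around a tubular neighbourhood of $Z$, rather than by a uniform dilation, is precisely what cures this: the normal coordinate $u$ tends to $0$ as $x$ approaches $Z$, so the straightened family $T_0 f(z) + \phi(u)$ passes continuously through the zero section, and $\phi(u)$ tends to $\infty$ as $x$ leaves $N$, so the family degenerates continuously to the empty manifold there, matching the push to infinity performed elsewhere. Keeping track of the $\theta$-structures throughout is then only bookkeeping, since all the dilations, translations and embeddings used act on $\Psi_\theta$, and the translation-invariant structure on each limiting affine plane is the evident limit of the structures along the way.
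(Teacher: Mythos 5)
Your proposal takes a genuinely different route from the paper's. The paper constructs an open embedding $e: \nu \dashrightarrow \Psi_\theta(\bR^n)$ of a neighbourhood of the zero section of the normal bundle $\nu \to \Psi_\theta(\bR^n)^\circ$ (all $\theta$-manifolds through $0$, not just planes), with image the set $U$ of manifolds having a unique closest point to $0$; it shows the complement $C$ is contractible and $C\hookrightarrow\Psi_\theta(\bR^n)$ is a cofibration, so collapsing $C$ gives $\Th(\nu)$; and it then retracts $\Psi_\theta(\bR^n)^\circ$ onto the linear planes $L^\theta\simeq\X(\bR^n)$ by the dilation homotopy $S$. No transversality is used. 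You instead realise the Thom space directly as the subspace $L$ of affine planes (plus $\emptyset$) and try to prove $L\hookrightarrow\Psi_\theta(\bR^n)$ is a weak equivalence by a parametrised Pontryagin--Thom collapse. Both strategies are natural; yours is closer to classical scanning arguments, while the paper's tubular-neighbourhood-and-cofibration argument neatly sidesteps putting families in general position, and that is precisely where your sketch runs into trouble.

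There are two gaps. First, your argument that the map $\theta_n^*\gamma_{d,n}^\perp \to L\setminus\{\emptyset\}$, $(\xi,v)\mapsto \theta_n(\xi)+v$, extends continuously to $\Th(\theta_n^*\gamma_{d,n}^\perp)\to L$ is wrong: $\X(\bR^n)$ is in general non-compact, so $(\xi,v)$ can leave every compact subset of the total space with $v$ bounded, and then $\theta_n(\xi)+v$ does \emph{not} converge to $\emptyset$. (Also, $\Th$ here means $D/S$, not the one-point compactification; over a non-compact base these differ.) The conclusion $L\simeq\Th(\theta_n^*\gamma_{d,n}^\perp)$ is still true — e.g.\ cover $L$ by the open sets $\{|v|<2\}\simeq\X(\bR^n)$ and $\{|v|>1\}\cup\{\emptyset\}\simeq *$, whose intersection is weakly equivalent to the sphere bundle — but your stated reason does not establish it.

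Second, the nullhomotopy must be a single continuous map on $[0,1]\times D^m$, so the three prescriptions (dilate over $Z$, straighten over $N$, push to $\emptyset$ outside $N$) have to agree on open overlaps for \emph{all} $t$, not just at $t=1$ where you note both give $\emptyset$. As written, the push-off homotopy $(1-t)^{-1}f(x)$ for $x$ just outside $N$ and the straightening for $x$ just inside $N$ are unrelated families of manifolds for $0<t<1$, so they do not glue. Arranging that the straightening degenerates, as $u\to\partial N$, to the same dilation uniformly in $t$ is exactly the content the sketch omits, and is the technical reason the paper's argument is cleaner. Two smaller points: the relative nullhomotopy need only keep $\partial D^m$ inside $L$, not be constant there — your homotopy satisfies the former but not the latter, so the parenthetical claim of constancy should be dropped; and to make $\Gamma(f)\to\bR^n$ transverse to $0$ rel $\partial D^m$ you must first put $f|_{\partial D^m}$, as a map to $L$, in general position relative to the zero section, which is a preliminary homotopy within $L$ not covered by the rel-boundary smoothing lemma.
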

\begin{proof}
First let $\Psi_\theta(\bR^n)^\circ \subset \Psi_\theta(\bR^n)$ be the subspace of those $\theta$-manifolds which contain the origin. Write $L^\theta$ for the subspace of $\Psi_\theta(\bR^n)^\circ$ consisting of \textit{linear} $\theta$-manifolds, i.e.\ those where the underlying manifold is a $d$-plane and the $\theta$-structure is constant. There is a map $\X(\bR^n) \to L^\theta$ which sends a pair $(V, x)$ of a $d$-dimensional plane $V$ in $\bR^d$ and a point $x \in \X$ over $i(V)$ to the pair $(V, \ell)$ of an element of $Gr_d(\bR^n)$ and a bundle map $\ell : TV = V \times V \to \theta^*\gamma$ given by $(v, \bar{v}) \mapsto (x, i(\bar{v}))$. This gives a map of fibrations over $Gr_d(\bR^n)$, with map on fibres over $V$ given by the inclusion $\Fib(\theta) \to \Bun(V, \theta^*\gamma)$. This is a homotopy equivalence (it is essentially the inclusion of the fibre of $\theta$ into its homotopy fibre, and $\theta$ has been assumed to be a Serre fibration).

Let $F_t : \bR^n \to \bR^n$ be scalar multiplication by $(1-t)$ and define a homotopy $S: [0,1] \times \Psi_\theta(\bR^n)^\circ \lra \Psi_\theta(\bR^n)^\circ$ as follows: on underlying manifolds let
$$S(t, W) = \begin{cases}
F_t^{-1}(M) &\text{if} \,\,\, t < 1\\
T_0M  &\text{if} \,\,\, t=1.
\end{cases}$$
To define the $\theta$-structure on $S(t,M)$ we use the map $F_t \times \mathrm{Id} : S(t, M) \times \bR^n \to M \times \bR^n$, which restricts to a fiberwise linear isomorphism
\begin{align*}
T(S(t, M)) \to TM
\end{align*}
over $F_t$ (which is not the same as $DF_t$) and we give $S(t,M)$ the $\theta$-structure obtained by composition. This defines a continuous homotopy such that $S(0,W)=W$, $S(1, W) \in L^\theta$ and $S$ preserves $L^\theta$, so it gives a deformation retraction of $\Psi_\theta(\bR^n)^\circ$ to $L^\theta \simeq \X(\bR^n)$.

There is a $(n-d)$-dimensional vector bundle $\nu \lra \Psi_\theta(\bR^n)^\circ$ which at a point $W$ has fibre $\nu_0 W$ the normal space to the manifold at 0. There is a map $e:\nu \lra \Psi_\theta(\bR^n)$ sending $(W, v \in \nu_0 W)$ to the translated manifold $W + v$. Restricted to a neighbourhood of the 0-section in $\nu$, this gives an embedding onto the open subspace $U$ of $\Psi_\theta(\bR^n)$ of those manifolds having a unique closest point to the origin. The complement $C$ of this embedding consists of manifolds which do not have a unique closest point to the origin; in particular, they do not contain it. The isotopy $\cdot \frac{1}{1-t} : \bR^n \to \bR^n$, $t \in [0,1)$ produces a map $H:[0,1) \times C \to C$, as it moves points on a manifold uniformly away from the origin. We can extend it to a continuous map $H:[0,1] \times C \to C$ by $H(1, c) = \emptyset$, which gives a contraction of $C$.

The map $C \to \Psi_\theta(\bR^n)$ is a cofibration, as it is a push out of the cofibration $\nu \cap e^{-1}(C) \to \nu$, so collapsing $C$ gives a homotopy equivalent space. On the other hand, collapsing $C$ gives a space homeomorphic to that obtained by collapsing $\nu \cap e^{-1}(C)$ in $\nu$. This is the Thom space of $\nu$, so $\Psi_\theta(\bR^n) \simeq \Th(\nu \to \psi_\theta(n,n)^\circ)$.\qedhere
\end{proof}

Combining Theorems \ref{thm:main-cpt-3} and \ref{thm:Scanning} finishes the proof of Theorem \ref{thm:MainThmGMTW}.

\section{Proof of the main theorems}\label{ParametrisedSurgery}

In this section we will prove our main results, Theorems \ref{thm:SubcategoryConnected} and \ref{thm:SubcategoryMonoids}. The inclusion of a full subcategory $\mathcal{D}$ of $\mathcal{C}_\theta\bp$ into $\mathcal{C}_\theta$ will be considered in several steps. Recall that by Theorems \ref{thm:CobordismCategoryEquivalence} and \ref{thm:PosetModel}, $B \mathcal{C}_\theta \simeq \psi_\theta(\infty,1)$. We will give a similar model for $B\mathcal{D}$. We first describe some variations on the space $\psi_\theta(\infty, 1)$.

Recall from the introduction that we have chosen a $\theta$-structure on $\bR^d$, thought of as a vector bundle over a point, and there is an induced structure on any framed manifold called the \textit{standard $\theta$-structure}.

\begin{definition}
For $1>\epsilon>0$, let $\psi_\theta(n,1)^\epsilon \subseteq \psi_\theta(n,1)$ be the subspace where the manifold satisfies
  \begin{align*}
    M \subseteq \R \times (-1,1)^{d-1} \times [0,1)^{n-d}
  \end{align*}
  and
  \begin{align*}
    L_\epsilon = \R \times (-\epsilon,\epsilon)^{d-1} \times \{0\}
    \subseteq M,
  \end{align*}
and that the tangential structure $l$ is
  standard on $L_\epsilon$ with respect to the framing of $L_\epsilon$
  given by the vector fields $\partial/\partial x_1,
  \dots, \partial/\partial x_d$. If $\epsilon > \epsilon'$, there is an inclusion $\psi_\theta(n,1)^\epsilon \to \psi_\theta(n,1)^{\epsilon'}$. Define $\psi_\theta(n,1)^\bullet = \colim_\epsilon \psi_\theta(n,1)^\epsilon$, with the colimit topology. There is a continuous injection
$\psi_\theta(n,1)^\bullet \to \psi_\theta(n,1)$.

We define $\psi_{\theta_{d-1}}(n-1, 0)^\bullet$ similarly, where $\psi_{\theta_{d-1}}(n-1, 0)^\epsilon \subseteq \psi_{\theta_{d-1}}(n-1, 0)$ is the subspace of those manifolds $M$ that satisfy $M \subseteq (-1,1)^{d-1} \times [0,1)^{n-d}$ and $(-\epsilon, \epsilon)^{d-1} \times \{0\} \subseteq M$.
\end{definition}

In Definition \ref{defn:ThetaDotCategory} we briefly described the objects and morphisms of a category $\mathcal{C}_\theta\bp$. A more precise definition is as follows.

\begin{definition}
Let $\mathcal{C}_\theta\bp(\bR^n)$ have object space the subspace of $\psi_{\theta_{d-1}}(n-1,0)\bp$ consisting of connected manifolds. The set of non-identity morphisms from $M_0$ to $M_1$ is the set of $(t, W) \in \bR \times \psi_\theta(n, 1)\bp$ such that $t>0$, there is an $\epsilon>0$ such that
\begin{align*}
    W|_{(-\infty,\epsilon) \times \R^{n-1}} &= (\R \times
    M_0)|_{(-\infty,\epsilon)\times\R^{n-1}} \\
    W|_{(t-\epsilon,\infty) \times \R^{n-1}} &= (\R \times
    M_1)|_{(t-\epsilon,\infty)\times\R^{n-1}},
  \end{align*}
and such that $W \cap [0, t] \times \bR^{n-1}$ is connected.  Composition in the category is as in Definition \ref{defn:CobordismCategory}. The total space
  of morphisms is topologised as a subspace of $(\{0\} \amalg
  (0,\infty)) \times \psi_\theta(n,1)\bp$, where $(0,\infty)$ is given
  the usual topology. We shall be mostly concerned with the colimit $\mathcal{C}_\theta\bp = \colim_{n \to \infty} \mathcal{C}_\theta\bp(\bR^n)$
\end{definition}

\begin{definition}
Let the subspace
$$\psi^{nc}_{\theta}(n, 1)\bp \subset \psi_{\theta}(n, 1)\bp$$
consist of those manifolds $W$  having \textit{no compact path components}.
\end{definition}

\begin{definition}
For $\mathbf{C}$ a collection of elements of $\psi_{\theta_{d-1}}(\infty,0)$, let the subspace
$$\psi_{\theta}(\infty, 1)_{\mathbf{C}} \subset \psi_{\theta}(\infty, 1)$$
consist of those manifolds $W$ for which there exists a regular value $t \in (-1,1)$ of $x_1 : W \to \bR$ such that $W_t$ is in $\mathbf{C}$. Write $\mathbf{Conn}$ for the collection of elements of $\psi_{\theta_{d-1}}(\infty,0)$ which are connected manifolds. Thus $\psi_{\theta}(\infty, 1)_{\mathbf{Conn}}$ consists of those $W$ such that some $W_t$ is connected, for $t \in (-1,1)$.
\end{definition}
In this definition we have insisted on regular values in $(-1,1)$ exhibiting an element as a member of the space $\psi_\theta(\infty,1)_\mathbf{C}$. If we merely ask for such regular values in $\bR$, we get a weakly homotopy equivalent space, but the current definition will simplify certain constructions later.

\begin{theorem}\label{thm:HtpyTypeCobCat}
Let $\mathbf{C}$ be a collection of objects of $\mathcal{C}_\theta\bp$ (so $\mathbf{C} \subseteq \mathbf{Conn}$), and $\mathcal{D}$ be the full subcategory on $\mathbf{C}$. Then there is a weak homotopy equivalence
$$B \mathcal{D} \simeq \psi_{\theta}^{nc}(\infty, 1)\bp_{\mathbf{C}}.$$
\end{theorem}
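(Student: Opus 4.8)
The plan is to run the argument of Theorems~\ref{thm:CobordismCategoryEquivalence} and~\ref{thm:PosetModel} in the presence of the extra structure, replacing $\psi_\theta(n,1)$ by $\psi_\theta^{nc}(n,1)^\bullet_{\mathbf C}$ and $\mathcal C_\theta$ by $\mathcal D$. Fix $n$ and write $\mathcal D(\bR^n) \subseteq \mathcal C_\theta^\bullet(\bR^n)$ for the full subcategory on the objects lying in $\mathbf C$. First I would introduce a topological poset $D_{\mathcal D}(\bR^n)$ whose underlying space is the set of pairs $(t,W)$ with $W \in \psi_\theta^{nc}(n,1)^\bullet$, with $t$ a regular value of $x_1 : W \to \bR$, and with $W_t = W \cap x_1^{-1}(t) \in \mathbf C$, topologised as a subspace of $\bR \times \psi_\theta^{nc}(n,1)^\bullet$ and ordered by $(t,W) \le (t',W')$ iff $W = W'$ and $t \le t'$; together with the full subposet $D_{\mathcal D}^\perp(\bR^n)$ of those $(t,W)$ for which $W$ is moreover cylindrical in some $x_1^{-1}(t-\epsilon, t+\epsilon)$.

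\emph{Step 1 (poset model).} I would prove, exactly as in Theorem~\ref{thm:CobordismCategoryEquivalence}, that there is a zig-zag
\begin{align*}
  \mathcal D(\bR^n) \overset{c}{\lla} D_{\mathcal D}^\perp(\bR^n) \overset{i}{\lra} D_{\mathcal D}(\bR^n)
\end{align*}
of homotopy equivalences of topological categories. That $i$ is a levelwise equivalence of nerves follows from Lemma~\ref{Cylindrical}, which makes a chosen cut cylindrical without altering the $\theta$-diffeomorphism type of that cut (so the conditions $W_t \in \mathbf C$ and ``no compact path component'' are undisturbed) and which can be arranged to be supported away from $L_\epsilon$. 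For $c$ one reuses the reparametrisations $\varphi_s(a,b) \times \mathrm{Id}$ of Theorem~\ref{thm:CobordismCategoryEquivalence}: these are cylindrical on the strip $L_\epsilon$ and fix the standard $\theta$-structure there (it is translation-invariant in $x_1$), they never create a compact component, and on a cylindrical cut they preserve the $\theta$-diffeomorphism type of the cut. The one point that genuinely uses the hypotheses is that $c$ is well defined, i.e.\ that a one-step morphism $(t_0,W) \le (t_1,W)$ of $D_{\mathcal D}^\perp(\bR^n)$ is sent to a \emph{connected} cobordism of $\mathcal C_\theta^\bullet$: here one argues that, because $W$ contains $L_\epsilon$, because $W_{t_0}$ and $W_{t_1}$ are connected (being in $\mathbf C \subseteq \mathbf{Conn}$), and because $W$ has no compact path component, any component of $W$ other than the one containing $L_\epsilon$ has $x_1$-image disjoint from $[t_0,t_1]$, and $W \cap x_1^{-1}([t_0,t_1])$ is itself connected.

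\emph{Step 2 (scanning).} I would then show that the forgetful map $u : BD_{\mathcal D}(\bR^n) \to \psi_\theta^{nc}(n,1)^\bullet_{\mathbf C}$, $(t,W) \mapsto W$, is a weak homotopy equivalence by the relative-homotopy-group computation of Theorem~\ref{thm:PosetModel}. The fibre of $u$ over $W$ is the classifying space of the totally ordered set $\{t \in \bR : t \text{ regular},\ W_t \in \mathbf C\}$, hence a simplex, and it is nonempty precisely because $W \in \psi_\theta(n,1)_{\mathbf C}$ guarantees a regular value $t \in (-1,1)$ with $W_t \in \mathbf C$. Given $f : D^m \to \psi_\theta^{nc}(n,1)^\bullet_{\mathbf C}$ and a lift $\hat f$ on $\partial D^m$, one lifts $f$ by choosing a partition of unity subordinate to the open cover of $D^m$ by the sets $U_a = \{x : a \text{ is a regular value of } x_1 : f(x) \to \bR \text{ and } f(x)_a \in \mathbf C\}$ (open because regularity is an open condition and, for the natural interpretation of $\mathbf C$ as a union of path components of $\psi_{\theta_{d-1}}(n-1,0)^\bullet$, so is membership $f(x)_a \in \mathbf C$), and joins this lift to $\hat f$ by the straight-line homotopy in the simplicial fibres. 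Combining with Step 1 gives $B\mathcal D(\bR^n) \simeq \psi_\theta^{nc}(n,1)^\bullet_{\mathbf C}$, and passing to the colimit over $n$ (both sides being compatible with $\bR^n \hookrightarrow \bR^{n+1}$) yields $B\mathcal D \simeq \psi_\theta^{nc}(\infty,1)^\bullet_{\mathbf C}$.

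I expect the main obstacle to be the well-definedness of $c$ in Step~1 — that is, the purely geometric claim that under the standing hypotheses ($\mathbf C \subseteq \mathbf{Conn}$, the $L_\epsilon$-condition, no compact components) the slab $W \cap x_1^{-1}([t_0,t_1])$ between two $\mathbf C$-cuts is connected. This is exactly the place where all three hypotheses are needed, and it is what makes the poset model compute $B\mathcal D$ rather than the classifying space of a larger category whose morphisms are allowed to be disconnected; everything else is a routine transcription of the arguments already given for $\mathcal C_\theta$.
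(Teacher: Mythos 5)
Your overall strategy is exactly the paper's: run the zig-zag of Theorem~\ref{thm:CobordismCategoryEquivalence} through a poset of marked manifolds, and then scan via the forgetful map as in Theorem~\ref{thm:PosetModel}. Your identification of the key geometric input --- that the slab $W\cap x_1^{-1}([t_0,t_1])$ between two $\mathbf C$-cuts is connected, using $\mathbf C\subseteq\mathbf{Conn}$, the $L_\epsilon$-strip, and the absence of compact components --- is correct and is precisely what the paper singles out.

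There is, however, one concrete gap in Step~2. As you define it, $D_{\mathcal D}(\bR^n)$ consists of pairs $(t,W)$ with $t$ a regular value anywhere in $\bR$, whereas $\psi_\theta^{nc}(n,1)^\bullet_{\mathbf C}$ is defined by insisting on a witness regular value $t\in(-1,1)$. So the forgetful map $(t,W)\mapsto W$ need not land in $\psi_\theta^{nc}(n,1)^\bullet_{\mathbf C}$ at all: a pair $(5,W)$ with $W_5\in\mathbf C$ but with no $\mathbf C$-level in $(-1,1)$ is a legitimate point of your poset whose image is outside your stated target. You notice this asymmetry in passing (nonemptiness of the fibre "precisely because ... guarantees a regular value $t\in(-1,1)$"), but the map itself has to be repaired before the fibre computation makes sense. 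The fix the paper uses is to pass to the full subposet $\mathcal P\subset D_{\mathcal D}$ of pairs $(t,W)$ with $t\in(-1,1)$, whose forgetful map genuinely has image in $\psi_\theta^{nc}(n,1)^\bullet_{\mathbf C}$, and to show that $\mathcal P\hookrightarrow D_{\mathcal D}$ is an equivalence on classifying spaces. This last step is done on simplicial nerves by affine rescaling in the $x_1$-direction, pushing all the marked regular values into $(-1,1)$; once you have $B\mathcal P\simeq BD_{\mathcal D}$, your Step~2 fibre argument applies verbatim to $u:B\mathcal P\to\psi_\theta^{nc}(n,1)^\bullet_{\mathbf C}$. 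Finally, your parenthetical restriction of $\mathbf C$ to a union of path components (so that $\{x: f(x)_a\in\mathbf C\}$ is open) is a real concern worth being explicit about; note that the paper states the theorem for arbitrary collections $\mathbf C$, and deals with general $\mathbf C$ versus its saturation $\mathbf C'$ only later in Proposition~\ref{prop:PathComponentsOfObjects}, so you should either note that openness of $\{f(x)_a\in\mathbf C\}$ is needed for the cover by $U_a$ or arrange the lift more carefully.
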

\begin{proof}
This is exactly as the proof of Theorem \ref{thm:CobordismCategoryEquivalence}. Note that if $W \in \psi_\theta^{nc}(\infty,1)\bp$ and
  $a_0$ and $a_1$ are two regular values of $x_1: W \to \bR$ such that
  $W_{a_\nu} \in \mathbf{C}$, then the fact that elements of $\mathbf{C}$ are
  connected and $W$ is non-compact implies that the manifold
  $W \cap x_1^{-1}([a_0,a_1])$ is also connected. The analogous $D_\theta(\bR^\infty)_{\mathbf{C}}\bp$ is then the topological poset consisting of pairs $(t,W) \in \bR \times \psi_\theta^{nc}(\infty,1)\bp$ with $t$ regular for $x_1 : W \to \bR$ and $W_t \in \mathbf{C}$. This allows one to mimic the proof of Theorem \ref{thm:CobordismCategoryEquivalence}.

However the space $\psi_{\theta}^{nc}(\infty, 1)\bp_{\mathbf{C}}$ has such regular values in $(-1,1)$, not merely in $\bR$. Thus it does not have a map from the poset $D_\theta(\bR^\infty)_{\mathbf{C}}\bp$, but only from the full subposet $\mathcal{P} \subseteq D_\theta(\bR^\infty)_{\mathbf{C}}\bp$ of pairs $(t,W)$ with $t \in (-1,1)$. We must show that the inclusion of this subposet gives an equivalence on classifying-spaces. This is so as it induces a levelwise homotopy equivalence on simplicial nerves: a homotopy inverse to the inclusion $N_k \mathcal{P} \to N_k D_\theta(\bR^\infty)_{\mathbf{C}}\bp$ is giving by affine scaling in the $\bR$ direction until all regular values lie in the interval $(-1,1)$.

As in the proof of Theorem \ref{thm:PosetModel} the fibre of $\mathcal{P} \to \psi_{\theta}^{nc}(\infty, 1)\bp_{\mathbf{C}}$ over $W$ becomes the simplex with vertices all possible choices of $a \in (-1,1)$ with $W_a \in \mathbf{C}$, which is contractible.
\end{proof}

To finish the proof of Theorem~\ref{thm:SubcategoryConnected} we need to
prove that the inclusion
\begin{align}
  \label{eq:23}
  \psi^{nc}_\theta(\infty,1)^\bullet_\mathbf{Conn} \to \psi_\theta(\infty,1)
\end{align}
is a weak equivalence.  This will be done
in the rest of this section, and will be broken up into several steps
that we treat separately.  The inclusion~(\ref{eq:23}) factors as
\begin{align*}
\psi^{nc}_\theta(\infty,1)^\bullet_\mathbf{Conn} \to
  \psi_\theta^{nc}(\infty,1)^\bullet \to \psi_\theta(\infty,1)^\bullet \to
  \psi_\theta(\infty,1).
\end{align*}
Starting from the right, we prove in
Lemma~\ref{lem:AddingBasepoint} below that the inclusion
$\psi_\theta(n,1)^\bullet \to \psi_\theta(n,1)$ is a homotopy
equivalence by giving two explicit maps that are homotopy inverse.
Then in Lemma~\ref{lem:ElimCpctComponents} we show that
$\psi_\theta^{nc}(n,1)^\bullet \to \psi_\theta(n,1)^\bullet$ is a weak
homotopy equivalence.  These steps are both fairly easy.  In
\S\ref{sec:ProofThmA} we prove that $\psi^{nc}_\theta(\infty,1)^\bullet_\mathbf{Conn} \to \psi_\theta^{nc}(\infty,1)^\bullet$ is a weak equivalence. For the proof we use
0-surgery to make objects connected, but in order to get a homotopy
equivalence we need a fairly elaborate procedure for doing 0-surgeries
in families.

\begin{lemma}\label{lem:AddingBasepoint}
The inclusion $i: \psi_{\theta}(n, 1)\bp \lra \psi_{\theta}(n, 1)$ is a weak homotopy equivalence.
\end{lemma}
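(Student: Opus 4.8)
The strategy is to exhibit an explicit homotopy inverse to the inclusion $i : \psi_\theta(n,1)\bp \to \psi_\theta(n,1)$ by ``inserting a standard strip''. Recall that $\psi_\theta(n,1)\bp = \colim_\epsilon \psi_\theta(n,1)^\epsilon$, where $M \in \psi_\theta(n,1)^\epsilon$ is required to lie in $\bR \times (-1,1)^{d-1} \times [0,1)^{n-d}$, to contain the strip $L_\epsilon = \bR \times (-\epsilon,\epsilon)^{d-1} \times \{0\}$, and to carry the standard $\theta$-structure there. The obvious way to force these conditions on an arbitrary $W \in \psi_\theta(n,1)$ is to first push $W$ off the region where we want to put the strip, and then glue in $\bR \times (-\epsilon,\epsilon)^{d-1} \times \{0\}$ with its standard structure.

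Concretely, I would proceed as follows. First, using the embedding $\bR^n \hookrightarrow \bR \times (-1,1)^{d-1} \times (-1,1)^{n-d}$ (a diffeomorphism of $\bR^{n-1}$ onto a thin slab, applied in the last $n-d$ coordinates only, crossed with the identity on the first coordinate) together with the continuity of the $\Emb$-action on $\Psi_\theta$ (the theorem following Lemma on smooth maps, and Theorem \ref{thm:action-cont}), define a map $r : \psi_\theta(n,1) \to \psi_\theta(n,1)$ whose image lies in the subspace of manifolds contained in $\bR \times (-1,1)^{d-1} \times [0,1)^{n-d}$; this $r$ is homotopic to the identity because the chosen embedding is isotopic to the identity through embeddings. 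Second, translate in the $x_n$-direction (or scale the last coordinates toward a half-open slab) to move $W$ entirely into $\bR \times (-1,1)^{d-1} \times (\delta, 1)^{n-d}$ for a fixed small $\delta$; again this is realized by an ambient isotopy, hence a homotopy. Third, define $j : \psi_\theta(n,1) \to \psi_\theta(n,1)\bp$ by $W \mapsto W' \sqcup L_\epsilon$ where $W'$ is the image of $W$ under the first two steps and $L_\epsilon$ carries the standard $\theta$-structure; since $W'$ is disjoint from a neighbourhood of $L_\epsilon$, the union is a legitimate element of $\Psi_\theta(\bR^n)$, it lands in $\psi_\theta(n,1)^\epsilon$, and the assignment is continuous by the sheaf property (Theorem \ref{thm:sheaf}) and the fact that union-with-a-fixed-manifold-supported-away is continuous.

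It then remains to check $j \circ i \simeq \id$ on $\psi_\theta(n,1)\bp$ and $i \circ j \simeq \id$ on $\psi_\theta(n,1)$. For $i \circ j$: the composite sends $W$ to $W' \sqcup L_\epsilon$; the isotopies from steps one and two give a homotopy from $W$ to $W'$, and then one slides the added strip $L_\epsilon$ off to infinity in the $x_1$-direction (as in the contracting homotopies of Lemma \ref{lem:monoid-properties}(\ref{item:3:lem:monoid-properties})), which disposes of the extra component and returns to $W'$, hence to $W$. For $j \circ i$: starting from $M \in \psi_\theta(n,1)^\epsilon$, which already contains $L_\epsilon$ and is already in the slab, the steps-one-and-two isotopies must be chosen (or modified) to fix $M$ near $L_\epsilon$; applying them pushes the rest of $M$ into the thinner slab, and then one must cancel the doubled strip. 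The cleanest way is to arrange the isotopies to be supported away from a neighbourhood of $L_\epsilon$ and to then use a homotopy that merges the pre-existing strip with the newly glued one (both standard, so the $\theta$-structures match) and pulls the result back to $M$. Here one uses the colimit topology on $\psi_\theta(n,1)\bp$: any compact family lies in some $\psi_\theta(n,1)^{\epsilon_0}$, so the required $\epsilon$ can be taken uniformly on that family.

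The main obstacle is the $\theta$-structure bookkeeping in the last step: one must ensure that all the isotopies respect $\theta$-structures (they do, since the $\Emb$-action on $\Psi_\theta$ is defined and continuous, Theorem \ref{thm:action-cont} and its $\theta$-analogue), and, more delicately, that gluing in $L_\epsilon$ with \emph{the standard structure} and later absorbing it is compatible with the structure $M$ already carries on $L_\epsilon$ — which is exactly where the hypothesis that $M$ be standard on $L_\epsilon$ (built into the definition of $\psi_\theta(n,1)^\epsilon$) is used. A secondary technical point is continuity of $j$ at the colimit level and uniformity of $\epsilon$ over compacta, handled by the fact that $D^m$ and $[0,1] \times D^m$ are compact so their images lie in a single $\psi_\theta(n,1)^{\epsilon_0}$.
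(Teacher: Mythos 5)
Your overall strategy --- compress $W$ into a thin slab and glue in a standard object over the strip region, thereby constructing a map $j\colon \psi_\theta(n,1) \to \psi_\theta(n,1)^\bullet$ homotopy inverse to $i$ --- matches the paper's plan (the paper phrases the compress-and-union move as $M \mapsto W_0 \amalg M$ for a fixed $W_0 \in \psi_\theta(n,1)^\epsilon$, which is the same construction). However, there are two concrete gaps in your execution.

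First, the object you glue in cannot be the bare strip $L_\epsilon = \bR \times (-\epsilon,\epsilon)^{d-1} \times \{0\}$: elements of $\Psi_\theta(\bR^n)$ are required to be \emph{closed} subsets of $\bR^n$, and $L_\epsilon$ is not closed (its closure contains $\bR \times [-\epsilon,\epsilon]^{d-1} \times \{0\}$). Thus $W' \sqcup L_\epsilon$ is not a legitimate element of $\psi_\theta(n,1)^\epsilon$. What must be glued in is a full closed $d$-manifold $W_0$ (a ``cylinder'' $\bR \times C$ with $C$ a closed $(d-1)$-manifold containing $(-\epsilon,\epsilon)^{d-1}\times\{0\}$ with the standard $\theta$-structure there); the definition of $\psi_\theta(n,1)^\epsilon$ only asks that the element \emph{contain} $L_\epsilon$, not that it equal it near the origin.

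Second, and more seriously, the step where you ``slide the added strip off to infinity in the $x_1$-direction'' to show $i \circ j \simeq \mathrm{id}$ cannot work. The strip $L_\epsilon$ (and any cylinder $W_0 = \bR \times C$) is $\bR$-invariant in its first coordinate, so translating it in $x_1$ does nothing; and you cannot push it out in the remaining coordinates while staying inside the constraint $M \subseteq \bR \times (-1,1)^{n-1}$ defining $\psi_\theta(n,1)$. There is simply no ambient isotopy that disposes of an added component that is unbounded in $x_1$. The paper handles this with a genuinely different argument: since $i \circ c$ is the $H$-space translation $M \mapsto i(W_0) \amalg M$ for the operation $\amalg$ on $\psi_\theta(n,1)$, it is homotopic to the identity \emph{provided $W_0$ lies in the basepoint component of $\psi_\theta(n,1)$}, i.e.\ provided $C$ is null $\theta_{d-1}$-cobordant (cf.\ Corollary~\ref{cor:grouplike}). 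That is a real cobordism-theoretic fact, not a formal isotopy, and it is precisely where the standing hypotheses on $\theta$ enter. Without supplying this, your homotopy $i\circ j \simeq \mathrm{id}$ does not go through.

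Your treatment of $j \circ i$ (merge the two standard strips after arranging the compression to be supported away from $L_\epsilon$, relying on both being standard) is in line with what the paper's figures depict, and your remark about compactness and factoring through a single $\psi_\theta(n,1)^{\epsilon_0}$ is correct and matches the paper's opening observation.
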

\begin{proof}
For $X$ a compact space, any map $X \to \psi_\theta(n,1)^\bullet$ factors through some $\psi_\theta(n,1)^\epsilon$ by a standard property of the colimit topology. Thus it is enough to show that $\psi_\theta(n,1)^\epsilon \to \psi_\theta(n,1)$ is a weak homotopy equivalence.

We can define a product $\amalg : \psi_\theta(n,1) \times \psi_\theta(n,1) \to \psi_\theta(n,1)$ as follows. The manifold $W_1 \amalg W_2$ is obtained by taking the union of the disjoint manifolds $W_1$ and $W_2 + e_{d+1}$ and scaling the $(d+1)$st coordinate by $\tfrac{1}{2}$.

Now pick a ``cylinder'' $W_0 \in \psi_\theta(n,1)^\epsilon$ and define a map
\begin{align*}
  c: \psi_\theta(n,1) & \to \psi_\theta(n,1)^\epsilon\\
  M & \mapsto W_0 \amalg M.
\end{align*}
Note that $W_0 \amalg M$ does lie in $\psi_\theta(n,1)^\epsilon$ by construction of the product. Then the composition
\begin{align*}
  \psi_\theta(n,1) \xrightarrow{c} \psi_\theta(n,1)^\epsilon \xrightarrow{i}
  \psi_\theta(n,1)
\end{align*}
is $M \mapsto i(W_0) \amalg M$, which is homotopic to the
identity map if we pick $W_0$ in the component of the basepoint.  The
effect of the reverse composition
\begin{align*}
  \psi_\theta(n,1)^\epsilon \xrightarrow{i} \psi_\theta(n,1) \xrightarrow{c}
  \psi_\theta(n,1)^\epsilon
\end{align*}
is shown in the following figure
\begin{center}
\includegraphics[bb = 152 623 420 716, scale=1]{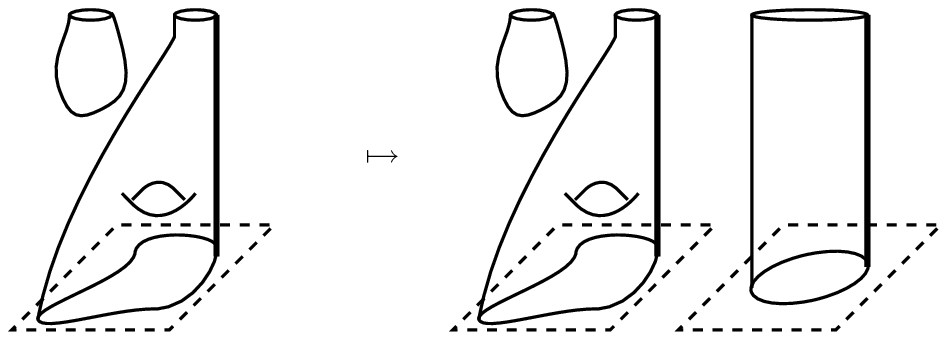}
\end{center}
and the homotopy to the identity map is as follows.
\begin{center}
\includegraphics[bb = 156 623 297 716, scale=1]{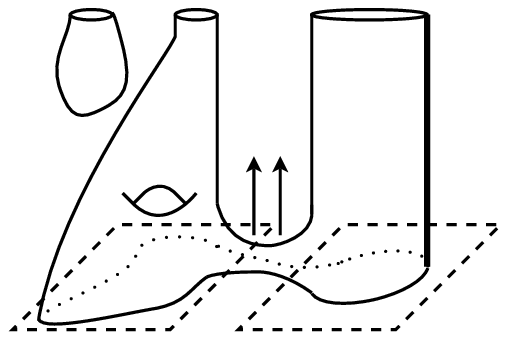}
\end{center}
\end{proof}

\begin{lemma}\label{lem:ElimCpctComponents}
The inclusion $\psi^{nc}_{\theta}(n, 1)\bp \lra \psi_{\theta}(n, 1)\bp$ is a weak homotopy equivalence.
\end{lemma}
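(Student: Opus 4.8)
The goal is to show that the inclusion of $\psi^{nc}_\theta(n,1)\bp$, the subspace of those $W$ with no compact path components, into $\psi_\theta(n,1)\bp$ is a weak homotopy equivalence. The plan is to show the relative homotopy groups vanish: given a map $f \colon (D^m, \partial D^m) \to (\psi_\theta(n,1)\bp, \psi^{nc}_\theta(n,1)\bp)$, I will homotope it rel $\partial D^m$ into $\psi^{nc}_\theta(n,1)\bp$. After a preliminary homotopy we may assume $f$ is smooth in the sense of Lemma~\ref{lem:SmoothMapsAreContinuous}, so the total space $\Gamma(f) \subseteq D^m \times \bR \times (-1,1)^{d-1} \times [0,1)^{n-d}$ is a smooth submanifold submersing onto $D^m$. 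The basepoint strip $L_\epsilon$ gives a distinguished non-compact component of each $f(x)$, so the ``bad'' part is the union of the compact components, which forms a closed subset of $\Gamma(f)$ disjoint from the basepoint strip; by compactness it is contained in $D^m \times (-R,R) \times (-1,1)^{d-1} \times [0,1)^{n-d}$ for some $R$.

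The key move is a ``pushing to infinity'' argument in the $x_1$-direction, entirely analogous to the one used in Lemma~\ref{lem:monoid-properties}(\ref{item:3:lem:monoid-properties}) and in the contraction of $C$ in Theorem~\ref{thm:Scanning}. Fix a smooth family of orientation-preserving embeddings $\varphi_t \colon \bR \to \bR$, $t \in [0,1)$, that are the identity on $(-\infty, R]$ (so the basepoint strip is untouched to the left, and everything to the right gets shoved off to $+\infty$), with $\varphi_t$ converging to a map that is eventually constant as $t \to 1$; applying $(\varphi_t \times \mathrm{Id})^{-1}$ via the extended functoriality~(\ref{eq:10}) (using $\varphi_t' \geq 0$) gives a homotopy of $f$ that at $t=1$ has sent every compact component of $f(x)$ to $\emptyset$, while never affecting $f$ near $x_1^{-1}(-\infty, R]$, hence leaving the basepoint strip $L_\epsilon$ intact. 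One must check: (a) the $\theta$-structures transport correctly, which is exactly what~(\ref{eq:10}) provides since $\varphi_t' \geq 0$; (b) the result stays in $\psi_\theta(n,1)^\epsilon$ for a fixed $\epsilon$, which holds because the strip is fixed throughout; and (c) the homotopy extends continuously to $t=1$, which holds because each compact component, being bounded in $x_1$, is eventually pushed outside every compact set and disappears in the $K$-topologies.

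Finally, for this to be a \emph{relative} homotopy I need it to be constant on $\partial D^m$, where $f(x)$ already has no compact components. The cleanest fix is to choose the ``push-off'' family to depend on $x$: pick $\rho \colon D^m \to [0,1]$ with $\rho \equiv 0$ near $\partial D^m$ and $\rho \equiv 1$ on the (closed) set where $f(x)$ actually has a compact component, and use $(\varphi_{t\rho(x)} \times \mathrm{Id})^{-1}$. Since compact components vary in a controlled way (they persist into any $f(x')$ with $x'$ near $x$, but can only appear, not merge with the non-compact part), the set of $x$ with a compact component is closed, so such a $\rho$ exists, and on $\partial D^m$ the homotopy is constant.

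\textbf{Main obstacle.} The technical heart is verifying continuity of the homotopy $[0,1] \times D^m \to \psi_\theta(n,1)\bp$ all the way up to $t=1$ (including that the endpoint lands where claimed), using the $K$-topology description of $\Psi_\theta$ and Theorem~\ref{thm:sheaf}: one must confirm that as parameters vary, compact components uniformly exit every compact set $K \subseteq \bR^n$, so that $\pi_K \circ f_t$ is continuous at $t=1$. The subtlety is that the bound $R$ on the compact components is uniform in $x \in D^m$ (by compactness), but one should be slightly careful that no compact component ``escapes to infinity in the $(-1,1)^{d-1} \times [0,1)^{n-d}$ directions as $x$ varies'' — this cannot happen precisely because $\Gamma(f)$ is closed in $D^m \times \bR^n$ and the compact-component locus is closed in $\Gamma(f)$, hence compact, hence genuinely bounded. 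Once boundedness is pinned down, everything else is the same elementary point-set bookkeeping used repeatedly in \S\ref{sec:properties}.
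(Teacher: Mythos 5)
Your overall strategy — show the relative homotopy groups vanish, smooth $f$, then use the extended functoriality of~(\ref{eq:10}) to rescale the $x_1$-axis — is the same as the paper's, but the specific geometric move is wrong, and the set-up it rests on contains a false claim.

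First, your family of embeddings $\varphi_t$ is required to be the identity on $(-\infty,R]$, whereas the compact components are asserted to live in $x_1 \in (-R,R)$. Since $\varphi_t$ is the identity on a neighbourhood of $(-R,R)$, the pullback $(\varphi_t\times\mathrm{Id})^{-1}$ leaves those compact components untouched for all $t$. Nothing is ``shoved off to infinity'': making $\varphi_t$ grow rapidly beyond $R$ only pulls the far-away part of $f(x)$ toward height $R$; it does not move the part of $f(x)$ at heights in $(-R,R)$. So the homotopy you describe does not change whether $f(x)$ has compact components. (There is no actual need to protect the basepoint strip in the $x_1$-direction: $L_\epsilon$ is $x_1$-translation-invariant, so it is preserved by \emph{any} $(\varphi\times\mathrm{Id})^{-1}$.)

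Second, the claim that the union of compact components is a closed subset of $\Gamma(f)$ is false: it is \emph{open}. If $C$ is a compact component of a fibre $f(x_0)$, then since $\pi\colon\Gamma(f)\to D^m$ is a submersion, the restriction of $\pi$ to a neighbourhood of $C$ is a proper submersion, hence a fibre bundle (Ehresmann); for $x$ near $x_0$ the corresponding open-and-closed compact piece of $f(x)$ is again a union of compact components. Conversely a noncompact component can close off to a compact one under an arbitrarily small perturbation. So the set of $x$ with a compact component is open, not closed, and your bump function $\rho\equiv 1$ on that locus cannot have support disjoint from $\partial D^m$. This also undermines the uniform $R$: even if the bad locus were closed it need not be bounded in $x_1$ uniformly in $x$, and ``closed'' does not entitle you to ``compact'' inside the noncompact space $\Gamma(f)$.

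The paper's proof goes the other way. Perturb $f$ so that for every $x$, no path component of $f(x)$ is entirely contained in $x_1^{-1}(0)$; this is an open condition, so near each $x$ there is an $\epsilon_x>0$ with no component contained in $x_1^{-1}[-\epsilon_x,\epsilon_x]$, and by compactness of $D^m$ one $\epsilon>0$ works for all $x$. Now take an isotopy of embeddings from the identity to a diffeomorphism $\bR\to(-\epsilon,\epsilon)$ and pull back using~(\ref{eq:10}). The resulting map sends each $f(x)$ to (a re-embedding of) $f(x)\cap x_1^{-1}(-\epsilon,\epsilon)$, and a compact open-and-closed piece of that would be a compact component of $f(x)$ lying inside $x_1^{-1}[-\epsilon,\epsilon]$ — ruled out by the choice of $\epsilon$. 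So the endpoint lies in $\psi^{nc}_\theta(n,1)^\bullet$, and the same argument shows the homotopy stays in $\psi^{nc}_\theta(n,1)^\bullet$ on $\partial D^m$ at all times. The essential idea you are missing is: rather than trying to push compact components away (which the $x_1$-translation-invariant framework resists), shrink a short interval over which no component is trapped, so that after rescaling every surviving component necessarily escapes to infinity.
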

\begin{proof}
Again we will show that the relative homotopy groups vanish. Let $f: (D^m, \partial D^m) \lra (\psi_{\theta}(n, 1)\bp, \psi^{nc}_{\theta}(n, 1)\bp)$ be a smooth map. We will show it is relatively homotopic to a map into $\psi^{nc}_{\theta}(n, 1)\bp$.

Consider the height function $x_1: \Gamma(f) \lra \bR$. After perturbing $f$, we way assume that for all $x \in D^m$, no path component of $f(x)$ is contained in $x_1^{-1}(0)$. For each $x \in X$, we can choose an $\epsilon_x > 0$ such that no path component of $f(x)$ is contained in $f(x) \cap x_1^{-1}[-\epsilon_x, \epsilon_x]$. There is an open neighbourhood $U_x \ni x$ for which this is still true.

The $\{  U_x \}_{x \in X}$ form an open cover of $X$: take a finite subcover $\{ U_i \}$, and let $\epsilon = \min(\epsilon_i)$. Then there is no path component of $f(x)$ in $x_1^{-1}[-\epsilon, \epsilon]$ for any $x \in X$. Choosing an isotopy of embeddings from the identity on $\bR$ to a diffeomorphism that takes $\bR$ to $(-\epsilon, \epsilon)$ defines a relative homotopy of the map $f$ into the subspace $\psi^{nc}_{\theta}(n, 1)\bp$.
\end{proof}

Our aim is to show that the inclusion
$$\psi^{nc}_{\theta}(\infty, 1)\bp_{\mathbf{Conn}} \lra \psi^{nc}_{\theta}(\infty, 1)\bp$$
is a weak homotopy equivalence, for any dimension $d$ and for all tangential structures $\theta$ such that $\X$ is connected and $S^2$ admits a $\theta$-structure. The idea for doing so is relatively simple, and proceeds by showing the relative homotopy groups $\pi_k(\psi^{nc}_{\theta}(\infty, 1)\bp_{\mathbf{Conn}}, \psi^{nc}_{\theta}(\infty, 1)\bp)$ are trivial.

It is instructive to first consider the case $k=0$. Suppose we are given a $W \in \psi^{nc}_{\theta}(\infty, 1)\bp$. We wish to produce a $\overline{W} \in \psi^{nc}_{\theta}(\infty, 1)\bp_{\mathbf{Conn}}$ and a path from $W$ to $\overline{W}$ in $\psi^{nc}_{\theta}(\infty, 1)\bp$. The condition to belong to $\psi^{nc}_{\theta}(\infty, 1)\bp_{\mathbf{Conn}}$ is the existence of a level set which is connected, so let us choose a regular value $a_0 \in (-1,1)$ of $x_1 : W \to \bR$ and write $W_{a_0} = W \cap x_1^{-1}(a_0)$ for the level set. In general $W_{a_0}$ will not be connected, so we intend to obtain $\overline{W}$ by performing 0-surgery on $W$ near $a_0$, so as to make the level set at $a_0$ connected: the result $\overline{W}$ of this surgery will lie in $\psi^{nc}_{\theta}(\infty, 1)\bp_{\mathbf{Conn}}$.

To carry out 0-surgery on an element of $\psi_\theta(\infty,1)$ we need to explain how to give a height function and a $\theta$-structure on the handles we attach. We do this in the following section.

\subsection{Parametrised 0-surgery}
\label{sec:surgery}

Recall that if $W$ is an (abstract) smooth $d$-manifold and $e: S^0
\times D^d \to W$ is an embedding, then performing 0-surgery on $W$
along $e$ amounts to removing $e(S^0 \times \Int(D^d))$ from $W$ and
gluing in $D^1 \times S^{d-1}$ along $S^0 \times S^{d-1}$.  In our
setup, $W \in \psi^\theta(\infty,1)$ has more structure, the essential
parts of which are the height function $x_1: W \to \R$ and the
$\theta$ structure $l: TW \to \theta^*\gamma$.  We explain how to
extend these to the manifold $\overline{W}$ resulting from
performing surgery.

\subsubsection{$\theta$-structures}
\label{sec:structures}

The following depends critically on our assumptions on $\theta: \X \to BO(d)$, namely that $\X$ be path connected and that $S^d$ admit a $\theta$-structure.
\begin{proposition}\label{prop:extend-structures}
  Let $W$ be a smooth $d$-manifold equipped with a $\theta$-structure
  $l: TW \to \theta^*\gamma$ and let
  \begin{align}
    \label{eq:36}
    e: S^0 \times D^d \to W
  \end{align}
  be an embedding.  Let $\K^e(W)$ denote the result of performing
  surgery on $W$ along $e$.  After possibly changing $e$ by a
  self-diffeomorphism of $D^d$, the $\theta$-structure
  on the submanifold
  \begin{align*}
    W - e(S^0 \times \Int D^d) \subseteq \K^e(W)
  \end{align*}
  extends to a $\theta$-structure on $\K^e(W)$.
\end{proposition}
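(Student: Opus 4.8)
The plan is to reduce the statement to a question about extending a map into $\X$ over a relative CW pair, and then to use the two hypotheses on $\theta$ to kill the obstructions. Recall that a $\theta$-structure $l : TW \to \theta^*\gamma$ is the same data as a map $f_l : W \to \X$ together with a bundle map $TW \to f_l^*\theta^*\gamma$ covering $f_l$; since $\theta$ is a Serre fibration, up to homotopy through $\theta$-structures the relevant datum is the lift $f_l : W \to \X$ of the Gauss map $\tau_W : W \to BO(d)$ classifying $TW$. Performing surgery along $e : S^0 \times D^d \to W$ produces $\K^e(W)$ by gluing $D^1 \times S^{d-1}$ to the complement $W_0 := W - e(S^0 \times \Int D^d)$ along $S^0 \times S^{d-1}$; its tangent bundle restricted to $W_0$ agrees with $TW|_{W_0}$, so the $\theta$-structure already restricts to $W_0 \subseteq \K^e(W)$, and the problem is to extend $f_l|_{W_0}$ over the attached handle $D^1 \times S^{d-1}$ compatibly with the Gauss map of $\K^e(W)$.

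First I would reduce to a local model. Away from a neighbourhood of the trace of the surgery nothing changes, so it suffices to work on $V := e(S^0 \times D^d) \cup (D^1 \times S^{d-1})$, which is an abstract manifold diffeomorphic to $S^{d-1} \times D^1$ (two disks glued along a tube) — more precisely $V$ is obtained from $S^0 \times D^d$ by the surgery and is diffeomorphic to a cylinder on $S^{d-1}$. Its boundary is $S^0 \times S^{d-1}$, on which we are already given the $\theta$-structure inherited from $W$. So the task becomes: given a $\theta$-structure on $TV|_{\partial V}$ (with underlying Gauss map the one coming from the embedding into $W$, hence into $\R^N$), extend it to a $\theta$-structure on all of $TV$. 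Because $V \simeq S^{d-1}$ and $V$ deformation retracts onto $\{0\} \times S^{d-1} \subseteq D^1 \times S^{d-1}$, and the inclusion $S^0 \times S^{d-1} \hookrightarrow V$ is (for $d \geq 2$) surjective on $\pi_0$ and the two components are each $(d-2)$-connected, the obstruction to extending the lift lives in the relative cohomology of $(V, \partial V)$ with coefficients in the homotopy groups of the fibre $\Fib(\theta)$, and the only potentially nonzero obstruction sits in degree $d$, i.e. is an element of $\pi_{d-1}(\Fib(\theta))$ measuring the difference of the two given $\theta$-structures on the two $D^d$'s against each other once the tube is filled in.

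The two hypotheses are exactly what trivialise this. Path-connectedness of $\X$ lets us assume, after a homotopy, that the two restrictions $f_l \circ e|_{\{+\}\times D^d}$ and $f_l\circ e|_{\{-\}\times D^d}$ land near a common point of $\X$ and agree up to a loop; and the hypothesis that $S^d$ admits a $\theta$-structure says precisely that the clutching element in $\pi_{d-1}(\Fib(\theta))$ coming from a standard framing picture is killed — gluing two framed disks along $S^{d-1}$ to form $S^d$ and asking for a $\theta$-structure on the result is the universal instance of our extension problem. The phrase ``after possibly changing $e$ by a self-diffeomorphism of $D^d$'' is where I expect the real work: precomposing one of the two embedded disks with an orientation-reversing (or otherwise nontrivial) element of $\Diff(D^d)$, or more to the point with a diffeomorphism whose derivative realises a prescribed loop in $O(d)$, changes the comparison element by the image of $\pi_{d-1}(O(d)) \to \pi_{d-1}(\Fib(\theta))$ — wait, rather it changes it in a controlled way — and one checks that together with the $S^d$-hypothesis this suffices to hit $0$. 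Concretely I would: (1) set up the handle $V$ and the bundle data explicitly as above; (2) identify the space of extensions of the given boundary $\theta$-structure over $V$ with a space of based maps out of $(V/\partial V) \simeq S^d$, hence with $\Omega^d$ of a homotopy fibre of $\theta$, and identify $\pi_0$ of this with a torsor over $\pi_d(\X, \Fib)$; (3) observe that the hypothesis ``$S^d$ admits a $\theta$-structure'' produces a distinguished null-element, and that acting by $\pi_0 \Diff(D^d)$ (or by an isotopy of $e$ realising a generator of $\pi_{d-1}(O(d))$) moves the obstruction class onto it; (4) conclude. The main obstacle is step (3): making precise the dictionary between ``$S^d$ is $\theta$-able'', the clutching construction, and the effect of modifying $e$ by a self-diffeomorphism of $D^d$, so that the obstruction class is literally shown to vanish rather than merely shown to lie in a group one hopes is zero.
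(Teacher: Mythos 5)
Your outline correctly identifies the shape of the problem---reduce to extending the bundle map over the glued-in handle $D^1 \times S^{d-1}$, and notice that the two hypotheses on $\theta$ are responsible for the extension existing---but you flag step (3) as ``the main obstacle'' and ``the real work,'' and indeed that is exactly where the proposal has a gap: you set up an abstract obstruction class in $\pi_{d-1}(\Fib(\theta))$ and then assert, without proof, that the $S^d$-hypothesis together with modifying $e$ can be used to kill it. That identification and cancellation is never carried out, so as written the argument is incomplete.

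The paper's proof avoids this bookkeeping entirely with a more direct homotopy argument. After cutting out $e(S^0 \times \Int D^d)$ and gluing in $D^1\times S^{d-1}$, one must extend the bundle map $T(D^1\times S^{d-1})|_{\partial D^1\times S^{d-1}} \to \theta^*\gamma$ over the whole handle. Because the space of $\theta$-structures on a disc $D^d$ is homotopy equivalent to the fibre $\Fib(\theta)$, and $\pi_0\X = *$ forces $\pi_1 BO(d)$ to act transitively on $\pi_0\Fib(\theta)$, it follows that $\Fib(\theta)$ has at most two path components which are permuted by a reflection; hence up to changing the sign of a coordinate of $D^d$, \emph{any two} $\theta$-structures on $D^d$ are homotopic. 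The paper uses this to homotope the two boundary structures $l\circ e(\pm1,-)$ so that each one agrees with the restriction of a fixed $\theta$-structure $l_0$ on $S^d$ (whose existence is the other hypothesis) to the caps $D^d_\pm = \{x\in S^d : \pm x_1 \le \tfrac12\}$. The required extension over $D^1\times S^{d-1}$ is then simply $l_0$ restricted to $\{x \in S^d: |x_1|\le \tfrac12\}$---exhibited, not obstructed.

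Note also that ``changing $e$ by a self-diffeomorphism of $D^d$'' in the statement means only a reflection, i.e.\ changing the sign of one coordinate. This is a $\pi_0 O(d)$-level modification used to match components of $\Fib(\theta)$; the speculation in your proposal about isotoping $e$ through a loop in $O(d)$ realising a class in $\pi_{d-1}(O(d))$ is both unnecessary and not what is meant.
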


\begin{proof}
  After cutting out $e(S^0 \times \Int D^d) \subseteq W$ and gluing in
  the $D^1 \times S^{d-1}$, the $\theta$-structure on $W$ gives a
  bundle map
\begin{align}\label{eq:37}
  T(D^1 \times S^{d-1})|_{\partial D^1 \times S^{d-1}} \to
  \theta^*\gamma,
\end{align}
and the problem becomes to extend this to a bundle map over all of
$D^1 \times S^{d-1}$.  This obstruction theoretic question only
depends on the homotopy class of the $\theta$ structure on $W$ over
each sphere $e(\{\pm 1\} \times \partial D^d)$.

The space of $\theta$ structures on $D^d$ is homotopy equivalent to
the fibre of the fibration $\theta: \X \to BO(d)$.  The assumption
that $\X$ be path connected implies that $\pi_1 BO(d)$ acts
transitively on this fibre, so up to possibly changing the sign of a
coordinate in $D^d$, all $\theta$ structures on it are homotopic.  In
particular we can arrange that after possibly composing the embedding
(\ref{eq:36}) with a self-diffeomorphism of $S^0 \times D^d$, the
$\theta$ structures on $D^d$ induced by the embeddings $e(\pm 1, -): D^d
\to W$ are homotopic to the $\theta$ structures on the disks
\begin{align*}
  D^d_\pm = \{x \in S^d | \pm x_1 \leq \tfrac{1}{2}\}
\end{align*}
with respect to some $\theta$ structure $l_0: TS^d \to \theta^*\gamma$
on $S^d$, chosen once and for all.  (Our assumption on $\theta:X \to
BO(d)$ is precisely that such a choice can be made.)  But then the
bundle map~\eqref{eq:37} extends over the cylinder $D^1 \times
S^{d-1}$, since that cylinder can be identified with
\[\{x \in S^d \mid |x_1| \leq \tfrac{1}{2}\}. \qedhere\]
\end{proof}

\subsubsection{Height functions}
\label{sec:level-pres-surg}

Pick once and for all a smooth family of functions 
\begin{align*}
  \lambda_w: [0,1] \to [w,1], \quad w \in [0,\tfrac{1}{2}]
\end{align*}
such that $\lambda(t) = t$ on $\lambda^{-1}([\tfrac{3}{4},1])$, that
$\lambda^{-1}(w) = [0,\tfrac{1}{2}]$, and such that $\lambda'(t) > 0$ on
$\lambda^{-1}(w,1)$.  Define a family of smooth functions $\varphi_w$
by
\begin{align*}
  D^1 \times S^{d-1} &\stackrel{\varphi_w}{\to} D^d, \quad w \in [0,\tfrac{1}{2}]\\
  (t,x) & \mapsto \lambda_w(|t|)x.
\end{align*}
The domain $D^1 \times S^{d-1}$ is the disjoint union of the open
sets $U^+ = (\tfrac{1}{2},1] \times S^{d-1}$ and $U^- = [-1,-\tfrac{1}{2}) \times
S^{d-1}$, and the closed set $C = [-\tfrac{1}{2}, \tfrac{1}{2}] \times S^{d-1}$.  Then the
map $\varphi_w$ restricts to diffeomorphisms
\begin{align}
  \label{eq:31}
  \begin{aligned}
    U^+ &\stackrel{\varphi_w|_{U^+}}{\longrightarrow} D^d - wD^d\\
    U^- &\stackrel{\varphi_w|_{U^-}}{\longrightarrow} D^d - wD^d
  \end{aligned}
\end{align}
and the restriction to $C = [-\tfrac{1}{2},\tfrac{1}{2}]\times S^{d-1}$ can be factored as
\begin{align}\label{eq:1}
  C \xrightarrow{\mathrm{proj}} S^{d-1} \xrightarrow{w} \partial (w
  D^d),
\end{align}
where the second map $w: S^{d-1} \to \partial (w D^d)$ denotes
multiplication by $w$ in $\R^n$.

Let $W$ be a smooth $d$-manifold with a smooth height function $x_1 : W \to \bR$. If $e: S^0 \times D^d \to W$ and $w \in [0,\tfrac{1}{2}]$ is such that $x_1\circ e(+1,x) =
x_1\circ e(-1,x)$ for $|x| \leq w$ we define a map $D^1 \times S^{d-1}
\to \R$ by
\begin{align}\label{eq:38}
  (t,x) & \mapsto
  \begin{cases}
    x_1\circ e(+1,\varphi_w(t,x)) & \text{if $t > -\tfrac{1}{2}$}\\
    x_1\circ e(-1,\varphi_w(t,x)) & \text{if $t < \tfrac{1}{2}$}.
  \end{cases}
\end{align}
This is well defined because of the factorisation~\eqref{eq:1} (so
$\varphi_w(t,x)$ is independent of $t$ for $t \in [-\tfrac{1}{2},\tfrac{1}{2}]$).

The inverses of the diffeomorphisms~(\ref{eq:31}) give an embedding
\begin{align}
  \label{eq:30}
  \begin{aligned}
    S^0 \times (D^d - wD^d) &\xrightarrow{h_w}  D^1 \times S^{d-1}\\
    (\pm 1, x) & \mapsto (\varphi_w|_{U^\pm})^{-1}(x)
  \end{aligned}
\end{align}
whose image is the complement of $C$.  The restriction of $h_w$ to $h:
S^0 \times (D^d - \tfrac{3}{4} D^d) \to D^1 \times S^{d-1}$ is independent of the
parameter $w$.

\begin{definition}\label{defn:surgery}
  Let $W \in \psi^\theta(n,1)$, let $w \in [0,\tfrac{1}{2}]$, and let $e: S^0
  \times D^d \to W$ be an embedding satisfying
  \begin{align*}
    x_1 \circ e(+1,x) = x_1 \circ e(-1,x),\quad \text{when $|x| \leq
      w$.}
  \end{align*}
  \begin{enumerate}[(i)]
  \item Let $\K^e_w(W)$ be the smooth
    manifold obtained by gluing $D^1 \times S^{d-1}$ into $W - e(S^0
    \times wD^d)$ along the embedding~\eqref{eq:30}.  Let
    \begin{align*}
      x_1: \K^e_w(W) \to \R
    \end{align*}
    be the function which agrees with~\eqref{eq:38} on $D^1 \times
    S^{d-1}$ and with the old $x_1$ on $W - e(S^0 \times w D^d)$.
  \item Pick functions $x_{2}, x_{3}, \dots: \K^e_w(W)\to (-1,1)$ extending the coordinate functions on $W - e(S^0 \times D^d)$
    such that the resulting map $x: \K^e_w(W) \to \R^\infty$ is an
    embedding.  Also pick, by
    Proposition~\ref{prop:extend-structures}, an extension of the
    $\theta$ structure on $W - e(S^0 \times w D^d)$ to a $\theta$
    structure on $\K^e_w(W)$ (after possibly changing sign of a
    coordinate on $D^d$).  We use the same notation $\K^e_w(W)$ for
    the resulting element
    \begin{align*}
      \K^e_w(W) \in \psi^\theta(\infty,1).
    \end{align*}
  \end{enumerate}
\end{definition}

In the second part of Definition~\ref{defn:surgery}, the
notation $\K^e_w(W) \in \psi^\theta(\infty,1)$ is slightly imprecise,
because the element $\K^e_w(W)$ depends on more data than just $W$,
$w$, and $e$.  The remaining data (namely the extension of the embedding and the
$\theta$-structure) will not play any role in the applications of the
construction, so we omit it from the notation.  The main properties of
$\K^e_w(W)$ are recorded in the following two lemmas.

\begin{lemma}\label{lemma:thin-surg}
  Let $W$, $w$ and $e$ be as in Definition~\ref{defn:surgery} and let
  $[a,b] = x_1\circ e(+1,wD^d) = x_1 \circ e(-1,wD^d)$.  Then the
  level sets $W \cap x_1^{-1}(t)$ and $\K^e_w(W) \cap x_1^{-1}(t)$ are
  canonically diffeomorphic, for all $t \in \R - [a,b]$.

  In particular in the extreme case $w=0$, level sets agree except at
  the level $a = x_1 \circ e(+1,0) = x_1 \circ e(-1,0)$.
\end{lemma}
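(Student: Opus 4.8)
The plan is to realise the asserted diffeomorphism as the identity on the part of $\K^e_w(W)$ that the surgery leaves untouched, after checking that both level sets in question lie entirely inside that part. Recall that $\K^e_w(W)$ is obtained from $W$ by deleting the open set $e(S^0\times w\,\Int D^d)$ and gluing in the handle $D^1\times S^{d-1}$ along the embedding~\eqref{eq:30}, whose image is the complement of the core $C = [-\tfrac12,\tfrac12]\times S^{d-1}$. Thus, as a set, $\K^e_w(W)$ is the union of the open submanifold $W - e(S^0\times wD^d)$ — on which, by Definition~\ref{defn:surgery}, $x_1$ is the original height function — together with the compact subset given by the image of $C$. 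The only point-set bookkeeping required at this stage is to observe that the annular overlap $e(S^0\times(D^d - wD^d))$ is identified with the part $\{|t|>\tfrac12\}\times S^{d-1}$ of the handle, and that there \eqref{eq:38} reduces, via the inverses of the diffeomorphisms~\eqref{eq:31}, to $x_1\circ e$; so the two prescriptions for $x_1$ on $\K^e_w(W)$ do agree on the overlap.

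Next I would compute the range of $x_1$ on the image of $C$. On $C$ the map $\varphi_w$ is independent of the $D^1$-coordinate and factors as in~\eqref{eq:1} through $S^{d-1}\xrightarrow{\,w\,}\partial(wD^d)$; hence \eqref{eq:38} restricted to $C$ is $(t,x)\mapsto x_1\circ e(+1,wx) = x_1\circ e(-1,wx)$, and its image is contained in $x_1\circ e(+1,wD^d) = [a,b]$. Consequently, for any $t\in\R - [a,b]$ the level set $\K^e_w(W)\cap x_1^{-1}(t)$ is disjoint from the image of $C$, so it is contained in the open submanifold $W - e(S^0\times wD^d)$, where $x_1$ is the original height function.

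On the $W$ side, $x_1\big(e(S^0\times wD^d)\big) = x_1\circ e(+1,wD^d)\cup x_1\circ e(-1,wD^d) = [a,b]$, so for $t\in\R - [a,b]$ the level set $W\cap x_1^{-1}(t)$ is likewise disjoint from $e(S^0\times wD^d)$. Both level sets therefore coincide with $(W - e(S^0\times wD^d))\cap x_1^{-1}(t)$, and the inclusions of $W - e(S^0\times wD^d)$ into $\K^e_w(W)$ and into $W$ give the canonical diffeomorphism; the only discrepancy is that the chosen re-embedding $x\colon\K^e_w(W)\to\R^\infty$ need not restrict to the identity on $e(S^0\times(D^d - wD^d))$, which is irrelevant to the diffeomorphism type. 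The extreme case $w=0$ is simply the case $wD^d = S^0\times\{0\}$, where $[a,b]$ collapses to the single value $a = x_1\circ e(+1,0) = x_1\circ e(-1,0)$.

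I do not anticipate a real obstacle: the lemma is essentially a consequence of the construction together with the location of the values of $x_1$ on the newly glued core, and the only care needed is in handling the gluing conventions and keeping track of which heights occur on which pieces, exactly as above.
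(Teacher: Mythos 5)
Your proof is correct and takes essentially the same route as the paper: both decompose $\K^e_w(W)$ as the union of the open piece $W - e(S^0\times wD^d)$ (on which $x_1$ is unchanged) and the glued-in core $C$, observe that $x_1$ takes values only in $[a,b]$ on the excised set $e(S^0\times wD^d)\subseteq W$ and on $C\subseteq\K^e_w(W)$, and conclude that for $t\notin[a,b]$ both level sets lie in the common open piece and hence agree canonically. Your write-up simply spells out the overlap check and the computation of $x_1(C)$ in a bit more detail than the paper does.
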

\begin{proof}
  The manifold $\K^e_w(W)$ is the disjoint union of the open set $W -
  e(S^0 \times wD^d)$ and the closed set $C = [-\tfrac{1}{2},\tfrac{1}{2}] \times
  S^{d-1}$.  The lemma follows because $x_1: W\to \R$ takes the values
  in $[a,b]$ on $e(S^0 \times wD^d)\subseteq W$ and~\eqref{eq:30}
  takes values in $[a,b]$ on $C \subseteq \K^e_w(W)$.  Outside these
  closed sets, $W$ and $\K^e_w(W)$ are canonically diffeomorphic by a
  level-preserving diffeomorphism.
\end{proof}

The next lemma is concerned with what happens to the level sets that change, in
some cases.  More precisely we will consider $w > 0$ and embeddings
$e: S^0 \times D^d \to W$ that are height preserving, i.e.\ such that
\begin{align}\label{eq:2}
  x_1\circ e(\pm 1,y) = a + y_1, \quad\text{for $|y| \leq w$}
\end{align}
where $a = x_1 \circ e(+1,0) = x_1\circ e(-1,0)$ and $y_1$ is the
first coordinate of $y \in D^d \subseteq \R^d$.  In that case $e$
restricts to an embedding
\begin{align*}
  S^0 \times (w D^d_0) \to W_a = W \cap x_1^{-1}(a),
\end{align*}
where $D^d_0 = D^d \cap x_1^{-1}(0) \cong D^{d-1}$.
\begin{lemma}\label{lemma:thick-surg}
  Let $w > 0$ and $e: S^0 \times D^d \to W$ be as in~\eqref{eq:2}, and
  assume $a$ is a regular value of $x_1: W \to \R$.  Then the level
  set $\K^e_w(W) \cap x_1^{-1}(a)$ is diffeomorphic to the manifold
  obtained by performing 0-surgery on $W_a = W \cap x_1^{-1}(a)$ along
  the embedding
  \begin{align*}
    S^0 \times D^{d-1} \stackrel{\cong}{\longrightarrow} S^0 \times
    (wD^d_0) \stackrel{e}{\longrightarrow} W_a.
  \end{align*}
\end{lemma}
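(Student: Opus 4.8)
The plan is to unwind Definition~\ref{defn:surgery} and simply compute the level set $\K^e_w(W)\cap x_1^{-1}(a)$, then match it term by term with an ordinary $0$-surgery on $W_a$. As a smooth manifold, $\K^e_w(W)$ is the union of the open piece $W-e(S^0\times wD^d)$ and the closed piece $C=[-\tfrac12,\tfrac12]\times S^{d-1}$, glued by a smooth collar identification along $(D^1\times S^{d-1})-C\xrightarrow{h_w^{-1}}S^0\times(D^d-wD^d)\xrightarrow{e}W$; by the factorisation~\eqref{eq:1} and the normalisation $\lambda_w^{-1}(w)=[0,\tfrac12]$, this identification attaches $\partial C=\{\pm\tfrac12\}\times S^{d-1}$ along $e(S^0\times wS^{d-1})=\partial\bigl(e(S^0\times wD^d)\bigr)$. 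In other words, $\K^e_w(W)$ is $W$ with the two disks $e(S^0\times \Int wD^d)$ removed and $C\cong D^1\times S^{d-1}$ glued back in along $S^0\times S^{d-1}$.

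Next I would intersect each piece with $x_1^{-1}(a)$. Since $e$ is height preserving, \eqref{eq:2} gives $x_1\circ e(\pm1,y)=a+y_1$, so $e(S^0\times wD^d)\cap x_1^{-1}(a)=e(S^0\times wD^d_0)$ with $D^d_0=D^d\cap x_1^{-1}(0)\cong D^{d-1}$; as $a$ is a regular value of $x_1\colon W\to\R$, the first piece contributes $W_a-e(S^0\times \Int(wD^d_0))$. For the piece $C$, the height function of Definition~\ref{defn:surgery}(i) is, by~\eqref{eq:38} together with~\eqref{eq:1} (so $\varphi_w(t,x)=wx$ for $t\in[-\tfrac12,\tfrac12]$), equal to $(t,x)\mapsto a+w\,x_1(x)$, where $x_1(x)$ denotes the first coordinate of $x\in S^{d-1}$. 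Since $w>0$ this vanishes exactly on the equator $S^{d-1}\cap\{x_1=0\}$, so $C\cap x_1^{-1}(a)=[-\tfrac12,\tfrac12]\times S^{d-2}\cong D^1\times S^{d-2}$, meeting the first piece along $\{\pm\tfrac12\}\times S^{d-2}$, which maps to $e(S^0\times wS^{d-2})=\partial\bigl(e(S^0\times wD^d_0)\bigr)$.

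Assembling the two contributions, $\K^e_w(W)\cap x_1^{-1}(a)$ is $W_a$ with the two $(d-1)$-disks $e(S^0\times wD^d_0)$ removed and $D^1\times S^{d-2}$ glued in along $S^0\times S^{d-2}$; after rescaling $wD^d_0\cong D^{d-1}$ this is precisely $0$-surgery on $W_a$ along $S^0\times D^{d-1}\xrightarrow{\cong}S^0\times(wD^d_0)\xrightarrow{e}W_a$, as asserted. The one point deserving care --- the \textbf{main obstacle}, although a mild one --- is that forming the level set at $a$ commutes with the gluing. This needs that the height function on the gluing locus $(D^1\times S^{d-1})-C$ agrees with the old $x_1$ under~\eqref{eq:30}, which holds because $x_1$ on $\K^e_w(W)$ was defined to restrict to the old $x_1$ on $W-e(S^0\times wD^d)$, and that the level $a$ is cut transversally inside $C$, which is immediate from $x_1|_C(t,x)=a+w\,x_1(x)$ with $w>0$; the collar used in the gluing then makes all of these identifications smooth. (Lemma~\ref{lemma:thin-surg} already records the complementary fact that the level sets outside $[a-w,a+w]$ do not change.)
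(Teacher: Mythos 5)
Your proof is correct and follows essentially the same route as the paper's: both unwind Definition~\ref{defn:surgery}, decompose $\K^e_w(W)$ into the open piece $W-e(S^0\times wD^d)$ and the closed piece $C$, intersect each with $x_1^{-1}(a)$, and observe that the level set in $C$ is exactly $[-\tfrac12,\tfrac12]\times S^{d-2}$. You supply somewhat more explicit bookkeeping (the formula $x_1|_C(t,x)=a+w\,x_1(x)$ and the transversality remark) than the paper's terser version, but the argument is the same.
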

\begin{proof}
  $\K^e_w(W)$ is obtained from $W$ by removing $e(S^0 \times \Int (w
  D^d))$ and gluing in $C = [-\tfrac{1}{2},\tfrac{1}{2}]\times S^{d-1}$.  On the level set
  at $a$, this removes the open set $e(S^0 \times \Int(w D^d_0)) \cong
  S^0 \times D^{d-1}$ and glues in the set
  \begin{align*}
    C \cap (x_1)^{-1}(a) = [-\tfrac{1}{2},\tfrac{1}{2}] \times S^{d-2},
  \end{align*}
  where $S^{d-2} \subseteq S^{d-1}$ is the equator.
\end{proof}

Finally, two extended versions of the construction of $\K^e_w(W)$ from
$W,w,e$.  First a version with multiple surgeries at once.
\begin{definition}\label{defn:multiple-surg}
  Let $W \in \psi^\theta(\infty,1)$ and $w \in [0,\tfrac{1}{2}]$.  Let $\Lambda =
  \{\lambda_1, \dots, \lambda_r\}$ be a finite set and $e: \Lambda
  \times S^0 \times D^d \to W$ an embedding such that
  \begin{align}
    \label{eq:3}
    x_1 \circ e(\lambda_i,+1,x) = x_1 \circ e(\lambda_i,-1,x),\quad \text{when $|x| \leq
      w$.}
  \end{align}
  for all $\lambda_i \in \Lambda$.  Then define
  \begin{align*}
    \K^e_w(W) = \K^{e(\lambda_1,-)}_w \circ \dots \circ
    \K^{e(\lambda_r,-)}_w(W) \in \psi^\theta(\infty,1).
  \end{align*}
  This is well defined because of the disjointness of the embeddings
  $e(\lambda_i,-): S^0 \times D^d \to W$.
\end{definition}

Of course, the properties of levelwise surgery from Lemmas
\ref{lemma:thin-surg} and \ref{lemma:thick-surg} imply similar
properties for multiple levelwise surgery.  Secondly we need a
parametrised version of this construction.
\begin{proposition}\label{prop:parametrised-surg}
  Let $V$ be a contractible manifold, and $f: V \to
  \psi^\theta(n,1)$ and $w : V \to [0,\tfrac{1}{2}]$ smooth maps.  Let
  $\Lambda$ be a finite set and
$e : V \times \Lambda \times S^0\times D^d \to \Gamma(f|_V)$ be an embedding over $V$.
Assume that the triple $(f(x),w(x), e(x))$
  satisfies the assumption of Definition~\ref{defn:multiple-surg} for
  all $x \in V$.  Then there is a smooth map
  \begin{align*}
    V &\to \psi^\theta(\infty,1)\\
    x & \mapsto \K^{e(x)}_{w(x)}(f(x)),
  \end{align*}
  which for each $x$ is constructed as in
  Definition~\ref{defn:multiple-surg}.
\end{proposition}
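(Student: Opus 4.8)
The plan is to perform the surgery of Definition~\ref{defn:multiple-surg} fibrewise over $V$ and then to recognise, via the characterisation of smooth maps in Lemma~\ref{lem:SmoothMapsAreContinuous}, that the resulting total space of surgered manifolds carries the structure of a \emph{smooth} map into $\psi^\theta(\infty,1)$. Since the $r$ surgeries indexed by $\Lambda$ have disjoint supports and $\K^e_w$ is by definition their composite, it is enough to treat a single surgery, i.e.\ $\Lambda=\{\lambda\}$, the general case following by iteration; so fix $\lambda$ and abbreviate $e(x)\colon S^0\times D^d\to f(x)$ and $w(x)\in[0,\tfrac12]$.

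The first step is to build the underlying family of manifolds. Let $\mathcal{E}$ be obtained from $\Gamma(f|_V)$ by applying Definition~\ref{defn:surgery}(i) fibrewise: over each $x\in V$ remove $e(x)(S^0\times w(x)D^d)$ and glue in a copy of $D^1\times S^{d-1}$ along the embedding $h_{w(x)}$ of \eqref{eq:30}, whose image is the complement of $C=[-\tfrac12,\tfrac12]\times S^{d-1}$. Because $h_w$ and the factorisation \eqref{eq:1} depend smoothly on $w\in[0,\tfrac12]$, and $w$, $f$, $e$ are smooth, $\mathcal{E}$ is a smooth manifold and $\pi_V\colon\mathcal{E}\to V$ is a submersion. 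I would then embed $\mathcal{E}$ into $V\times\R\times(-1,1)^\infty$ over $V$: the first coordinate is the given one on the $\Gamma(f|_V)$-part and \eqref{eq:38} (with $e=e(x)$, $w=w(x)$) on the handle part, these agreeing on the overlap by \eqref{eq:1}; and one chooses smooth functions $x_2,x_3,\dots\colon\mathcal{E}\to(-1,1)$, only finitely many nonstandard, extending the old coordinates away from the surgery region and restricting fibrewise to embeddings as in Definition~\ref{defn:surgery}(ii). Since the handle bundle $V\times(D^1\times S^{d-1})$ is trivial, such extensions form a fibration over $V$ with non-empty fibres --- non-emptiness being precisely the unparametrised Definition~\ref{defn:surgery}(ii) --- so they admit a section over the contractible base $V$; and $\mathcal{E}$ is closed in $V\times\R^\infty$ because $\Gamma(f|_V)$ is closed in $V\times\R^n$ and the handle bundle is compact.

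The substantive point, and the place where contractibility of $V$ is genuinely used, is the $\theta$-structure. Definition~\ref{defn:surgery} involves two choices: the self-diffeomorphism of $S^0\times D^d$ supplied by Proposition~\ref{prop:extend-structures}, and the extension of the $\theta$-structure over the glued handle. For the first, the homotopy class of the $\theta$-structure that $\ell(f)$ induces on $D^d$ via $De(x)(\pm1,-)$ is a locally constant function of $x\in V$ valued in $\pi_0$ of the fibre of $\theta$, hence constant on the connected $V$; so a single sign-change of a $D^d$-coordinate (as in Definition~\ref{defn:surgery}, chosen compatibly with the height conditions \eqref{eq:3}) works for all $x$ simultaneously, after which every induced disk-structure is homotopic to that of the standard $D^d_\pm\subseteq S^d$ relative to a fixed $l_0\colon TS^d\to\theta^*\gamma$. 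For the second, identify the handle bundle $V\times(D^1\times S^{d-1})$ fibrewise with $V\times\{x\in S^d\colon|x_1|\le\tfrac12\}$ as in the proof of Proposition~\ref{prop:extend-structures}, and pull back $l_0$; along the attaching locus this $\theta$-structure agrees with the restriction of $\ell(f)$ up to fibrewise homotopy. The fibrewise spaces of such homotopies form a fibration over $V$ with non-empty fibres --- non-emptiness being exactly Proposition~\ref{prop:extend-structures} after the first fix --- so it has a section over the contractible $V$; using that the attaching locus includes into the handle bundle by a cofibration over $V$, this section lets me homotope the pulled-back structure so that it agrees on the nose along the attaching locus, whence it glues with $\ell(f)$ to a fibrewise $\theta$-structure $\ell$ on $\mathcal{E}$. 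By Lemma~\ref{lem:SmoothMapsAreContinuous} the pair $(\mathcal{E},\ell)$ is a smooth map $V\to\psi^\theta(\infty,1)$ whose value at $x$ is $\K^{e(x)}_{w(x)}(f(x))$.

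The main obstacle is thus not a single hard estimate but the bookkeeping that turns the two choices in Definition~\ref{defn:surgery} into smooth families over $V$: one must check that the discrete obstruction of Proposition~\ref{prop:extend-structures} is constant over the connected base, and that the remaining non-discrete choices assemble into sections of suitable fibrations, which is automatic over a contractible base. Everything else is a routine verification that the fibrewise recipe of Definitions~\ref{defn:surgery} and~\ref{defn:multiple-surg} depends smoothly on the parameters $(f(x),w(x),e(x))$.
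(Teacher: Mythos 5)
Your proof is correct and follows essentially the same route as the paper's (which is only a proof sketch): show the underlying family of manifolds depends smoothly on $x$, choose the remaining coordinate functions smoothly, and use contractibility of $V$ to conclude that the obstruction theory for extending the $\theta$-structure is the same as in the unparametrised case. You have usefully made explicit the two places where contractibility is used — the non-discrete choices (coordinate extensions and the $\theta$-structure homotopies) form fibrations admitting sections over the contractible base, while the discrete choice (the sign change from Proposition~\ref{prop:extend-structures}) is locally constant hence constant over connected $V$ — and your invocation of Lemma~\ref{lem:SmoothMapsAreContinuous} to recognise the result as a smooth map is the right way to close the argument.
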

\begin{proof}[Proof sketch]
  The manifold $f(x)$ depends smoothly on $x$.  If we pick the
  remaining coordinate functions $x_{2}, x_{3}, \dots:
  \K^{e(x)}_{w(x)}(f(x)) \to \R$ in a smooth fashion, the manifold
  $\K^{e(x)}_{w(x)}(f(x))$ will also depend smoothly on $x$.
  Under the assumption that $V$ is contractible, the obstructions to
  extending the $\theta$ structure are the same as in the case where
  $V$ is a point.
\end{proof}

Finally, let us remark that the surgery on elements of
$\psi_\theta(\infty,1)$ constructed in this section preserves the
subspace $\psi_\theta^{nc}(\infty,1)$.  It also preserves the
subspace $\psi_\theta^{nc}(\infty,1)^\bullet$, provided the
surgery data is disjoint from some strip $L_\epsilon \subseteq W$.

\subsection{Proof of Theorem \ref{thm:SubcategoryConnected}}\label{sec:ProofThmA}

We now prove the main result of this section and finish the proof of
Theorem \ref{thm:SubcategoryConnected}.
\begin{theorem}\label{thm:main-of-4.1}
  The relative homotopy groups
  \begin{align}\label{eq:35}
    \pi_k(\psi_\theta^{nc}(\infty,1)^\bullet,
    \psi_\theta^{nc}(\infty,1)^\bullet_\Conn)
  \end{align}
  vanish for all $k$.
\end{theorem}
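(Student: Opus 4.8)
The plan is to show that any element of the relative homotopy group is trivial, i.e.\ that for a smooth map $f : (D^k, \partial D^k) \to (\psi_\theta^{nc}(\infty,1)^\bullet, \psi_\theta^{nc}(\infty,1)^\bullet_\Conn)$ we can find a homotopy, constant on $\partial D^k$, to a map landing in $\psi_\theta^{nc}(\infty,1)^\bullet_\Conn$. First I would fix, for each $x \in D^k$, a regular value $a_x \in (-1,1)$ of $x_1 : f(x) \to \bR$ (possible by Sard); by openness of the regular-value condition and compactness of $D^k$ we may cover $D^k$ by finitely many contractible open sets $V_1, \dots, V_r$, on each of which a single $a_i \in (-1,1)$ works simultaneously. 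The goal on each $V_i$ is to perform parametrised $0$-surgery, using the machinery of \S\ref{sec:surgery} (in particular Proposition~\ref{prop:parametrised-surg} and Lemma~\ref{lemma:thick-surg}), so as to connect up the level set $f(x)_{a_i}$; the key point is that Lemma~\ref{lemma:thick-surg} says surgery at level $a_i$ on the ambient manifold (with $w > 0$, height-preserving data) induces an ordinary $0$-surgery on the level manifold $f(x)_{a_i}$.

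The heart of the argument is therefore to produce, over each $V_i$, a coherent family of surgery data $e : V_i \times \Lambda \times S^0 \times D^d \to \Gamma(f|_{V_i})$ that is height-preserving near $a_i$ and whose effect on the level set at $a_i$ is to make it connected. I would do this in stages: first, on $\partial D^k \cap V_i$ (where $f(x)_{a_x}$ may already be connected for a possibly \emph{different} regular value than $a_i$) I must be careful — this is handled by observing that an element already in $\psi_\theta^{nc}(\infty,1)^\bullet_\Conn$ has \emph{some} connected level set, and one can first homotope, rel nothing, so that the chosen level $a_i$ itself is connected there (by a further surgery that is trivial if the level set is already connected, or by a scaling/cylinder argument); then the surgeries performed over $V_i$ can be tapered off to be empty near $\partial D^k$. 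Secondly, one inducts over the cover $V_1, \dots, V_r$: after making the level set at $a_1$ connected over $V_1$ (extended by the constant homotopy elsewhere using partitions of unity $\tau_i$ and the sheaf property of $\Psi_\theta$, exactly as in the proof of Proposition~\ref{prop:Psi0IntoPsiDot}), one moves to $V_2$, being careful that surgeries near $a_2$ do not disturb the now-connected level at $a_1$ (choose $\epsilon$ so the intervals $(a_i - 2\epsilon, a_i + 2\epsilon)$ are disjoint, and perform all surgery data within these disjoint slabs). Throughout, the $\theta$-structures on the attached handles are supplied by Proposition~\ref{prop:extend-structures}, using exactly the hypotheses that $\X$ is path connected and $S^d$ admits a $\theta$-structure; the basepoint strip $L_\epsilon$ is avoided so that we stay in the $\bullet$-decorated spaces, as remarked at the end of \S\ref{sec:surgery}.

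Concretely the steps are: (1) reduce to showing relative $\pi_k$ vanishes via a smooth representative $f$; (2) choose regular values and a finite contractible cover; (3) over each $V_i$, and compatibly with $\partial D^k$, construct height-preserving parametrised surgery data whose effect on the level set at $a_i$ is a sequence of $0$-surgeries joining all path components meeting a large box — here one uses that $f(x)$ has no compact path components (the $nc$ condition), so every path component of $f(x)$ does reach the level set $x_1^{-1}(a_i)$, ensuring finitely many surgeries suffice to connect it inside any given $[-R,R]^n$; (4) apply Proposition~\ref{prop:parametrised-surg} to get a smooth homotopy over $V_i$, damp it down near $\partial V_i$ using $\tau_i$, and glue via the sheaf property; (5) induct over $i = 1, \dots, r$ keeping the slabs disjoint; (6) check the homotopy is constant on $\partial D^k$ (there the chosen level is already connected and all surgery data is empty there).

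The main obstacle I expect is step (3): organising the $0$-surgeries in a \emph{family} over $V_i$ so that the number $|\Lambda|$ of surgeries and the embedding $e$ vary continuously (indeed smoothly) in $x$, while guaranteeing that the result is connected for \emph{every} $x \in V_i$. The subtlety is that the combinatorial pattern of path components of $f(x)_{a_i}$ can change across $V_i$; the way around this is to work within a fixed large box $[-R,R]^{n}$ (enlarging $R$ and, if necessary, shrinking $V_i$), to use that $V_i$ is contractible so that the relevant bundle of "which components to join" is trivial, and to allow redundant surgeries (a surgery joining two arcs already in the same component is harmless) so that a single choice of $\Lambda$ and a continuously-varying $e$ works uniformly. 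Matching this up across the boundary $\partial D^k$, where a connected level set is already present but possibly at a different height, is the second delicate point, handled by first inserting a preliminary homotopy (supported away from $L_\epsilon$) that arranges the distinguished level $a_i$ itself to be connected over $\partial D^k \cap V_i$ before any surgery over the interior of $V_i$ is switched on.
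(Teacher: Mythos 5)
Your overall plan matches the paper's: reduce to relative $\pi_k$ via a smooth representative, pick regular values $a_i$ over a finite cover by contractible opens $V_i$, perform parametrised $0$-surgery supported in disjoint slabs about the $a_i$ to make each level set $f(x)_{a_i}$ connected, taper using partitions of unity, and glue via the sheaf property. But the two points you flag as ``delicate'' are precisely where your proposal has genuine gaps, and the paper's fix is substantially different from what you sketch.

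The first gap is the tapering. You cannot smoothly interpolate between ``surgery performed'' and ``surgery not performed'': there is no way to shrink a handle to nothing without a singularity or a discontinuity in the topological type of the manifold, so ``surgery data tapered off to be empty near $\partial V_i$'' is not a well-formed operation. The paper's mechanism (Proposition~\ref{prop:local-surg}, via Proposition~\ref{prop:parametrised-surg}) is to choose, for each non-basepoint component of the level set $f(x)_{a_i}$, an \emph{embedded path} in $f(x)$ from a point far away --- outside $x_1^{-1}([-10,10])$ --- down to that component. The $nc$ condition is used exactly here: no compact component lies in $x_1^{-1}([-10,10])$, so such paths exist (this is Lemma~\ref{lemma:existence-paths-point}; your formulation ``every path component of $f(x)$ reaches $x_1^{-1}(a_i)$'' is both false in general and not the relevant point). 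The homotopy then \emph{slides} the surgery locus along these paths, governed by a parameter $\tau(t)\in[0,1]$, and simultaneously fattens the surgery width $w(t)=\rho(t)w_0$. The bump functions $\tau_i$ damp the \emph{path parameter}, not the surgery itself: near $\partial V_i$ the surgery sits far away from $x_1^{-1}([-10,10])$, where the stretching map $s$ of~(\ref{eq:14}) erases its effect. This is how the homotopies over different $V_i$ are made compatible without ever ``switching surgery off.''

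The second gap is relativity. What you need is a homotopy keeping $\partial D^k$ inside $\psi^{nc}_\theta(\infty,1)^\bullet_\Conn$, not one constant on $\partial D^k$; your step (6) aims at the latter and your ``preliminary homotopy'' making the chosen level $a_i$ connected on $\partial D^k \cap V_i$ is incompatible with it (if a preliminary homotopy must move $\partial D^k$, and then surgery is off there, the total homotopy certainly isn't constant on the boundary; and there is no reason the preliminary homotopy itself stays in $\Conn$). The paper's resolution is the clever choice of $\rho$ and $\tau$: on $[0,\tfrac{1}{2}]$, $\rho=0$ so the surgery is ``thin'' ($w=0$) and by Lemma~\ref{lemma:thin-surg} only one level set is changed --- but $\Conn$ is an open condition on a whole interval of regular values, so one bad level is harmless; on $[\tfrac{1}{2},1]$, $\tau=1$, so the surgery sits at level $a_i$ and Lemma~\ref{lemma:thick-surg} shows the surgered level set is connected. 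Thus for every $(t,x)$ with $f(x)\in\Conn$ one of these two cases applies, and $F(t,x)\in\Conn$ throughout, which is property~(\ref{item:9}) of Proposition~\ref{prop:local-surg}. Without this (or some replacement), your proposal does not produce a relative homotopy. Your observation that the combinatorial type of $\pi_0(f(x)_{a_i})$ changes across $V_i$ is correct and is addressed in the paper by a careful inductive choice of surgery data in Proposition~\ref{prop:paths-exist} (keeping heights and paths disjoint), not by allowing redundant surgeries.
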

To prove this theorem we need to construct a null homotopy of an
arbitrary continuous map of pairs
\begin{align*}
  (D^k, \partial D^k) \xrightarrow{f}
  (\psi_\theta^{nc}(\infty,1)^\bullet,
  \psi_\theta^{nc}(\infty,1)^\bullet_\Conn).
\end{align*}
The null homotopy will be constructed using the parametrised surgery
developed in the previous section.  We first give the local
construction.  It is convenient to have a word for the following
property.
\begin{definition}
Let us say that an element $W \in
  \psi_\theta^{nc}(n,1)^\bullet$ is \emph{connected at level
    $a$} if $a$ is a regular value of $x_1: W \to \R$ and if the level
  set $W_a = W \cap x_1^{-1}(a)$ is path connected.  Thus $W \in
  \psi_\theta^{nc}(n,1)^\bullet_\Conn$ if and only if it is
  connected at level $a$ for some $a \in (-1,1)$.
\end{definition}

In the following, we will very often use the following construction.
Let $j: \R \to \R$ be an embedding which is isotopic through embeddings to the identity,
has image $(-10,10)$, and has $j(t) = t$ for $t \in [-1,1]$.  Then let
$s$ be the continuous map
\begin{align}\label{eq:14}
  \begin{aligned}
    \psi_\theta(n,1) & \xrightarrow{s} \psi_\theta(n,1)\\
    M & \mapsto (j \times \mathrm{Id})^{-1}(M).
  \end{aligned}
\end{align}
This ``stretching'' map will be used throughout this section.

\begin{proposition}\label{prop:local-surg}
  Let $V$ be a contractible space and $f: V \to
  \psi_\theta^{nc}(\infty,1)^\bullet$ be a smooth map.  Let $a
  \in (-1,1)$ be a regular value of $x_1: f(x) \to \R$ for all $x \in
  V$.  Finally, let $\Lambda = \{\lambda_1, \dots, \lambda_r\}$ be a
  finite set, and
$$p : V \times \Lambda \times [0,1] \to \Gamma(f|_V)$$
be an embedding over $V$, such that
  \begin{enumerate}[(a)]
  \item\label{item:2} The path $p(x, \lambda,-): [0,1] \to f(x)$ ends
    somewhere in $f(x) \cap x_1^{-1}(a)$, outside the path component
    of the basepoint; each non-basepoint component contains exactly
    one of the ending points $p(x, \lambda,1)$.
  \item\label{item:4} The path $p(x, \lambda,-): [0,1] \to f(x)$
    starts somewhere in $f(x) - x_1^{-1}([-10,10])$.
  \end{enumerate}
  Then there is a homotopy $F: [0,1] \times V \to
  \psi_\theta^{nc}(\infty,1)^\bullet$, such that
  \begin{enumerate}[(i)]
  \item\label{item:5} $F(0,-)$ agrees with the composition
    \begin{align*}
      V \xrightarrow{f} \psi_\theta^{nc}(\infty,1)\bp \xrightarrow{s} \psi_\theta^{nc}(\infty,1)\bp.
    \end{align*}
  \item\label{item:6} $F(1,x)$ is connected at level $a$ for all $x \in V$.  In
    particular,
    \begin{align*}
      F(\{1\} \times V) \subseteq
      \psi_\theta^{nc}(\infty,1)^\bullet_\Conn.
    \end{align*}
  \item\label{item:9} If $f(x) \in
    \psi_\theta^{nc}(\infty,1)^\bullet_\Conn$ then $F(t,x) \in
    \psi_\theta^{nc}(\infty,1)^\bullet_\Conn$ for all $t$.
  \end{enumerate}
\end{proposition}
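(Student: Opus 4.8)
The plan is to realize the connecting-up as a single parametrised $0$-surgery, using Proposition~\ref{prop:parametrised-surg}, and to build the homotopy $F$ in two visible stages: first slide surgery handles in from infinity along the paths $p(x,\lambda,-)$, then perform the surgery by letting the handle ``width'' parameter $w$ grow from $0$ to a positive value. First I would use the stretching map $s$ of \eqref{eq:14} to arrange that, after replacing $f$ by $s\circ f$, the region $x_1^{-1}([-1,1])$ is ``spread out'' so that the level set $f(x)\cap x_1^{-1}(a)$ sits inside a long cylindrical-looking band; this is what item \eqref{item:5} records, and it is needed so that later handle attachments and the coordinate functions $x_2,x_3,\dots$ can be chosen with enough room. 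The key input from the hypotheses is that the ending points $p(x,\lambda,1)$ pick out exactly one point in each non-basepoint component of $f(x)\cap x_1^{-1}(a)$ (assumption \eqref{item:2}), and the starting points $p(x,\lambda,0)$ lie out beyond $x_1^{-1}([-10,10])$ (assumption \eqref{item:4}), so that attaching a $0$-handle there does no harm and, since $f(x)$ has no compact components, the result stays in $\psi_\theta^{nc}$.

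Next I would thicken the paths $p$ to an embedding $e\colon V\times\Lambda\times S^0\times D^d \to \Gamma(f|_V)$ over $V$ as follows. One end of $S^0$ is attached near $p(x,\lambda,1)\in f(x)\cap x_1^{-1}(a)$ via a tubular neighbourhood that can be chosen height-preserving in the sense of \eqref{eq:2} (possible because $a$ is a regular value for all $x$); the other end of $S^0$ is attached near $p(x,\lambda,0)$, out near infinity, where $f(x)$ is cylindrical and the height function is transverse, so again a height-preserving normal form is available. The paths themselves guide a choice of the handle so that $x_1\circ e(\lambda_i,+1,y)=x_1\circ e(\lambda_i,-1,y)=a+y_1$ near $0$. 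Then for each $x$ the triple $(f(x),w,e(x))$ with $w=0$ satisfies Definition~\ref{defn:multiple-surg}, and I would run the homotopy $w\mapsto \K^{e(x)}_w(f(x))$, $w\in[0,\tfrac12]$, reparametrised to $t\in[0,1]$. By Lemma~\ref{lemma:thick-surg} the level set at $a$ of $\K^e_w(f(x))$, for $w>0$, is obtained from $f(x)\cap x_1^{-1}(a)$ by $0$-surgery along the $r$ embedded arcs meeting each non-basepoint component once and the basepoint component; this connects up all components, so $F(1,x)$ is connected at level $a$, giving \eqref{item:6}. Smoothness in $x$ of the whole family is exactly Proposition~\ref{prop:parametrised-surg}, using contractibility of $V$ to extend the $\theta$-structure coherently (Propositions~\ref{prop:extend-structures} and~\ref{prop:parametrised-surg}), and non-compactness is preserved by the remark at the end of \S\ref{sec:surgery}; moreover if the surgery data is kept disjoint from a strip $L_\epsilon$ then $F$ stays in the decorated space $\psi_\theta^{nc}(\infty,1)^\bullet$, which can be arranged since the arcs run out to infinity and $a$ can be taken away from the strip.

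For item \eqref{item:9} I would observe that if $f(x)$ is already connected at some level $a'\in(-1,1)$, then I can take the surgery to happen disjointly from the level $a'$: all the handle data lives near level $a$ and out near infinity, so by Lemma~\ref{lemma:thin-surg} the level set at $a'$ is unchanged throughout the homotopy $F(t,x)$, hence $F(t,x)\in\psi_\theta^{nc}(\infty,1)^\bullet_\Conn$ for all $t$ — provided $a\ne a'$, which can be ensured by a preliminary small perturbation of the chosen regular value $a$ on the (closed) locus where $f(x)$ is already connected. The main obstacle is the bookkeeping in constructing the embedding $e$ over $V$ with the required height-preserving normal form at both ends simultaneously and with the handles mutually disjoint and disjoint from $L_\epsilon$; once $e$ is in hand, the rest is a direct appeal to the parametrised surgery machinery and the two level-set lemmas. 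A secondary subtlety is compatibility of the stretching $s$ with the requirement $a\in(-1,1)$: since $j(t)=t$ on $[-1,1]$, the value $a$ is untouched by $s$, so no clash arises.
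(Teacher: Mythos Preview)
Your high-level plan---slide surgery data in from infinity along the paths, then let the width $w$ grow---matches the paper's, but your construction of the surgery embedding $e$ contains a genuine error that breaks the argument.

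You place the $+1$ disc near $p(x,\lambda,1)$ at level $a$ and the $-1$ disc near $p(x,\lambda,0)$, which lies outside $x_1^{-1}([-10,10])$, and then assert $x_1\circ e(\lambda,+1,y)=x_1\circ e(\lambda,-1,y)=a+y_1$. These are incompatible: the two centres are at heights differing by more than $9$, so the height-matching condition of Definition~\ref{defn:surgery} fails for every $w>0$ and $\K^e_w$ is not defined. Even as a $0$-surgery this would join a component of $f(x)$ to \emph{itself} (the endpoints $p(x,\lambda,0)$ and $p(x,\lambda,1)$ lie on a single arc in $f(x)$), not to the basepoint component, so the level set at $a$ would not become connected.

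The paper's construction instead puts the $-1$ discs into the strip $L_\epsilon\subset f(x)$: pick distinct points $x_\lambda\in(0,\epsilon)^{d-1}\times\{0\}$ and set
\[
e(t,x,\lambda,+1)=p(x,\lambda,t),\qquad e(t,x,\lambda,-1)=\bigl(x_1\circ p(x,\lambda,t),\,x_\lambda\bigr)\in L_\epsilon.
\]
Now both ends sit at the \emph{same} height for every $t$, the $-1$ end always lies in the basepoint component, and the entire embedding moves along the path as $t$ runs from $0$ to $1$. At $t=0$ all the surgery data lies outside $x_1^{-1}([-10,10])$ (by hypothesis~\eqref{item:4} for the $+1$ end, and because $L_\epsilon=\R\times(-\epsilon,\epsilon)^{d-1}\times\{0\}$ extends over all heights for the $-1$ end), so after composing with the stretching map $s$ one obtains $F(0,-)=s\circ f$. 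This is how item~\eqref{item:5} is achieved; your static $e$ cannot do this, since $\K^e_0(f(x))$ already differs from $f(x)$ as a manifold.

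Your argument for item~\eqref{item:9} also needs repair. You may not perturb $a$, since it is part of the hypothesis, and for $w>0$ the surgery alters a whole interval of level sets rather than just the single level $a$, so ``disjoint from level $a'$'' is not automatic. The paper handles this by choosing timing functions $\rho,\tau\colon[0,1]\to[0,1]$ with $\rho|_{[0,1/2]}\equiv 0$ and $\tau|_{[1/2,1]}\equiv 1$, and setting $F(t,x)=s\bigl(\K^{e(\tau(t),x)}_{\rho(t)w_0}(f(x))\bigr)$. Then for every $t$ either $w(t)=0$, in which case Lemma~\ref{lemma:thin-surg} says at most one level set changes and the open interval of already-connected levels in $f(x)$ survives, or $\tau(t)=1$, in which case the surgery data sits at level $a$ and Lemma~\ref{lemma:thick-surg} makes that level set connected.
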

\begin{proof}
  First construct
$e : [0,1] \times V \times \Lambda \times S^0 \to \Gamma(f|_V)$
 an embedding over $V$, in the following way.  Recall that we have a trivialised
  $L_\epsilon = \R \times (-\epsilon,\epsilon)^{d-1} \times \{0\}
  \subseteq f(x)$ for all $x$.  Then pick points $x_\lambda \in
  (0,\epsilon)^{d-1} \times \{0\}$ arbitrarily, but with all
  $x_\lambda$ different (this is possible because $d-1 > 0$).  Then
  set
  \begin{align*}
    e(t, x,\lambda,+1,) &= p(x, \lambda,t)\\
    e(t, x,\lambda,-1) &= (x_1\circ p(x, \lambda,t),x_\lambda).
  \end{align*}
  This has the property that $e(t, x, -)$ embeds $\Lambda \times S^0$ into a single level set of $x_1: f(x) \to \bR$.

Thicken each embedding $e(t, x,-): \Lambda \times S^0 \to f(x)$ to
  an embedding of $\Lambda \times S^0 \times D^d$.  We denote the
  resulting map by the same letter
$$e : [0,1] \times V \times \Lambda \times S^0 \times D^d \to \Gamma(f|_V).$$
We can arrange that at $t=1$ we have the analogue of \eqref{eq:2}, namely
    \begin{align*}
      x_1 \circ e(1, x,\lambda,\pm 1, v) = a + v_1, \quad \text{for
        $|v| \leq w_0$}
    \end{align*}
    for some $w_0 > 0$.
  
Let $\rho: [0,1] \to [0,1]$ be a smooth function with $[0,\tfrac{1}{2}]
  \subseteq \rho^{-1}(0)$ and $\rho(1) = 1$.  Set $w(t) = \rho(t)w_0$.
  Also pick a function $\tau: [0,1] \to [0,1]$ with $\tau(0) = 0$ and
  $[\tfrac{1}{2},1] \subseteq \tau^{-1}(1)$.  By
  Proposition~\ref{prop:parametrised-surg} we get a smooth
  homotopy
  \begin{align}\label{eq:5}
    \begin{aligned}
      {}[0,1] \times V &\to \psi_\theta^{nc}(\infty,1)^\bullet\\
      (t,x) & \mapsto \K^{e(\tau(t), x)}_{w(t)}(f(x)).
    \end{aligned}
  \end{align}
  By Lemma~\ref{lemma:thick-surg} this homotopy at time 1 performs
  0-surgery on the level set $x_1^{-1}(a)$.  This 0-surgery forms the
  connected sum of the basepoint component with all the other
  components, so the result satisfies~(\ref{item:6}).  At time 0, we
  have performed 0-surgery on points outside of $x_1^{-1}([-10,10])$,
  so we have not changed anything inside that interval.  Therefore we
  can let $F: [0,1]\times V \to
  \psi_\theta^{nc}(\infty,1)^\bullet$ be the composition of
  \eqref{eq:5} with the stretching map $s$ from~(\ref{eq:14}), and
  then $F$ satisfies (\ref{item:5}) and (\ref{item:6}).

Finally, (\ref{item:9}) follows for any pair $(t,x)$ by cases. Either $\rho(t)=0$, so Lemma \ref{lemma:thin-surg} implies there is at most one level set of $F(t,x)$ in $(-1,1)$ different from its corresponding one in $f(x)$, but if $f(x) \in \psi_\theta^{nc}(\infty,1)^\bullet_{\Conn}$ then there is a small open interval of regular values in $(-1,1)$ having connected level sets, so $F(t,x)$ must still have a connected level set. Otherwise $\tau(t)=1$, so $F(t,x)$ is obtained from $f(x)$ by performing 0-surgery on the level set $x_1^{-1}(a) \cap f(x)$ to make it connected, so $F(t,x)$ has a connected level set.
\end{proof}

Our strategy for proving vanishing of the relative homotopy
groups~(\ref{eq:35}) is to apply Proposition~\ref{prop:local-surg}
locally to construct a null homotopy.  More precisely we will use the
following corollary of Proposition~\ref{prop:local-surg} (and its
proof).
\begin{corollary}\label{corollary:homotopy-2}
  Let $f: X \to \psi_\theta^{nc}(\infty,1)^\bullet$ be
  smooth.  Assume that there exists an open cover $X = V_1 \cup
  \dots \cup V_r$ by contractible open sets $V_i \subseteq X$ and
  different $a_i \in (-1,1)$ such that $a_i$ is a regular value of
  $x_1:f(x) \to \R$ for all $x \in V_i$.  Finally, assume there are
  finite sets $\Lambda_i$ and an embedding
$$p_i : V_i \times \Lambda_i \times [0,1] \to \Gamma(f|_{V_i})$$
over $V_i$, such that
  \begin{enumerate}[(a)]
  \item\label{item:10} $(V_i, f|_{V_i}, a_i, \Lambda_i, p_i)$ satisfy
    the assumptions of Proposition~\ref{prop:local-surg}.
  \item\label{item:11} For $i \neq j$, the images of $p_i(x, -)$ and
    $p_j(x, -)$, if both defined, are disjoint.
  \end{enumerate}
  Then given any collection $\{U_i \subseteq V_i\}$ with $\overline{U}_i \subseteq
  V_i$, there is a homotopy $H: [0,1] \times X \to
  \psi_\theta^{nc}(\infty,1)^\bullet$ such that
  \begin{enumerate}[(i)]
  \item $H(0,-)$ agrees with the composition
    \begin{align*}
      X \xrightarrow{f} \psi_\theta^{nc}(\infty,1)\bp \xrightarrow{s} \psi_\theta^{nc}(\infty,1)\bp.
    \end{align*}
  \item $H(1,x)$ is connected at level $a_i$ for all $x \in U_i$.  In
    particular if the $U_i$ cover $X$, then
    \begin{align*}
      H(\{1\} \times X) \subseteq
      \psi_\theta^{nc}(\infty,1)^\bullet_\Conn.
    \end{align*}
  \item If $f(x) \in \psi_\theta^{nc}(\infty,1)^\bullet_\Conn$,
    then $H(t,x) \in \psi_\theta^{nc}(\infty,1)^\bullet_\Conn$ for
    all $t$.
  \end{enumerate}
\end{corollary}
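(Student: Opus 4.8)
The plan is to deduce this from Proposition~\ref{prop:local-surg} by running the construction there for all the $V_i$ at once, localising each family of surgeries near the corresponding $\overline{U}_i$ by means of cut-off functions on $X$, and using the disjointness hypothesis on the $p_i$ to guarantee that the resulting parametrised surgeries do not interfere. Concretely, I would first choose, for each $i$, a smaller open set $U_i'$ with $\overline{U}_i \subseteq U_i' \subseteq \overline{U_i'} \subseteq V_i$, together with smooth functions $\eta_i, \chi_i : X \to [0,1]$ such that $\eta_i \equiv 1$ on $\overline{U_i'}$ and $\supp(\eta_i) \subseteq V_i$, while $\chi_i \equiv 1$ on $\overline{U}_i$ and $\supp(\chi_i) \subseteq U_i'$.

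Next I would reopen the proof of Proposition~\ref{prop:local-surg}. For each $i$ it produces an embedding $e_i : [0,1]\times V_i \times \Lambda_i \times S^0 \times D^d \to \Gamma(f|_{V_i})$ over $V_i$, a width $w_0 > 0$, and functions $\rho,\tau : [0,1]\to[0,1]$ with $\rho \equiv 0$ on $[0,\tfrac{1}{2}]$ and $\tau \equiv 1$ on $[\tfrac{1}{2},1]$, such that $(t,x)\mapsto \K^{e_i(\tau(t),x)}_{\rho(t)w_0}(f(x))$ slides the surgery locus from the far-out points $p_i(x,\lambda,0)\notin x_1^{-1}([-10,10])$ down to the level $a_i$ and then thickens it, with $e_i$ height-preserving at $t=1$ as in~\eqref{eq:2}. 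Choosing the auxiliary points on the strip $L_\epsilon$ all distinct (possible since $d-1>0$) and using that the $p_i$ have pairwise disjoint images, the $e_i$ may be taken with pairwise disjoint images, so by Proposition~\ref{prop:parametrised-surg} and Definition~\ref{defn:multiple-surg} all of these surgeries can be performed simultaneously, and the result stays in $\psi_\theta^{nc}(\infty,1)^\bullet$. I now localise by replacing the parameters with $\tau_i(t,x)=\tau(t)\,\eta_i(x)$ and $w_i(t,x)=\rho(t)\,\chi_i(x)\,w_0$; crucially $\{w_i>0\}\subseteq\{\tau_i=1\}$, since $\chi_i$ is supported where $\eta_i\equiv1$ and $\rho$ vanishes where $\tau<1$, so a \emph{thick} handle of the $i$th family is only ever attached at level $a_i$. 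Finally I set $H$ to be the composite of the resulting map $[0,1]\times X\to\psi_\theta^{nc}(\infty,1)^\bullet$ with the stretching map $s$ of~\eqref{eq:14}.

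The three conclusions then follow as in Proposition~\ref{prop:local-surg}. At $t=0$ every $\tau_i$ and $w_i$ vanishes, so only thin $0$-surgeries are performed, at points $p_i(x,\lambda,0)$ lying outside $x_1^{-1}([-10,10])$ and hence forgotten by $s$; thus $H(0,-)=s\circ f$. For $x\in U_i$ one has $\eta_i(x)=\chi_i(x)=1$, so at $t=1$ the $i$th family is exactly the surgery of Proposition~\ref{prop:local-surg}, which by Lemma~\ref{lemma:thick-surg} performs $0$-surgery on the level set at $a_i$ joining the basepoint component to all the others; hence $H(1,x)$ is connected at level $a_i$. For the third property I would argue verbatim as in the last paragraph of the proof of Proposition~\ref{prop:local-surg}: for $(t,x)$ with $f(x)\in\psi_\theta^{nc}(\infty,1)^\bullet_\Conn$, either every $w_i(t,x)=0$, in which case Lemma~\ref{lemma:thin-surg} shows only finitely many level sets in $(-1,1)$ are altered and, ``connected at level $a$'' being an open condition witnessed by an interval of regular values, $H(t,x)$ keeps a connected level set; or some $w_i(t,x)>0$, whence $\tau_i(t,x)=1$ and the $i$th surgery makes the level set at $a_i$ connected — and one ensures this is not undone by arranging at the outset that $w_0$ is small and the (already distinct) $a_i$ are spread out, so that no other thin or thick handle meets a neighbourhood of that level.

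The main obstacle is exactly this localisation: a surgery either does something or does nothing, so a naive cut-off would destroy continuity of the homotopy across $\partial V_i$. What rescues the argument is that every intermediate stage of the proof of Proposition~\ref{prop:local-surg} is already ``invisible'' after applying $s$, because the handle is either attached outside $x_1^{-1}([-10,10])$ or has vanishing width; together with hypothesis~(b) of Proposition~\ref{prop:local-surg} placing the path-starts outside $x_1^{-1}([-10,10])$, this is precisely what lets the $\eta_i$ interpolate continuously between ``full surgery'' over $\overline{U}_i$ and ``no surgery'' over $X-V_i$. The only genuinely delicate bookkeeping is keeping track of which levels are touched, which is needed so that property~(iii) survives.
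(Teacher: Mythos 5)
Your argument is correct and follows the paper's own route: in both cases the surgeries from Proposition~\ref{prop:local-surg} are run simultaneously (using the disjointness of the $p_i$), localised by bump functions compactly supported in $V_i$, and post-composed with the stretching map $s$ so that the switched-off surgeries (at parameter $0$, hence at far-out heights) become invisible and the homotopy glues continuously. The only cosmetic difference is that you encode the nesting ``thick handle $\Rightarrow$ height-preserving at level $a_i$'' by two bump functions $\eta_i,\chi_i$ with $\supp(\chi_i)\subseteq\{\eta_i=1\}$, whereas the paper gets the same nesting from a single bump function $\rho_i$ together with the pre-chosen pair $\tau,\rho$ on $[0,1]$ (via $\tau_i = \tau(t\rho_i(x))$, $w_i = \rho(t\rho_i(x))w_0$), packaged as a map into $\bigcup_T[0,1]^T\times V_T$.
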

\begin{proof}
  For $i \in \{1,\dots, r\}$, let $[0,1]^{\{i\}} \subseteq [0,1]^r$ be
  the subspace where all coordinates but the $i$th are 0.  From the
  paths $p_i$, the proof of Proposition~\ref{prop:local-surg}
  constructs homotopies $F_i: [0,1]^{\{i\}} \times V_i \to
  \psi_\theta(\infty,1)$ in two steps.  First we used the path $p_i$ to
  construct a map
  \begin{align*}
    e_i: [0,1]^{\{i\}} \times V_i \times \Lambda_i\times S^0 \times
    D^d  \to \Gamma(f_{V_i}),
  \end{align*}
over $V_i$ that is an embedding for each point of $[0,1]^{\{i\}} \times V_i$, then we let $F_i$ be the composition
  \begin{align}\label{eq:9}
    \begin{aligned}
    \xymatrix @R0mm {
      {[0,1]^{\{i\}} \times V_i} \ar[r] & \psi_\theta(\infty,1)\\
      (t,x) \ar@{|->}[r] & \K^{e_i(\tau(t_i), x)}_{w(t_i)}(f(x)).&
    }
    \end{aligned}
  \end{align}
  The homotopies $F_i$ all start at $s \circ f$ so we can glue them
  together to a map
  \begin{align}\label{eq:7}
    \bigcup_i \big([0,1]^{\{i\}} \times V_i\big) \stackrel{F}{\longrightarrow}
    \psi_\theta(\infty,1),
  \end{align}
  where the union is inside $[0,1]^r \times X$.

  Recall that the embedding $e_i(t,x): \Lambda_i \times S^0 \times D^d
  \to f(x)$ was constructed the following way.  On $\{\lambda\} \times
  \{+1\} \times D^d$ it is a disk centered at $p_i(x,\lambda,t)$, and on
  $\{\lambda\} \times \{-1\} \times D^d$ it is constructed in the
  standard strip $L_\epsilon \subseteq f(x)$.  By the disjointness
  assumption (\ref{item:11}) we may suppose that the image of
  $e_i(t,x)$ is disjoint from the image of $e_j(t',x)$ for $x \in V_i
  \cap V_j$ and $t,t' \in [0,1]$, and thus we get an embedding of
  $(\Lambda_i \amalg \Lambda_j) \times S^0 \times D^d$.  Disjointness
  of the $e_i$'s means that we can iterate the
  construction~\eqref{eq:9}.  Namely if $T = \{i_1, \dots, i_k\}$ and
  we set $V_T = V_{i_1} \cap \dots \cap V_{i_k}$, we have the homotopy
  \begin{align}\label{eq:6}
    \begin{aligned}
      {}[0,1]^T \times V_T &\to \psi_\theta(\infty,1)\\
      (t,x) & \mapsto \K^{e_{i_1}(\tau(t_{i_1}), x)}_{w(t_{i_1})} \circ
      \dots \circ \K^{e_{i_k}(\tau(t_{i_k}),x)}_{w(t_{i_k})}(f(x)),
    \end{aligned}
  \end{align}
  where the iterated surgery is defined because the corresponding
  surgery data are disjoint.  Composing~\eqref{eq:6} with the
  stretching map $s$ gives a homotopy $F_T: [0,1]^T \times V_T \to
  \psi_\theta(\infty,1)$.  If we regard $[0,1]^T$ as a subset of $[0,1]^r$
  in the obvious way, the various $F_T$'s are compatible and we can
  glue them to a smooth map extending~\eqref{eq:7}
  \begin{align}
    \label{eq:8}
    \bigcup_{T} \big([0,1]^T \times V_T\big) \xrightarrow{F}
    \psi_\theta(\infty,1).
  \end{align}
  The restriction of $F$ to $\{0\} \times X$ is $s \circ f$, and if
  $F(t,x)$ is defined and $t_i = 1$, then $F(t,x)$ is connected at
  level $a_i$.

  Finally we pick for each $i$ a bump function $\rho_i : X \to [0,1]$
  supported in $V_i$, such that $U_i \subseteq \rho_i^{-1}(1)$.  Let
  $\rho: X \to [0,1]^r$ have the $\rho_i$ as coordinate functions.
  Then the map
  \begin{align*}
    [0,1] \times X &\to [0,1]^r \times X\\
    (t,x) & \mapsto (t \rho(x), x)
  \end{align*}
  has image in the domain of~\eqref{eq:8}, and we let $H$ be the
  composition of the two maps.
\end{proof}

We now return to the proof of Theorem~\ref{thm:main-of-4.1}.  Given a
map of pairs
\begin{align}\label{eq:MapToNullify}
  (D^k, \partial D^k) \xrightarrow{f}
  (\psi_\theta^{nc}(\infty,1)^\bullet,\psi_\theta^{nc}(\infty,1)^\bullet_\Conn),
\end{align}
we must produce a relative homotopy, deforming $f$ to a map into
$\psi_\theta^{nc}(\infty,1)^\bullet_\Conn$.  The homotopy will be
produced by Corollary~\ref{corollary:homotopy-2} once we construct the
data $(V_i,a_i, p_i)$.  We first consider the case $k=0$, in which the
map $p: D^0 \times \Lambda \times [0,1] \to W$
is produced by the following lemma.  It is proved by first producing paths $p(\lambda,-)$
which are embedded but not necessarily disjoint, and then making them
disjoint in Lemma~\ref{lemma:vector-flow}.
\begin{lemma}\label{lemma:existence-paths-point}
  For any $W \in \psi_\theta^{nc}(n,1)^\bullet$ and any $a
  \in (-1,1)$ which is a regular value of $x_1: W \to \R$, there is a
  finite set $\Lambda = \{\lambda_1, \dots, \lambda_r\}$ and an
  embedding $p: \Lambda \times [0,1] \to W$ such that
  \begin{enumerate}[(a)]
  \item\label{item:7} The path $p(\lambda,-): [0,1] \to W$ starts
    somewhere in $W \cap x_1^{-1}(a)$, outside the path component of
    the basepoint, and each non-basepoint component contains exactly
    one of the starting points $p(\lambda,0)$.
  \item\label{item:8} The path $p(\lambda,-): [0,1] \to W$ ends
    somewhere in $f(x) - x_1^{-1}([-10,10])$.
  \end{enumerate}
\end{lemma}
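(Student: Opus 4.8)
The plan is to pick one point in each non‑basepoint component of the level set $W_a = W \cap x_1^{-1}(a)$, run an embedded arc from it out to the non‑compact part of $W$, and then arrange the finitely many resulting arcs to be disjoint. The first step is a point‑set observation: since $W$ is closed as a subset of $\bR^n$ and lies in $\bR \times (-1,1)^{n-1}$, the set $W \cap x_1^{-1}([-R,R])$ is a closed and bounded subset of $\bR^n$, hence compact, for every $R>0$. In particular $W_a$ is a compact $(d-1)$‑manifold, so it has only finitely many components; let $N_0$ be the one meeting the standard strip (the component containing $\{a\} \times (-\epsilon,\epsilon)^{d-1} \times \{0\} \subseteq L_\epsilon$), let $N_1, \dots, N_r$ be the others, set $\Lambda = \{\lambda_1, \dots, \lambda_r\}$, and choose $q_i \in N_i$. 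This already arranges the ``exactly one starting point per non‑basepoint component'' clause of (a).

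Next I would run an arc out of $q_i$ inside its path component $P_i \subseteq W$. The key point is that $x_1$ is unbounded on every path component of $W$: a path component of a manifold is open and closed, so $P_i$ is closed in $\bR^n$, and if $x_1$ were bounded on $P_i$ then $P_i$ would be closed and bounded, hence compact, contradicting $W \in \psi_\theta^{nc}(n,1)\bp$. Thus $P_i$ contains a point $q_i'$ with $|x_1(q_i')| > 10$, and the $q_i'$ may be taken distinct. Since $P_i$ is a connected manifold of dimension $d \geq 2$, any smooth path from $q_i$ to $q_i'$ can be turned into an embedded arc $p(\lambda_i,-) : [0,1] \to P_i \subseteq W$ by removing self‑intersections (automatic by transversality when $d \geq 3$, innermost‑crossing surgery in the surface case $d=2$). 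Each arc then satisfies (a) at the start, as $p(\lambda_i,0) = q_i \in N_i$ lies outside the basepoint component, and (b) at the end, as $p(\lambda_i,1) = q_i' \notin x_1^{-1}([-10,10])$.

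The remaining, and main, difficulty is disjointness: two of the $N_i$ may lie in a single path component of $W$, so the arcs $p(\lambda_i,-)$ need not have disjoint images, whereas the lemma demands a single embedding of $\Lambda \times [0,1]$. When $d \geq 3$ the finitely many arcs can be pushed off one another rel endpoints by general position ($1+1 < d$). When $d = 2$ this fails, and separating the arcs is precisely the job of the next result, Lemma~\ref{lemma:vector-flow}: it takes a finite family of embedded arcs with prescribed distinct endpoints and produces a disjoint family by flowing along a suitable vector field on $W$, without leaving $W$ and without disturbing the start/end conditions. Applying that lemma to $\{p(\lambda_i,-)\}_{i=1}^r$ yields the desired embedding $p : \Lambda \times [0,1] \to W$. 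I expect the compactness remark and the ``unbounded $x_1$'' argument to be routine; the genuine work, which I would defer to Lemma~\ref{lemma:vector-flow}, is the surface‑case disjointness.
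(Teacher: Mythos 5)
Your argument follows the paper's proof essentially verbatim: one starting point per non‑basepoint component of the compact level set $W_a$, an embedded arc from each out to $|x_1|>10$ (possible because each path component is non‑compact, hence, being closed in $\bR^n$ and bounded in the last $n-1$ coordinates, must have $x_1$ unbounded), and then Lemma~\ref{lemma:vector-flow} to make the arcs disjoint. One small step to tighten: the hypothesis of Lemma~\ref{lemma:vector-flow} is $p_i([0,1]) \cap p_j(\partial[0,1]) = \emptyset$, which is stronger than merely having distinct endpoints, so, as the paper notes, you should first perturb each arc's interior off the other arcs' endpoints by general position (possible since $d \geq 2$).
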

\begin{proof}
  First pick points $p(\lambda,0) \in W \cap x_1^{-1}(a)$, one in
  each non-basepoint component.  The index set $\Lambda$ is then the
  set of non-basepoint components.  By assumption, no compact
  component of $W$ is contained in $x_1^{-1}([-10,10])$, so for each
  $\lambda$ there is a path in $W$ from $p(\lambda,0)$ to a point in
  $W - x_1^{-1}([-10,10])$.  Pick an embedded such path $p(\lambda,-):
  [0,1] \to W$.  These assemble to a map
  \begin{align*}
    p: \Lambda \times [0,1] \to W,
  \end{align*}
  but it need not be an embedding because the images of the paths
  $p(\lambda,-)$ need not be disjoint. However, by transversality we may suppose that the paths $p(\lambda,-)$ do not intersect at their endpoints. In
  Lemma~\ref{lemma:vector-flow} below we will prove that we can change
  each $p(\lambda,-): [0,1] \to W$ by an isotopy of embedded paths
  such that the isotoped paths are disjoint.
\end{proof}
\begin{lemma}\label{lemma:vector-flow}
  Let $W$ be a smooth manifold and $p_1, \dots, p_r: [0,1] \to W$ a
  set of embedded paths, such that
  \begin{align*}
    p_i([0,1]) \cap p_j(\partial [0,1]) = \emptyset\quad \text{for $i
      \neq j$}.
  \end{align*}
  Then there are paths $q_i: [0,1] \to W$ such that $p_i$ is isotopic
  to $q_i$ through an isotopy of embedded paths which is relative to a
  neighbourhood of the endpoints, and such that the
  \begin{align*}
    q_i([0,1]) \cap q_j([0,1]) = \emptyset\quad \text{for $i \neq j$}.
  \end{align*}
\end{lemma}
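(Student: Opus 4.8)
The strategy is a downward induction on pairs of paths, making them disjoint one pair at a time while never creating new intersections among pairs already made disjoint. Since $W$ is a smooth manifold and each $p_i$ is an embedded arc, a standard general-position argument (dimension count: two arcs in a manifold of dimension $d \geq 1$; if $d \geq 3$ generic arcs are automatically disjoint, and in low dimensions we argue by hand) lets us perturb. But to get the result uniformly, the cleanest approach is to construct an ambient isotopy supported near one path that pushes the others off it. First I would make all the intersection points transverse by a small perturbation of, say, $p_1$ relative to a neighbourhood of its endpoints (possible since the endpoints of $p_1$ miss all the other $p_j([0,1])$ by hypothesis, so the perturbation region is away from where things are pinned). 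After this, $p_1([0,1]) \cap \bigcup_{j>1} p_j([0,1])$ is a finite set of points, none an endpoint of any arc.

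Next, for each such intersection point $x = p_1(s) = p_j(t)$, I would choose a small coordinate ball $B_x \subseteq W$ around $x$ in which $p_1$ looks like a coordinate line segment, $B_x$ is disjoint from all endpoints of all arcs, and the $B_x$ are pairwise disjoint. Inside $B_x$, I would construct a compactly supported diffeomorphism $\phi_x$ of $W$ (identity outside $B_x$) that pushes the portion of $p_1$ inside $B_x$ off to one side, away from $p_j$ — concretely, translate in a direction transverse to $p_j$ near $x$ by slightly more than the local ``width'' of the configuration. Because $d - 1 \geq 0$ there is room to push $p_1$ transversally; when $d=1$ the manifold is a union of intervals and circles and one argues directly that embedded arcs with disjoint endpoints can be made disjoint. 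Composing the $\phi_x$ over all intersection points of $p_1$ with the others gives a single compactly supported diffeomorphism $\Phi_1$, isotopic to the identity through compactly supported diffeomorphisms supported away from the endpoints, with $q_1 = \Phi_1 \circ p_1$ now disjoint from $p_2, \dots, p_r$. The isotopy $p_1 \leadsto q_1$ is obtained by applying the isotopy from $\id$ to $\Phi_1$, and it is through embedded paths, relative to a neighbourhood of the endpoints, as required.

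Now I would iterate: having arranged $q_1$ disjoint from $p_2, \dots, p_r$, I repeat the argument on the pair $(p_2, p_3), \dots, (p_2, p_r)$, choosing the supporting balls $B_x$ additionally disjoint from $q_1([0,1])$ — this is possible since $q_1$ is now disjoint from $p_2$, hence $q_1$ meets a small enough neighbourhood of each intersection point of $p_2$ with the later paths in nothing. Thus the diffeomorphism $\Phi_2$ fixing $q_1$ pointwise produces $q_2$ disjoint from $q_1, p_3, \dots, p_r$. Continuing through $i = 1, \dots, r-1$ yields pairwise disjoint embedded arcs $q_1, \dots, q_r$, each obtained from $p_i$ by an isotopy of embedded paths relative to a neighbourhood of the endpoints. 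The main obstacle is the low-dimensional case $d \le 2$: in dimension $1$, $W$ is one-dimensional and one must check the combinatorics of arcs on intervals and circles directly (the hypothesis that endpoints are disjoint is exactly what makes this work); in dimension $2$ the transversality is fine but one should be slightly careful that pushing off at one intersection point does not reintroduce intersections at another, which is why the disjoint supporting balls $B_x$ are essential.
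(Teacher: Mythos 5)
There is a genuine gap, and it occurs in the case that matters most for this paper, namely $d=2$. Your plan is to remove each transversal intersection point $x$ of $p_1$ with some $p_j$ by a compactly supported isotopy in a small ball $B_x$ around $x$, pushing $p_1$ transversally off $p_j$. But in a surface this is impossible: if two embedded arcs cross transversally once inside a disc $B_x$, their four boundary points on $\partial B_x$ alternate around the circle, so by the Jordan curve theorem \emph{any} arc of $B_x$ joining one pair must meet any arc joining the other pair. No diffeomorphism of $B_x$ supported in its interior can separate them; the crossing can only be removed by sliding along one of the arcs and escaping past an endpoint, which is necessarily a non-local move. Your remark that ``in dimension $2$ the transversality is fine but one should be slightly careful'' does not touch this obstruction. (Your dimension count ``$d-1 \geq 0$'' is also the wrong one: in $d \geq 3$ generic arcs are already disjoint and no push is needed, while in $d=2$ transversality gives isolated intersection points that cannot be locally eliminated.)

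The paper's proof avoids exactly this trap. Rather than pushing $p_r$ sideways off the $q_i$, it builds a vector field $X$ on $W$ that restricts on each $q_i$ to its velocity $q_i'(t)$, i.e.\ $X$ is \emph{tangent} to the earlier paths, pointing forward along them. Cutting $X$ off to have compact support away from the endpoints of $p_r$, one sets $q_r = F(1, p_r(\cdot))$ where $F$ is the time-$1$ flow. Any intersection point of $p_r$ with $q_i$ is then slid forward along the integral curve $\overline q_i$ and off the far end of $q_i([0,1])$; the hypothesis $p_i([0,1]) \cap p_j(\partial[0,1]) = \emptyset$ is precisely what guarantees the escaping point cannot land back on an endpoint and hence cannot remain on $q_i$. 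This ``slide off the end'' mechanism is the idea your argument is missing, and it works uniformly in all dimensions $d \geq 2$.
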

\begin{proof}
By induction on $r$ we can assume that $q_i = p_i$ already have
  disjoint images for $i < r$.  The derivative $q_i'(t) \in TW$ gives
  a vector field on $q_i([0,1]) \subseteq W$.  We can extend this to a
  vector field $X$ on $W$, with the properties that $X(q_i(t)) =
  q_i'(t)$ for $i = 1,\dots, r-1$.  Then each $q_i$ is the restriction
  of an integral curve $\overline{q}_i$ of $X$.  After multiplying $X$
  with a function $W \to [0,1]$ which is 1 on a neighbourhood of the
  image of the $q_i$'s and vanishes outside a slightly larger
  neighbourhood, we may assume that $X$ has compact support and that
  $\overline{q}_i$ does not intersect $q_r$ outside the image of
  $q_i$.

Finally extend $X$ to a compactly supported vector field on all of $\R^n$, i.e.\ $X: \R^n \to \R^n$.  Let $F: \R \times \R^n \to \R^n$ be the flow of $X$. It is defined everywhere because $X$ is compactly supported, it preserves $W$, and it also fixes a neighbourhood of the end points of $p_r$. Then $q_r(s) = F(1,p_r(s))$ will work, since all intersections between $q_r$ and the $\overline{q}_i$'s have been flowed off of $q_i$.
\end{proof}

Combining Lemma~\ref{lemma:existence-paths-point} and Corollary
\ref{corollary:homotopy-2} produces a null homotopy of~(\ref{eq:MapToNullify}),
proving Theorem~\ref{thm:main-of-4.1} for $k=0$.  For $k > 0$ we need
a parametrised version of Lemma~\ref{lemma:existence-paths-point}.
\begin{proposition}\label{prop:paths-exist}
  For any smooth $f: D^k \to \psi_\theta^{nc}(\infty,1)^\bullet$,
  there exists a covering $D^k = V_1 \cup \dots \cup V_k$ by
  contractible open sets $V_i$, real numbers $a_i$, finite sets
  $\Lambda_i$, and embeddings
$p_i : V_i \times \Lambda_i\times [0,1] \to \Gamma(f|_{V_i})$ over $V_i$
satisfying the assumptions of
  Corollary~\ref{corollary:homotopy-2}.
\end{proposition}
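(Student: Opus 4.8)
The plan is to produce the data $(V_i,a_i,\Lambda_i,p_i)$ by first choosing well‑behaved regular values, then transporting the fibrewise paths of Lemma~\ref{lemma:existence-paths-point} over each $V_i$, and finally putting the paths for different $i$ in general position. Since $D^k$ is compact, $f$ factors through $\psi^{nc}_\theta(n,1)^\epsilon:=\psi^{nc}_\theta(n,1)\cap\psi_\theta(n,1)^\epsilon$ for some finite $n$ and some $\epsilon>0$, so every $f(x)$ is a submanifold of $\R\times(-1,1)^{d-1}\times[0,1)^{n-d}$ that is closed in $\R^n$. By Sard's theorem each $x\in D^k$ has a regular value $a_x\in(-1,1)$ of $x_1:f(x)\to\R$; since the set of pairs $(a,y)$ for which $a$ is regular for $x_1:f(y)\to\R$ is open in $\R\times D^k$, the value $a_x$ stays regular for all $y$ in an open $W_x\ni x$. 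I would pass to a finite subcover, refine it to a cover of $D^k$ by contractible open sets $V_i$ with each $\overline{V_i}$ contained in some $W_x$, and perturb the associated values so that they become pairwise distinct (possible since on the compact set $\overline{V_i}$ the set of everywhere‑regular values is open). This yields finitely many contractible open $V_i$ and distinct $a_i\in(-1,1)$ with $a_i$ a regular value of $x_1:f(x)\to\R$ for every $x\in V_i$.

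\emph{The level-set bundle.} Fix $i$ and write $V=V_i$, $a=a_i$. Each level set $f(x)\cap x_1^{-1}(a)$ is \emph{compact}, being closed in $\R^n$ and contained in the bounded set $\{a\}\times(-1,1)^{d-1}\times[0,1)^{n-d}$. Hence $E:=\Gamma(f|_V)\cap x_1^{-1}(a)$ is a \emph{proper} submersion over $V$, so a smooth fibre bundle by Ehresmann's theorem, and trivial since $V$ is contractible. The strip sub‑bundle $\{x\}\times\{a\}\times(-\epsilon,\epsilon)^{d-1}\times\{0\}\subseteq E$ lets me choose a trivialisation $E\cong V\times E_{x_0}$ compatibly, so that the ``basepoint'' component of $f(x)\cap x_1^{-1}(a)$ (the one meeting $L_\epsilon$) and the remaining ``non‑basepoint'' components vary in sub‑bundles of $E$. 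In particular the indexing set $\Lambda_i$ of non‑basepoint components is finite and the \emph{same} for every $x\in V$.

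\emph{The paths.} Fix $x_0\in V$. Lemma~\ref{lemma:existence-paths-point} applied to $f(x_0)$ gives pairwise disjoint embedded paths, one per non‑basepoint component of $f(x_0)\cap x_1^{-1}(a)$, running from $f(x_0)-x_1^{-1}([-10,10])$ into that component at level $a$ (reversing the parameter of the lemma); indexing by $\Lambda_i$ this is an embedding $p_i(x_0,-):\Lambda_i\times[0,1]\to f(x_0)$ of the shape demanded by Proposition~\ref{prop:local-surg}. Because $\pi:\Gamma(f|_V)\to V$ is a submersion and $V$ contractible, I would propagate this to a smooth family $p_i(x,-)$, $x\in V$, extending $p_i(x_0,-)$ --- say by parallel transport for an Ehresmann connection on $\pi$ chosen to restrict on $E$ to the flat connection of $E\cong V\times E_{x_0}$, so that $p_i(x,\lambda,1)$ lands in the $\lambda$-th non‑basepoint component of $f(x)\cap x_1^{-1}(a)$, one point per component. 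The remaining conditions --- that $p_i(x,\lambda,0)$ lie outside $x_1^{-1}([-10,10])$ and that the $p_i(x,\lambda,-)$ be embedded and pairwise disjoint --- are open and hold at $x_0$, hence survive shrinking $V_i$ about $x_0$ (after which I re‑refine the cover so it still covers $D^k$). This gives embeddings $p_i:V_i\times\Lambda_i\times[0,1]\to\Gamma(f|_{V_i})$ over $V_i$ verifying the hypotheses of Proposition~\ref{prop:local-surg}. To secure the disjointness condition of Corollary~\ref{corollary:homotopy-2}, i.e.\ that $p_i(x,-)$ and $p_j(x,-)$ be disjoint over $V_i\cap V_j$ for $i\neq j$, I would finally make the finitely many families mutually disjoint one pair at a time by a parametrised version of the vector‑field flow argument of Lemma~\ref{lemma:vector-flow}; that argument is dimension‑independent and can be kept small and supported away from the endpoints, so it does not undo what has already been arranged.

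\emph{Main obstacle.} Everything rests on the compactness of the level sets in the middle step: without it the submersion $E\to V$ need not be a fibre bundle, the number of non‑basepoint components could jump over a contractible $V_i$, and there would be no consistent $\Lambda_i$ for which hypothesis~(a) of Proposition~\ref{prop:local-surg} holds. Granting that, the rest is routine propagation of the fibrewise Lemma~\ref{lemma:existence-paths-point} through a submersion, together with standard openness arguments and the flow trick for disjointness.
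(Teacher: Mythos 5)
Your proposal follows the paper's outline (contractible cover with regular values, fibrewise paths from Lemma~\ref{lemma:existence-paths-point}, local extension to families, pairwise disjointness via a parametrised flow), and the Ehresmann observation is a nice way to make the paper's implicit ``extend to a neighbourhood $V_x$'' step explicit: compactness of the level sets $f(x)\cap x_1^{-1}(a_i)$ is indeed what makes $\Gamma(f|_{V_i})\cap x_1^{-1}(a_i)\to V_i$ a proper submersion with locally constant $\Lambda_i$.

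However, there is a genuine gap in the last step. The vector-field flow of Lemma~\ref{lemma:vector-flow} has a hypothesis you never arrange: that the images of the earlier paths avoid the \emph{endpoints} of the path being moved (and vice versa), i.e.\ $p_i([0,1])\cap p_j(\partial[0,1])=\emptyset$ for $i\neq j$. Without this, the flow either fails to be defined relative to the endpoints or pushes an endpoint of $p_j$ off its level set and out of the required path component, destroying hypothesis~(a) of Proposition~\ref{prop:local-surg}. This is not an afterthought: the paper devotes a whole induction to first achieving this ``endpoint disjointness'' (its condition~(\ref{eq:11})) by thickening each $p_i$ to a $D^{d-1}$-parametrised family of paths, choosing slices $d_x$ that miss the finitely many endpoints of the earlier $p_j$'s, refining the cover accordingly, and then perturbing the starting heights to be distinct --- an argument that explicitly uses $d\geq 2$. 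Only after that does the parametrised Lemma~\ref{lemma:vector-flow} apply. You would need to either reproduce this step or give a parametrised general-position argument establishing the endpoint-avoidance; the bare remark that the flow argument ``is supported away from the endpoints'' presupposes exactly the condition you need to prove.
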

\begin{proof}
  First pick regular values $a_x \in (-1,1)$ of $x_1: f(x) \to \R$, one
  for each $x \in D^k$.  Apply Lemma~\ref{lemma:existence-paths-point}
  for $W = f(x)$ and $a = a_x$ to find a finite set $\Lambda_x$ and an
  embedding
$$p_x : \{x\} \times \Lambda_x \times [0,1] \to f(x)$$
  satisfying (\ref{item:7}) and (\ref{item:8}) of
  Lemma~\ref{lemma:existence-paths-point}.

  Then extend to a map
$p_x : V_x \times \Lambda_x \times [0,1] \to \Gamma(f|_{V_x})$
defined on a neighbourhood $V_x$ of $x$. After possibly
  shrinking the $V_x$, each tuple $(V_x, f|_{V_x} ,\Lambda_x, a_x,
  p_x)$ will satisfy the assumptions in
  Proposition~\ref{prop:local-surg}.  Using compactness of $D^k$ we
  get a finite subcover
  \begin{align*}
    D^k = V_1 \cup \dots \cup V_r,
  \end{align*}
  with corresponding regular values $a_i \in (-1,1)$ and maps
$$p_i : V_i \times \Lambda_i \times [0,1] \to \Gamma(f|_{V_i})$$
satisfying condition (\ref{item:10}) of
  Corollary~\ref{corollary:homotopy-2} but not the disjointness
  condition (\ref{item:11}).

\vspace{2ex}

  Before achieving full disjointness of all paths, let us explain how
  to modify the data $(V_i, a_i, p_i)$ so as to satisfy
  \begin{align}\label{eq:11}
    p_j(\{x\} \times \Lambda_j \times \partial[0,1]) \text{ disjoint from
      $\IM(p_i(x, -))$ for $j < i$.}
  \end{align}
  To achieve this we proceed by induction on $i$, the case $i=1$ being
  vacuous. We can temporarily extend $p_i$ to a map $e_i : V_i \times \Lambda_i \times [0,1] \times D^{d-1} \to \Gamma(f|_{V_i})$ over $V_i$ that is an embedding of $\{x\} \times \Lambda_i \times \{t\} \times D^{d-1}$ into a level set for each $(x, t) \in V_i \times [0,1]$. For each $x \in V_i$ there is a $d_x \in D^{d-1}$ such that $\IM(e_i(x, -, d_x))$ is disjoint from $\bigcup_{j<i}p_j(\{x\} \times \Lambda_j \times \partial[0,1])$. Furthermore, there is a small contractible neighbourhood $x \in V_i^x \subseteq V_i$ where this is still true. Equip $V_i^x$ with the paths
$$p_i^x = e_i(-, d_x) : V_i^x \times \Lambda_i \times [0,1] \to \Gamma(f|_{V_i^x}).$$
We can choose finitely many $\{x_j\}_{j \in J}$ such that $\cup_{j \in J} V_i^{x_j} = V_i$ and the $d_{x_j}$ are distinct points in $D^{d-1}$. Then note that the $\{p_i^{x_j}\}_{j \in J}$ have disjoint images in $\Gamma(f|_{V_i})$.

The $\{p_i^{x_j}\}_{j \in J}$ all start at height $a_i \in (-1,1)$. We wish for the paths to start at different heights: by replacing the paths $p_i^{x_j}$ with their restrictions to a subinterval $[\eta, 1]$, for some small $\eta>0$, we can ensure that they all start at different heights $a_i^{x_j}$. We now replace $(V_i, a_i, p_i)$ by the collection $\{(V_i^{x_j}, a_i^{x_j}, p_i^{x_j})\}_{j \in J}$. This proves the induction step, and hence~(\ref{eq:11}).

\vspace{2ex}

  Finally we pass from the ``endpoint disjointness'' expressed
  in~(\ref{eq:11}) to the actual disjointness of all paths as required
  in (\ref{item:11}) of Corollary~\ref{corollary:homotopy-2}.  This is
  just an easy parametrised version of Lemma~\ref{lemma:vector-flow}.
  Indeed, if we pick an ordering of all the paths involved (i.e.\ of
  the disjoint union of the finite sets $\Lambda_i$) and proceed by
  induction as in the proof of that lemma.  Suppose we have already
  made the first $l$ paths disjoint.  Then as in the proof of
  Lemma~\ref{lemma:vector-flow} we get in the induction step for each
  $x \in X$ a compactly supported vector field $X(x): \R^n \to \R^n$
  preserving $f(x) \subseteq \R^n$.  It is easy to construct this to
  depend smoothly on $x$, and then we can flow the $(l+1)$st path
  with the flow of $X$ to make it disjoint from the previous paths,
  without changing any endpoints.
\end{proof}

\begin{proof}[Proof of Theorem~\ref{thm:main-of-4.1}]
  Starting with the map of pairs~(\ref{eq:MapToNullify}),
  Proposition~\ref{prop:paths-exist} produces a collection $\{(V_i, a_i, p_i)\}$
  satisfying (\ref{item:10}) and (\ref{item:11}) of
  Corollary~\ref{corollary:homotopy-2}, which then produces a relative
  null homotopy.
\end{proof}

\begin{proof}[Proof of Theorem \ref{thm:SubcategoryConnected}]
Lemma \ref{lem:AddingBasepoint}, Lemma \ref{lem:ElimCpctComponents} and Theorem \ref{thm:main-of-4.1} imply that the map
$$\psi^{nc}_\theta(\infty,1)^\bullet_\mathbf{Conn} \to \psi_\theta(\infty,1)$$
is a weak homotopy equivalence. Theorem \ref{thm:HtpyTypeCobCat} and Theorem \ref{thm:MainThmGMTW} identify these spaces with $B \mathcal{C}_\theta\bp$ and $B \mathcal{C}_\theta$ respectively.
\end{proof}

\subsection{Reversing morphisms}\label{sec:ReversingMorphisms}

Let us study the implications of the assumptions on $\theta: \X \to
BO(2)$.  We can arrange that $\X = EO(2) \times_{O(2)} F$, for some
space $F$ equipped with an action
\begin{align}
  \label{eq:25}
  O(2) \times F \to F.
\end{align}
The assumption that $\X$ is path connected says that $\pi_0 O(2)$ acts
transitively on $\pi_0 F$.  Therefore $F$ is either path connected, or
has two (homeomorphic) path components which are permuted by the
action of some reflection in $O(2)$.  To interpret the assumption that
$S^2$ admits a $\theta$-structure, we consider the induced action of unbased
homotopy classes
\begin{align}
  \label{eq:33}
  [S^1,O(2)] \times [S^1, F] \to [S^1,F].
\end{align}
We regard $\pi_0 F \subseteq [S^1,F]$ as the homotopy classes of
constant maps.
\begin{proposition}
  If $S^2$ admits a $\theta$-structure, then the subset $\pi_0(F) \subseteq
  [S^1, F]$ is preserved by elements of $[S^1, O(2)]$ represented by
  maps of even degree.
\end{proposition}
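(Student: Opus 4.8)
The plan is to describe $\theta$-structures on $S^2$ via clutching functions, deduce from the existence of one such structure a single identity about the action~(\ref{eq:33}), and then propagate that identity over all of $\pi_0 F$ and all even-degree loops using the hypothesis that $\X$ is path connected. As in the proofs of Theorem~\ref{thm:Scanning} and Proposition~\ref{prop:extend-structures}, a $\theta$-structure on a closed surface $M$ is the same thing as a lift of its Gauss map $M\to BO(2)$ along $\theta$; since $\X = EO(2)\times_{O(2)}F$, for $M=S^2$ these lifts are precisely the sections of the associated $F$-bundle $E=\mathrm{Fr}(TS^2)\times_{O(2)}F\to S^2$, where $\mathrm{Fr}$ denotes the frame bundle. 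Because $TS^2$ is orientable with Euler number $2$, over $S^2=D_+\cup_{S^1}D_-$ the bundle $E$ is trivial on each disc and glued over the equator by $(s,f)\mapsto(s,c(s)\cdot f)$, where $c\colon S^1\to SO(2)\subseteq O(2)$ has winding number $2$ and $\cdot$ is the action~(\ref{eq:25}).

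A section of $E$ is therefore a pair of maps $s_\pm\colon D_\pm\to F$ with $s_-(s)=c(s)\cdot s_+(s)$ for $s\in S^1$. Contractibility of the discs makes each $s_\pm$ homotopic to a constant map at some $x_\pm\in\pi_0 F$, so restricting to $S^1$ gives $[\mathrm{const}_{x_-}]=[c]\cdot[\mathrm{const}_{x_+}]$ in $[S^1,F]$. In particular $[c]\cdot x_+\in\pi_0 F$; and since the loop $s\mapsto c(s)\cdot x_+$ never leaves the path component of $x_+$, this class equals $[\mathrm{const}_{x_+}]$. Thus the hypothesis produces an element $x_0:=x_+\in\pi_0 F$ with $[c]\cdot x_0=[\mathrm{const}_{x_0}]$, and, applying $[c]^{-1}$, also $[c]^{-1}\cdot x_0=[\mathrm{const}_{x_0}]$.

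Now I would spread this over $\pi_0 F$. Since $\X$ is path connected, $\pi_0 O(2)=\bZ/2$ acts transitively on $\pi_0 F$, so either $\pi_0 F$ is a point — in which case $[c]$ already fixes its unique element — or $\pi_0 F=\{x_0,\,r\cdot x_0\}$ for a reflection $r\in O(2)$. In the second case the identity $c(s)\cdot r=r\cdot c(s)^{-1}$ in $O(2)$ gives $s\mapsto c(s)\cdot(r\cdot x_0)=r\cdot(c(s)^{-1}\cdot x_0)$, and composing a null-homotopy of $s\mapsto c(s)^{-1}\cdot x_0$ with the constant map $r$ shows this loop is homotopic to $\mathrm{const}_{r\cdot x_0}$. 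Hence $[c]$, and therefore $[c]^{-1}$, fixes every element of $\pi_0 F$.

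It remains to pass from $[c]$ to all even-degree classes. Let $A\subseteq[S^1,O(2)]$ be the set of classes $\gamma$ with $\gamma\cdot\pi_0 F\subseteq\pi_0 F$; since~(\ref{eq:33}) is an action of the group $[S^1,O(2)]$ under pointwise multiplication on $[S^1,F]$, the set $A$ is a submonoid, so the subgroup of $A$ consisting of classes whose inverse also lies in $A$ contains both $[c]$ and the class $[r]$ of the constant loop at $r$ (which is its own inverse and permutes $\pi_0 F$). Under $[S^1,O(2)]\cong\bZ\rtimes\bZ/2$ these classes are $(2,0)$ and $(0,1)$, which together generate exactly the index-two subgroup $2\bZ\rtimes\bZ/2$ of even-degree classes; equivalently, every even-degree loop is freely homotopic to $s\mapsto c(s)^k$ or to $s\mapsto r\cdot c(s)^k$ for some $k\in\bZ$, and such loops carry constants into $\pi_0 F$ by the previous two paragraphs. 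The genuinely essential input is that $e(TS^2)=2$ is \emph{even} — this is exactly what ``$S^2$ admits a $\theta$-structure'' provides, and it is why framed surfaces are excluded — and the only point requiring care is to keep the $O(2)$-action conventions consistent so that identities such as $c(s)\cdot r=r\cdot c(s)^{-1}$ are applied with the correct sign; granting the clutching description, the rest is formal.
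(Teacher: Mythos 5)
Your argument is correct and arrives at the same pivotal fact as the paper's proof --- that the hypothesis forces the loop $s\mapsto c(s)\cdot x_0$ (with $c$ a degree-$2$ loop in $SO(2)$ and $x_0\in F$) to be nullhomotopic in $F$ --- but via a different presentation. The paper extracts this from the long exact homotopy sequence of $F\to\X\to BO(2)$: the class of $TS^2$ is $2\in\pi_2 BO(2)\cong\pi_1 SO(2)$, lifts to $\pi_2\X$ by hypothesis, hence vanishes under the connecting map $\pi_2 BO(2)\to\pi_1 F$, and that connecting map applied to the degree-$2$ loop is exactly the class of $s\mapsto c(s)\cdot x_0$. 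You instead build a section of the associated $F$-bundle over $S^2=D_+\cup_{S^1}D_-$ and read the same vanishing straight off the clutching description. Since the connecting map of the long exact sequence over $S^2$ \emph{is} the clutching obstruction for sections, the two computations coincide; your version is more concrete and geometric, the paper's more algebraic, but the mathematical content is the same. What you supply that the paper suppresses is your third paragraph, where you propagate $[c]\cdot x_0=[\mathrm{const}_{x_0}]$ to a possible second component $r\cdot x_0$ of $F$, combining the transitivity of $\pi_0 O(2)$ on $\pi_0 F$ with the identity $c(s)\,r = r\,c(s)^{-1}$ in $O(2)$; the paper instead flatly asserts that ``$2\in[S^1,O(2)]$ acts trivially on $[S^1,F]$'', which is stated more strongly than the long exact sequence argument alone justifies and does not spell out the two-component case, so your handling is the more careful of the two. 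Your final paragraph, identifying the even-degree classes as the subgroup of $[S^1,O(2)]\cong\bZ\rtimes\bZ/2$ generated by $(2,0)$ and the constant reflection $(0,1)$, matches the paper's opening observation exactly.
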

That a map $S^1 \to O(2)$ has even degree means that the corresponding
vector bundle over $S^2$ built by clutching has vanishing second Stiefel--Whitney class.
\begin{proof}
  The free homotopy classes of maps of even degree form a subgroup
  generated by two elements: the element $r$, represented by a
  constant map $S^1 \to O(2)$ to some reflection, and the element $2:
  S^1 \to SO(2) \subseteq O(2)$ represented by a map of degree 2.  It
  suffices to see that these preserve $\pi_0(F) \subseteq [S^1, F]$.
  This is obvious for $r$ (as it is represented by a constant map), so
  we consider 2.  Pick a basepoint of $F$ and consider the long exact
  sequence in homotopy
  \begin{align*}
    \pi_2(F) \to \pi_2 X \to \pi_2 BO(2) \to \pi_1 F \to \pi_1 X \to
    \dots.
  \end{align*}
  The assumption says that a classifying map for $T S^2$ gives an
  element of $\pi_2 BO(2) = \pi_1 O(2)$ which lifts to $\pi_2 X$.
  Equivalently it must vanish in $\pi_1 F$.  But $T S^2$ comes from
  a map $S^1 \to SO(2) \subseteq O(2)$ which has degree 2, hence this
  assumption says that $2 \in [S^1, O(2)]$ acts trivially on $[S^1,F]$.
\end{proof}

We next study $\theta$-structures on $D^2$.  More precisely, if we are
given a structure over $\partial D^2$, how do we know whether it
extends to all of $D^2$?  Such a structure is called a bounding
structure, and we prove that the set of bounding structures is
preserved by a certain construction that we now define.  Let
\begin{align*}
  V \in \Gamma(TD^2|_{\partial D^2})
\end{align*}
be a non-zero vector field, and let
\begin{align*}
  \phi^V: TD^2|_{\partial D^2} \to TD^2|_{\partial D^2}
\end{align*}
be the bundle map which maps $V \mapsto -V$, and acts as the identity
on $V^\perp$.
\begin{corollary}
  Let
  \begin{align*}
    l: TD^2|_{\partial D^2} \to \theta^* \gamma
  \end{align*}
  be a bounding $\theta$-structure (i.e.\ one that extends to $D^2$).  Then $l
  \circ \phi^V$ is also bounding.
\end{corollary}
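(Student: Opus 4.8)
The plan is to reduce the Corollary to the preceding Proposition, via the usual identification of $\theta$-structures with maps into the fibre $F$ of $\theta$, in exactly the framework set up just before that Proposition (where $\X = EO(2)\times_{O(2)}F$, with its action $O(2)\times F\to F$ of (\ref{eq:25}) and the induced action (\ref{eq:33}) of $[S^1,O(2)]$ on $[S^1,F]$, and with $\pi_0 F\subseteq[S^1,F]$ the classes of constant maps).

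First I would trivialise. Since $D^2$ is contractible, pick an orthonormal trivialisation $t\colon TD^2\xrightarrow{\cong}D^2\times\R^2$. For a space $Y$ with the resulting trivialised rank-$2$ bundle, the space of bundle maps to $\theta^*\gamma$ is homeomorphic to $\Map(Y,P)$, where $P$ is the pullback along $\theta$ of the frame bundle of $\gamma$; since that frame bundle is a model for $EO(2)$ one gets $P\cong F\times EO(2)$ with $O(2)$ acting (freely) on the $EO(2)$-factor, so $P\simeq F$, equivariantly up to homotopy for the action of (\ref{eq:25}). Taking $Y=S^1=\partial D^2$ and $Y=D^2$, this identifies, on $\pi_0$, the $\theta$-structures on $TD^2|_{\partial D^2}$ with $[S^1,F]$, with the bounding ones corresponding precisely to the subset $\pi_0 F\subseteq[S^1,F]$ of (freely) nullhomotopic maps. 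Moreover, precomposing a $\theta$-structure with a bundle automorphism of $TD^2|_{\partial D^2}$ --- which, via $t$, is a map $g\colon S^1\to O(2)$ --- corresponds on $\pi_0$ to acting by $[g]\in[S^1,O(2)]$ through (\ref{eq:33}) (up to replacing $[g]$ by its inverse, which is harmless since the even-degree classes form a subgroup).

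So the whole matter comes down to checking that $[\phi^V]\in[S^1,O(2)]$ has \emph{even degree}, i.e. that the $O(2)$-bundle over $S^2$ obtained by clutching $\phi^V$ has $w_2=0$. Under $t$, $V$ becomes a nowhere-zero section $S^1\to\R^2\setminus\{0\}$, whose normalisation has some winding number $n\in\Z$, and $\phi^V(z)$ is reflection in the line orthogonal to $V(z)$. As $z$ runs around $S^1$ that line therefore rotates by $2\pi n$; multiplying $\phi^V$ pointwise by the constant reflection $\mathrm{diag}(1,-1)$ (which does not change the isomorphism class of the clutched bundle) turns $\phi^V$ into a rotation-valued map $S^1\to SO(2)$ of degree $2n$, so the clutched bundle is the complex line bundle of Euler number $2n$ and $w_2\equiv 2n\equiv 0\pmod{2}$. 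Hence, by the Proposition, the action of $[\phi^V]$ on $[S^1,F]$ preserves $\pi_0 F$; since $l$ is bounding its class lies in $\pi_0 F$, whence so does that of $l\circ\phi^V$, so $l\circ\phi^V$ is bounding.

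The main obstacle is this degree computation: one must see that a reflection $\phi^V$ built from an \emph{honest} nowhere-zero vector field necessarily winds an even number of times in the reflection component of $O(2)$ --- a reflection-valued loop of odd degree would clutch to a line bundle with $w_2\neq 0$, putting it outside the scope of the Proposition. The homotopy-theoretic bookkeeping of the first two paragraphs (identifying $\theta$-structures with maps into $F$, $O(2)$-equivariantly and compatibly with restriction to $\partial D^2$) is standard but should be spelled out with a little care.
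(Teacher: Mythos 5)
Your proposal is correct and takes essentially the same approach as the paper: reduce to the preceding proposition by showing $[\phi^V]\in[S^1,O(2)]$ has even degree. The only cosmetic difference is how evenness is seen --- the paper observes that $\phi^V$ depends only on the line $\R V(x)$ and hence factors through the degree-$2$ projection $S^1\to\RP^1$, whereas you compute the degree explicitly as $2n$ via the winding number $n$ of $V$; these are the same observation in different packaging.
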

\begin{proof}
  Give $D^2$ the framing coming from $\R^2$.  With respect to this
  framing, $l: \partial D^2 \to F$ has homotopy class
  \begin{align*}
    l \in [S^1, F]
  \end{align*}
  which is in $\pi_0 F \subseteq [S^1, F]$ because $l$ bounds.  With
  respect to the framing, the bundle map $\phi^V$ corresponds to the
  map $S^1 \to O(2)$ which takes $x \in S^1$ to the reflection taking
  $V(x) \mapsto -V(x)$ and preserves $V(x)^\perp$.  This map depends
  not on the vector $V(x)$, but only on the line $\R V(x)$, so it
  factors through the projection $S^1 \to \RP^1$.  It follows that
  $\phi^V$ corresponds to an element
  \begin{align*}
    \phi^V \in [S^1, O(2)]
  \end{align*} 
  which is of even degree.  Consequently $l \circ \phi^V \in [S^1, F]$
  also bounds, by the previous proposition.
\end{proof}

We now study the extent to which morphisms in $C_\theta$ can be
``turned around''.  The following proposition depends crucially on the
assumption that $S^2$ admit a $\theta$-structure.  It is false for example for
framed manifolds.
\begin{proposition}\label{prop:ReverseMorphisms}
  Let $c_0 = (M_0, l_0) \in C_\theta$ and $c_1 = (M_1,l_1)$ be objects
  and let $(W,l): c_0 \to c_1$ be a morphisms in $C_\theta$ whose
  underlying manifold is connected.  Then there exists a morphism
  $(\overline{W},\overline{l}) : c_1 \to c_0$ whose underlying
  manifold $\overline{W}$ is diffeomorphic to $W$.

In particular, if $c_0$ and $c_1$ are linked by a zig-zag of morphisms, then there are in fact morphisms between them in both directions.
\end{proposition}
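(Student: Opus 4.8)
The plan is to build $\overline{W}$ by taking the same underlying cobordism $W$ but read "backwards" (reverse the first coordinate $x_1$, which is the height function), and then fix up the $\theta$-structure, which does not automatically survive the reversal. Concretely, let $\sigma\colon \bR^n \to \bR^n$ reverse the sign of the first coordinate; then $\sigma(W)$ is a smooth cobordism from (a translate of) $M_1$ to $M_0$, and the only issue is that composing $l$ with $D\sigma$ does not in general give a $\theta$-structure agreeing with the chosen $l_1,l_0$ on the two ends — the reflection in the $x_1$-direction is applied to the tangent bundle. This is precisely a situation where a bundle automorphism of the form $\phi^V$ (reflection in the line spanned by a non-vanishing vector field $V$, here $V = \partial/\partial x_1$) intervenes: near each boundary component the would-be structure differs from the correct collared structure by such a $\phi^V$.

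The key steps, in order. First I would set up the reversed manifold $\sigma(W)$ and observe that its collars are cylinders $\bR\times M_\nu$ but carrying the reflected structures $l_\nu \circ \phi^{\partial/\partial x_1}$ rather than $l_\nu$ themselves. Second, I would invoke the Corollary just proved (that $l\circ\phi^V$ is bounding whenever $l$ is) — or more precisely the underlying homotopy-theoretic fact about $[S^1,O(2)]$ acting on $[S^1,F]$ via even-degree maps preserving $\pi_0 F$ — to conclude that on each boundary $(d-1)$-manifold $M_\nu$ the reflected structure $l_\nu\circ\phi^{\partial/\partial x_1}$ is homotopic, through $\theta$-structures on $M_\nu$, to $l_\nu$ itself. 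Here one uses that $M_\nu$ is a closed $1$-manifold, i.e.\ a disjoint union of circles, so the relevant obstruction lives in $[S^1,F]$ and the reflection $\phi^{\partial/\partial x_1}$ has even degree (it factors through $S^1\to\RP^1$ as in the proof of the Corollary). Third, I would use such a homotopy of structures on a collar neighbourhood of $\partial(\sigma(W))$ to modify $l\circ D\sigma$ so that it becomes standard/collared at the two ends, producing the desired $\overline{l}$; this is a routine extension-of-a-homotopy-over-a-collar argument, using that $\theta$ is a Serre fibration. This yields $(\overline W,\overline l)\colon c_1\to c_0$ with $\overline W\cong W$.

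For the final sentence: if $c_0$ and $c_1$ are joined by a zig-zag, then after orienting all the arrows we have, by the main clause, a morphism in each direction across every edge, so one can compose along the zig-zag to obtain honest morphisms $c_0\to c_1$ and $c_1\to c_0$. (One only needs connectedness of the underlying cobordism of each morphism in the zig-zag; if some intermediate morphism is an identity this is trivial, and general morphisms are disjoint unions of connected ones, to which the main clause applies component-wise — though in $\mathcal C_\theta^\bullet$ the morphisms are already connected.)

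The main obstacle I anticipate is step two: one must carefully check that reversing $x_1$ really does alter the structure exactly by $\phi^V$ for $V=\partial/\partial x_1$ along each circle of $M_\nu$, and that the splitting/inner-product conventions used in Definitions \ref{defn:cross-with-R} and \ref{defn:IntersectionMap} to induce structures on level sets make this identification literally correct rather than merely up to homotopy. Once the bookkeeping pins the discrepancy to an even-degree element of $[S^1,O(2)]$, the Corollary does the real work and the rest is soft (collar extensions, Serre fibration lifting). The $d=2$ hypothesis enters exactly here, since it forces the boundary objects to be unions of circles so that "even degree in $[S^1,O(2)]$" is the only obstruction that can appear.
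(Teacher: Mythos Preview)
Your step two has a genuine gap. You need $l_\nu\circ\phi^{\partial/\partial x_1}$ to be homotopic to $l_\nu$ as a $\theta$-structure on each boundary circle $M_\nu$, but the Proposition and Corollary preceding this statement do not give that. They assert only that even-degree elements of $[S^1,O(2)]$ preserve the \emph{subset} $\pi_0 F\subseteq [S^1,F]$, not that they act trivially on $[S^1,F]$. On $M_\nu$ with the cylindrical framing $(\partial/\partial x_1,\partial/\partial\theta)$ of $\epsilon^1\oplus TM_\nu$, your $\phi^{\partial/\partial x_1}$ is the constant reflection $r=\mathrm{diag}(-1,1)\in O(2)$, and its action on $[S^1,F]$ is postcomposition with the $O(2)$-action of $r$ on $F$; this is generally nontrivial. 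For the orientation cover $\theta\colon BSO(2)\to BO(2)$ one has $F\cong O(2)/SO(2)$, two points interchanged by $r$, so $l_\nu\circ\phi^{\partial/\partial x_1}$ is the \emph{opposite} orientation and is certainly not homotopic to $l_\nu$. No collar modification can fix this: reflecting an oriented cobordism and then trying to restore the original orientations only near the ends is impossible.

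The paper's argument avoids this by working over all of $W$ rather than only near $\partial W$. Using connectedness, one chooses a relative CW structure on $(W,\partial W)$ with a single $2$-cell, extends $V=\partial/\partial x_1$ to a nowhere-zero section of $TW$ over the $1$-skeleton $W^1$, and sets $\overline l = l\circ\phi^V$ on $TW|_{W^1}$. The Corollary is then invoked exactly once, at the attaching circle of the lone $2$-cell: there $l$ is bounding (it visibly extends over the cell), so $l\circ\phi^V$ is bounding too, and $\overline l$ extends over all of $W$. After transporting along the height-reversing diffeomorphism the two reflections cancel near $\partial W$ (where $V=\partial/\partial x_1$, so $\phi^V$ composed with the differential of the reflection is the identity), producing the correct collared structures $l_0,l_1$. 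The essential difference is that the paper pushes the obstruction to a single disk, where ``bounding'' is precisely the hypothesis available; your approach places it on the boundary circles $M_\nu$, where no bounding hypothesis holds and the needed conclusion is false.
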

\begin{proof}
  Recall that $W \subseteq [0,t] \times \R^n$ for some large $n$, and
  that $W$ is ``cylindrical'' near its boundary.  Let $\varphi: [0,t] \to
  [0,t]$ be the affine map $s \mapsto t-s$, and let $F = \varphi \times
  \mathrm{Id}: [0,t] \times \R^n \times [0,t] \times \R^n$.

  To construct the morphism $c_1 \to c_0$ we first construct the
  underlying manifold as $\overline{W} = F^{-1}(W)$.  If there were no
  tangential structures to worry about, this would be a morphism from
  $c_1$ to $c_0$, but we need to extend the structure given near $\partial
  \overline{W} = c_1 \amalg c_0$ to all of $\overline{W}$.

  The vector $V = \partial/\partial x_1 \in \R^{n+1}$ gives a section
  of $TW$, defined near $\partial W$ (pointing inwards on the incoming
  boundary and outwards on the outgoing).  Since $W$ is connected, we
  can pick a relative CW structure with only one 2-dimensional cell.
  Let $W^1\subseteq W$ be the 1-skeleton in this CW structure.  We can
  extend $V$ to a non-zero section
  \begin{align*}
    V \in \Gamma(TW|_{W^1})
  \end{align*}
  Now let $\phi^V: TW|_{W^1} \to TW|_{W^1}$ be the automorphism constructed
  as before: it takes $V \mapsto -V$ and is the identity on $V^\perp$.
  The ``reflected'' tangential structure
  \begin{align*}
    \overline{l} = l \circ \phi^V: TW|_{W^1} \to \theta^*\gamma,
  \end{align*}
  extends first to a neighbourhood of $W^1 \subseteq W$, and then over
  the 2-cell $D^2 \to W$ by the corollary above.

  Thus we have a tangential structure $\overline{l} : TW \to
  \theta^*\gamma$.  If we use the diffeomorphism $F: W \to
  \overline{W}$ to transport it to a structure on $\overline{W}$, we
  have produced the required morphisms $c_1 \to c_0$.
\end{proof}

\subsection{Proof of Theorem \ref{thm:SubcategoryMonoids}}\label{sec:DimensionTwo}
By Theorem \ref{thm:HtpyTypeCobCat}, we have reduced the first part of Theorem \ref{thm:SubcategoryMonoids} to the following theorem.
\begin{theorem}\label{thm:CToConn}
For tangential structures $\theta : \X \to BO(2)$ such that $\X$ is path connected and $S^2$ admits a $\theta$-structure, the inclusion
$$\psi^{nc}_\theta(\infty,1)^\bullet_\mathbf{C} \to \psi^{nc}_\theta(\infty,1)^\bullet_\mathbf{Conn}$$
is a weak homotopy equivalence of each component onto a component of $\psi^{nc}_\theta(\infty,1)^\bullet_\mathbf{Conn}$.
\end{theorem}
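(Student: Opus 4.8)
The plan is to prove that the relative homotopy groups
$$\pi_k\big(\psi^{nc}_\theta(\infty,1)\bp_\mathbf{Conn},\ \psi^{nc}_\theta(\infty,1)\bp_\mathbf{C}\big)$$
vanish for all $k\geq 1$. By the long exact sequence of the pair, vanishing of the relative $\pi_1$ gives injectivity of $\pi_0$ of the subspace into $\pi_0$ of the total space, and vanishing of the relative $\pi_k$ for all $k\geq 1$ gives that $\pi_k$ is an isomorphism for $k\geq 1$; together these say exactly that the inclusion is a weak equivalence of each component onto a component. So I would start with a smooth map of pairs $f\colon (D^k,\partial D^k)\to(\psi^{nc}_\theta(\infty,1)\bp_\mathbf{Conn},\psi^{nc}_\theta(\infty,1)\bp_\mathbf{C})$ with $k\geq 1$ and construct a homotopy through such maps (so $\partial D^k$ stays in $\psi^{nc}_\theta(\infty,1)\bp_\mathbf{C}$, though need not be fixed) ending at a map with image in $\psi^{nc}_\theta(\infty,1)\bp_\mathbf{C}$. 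Since $D^k$ is connected and $\partial D^k\neq\emptyset$, the image lies in a single path component; fix $x_0\in\partial D^k$ and a level set $N^*\in\mathbf{C}$ of $f(x_0)$. By Proposition~\ref{prop:IdentifyingComponents} and Corollary~\ref{cor:grouplike} the cobordism class of an element of $\psi_\theta(\infty,1)$ agrees with that of each of its level sets, so every connected level set of every $f(x)$ is $\theta$-cobordant to $N^*$.

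The construction then follows the template of the proof of Theorem~\ref{thm:main-of-4.1} and Corollary~\ref{corollary:homotopy-2}. Using that each $f(x)$ lies in $\psi^{nc}_\theta(\infty,1)\bp_\mathbf{Conn}$, pick for each $x$ a regular value $a_x\in(-1,1)$ of $x_1$ with $f(x)_{a_x}$ connected; this persists on a neighbourhood, so by compactness there is a cover $D^k=V_1\cup\dots\cup V_r$ by contractible open sets with distinct $a_i\in(-1,1)$ such that $f(x)_{a_i}$ is connected for all $x\in V_i$. Over each $V_i$, after applying Lemma~\ref{Cylindrical} to make $f(x)$ cylindrical near $a_i$ and, since $V_i$ is contractible, homotoping the $\theta$-structure on that cylindrical region, one may assume the level set $f(x)_{a_i}$ equals a fixed $\theta_{d-1}$-manifold $N_i$ independent of $x\in V_i$. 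Choose once and for all a connected $\theta$-cobordism $P_i\colon N_i\to N^*$ restricting to the given structures on the ends: such a cobordism exists because $N_i$ and $N^*$ are cobordant, and can be taken connected by $0$-surgery using that $S^2$ admits a $\theta$-structure (Proposition~\ref{prop:extend-structures}). A Morse function on $P_i$ restricting to the projection on $\partial P_i$ may, after cancelling critical-point pairs, be assumed to have only index-$1$ critical points, so $P_i$ is built from the cylinder $N_i\times[0,1]$ by a sequence of $1$-handle attachments; by Proposition~\ref{prop:ReverseMorphisms} there is also a reverse cobordism $\overline{P_i}\colon N^*\to N_i$ with the same underlying surface and a compatible $\theta$-structure.

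Each $1$-handle attachment is realised, near level $a_i$, as a surgery $\K^e_w$ of Definition~\ref{defn:surgery} with $w>0$, so by Lemma~\ref{lemma:thick-surg} it performs the corresponding $0$-surgery on the level set; the ``$-1$'' feet are placed at finitely many distinct points of the standard strip $L_\epsilon\subseteq f(x)$, exactly as in Proposition~\ref{prop:local-surg}. The homotopy from $f$ to the surgered map is obtained, as in Propositions~\ref{prop:parametrised-surg} and~\ref{prop:local-surg}, by first performing the surgeries ``at infinity'' (outside $x_1^{-1}([-10,10])$, where they change nothing inside $(-1,1)$, so the homotopy starts at $s\circ f$) and then sliding the handles in to level $a_i$; iterating over the handles of $P_i$ and then of $\overline{P_i}$ yields, over the contractible $V_i$, a homotopy of $f|_{V_i}$ to a map whose level set just above $a_i$ is abstractly $N^*$, and a concluding family of isotopies near that level moves it into the exact position $N^*\in\mathbf{C}$. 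Disjointness of the surgery data for distinct $i$ (arranged as in Corollary~\ref{corollary:homotopy-2} and Proposition~\ref{prop:paths-exist}, using the distinct $a_i$ and distinct points of the strip) allows the local homotopies to be patched, via bump functions, into a single homotopy of $f$ landing in $\psi^{nc}_\theta(\infty,1)\bp_\mathbf{C}$; that $\partial D^k$ remains in $\psi^{nc}_\theta(\infty,1)\bp_\mathbf{C}$ throughout is checked as in Proposition~\ref{prop:local-surg} and Corollary~\ref{corollary:homotopy-2}, using that the surgeries are ``thin'' away from level $a_i$ (Lemma~\ref{lemma:thin-surg}) and may be taken trivial where $f(x)$ already has a level set in $\mathbf{C}$.

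I expect the main obstacle to be the construction in the third paragraph: realising the chosen cobordism $P_i$, and its reverse $\overline{P_i}$, by an explicit sequence of elementary $\K^e_w$ surgeries whose induced $\theta$-structures transform the \emph{family} of connected level sets $f(x)_{a_i}$ into the \emph{exact} $\theta_{d-1}$-manifold $N^*$ --- not merely into some cobordant one --- while keeping all the data disjoint and depending continuously on $x\in V_i$, and closing up with $\overline{P_i}$ so that the result again agrees with $f(x)$ outside a compact set and hence stays in the original path component. This is precisely where the hypotheses on $\theta$ are needed: path connectedness of $\X$ and the existence of a $\theta$-structure on $S^2$ enter through Propositions~\ref{prop:extend-structures} and~\ref{prop:ReverseMorphisms}, and the parametrised bookkeeping directly parallels the arguments of Corollary~\ref{corollary:homotopy-2} and Proposition~\ref{prop:paths-exist}.
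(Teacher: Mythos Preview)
Your overall template—vanishing of relative homotopy groups, a finite cover with regular values $a_i$, making $f$ cylindrical there, and patching local homotopies via bump functions—matches the paper's. But the mechanism you propose for the local step is genuinely different from what the paper does, and as written it has a gap precisely at the point you flag as the ``main obstacle''.

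The paper does \emph{not} attempt to realise a chosen cobordism $P_i\colon N_i\to N^*$ handle-by-handle via $\K^e_w$ surgeries. Instead it exploits that connected objects of $\mathcal{C}_\theta^\bullet$ are classified by $\pi_1(\Fib(\theta))$, a \emph{group}. Given the class $[M_i]$ of the level set at $a_i$, one first finds (Lemma~\ref{lem:InverseObjects}) an object $N_i$ with $[M_i]\cdot[N_i]\in\pi_0\mathbf{C}$, together with morphisms $*\to N_i\to*$. One then cuts this composition open along the strip and glues the resulting piece into the standard strip $L_{3\delta}\subseteq f(x)$ near height $a_i$; this changes the level-set class at $a_i$ from $[M_i]$ to $[M_i]\cdot[N_i]$ without touching $f(x)$ anywhere except inside the strip. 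The homotopy back to $s\circ f$ is obtained simply by deforming the height function on the glued-in piece so as to push it above level $10$. The paper also first reduces (Proposition~\ref{prop:PathComponentsOfObjects}) to the case that $\mathbf{C}$ is a union of path components of objects, so that landing in the correct class suffices; you omit this reduction and instead try to hit a \emph{specific} $N^*\in\mathbf{C}$, which is strictly harder.

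The gap in your approach is this: the $\K^e_w$ construction, via Proposition~\ref{prop:extend-structures}, equips the surgered manifold with a \emph{particular} $\theta$-structure (built from a once-and-for-all choice of $\theta$-structure on $S^d$), not one you are free to prescribe. Consequently, a sequence of such surgeries does not obviously realise a \emph{given} $\theta$-cobordism $P_i$; it realises \emph{some} cobordism whose $\theta$-structure on the new level set is determined by the construction and need not be that of $N^*$, nor even lie in $\pi_0\mathbf{C}$. Your placement of the ``$-1$'' feet on the strip, imported from Proposition~\ref{prop:local-surg}, is designed to connect components, not to implement an arbitrary $1$-handle of $P_i$ with its prescribed $\theta$-structure. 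Making this work would require an argument that the attaching embeddings and the resulting $\theta$-extensions can be chosen to track the handle decomposition of $P_i$ as $\theta$-cobordisms; you do not supply one. The paper's ``multiply by an inverse in the strip'' trick sidesteps this entirely, and is both simpler and what actually makes the proof go through.
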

The second part of Theorem \ref{thm:SubcategoryMonoids} will be proved in Proposition \ref{prop:HomotopyCommutative}. Let us first prove Theorem \ref{thm:CToConn} in the case where $\mathbf{C}$ is a disjoint union of path components of $\ob(C_\theta^\bullet)$. We will explain how to remove this assumption in Proposition \ref{prop:PathComponentsOfObjects}.

Suppose we are given a map of pairs
$$f:(D^k, \partial D^k) \lra (\psi^{nc}_{\theta}(\infty, 1)\bp_{\mathbf{Conn}}, \psi^{nc}_{\theta}(\infty, 1)\bp_{\mathbf{C}}).$$
For each point $x \in D^k$, we can choose a regular value $a_x \in (-1,1)$ of $x_1 :f(x) \to \bR$. There is an open neighbourhood $U_x$ of $x$ on which $a_x$ is still regular. We may choose a finite subcover $\{U_i\}_{i \in I}$ of $\{U_x\}_{x \in X}$, and suppose that the $a_i$ are distinct: otherwise we may perturb them so that they are. Choose an $\epsilon >0$ such that the intervals $(a_i-2\epsilon, a_i+2\epsilon)$ are disjoint. By Lemma \ref{Cylindrical}, we may further suppose that for $x \in U_i$ the manifold $f(x)$ is cylindrical in $x_1^{-1}(a_i-\epsilon, a_i+\epsilon)$.

\vspace{2ex}

As we are working in the space of manifolds with basepoint, and the family $D^k$ is compact, there is a $\delta > 0$ such that the strip $L_{3\delta} = \bR \times (-3\delta, 3\delta) \times \{0\}$ lies inside every manifold $f(x)$, and furthermore that $f(x) \cap \bR \times (-3\delta, 3\delta)^{\infty} = L_{3\delta}$. Recall that inside $L_{3\delta}$ the surface has the standard $\theta$-structure. We can change the manifold inside $\bR \times (\delta, 2\delta) \times (-3\delta, 3\delta)^{\infty-1}$ without leaving the space of manifolds with basepoint, as the new manifold will still contain $L_{\delta}$.

The set $\pi_0 \ob( \mathcal{C}_\theta\bp )$ is in natural bijection with $\pi_1 \Fib(\theta)$, as an object $C$ has a natural framing of $\epsilon^1 \oplus TC$, extending the given framing on $(-\epsilon,\epsilon) \times \{0\}$, and this framing is unique up to homotopy. Write $*$ for the object having the standard $\theta$-structure with respect to this framing. For each $U_i$, choosing a $u_i \in U_i$ and setting $M_i = f(u_i) \cap \{a_i\}\times \bR^\infty$ produces a homotopy class $[M_i] \in \pi_1 \Fib(\theta)$ which is independent of the choice of $u_i$.

\begin{lemma}\label{lem:InverseObjects}
Let $\mathbf{C} \subseteq \ob( \mathcal{C}_\theta\bp )$ be a disjoint union of path components: this determines a subset $\pi_0 \mathbf{C} \subseteq \pi_1 \Fib(\theta)$. If an object $M \in \mathcal{C}_\theta\bp$ lies in a component of $B \mathcal{C}_\theta\bp$ that also contains an object in $\mathbf{C}$, there is another object $N$ such that $[M] \cdot [N] \in \pi_0\mathbf{C} \subseteq \pi_1 \Fib(\theta)$, where $\cdot$ denotes the group product in $\pi_1 \Fib(\theta)$. Furthermore, there are morphisms
$$* \lra N \lra *$$
in $\mathcal{C}_\theta\bp$. If $M$ is already in $\mathbf{C}$, we can choose $N$ to be $*$.
\end{lemma}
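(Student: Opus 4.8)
The plan is to use the identification $\pi_0 \ob(\mathcal{C}_\theta\bp) \cong \pi_1 \Fib(\theta)$ and the group structure coming from connected sum of objects (the monoid structure on $\pi_0\psi_\theta(\infty,1)$ from Corollary~\ref{cor:grouplike}, which is in fact a group). First I would note that $\pi_1\Fib(\theta)$ is a group, so for $M$ there is an inverse class $[M]^{-1}$; more to the point, the hypothesis is that $M$ lies in a component of $B\mathcal{C}_\theta\bp$ containing some object $C \in \mathbf{C}$, and $\pi_0\mathbf{C} \subseteq \pi_1\Fib(\theta)$ is a \emph{coset-like} subset. The key point is that being in the same component of $B\mathcal{C}_\theta\bp$ means $M$ and $C$ are cobordant, i.e.\ give the same element of $\pi_{-1}\MTtheta = \pi_0 B\mathcal{C}_\theta\bp$, and by Proposition~\ref{prop:ReverseMorphisms} there are morphisms $M \to C$ and $C \to M$ in both directions. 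So I would set $N$ to be an object realising the class $[M]^{-1}\cdot[C] \in \pi_1\Fib(\theta)$; then $[M]\cdot[N] = [C] \in \pi_0\mathbf{C}$ as required, and if $M \in \mathbf{C}$ we may take $[C] = [M]$, giving $[N]=*$.

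The substantive content is the existence of the morphisms $* \to N \to *$. For this I would argue as follows. Any object $N$ is cobordant to the empty object: indeed $N$ is connected, and by Proposition~\ref{prop:ReverseMorphisms} (or rather the construction in Corollary~\ref{cor:grouplike}, the ``bend'' embedding, together with the surgery techniques of \S\ref{sec:surgery} to make things connected, and the hypothesis that $S^2$ admits a $\theta$-structure so that $N \amalg -N$ bounds) one produces a connected cobordism $N \to \emptyset$, hence also a connected cobordism $\emptyset \to N$ by reversing via Proposition~\ref{prop:ReverseMorphisms}. But in $\mathcal{C}_\theta\bp$ the empty object is not allowed; instead the role of the ``trivial'' object is played by $*$, the object with one component carrying the standard $\theta$-structure. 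Since $*$ is framed and $S^2$ admits a $\theta$-structure, $*$ is null-cobordant as well, so $*$ and $N$ lie in the same component as $\emptyset$ would, and the same surgery-and-reversal argument — now carried out within the basepoint category, keeping the strip $L_\epsilon$ fixed, as remarked at the end of \S\ref{sec:surgery} — produces connected cobordisms $* \to N$ and $N \to *$.

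I would then assemble these: concretely, take a connected cobordism $W: * \to N$ (which exists because $N$ lies in the component of $*$: this uses that $[N]\cdot[*]^{-1}$ equals a class representable by such a cobordism, plus Theorem~\ref{thm:SubcategoryConnected}/Proposition~\ref{prop:ReverseMorphisms} to upgrade a zig-zag to an actual morphism), and apply Proposition~\ref{prop:ReverseMorphisms} to $W$ to obtain $\overline{W}: N \to *$. This gives the chain $* \to N \to *$. The last sentence of the lemma (that $N = *$ works when $M \in \mathbf{C}$) is immediate since $* \to * \to *$ is realised by identity morphisms, or by a short cylinder.

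The main obstacle will be bookkeeping the basepoint/strip condition: all the cobordisms must be constructed so that they contain the standard strip $L_\epsilon$ with the standard $\theta$-structure on it, which means the surgeries used to connect things up (Lemma~\ref{lemma:existence-paths-point}, Corollary~\ref{corollary:homotopy-2}) must be performed with surgery data disjoint from $L_\epsilon$ — this is exactly the situation handled by the remark at the end of \S\ref{sec:surgery}, so the obstruction is really one of careful citation rather than new mathematics. A secondary subtlety is checking that $\pi_0\mathbf{C}$, being a union of path components of $\ob(\mathcal{C}_\theta\bp)$, intersects each coset of the relevant subgroup in at most the way needed — but since we only need \emph{some} $N$ with $[M]\cdot[N]\in\pi_0\mathbf{C}$, and $[C]$ itself is such a product with $[N] = [M]^{-1}[C]$, there is nothing to check beyond realisability of an arbitrary class in $\pi_1\Fib(\theta)$ by an object, which follows from the surjectivity of $\pi_0\ob(\mathcal{C}_\theta\bp) \to \pi_1\Fib(\theta)$ noted just before the lemma.
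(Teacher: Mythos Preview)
Your proposal contains a genuine gap. The claim ``any object $N$ is cobordant to the empty object'' (equivalently, to $*$ in $\mathcal{C}_\theta^\bullet$) is simply false: for instance, for $\theta = BSO(2)\times Y \to BO(2)$ the set $\pi_0 B\mathcal{C}_\theta^\bullet$ is in bijection with $H_1(Y;\bZ)$, which can be nontrivial. Your justification via the bend construction only shows that $N \amalg (-N)$ bounds, not that $N$ itself does. Without knowing that your abstractly chosen $N$ (any realisation of $[M]^{-1}[C] \in \pi_1\Fib(\theta)$) actually lies in the component of $*$, you cannot invoke Proposition~\ref{prop:ReverseMorphisms} to produce the morphisms $* \to N \to *$.

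Your approach is salvageable, but it requires an additional argument you did not supply: that the cobordism equivalence relation on $\pi_0\ob(\mathcal{C}_\theta^\bullet) \cong \pi_1\Fib(\theta)$ is compatible with the group multiplication, so that $[M] \sim [C]$ forces $[M]^{-1}[C] \sim [*]$. This compatibility can be shown (by splicing cobordisms along the basepoint strip), but you would need to actually do it.

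The paper avoids this issue entirely by a direct construction. It first uses Proposition~\ref{prop:ReverseMorphisms} to obtain actual morphisms $E_1: M \to C$ and $E_2: C \to M$, then chooses any object $M^{-1}$ realising $[M]^{-1}$, cuts open the cylinder $\bR \times M^{-1}$ along the basepoint line, and splices the resulting strip into $E_1$ and $E_2$ inside the standard region $L_\epsilon$. The modified cobordisms $\tilde E_1, \tilde E_2$ now run between (something in the path component of $*$) and a specific object $N$; this $N$ comes equipped with the required morphisms by construction, so the question of whether $N$ is null-cobordant never arises.
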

\begin{proof}
Choose morphisms $E_1 : M \to C \in \mathbf{C}$, and $E_2 : C \to M$ which exist by Proposition \ref{prop:ReverseMorphisms}, and an object $M^{-1}$ such that $[M^{-1}] = [M]^{-1} \in \pi_1 \Fib(\theta)$. Scale the morphisms $E_1$ and $E_2$ to have length 1. Consider the morphism $I_{M^{-1}} = (1, \bR \times M^{-1}) : M^{-1} \to M^{-1}$. We may cut this along $\bR \times \{0\}$ to obtain a surface with $\theta$-structure and height function, $\tilde{I}_{M^{-1}}$, having two identical boundary components, which we will call $\widetilde{M^{-1}}$ and which are intervals with a $\theta_1$-structure that is standard near their ends.

Let $\epsilon>0$ be small enough that the morphisms $E_1$ and $E_2$  both satisfy $E_i \cap \bR \times (-3\epsilon, 3\epsilon)^\infty = L_{3\epsilon}$. In particular, the morphisms come from $\mathcal{C}_\theta^{3\epsilon}$. There is a height-preserving embedding $e: \tilde{I}_{M^{-1}} \to \bR \times [\epsilon, 2\epsilon] \times [0, 3\epsilon)^\infty$ sending $\partial \tilde{I}_{M^{-1}}$ to $\bR \times \{\epsilon, 2\epsilon\} \times \{0\}$ with image that agrees with $L_{3\epsilon}$ as a $\theta$-manifold near $(\bR \times \bR - (\epsilon, 2\epsilon) ) \times \bR^\infty$.

Let $\tilde{E}_i = (E_i - \bR \times (\epsilon, 2\epsilon)\times\{0\}) \cup e(\tilde{I}_{M^{-1}}) \in \psi_\theta^{nc}(\infty, 1)\bp_\Conn$. Then $(1, \tilde{E}_1)$ and $(1, \tilde{E}_2)$ are morphisms in $\mathcal{C}_\theta\bp$, between 
$(M - (\epsilon, 2\epsilon)\times\{0\}) \cup e(\widetilde{M^{-1}})$ and $(C - (\epsilon, 2\epsilon)\times\{0\}) \cup e(\widetilde{M^{-1}})$. The first of these objects is in the path component of $*$, and the second is defined to be $N$.
\end{proof}

We will now construct a $\theta$-manifold $D$ with height function, which we will glue into the manifolds $f(x)$ inside $L_{3\delta}$, similarly to the proof of Lemma \ref{lem:InverseObjects}. Let $C_i$ be the composition $* \to N_i \to *$ obtained from the above Lemma applied to $M_i$, and scaled so that each morphism has length $\epsilon$. Let $\tilde{C}_i$ denote the $\theta$-manifold with height function obtained by cutting $C_i$ along $\bR \times \{0\}$, and adding $a_i-\epsilon$ to the height function. The boundary of $\tilde{C}_i$ can be naturally identified with the boundary of $\bR \times [0,1]$, where the height function is given by the first coordinate. There is a height-preserving embedding $e_i : \tilde{C}_i \to \bR \times [\delta, 2\delta] \times [0, 3\delta)^\infty$ sending $\partial \tilde{C}_i$ to $\bR \times \{\delta, 2\delta \}$ with image that agrees with $L_{3\delta}$ as a $\theta$-manifold near
the boundary of $[a_i - \epsilon, a_i+\epsilon] \times [\delta, 2\delta] \times \bR^\infty$.

Let $D$ be the $\theta$-manifold obtained from $L_{3\delta}$ by cutting out $[a_i - \epsilon, a_i+\epsilon] \times [\delta, 2\delta] \times \{0\}$ and gluing in $e_i(\tilde{C}_i) \cap [a_i - \epsilon, a_i+\epsilon] \times [\delta, 2\delta] \times \bR^\infty$. The following figure shows the part of $D$ near $\bR \times (\delta, 2\delta) \times \bR^\infty$.

\begin{center}
\includegraphics[bb = 148 512 500 649, scale=1]{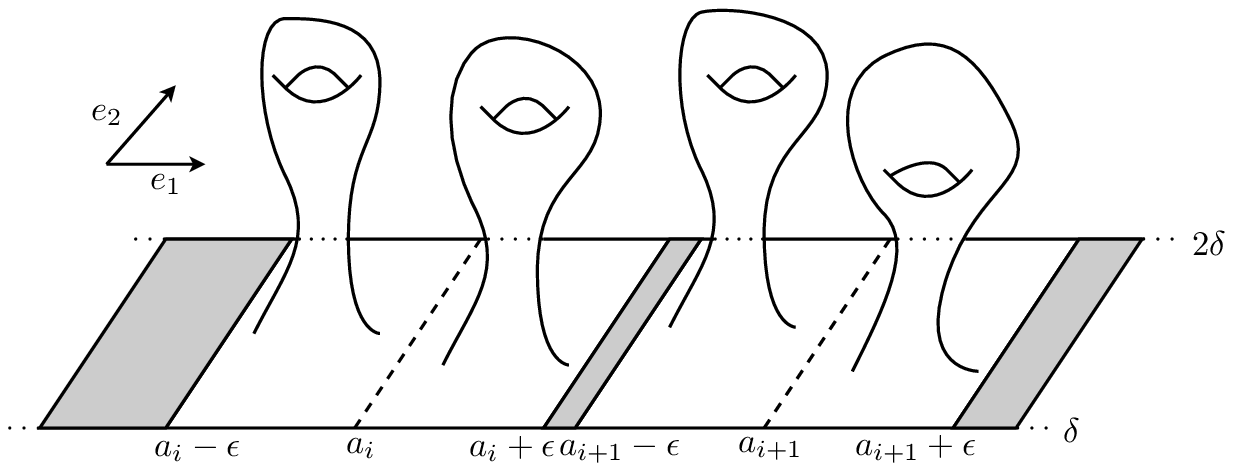}
\end{center}
For each $x \in D^k$ we can form $\bar{f}(x) = (f(x) - L_{3\delta}) \cup D$, which defines a continuous map $\bar{f} : D^k \to \psi_\theta(\infty,1)\bp_\Conn$. If $x \in U_i$, then $[\bar{f}(x) \cap \{a_i\} \times \bR^\infty] = [M_i] \cdot [N_i] \in \pi_1 \Fib(\theta)$ which is in $\pi_0 \mathbf{C}$ by construction. Thus $\bar{f}$ is a map into $\psi_\theta(\infty,1)\bp_{\mathbf{C}}$, and we must show it is relatively homotopic to $f$. We will do so by changing the height function on $D$. For ease of notation we will write $h$ instead of $x_1$ for the height function.

\vspace{2ex}

We first deform the height function on $D$ so that $D - L_{3\delta}$ has total height 0. Let $c_1 : \bR \to \bR$ be a smooth function such that $(t_i -\epsilon, t_i + \epsilon) \subseteq c_1^{-1}(t_i)$, and $c_1' \geq 0$. Let $c_s = s \cdot c_1 + (1-s) \cdot \mathrm{Id}$. Choose a smooth function $\varphi$ on $D$ that is identically 1 on $D-L_{3\delta}$, and identically 0 near $\bR \times \bR - (\delta,2\delta) \times \bR^\infty$. We can modify the height function on $D$ by
$$h_s(d) = \varphi(d) \cdot (c_s \circ h(d) - h(d)) + h(d).$$
Then $h_0 = h$ and if $c \in e_i(\tilde{C}_i) - L_{3\delta} \subseteq D$ then $h_1(c)=t_i$. The level sets of $h_1$ in $D$ at any points except $\{t_1, ...,t_n\}$ are then intervals with the standard $\theta$-structure.

We now define another path of height functions on $D$,
$$h'_s(d) = \varphi(d) \cdot 20 \cdot s + h_1(d).$$
Then $h'_0 = h_1$. If $c \in e_i(\tilde{C}_i) - L_{3\delta} \subseteq D$ then $h'_1(d) = t_i + 20 \in (19, 21)$. With this height function, $D$ has the standard tangential structure between heights $-10$ and $10$.

Applying these paths of height functions to the copy of $D$ inside each $\bar{f}(x)$ gives a path from $\bar{f}(x)$ to a manifold that agrees with $f(x)$ inside $x_1^{-1}(-10,10)$, so composing with the stretching map (\ref{eq:14}) gives a path to $f(x)$.

\vspace{2ex}

We must explain why these are \textit{relative} homotopies. Suppose $f(x) \in \psi^{nc}_\theta(\infty,1)^\bullet_\mathbf{C}$, so there is a $t \in (-1,1)$ and an $\eta>0$ with $f(x) \cap \{t'\} \times \bR^\infty \in \mathbf{C}$ for all $t' \in (t-\eta, t+\eta)$. If $t$ is not in $\cup_i (a_i-\epsilon, a_i+\epsilon)$, then $\bar{f}(x)$ agrees with $f(x)$ near $t$, so the level set at $t$ of $\bar{f}(x)$ is in $\mathbf{C}$. The path $h_s$ only changes the level sets in $\cup_i (a_i-\epsilon, a_i+\epsilon)$, so also always has a level set in $\mathbf{C}$. Finally, the path $h'_s$ only has finitely many level sets different from those of $f(x)$ at any time. Thus the final manifold always has a $t' \in (t-\eta, t+\eta)$ with level set in $\mathbf{C}$.

On the other hand, suppose $t \in (a_i-\epsilon, a_i+\epsilon)$. Then $M_i \in \mathbf{C}$ already, and we have thus chosen $N_i$ to be trivial. Thus the level sets of $f(x)$ and $\bar{f}(x)$ at $t$ are the same, and in $\mathbf{C}$. The path $h_s$ does not change this. Finally, the path $h'_s$ only has finitely many level sets in $(-10,10)$ different from $f(x)$ at a time. Thus the final manifold always has a $t'$ with level set in $\mathbf{C}$.

This concludes the proof of Theorem \ref{thm:CToConn}, in the case that $\mathbf{C}$ consists of whole path components of $\psi_{\theta_{d-1}}(\infty,0)$. The general case follows by:

\begin{proposition}\label{prop:PathComponentsOfObjects}
Let $\mathbf{C}$ be a collection of elements of $\psi_{\theta_{d-1}}(\infty,0)$, and $\mathbf{C}'$ be the collection of all elements of $\psi_{\theta_{d-1}}(\infty,0)$ in the same path component as an element in $\mathbf{C}$. Then the inclusion
$$\psi^{nc}_\theta(\infty,1)^\bullet_\mathbf{C} \to  \psi^{nc}_\theta(\infty,1)^\bullet_{\mathbf{C}'}$$
is a weak homotopy equivalence.
\end{proposition}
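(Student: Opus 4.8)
The plan is to deduce the proposition from the vanishing of the relative homotopy groups $\pi_k\big(\psi^{nc}_\theta(\infty,1)\bp_{\mathbf{C}'},\,\psi^{nc}_\theta(\infty,1)\bp_{\mathbf{C}}\big)$ for all $k$: given a smooth map of pairs $f:(D^k,\partial D^k)\to(\psi^{nc}_\theta(\infty,1)\bp_{\mathbf{C}'},\,\psi^{nc}_\theta(\infty,1)\bp_{\mathbf{C}})$ I would deform it, through maps of pairs, into the subspace. The point that makes this possible is that, unlike $\mathbf{C}$, the collection $\mathbf{C}'$ is a union of path components of $\psi_{\theta_{d-1}}(\infty,0)$, so for a regular value $a$ of $x_1:f(x)\to\bR$ the condition ``$f(x)_a\in\mathbf{C}'$'' is open in $x$ (continuity of the intersection map and local path-connectedness of the set of regular values). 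Hence, exactly as at the start of the proof of Theorem \ref{thm:CToConn}, one can choose a finite cover $D^k=V_1\cup\dots\cup V_r$ by contractible opens and distinct numbers $a_i\in(-1,1)$ such that $a_i$ is regular for $x_1:f(y)\to\bR$ and $f(y)_{a_i}\in\mathbf{C}'$ for all $y\in V_i$; since $V_i$ is connected all these $f(y)_{a_i}$ lie in a single path component of $\psi_{\theta_{d-1}}(\infty,0)\bp$, which meets $\mathbf{C}$ by definition of $\mathbf{C}'$, so one fixes $c_i\in\mathbf{C}$ in it.

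The construction then glues into $f(y)$, near height $a_i$ over $V_i$, a \emph{trivial} cobordism from $f(y)_{a_i}$ to $c_i$, so that the new manifold acquires a level set equal to $c_i\in\mathbf{C}$. Since $V_i$ is contractible and $y\mapsto f(y)_{a_i}$ lands in the path component of $c_i$, I would pick a smooth family of paths $\gamma_i:V_i\times[0,1]\to\psi_{\theta_{d-1}}(\infty,0)\bp$ with $\gamma_i(y,0)=f(y)_{a_i}$, $\gamma_i(y,1)=c_i$, and $\gamma_i(y,s)=f(y)_{a_i}$ for $s\in[0,\tfrac12]$ — the ``waiting'' on $[0,\tfrac12]$ being needed only for the relativity check. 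The deformation $H$ has two phases, performed in sequence. First, by Lemma \ref{Cylindrical} (with bump functions subordinate to the $V_i$, and radius $\epsilon$ small enough that the intervals $[a_i-2\epsilon,a_i+2\epsilon]$ are disjoint and contained in $(-1,1)$) make $f(y)$ cylindrical in $x_1^{-1}(a_i-\epsilon,a_i+\epsilon)$ for $y\in V_i$, with cylindrical value $f(y)_{a_i}$. Second, over $x_1^{-1}(a_i-\epsilon,a_i+\epsilon)$ replace $f(y)$ by the graph (Definition \ref{defn:take-theta-graph}) of the path $u\mapsto\gamma_i(y,t\beta(u))$, $t\in[0,1]$, where $\beta$ is a bump function equal to $0$ near $a_i\pm\epsilon$ and to $1$ near $a_i$, gluing to the rest of $f(y)$ by the sheaf property (Theorem \ref{thm:sheaf}). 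Because the graph of a path of compact manifolds is a trivial cobordism between copies of its initial level set, this changes neither the diffeomorphism type of the ambient manifold (so one stays in $\psi^{nc}_\theta(\infty,1)\bp$, cf.\ the remark at the end of \S\ref{sec:surgery}) nor the strip $L_\epsilon$. Assembling these modifications over the cover by bump functions, just as in the proof of Corollary \ref{corollary:homotopy-2}, gives a homotopy $H$ from $f$ to a map $\bar f$ with $\bar f(x)_{a_i}=c_i\in\mathbf{C}$ for $x\in V_i$, and since the $V_i$ cover $D^k$, $\bar f$ lands in $\psi^{nc}_\theta(\infty,1)\bp_{\mathbf{C}}$.

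The main obstacle will be to verify that $H$ is a homotopy \emph{of pairs}, i.e.\ that $H(t,x)\in\psi^{nc}_\theta(\infty,1)\bp_{\mathbf{C}}$ whenever $x\in\partial D^k$; this is the one place where arbitrary $\mathbf{C}$ makes things subtler than in Theorem \ref{thm:CToConn}, since ``having a $\mathbf{C}$-level set'' is no longer an open condition. I would argue as follows. Fix $x\in\partial D^k$ and a regular value $b\in(-1,1)$ with $f(x)_b\in\mathbf{C}$. If $b$ lies outside every interval $[a_i-2\epsilon,a_i+2\epsilon]$ then $H$ leaves $f(x)$ unchanged near height $b$, so the level set there stays in $\mathbf{C}$. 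Otherwise $b\in[a_i-2\epsilon,a_i+2\epsilon]$ for some $i$ with $x\in V_i$; phase one pulls back along surjective order-preserving self-maps of $\bR$ equal to the identity outside $[a_i-2\epsilon,a_i+2\epsilon]$, so it destroys no level set of $f(x)$, merely repositioning them within $[a_i-2\epsilon,a_i+2\epsilon]\subseteq(-1,1)$, and a $\mathbf{C}$-level set persists throughout it. When phase one ends, $f(x)$ is cylindrical in $(a_i-\epsilon,a_i+\epsilon)$ with value $f(x)_{a_i}$, and every original level set $f(x)_b$ with $b\neq a_i$ now sits at a height in $(-1,1)$ \emph{outside} $(a_i-\epsilon,a_i+\epsilon)$, which phase two does not disturb; finally, if $b=a_i$ then $f(x)_{a_i}\in\mathbf{C}$, and by the waiting property of $\gamma_i$ together with $\beta$ vanishing near $a_i\pm\epsilon$, the level set of $H(t,x)$ at any height $u\in(-1,1)$ close to $a_i-\epsilon$ equals $\gamma_i(x,t\beta(u))=f(x)_{a_i}\in\mathbf{C}$ for every $t$. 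In all cases $H(t,x)$ retains a $\mathbf{C}$-level set in $(-1,1)$ for all $t$, so the relative homotopy groups vanish and the inclusion is a weak homotopy equivalence.
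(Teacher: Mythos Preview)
Your proof is correct and takes essentially the same approach as the paper's: both cylindrify via Lemma~\ref{Cylindrical} near the $a_i$, then replace the cylinder by the graph of a path from the level set to an element of $\mathbf{C}$, which is exactly the construction of Proposition~\ref{prop:Psi0IntoPsiDot} (the paper simply cites that proposition rather than re-describing it; your pointer to Corollary~\ref{corollary:homotopy-2} for the assembly step should really be to that proposition, though the bump-function mechanism is the same). Your relativity argument via the buffer where $\beta=0$ is equivalent to the paper's, which keeps the graph modification inside $(a_i-2\delta,a_i+2\delta)\subsetneq(a_i-\epsilon,a_i+\epsilon)$ so that the cylindrical value persists in the untouched part $(a_i+2\delta,a_i+\epsilon)$; the waiting condition you impose on $\gamma_i$ is not actually used (only $\beta$ vanishing near the endpoints matters) and can be dropped.
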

\begin{proof}
We will show that the relative homotopy groups of the inclusion vanish. The argument is almost identical to that of Proposition \ref{prop:Psi0IntoPsiDot}, so we will explain to what extent it differs.

Take a map $f : (D^m, \partial D^m) \to (\psi^{nc}_\theta(\infty,1)^\bullet_{\mathbf{C}'} ,  \psi^{nc}_\theta(\infty,1)^\bullet_\mathbf{C})$. As usual, there is a finite cover of $D^m$ by contractible open sets $U_i$ and values $a_i \in (-1,1)$ such that $a_i$ is a regular value of the height function on $f(x)$ for $x \in U_i$, and the level sets at $a_i$ are in $\mathbf{C}'$. We may suppose the $a_i$ are distinct, and choose an $\epsilon >0$ so that the intervals $(a_i-2\epsilon, a_i+2\epsilon)$ are disjoint. By Lemma \ref{Cylindrical}, we may further suppose that $f(x \in U_i)$ is cylindrical in $x_1^{-1}(a_i-\epsilon, a_i+\epsilon)$. It is important to note that the homotopy of Lemma \ref{Cylindrical} does not change which level sets occur in $f(x)$, only the heights at which they occur: thus it gives a relative homotopy.

Thus there is a map $\lambda_i : U_i \to \psi_{\theta_{d-1}}(\infty,0)\bp$ so that $f(x)$ and $\bR \times \lambda_i(x)$ are equal in $\Psi_\theta(x_1^{-1}(a_i-\epsilon, a_i+\epsilon))$. The map $\lambda_i$ takes values in $\mathbf{C}'$. As $U_i$ is contractible we can pick a smooth homotopy $\Lambda_i : [0,1]\times U_i \to \mathbf{C}'$ with $\Lambda_i(0,-)=\lambda_i$ and $\Lambda_i(1,-) \in \mathbf{C}$. We use these functions exactly as in the proof of Proposition \ref{prop:Psi0IntoPsiDot}.

It remains to see why this gives a relative homotopy. Over $U_i$, elements $f(x)$ are cylindrical in $x_1^{-1}(a_i-\epsilon, a_i+\epsilon)$. The homotopy constructed in Proposition \ref{prop:Psi0IntoPsiDot} is constant outside of $x_1^{-1}(a_i-2\delta, a_i+2\delta)$, for some $0 < 3\delta < \epsilon$. In particular, throughout the homotopy there are only new types of level sets added: if $f(x)$ has a level set in $\mathbf{C}$, then either it occurs at a height outside of $(a_i-2\delta, a_i+2\delta)$ and so is unchanged by the homotopy, or it occurs at a height inside $(a_i-2\delta, a_i+2\delta)$, in which case it also occurs at a height inside $(a_i+2\delta, a_i+\epsilon)$, which must be unchanged by the homotopy.
\end{proof}

Theorem \ref{thm:CToConn} implies the first part of Theorem \ref{thm:SubcategoryMonoids}. It remains to see homotopy commutativity of the endomorphism monoids in $\mathcal{C}_\theta\bp$; to that end we give yet another model for these spaces.

The monoid of endomorphisms of an object $C \in \mathcal{C}_\theta\bp$ is homotopy equivalent to $\coprod_W B\Diff_\theta(W, L\cup \partial W)$, where $W$ ranges over connected compact $\theta$-surfaces with two boundary circles, both identified with $C$. We can cut such surfaces along $L$ to obtain surfaces with one boundary component. More precisely, given a pointed map $\ell : S^1 \to \Fib(\theta)$, define
$$\tilde{\ell} : \bR^2 \overset{\pi_2}\to \bR \to S^1 \overset{\ell}\to \Fib(\theta)$$
where $\bR \to S^1$ collapses the complement of $(0,1)$ to the basepoint. Define an element $B_\ell=(\bR^2 \times \{0\}, \tilde{\ell}) \in \Psi_\theta(\bR^\infty)$. Let $\mathcal{M}(\ell) \subseteq \Psi_\theta(\bR^\infty) \times \bR$ be the subspace of pairs $(W, t)$ where $W$ is connected and agrees with $B_\ell$ on an open neighbourhood of the complement of $(0,t)\times (0,1) \times \bR^\infty$. This is a monoid via
$$(W_1, t_1) \circ (W_2, t_2) = (W_1 \cup (W_2 + t_1 \cdot e_1), t_1+t_2).$$

An object $C \in \mathcal{C}_\theta\bp$ determines a loop $\ell_C : S^1 \to \Fib(\theta)$, after choosing a parametrisation of the underlying circle (of which there are a contractible choice, as it is determined near the basepoint). There is an object $C'$ in the same path component of $\ob( \mathcal{C}_\theta\bp )$ as $C$ and a map of monoids $\mathcal{M}(\ell_C) \to \End_{\mathcal{C}_\theta\bp}(C')$ given by replacing $B_\ell$ by a cylinder $\bR \times C'$ that agrees with $B_\ell$ near $\bR \times [0,1] \times \{0\}$. It is a weak homotopy equivalence as they are classifying spaces for equivalent families. Furthermore, the monoids $\End_{\mathcal{C}_\theta\bp}(C')$ and $\End_{\mathcal{C}_\theta\bp}(C)$ are equivalent in the following sense: choosing a path $C \leadsto C'$ in $\ob (\mathcal{C}_\theta\bp)$ gives a weak homotopy equivalence $\End_{\mathcal{C}_\theta\bp}(C') \to \End_{\mathcal{C}_\theta\bp}(C)$ that is not quite a map of monoids, but is a map of $H$-spaces (and in fact a map of $A_\infty$ spaces).

\begin{proposition}\label{prop:HomotopyCommutative}
Let $\theta : \X \to BO(2)$ be any tangential structure. Then the monoid of endomorphisms of an object $C \in \mathcal{C}_\theta\bp$ is homotopy commutative.
\end{proposition}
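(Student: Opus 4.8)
The plan is to reduce the statement to the model $\mathcal{M}(\ell)$ introduced above and there to write down an explicit homotopy between the product and its opposite. The two weak equivalences just recorded --- the monoid map $\mathcal{M}(\ell_C)\to\End_{\mathcal{C}_\theta\bp}(C')$ and the $H$-space map $\End_{\mathcal{C}_\theta\bp}(C')\to\End_{\mathcal{C}_\theta\bp}(C)$ --- reduce us to showing that $\mathcal{M}(\ell)$ is homotopy commutative for every pointed $\ell\colon S^1\to\Fib(\theta)$: since these are weak equivalences of $H$-spaces between spaces of CW homotopy type, homotopy commutativity of one end forces it of the other. So, writing $\mu$ for the product on $\mathcal{M}(\ell)$ and $\tau$ for the flip of the two factors, the task is to construct a homotopy $\mu\simeq\mu\circ\tau\colon\mathcal{M}(\ell)\times\mathcal{M}(\ell)\to\mathcal{M}(\ell)$.

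The geometric content is that $\mu((W_1,t_1),(W_2,t_2))=(W_1\cup(W_2+t_1e_1),t_1+t_2)$ is the surface obtained by placing the ``modification'' of $W_1$ (the region where $W_1$ differs from $B_\ell$) over the $x_1$-band $(0,t_1)$ and that of $W_2$ over $(t_1,t_1+t_2)$, leaving the plane $B_\ell$ everywhere else; I would interchange these two beads by an ambient isotopy of $\bR^\infty=\bR_{x_1}\times\bR_{x_2}\times\bR^\infty_y$ which is the identity on $B_\ell$ pointwise. By Theorem~\ref{thm:action-cont} and its $\theta$-version such an isotopy acts continuously on $\Psi_\theta(\bR^\infty)$, carries $\theta$-structures along by pullback, and --- because $B_\ell$ is fixed pointwise --- keeps us inside $\mathcal{M}(\ell)$ throughout. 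The isotopy would be built in four stages: (i) reparametrise the $x_1$-coordinate so that each modification is supported in a tiny subinterval of $(0,t_\nu)$; (ii) apply a shift embedding of the fibre factor $\bR^\infty_y$ inserting a zero in the first slot $y_1$ --- this fixes $\mathrm{span}(e_1,e_2)$ and is isotopic to the identity through embeddings of $\bR^\infty_y$, hence through ambient embeddings fixing $B_\ell$ --- so that afterwards neither modification meets the coordinate $y_1$; (iii) slide the two short beads past one another inside the $(x_1,y_1)$-band, routing one through $y_1>0$ while the other stays in $y_1=0$; (iv) undo stage (ii). Keeping track of the reparametrisations of stage (i), the endpoint is $(W_2\cup(W_1+t_2e_1),t_1+t_2)=\mu((W_2,t_2),(W_1,t_1))$. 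Note that this argument uses neither the connectedness of $\X$ nor the existence of a $\theta$-structure on $S^2$: all the diffeomorphisms involved are isotopic to the identity, so no reflection of a $\theta$-structure ever occurs --- consistent with the proposition being stated for arbitrary $\theta$.

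The step I expect to be the main obstacle is organising stages (i)--(iv) so that they are simultaneously continuous over the (non-compact) product $\mathcal{M}(\ell)\times\mathcal{M}(\ell)$ and never leave $\mathcal{M}(\ell)$. The subtle point is that a modification may be unbounded in the fibre directions, so one cannot simply compress it out of a chosen coordinate; one must genuinely free a coordinate, which is what the shift in stage (ii) does and is the reason a shift (isotopic to, but not equal to, the identity) is used rather than a compactly supported ambient isotopy. Should the direct construction prove unwieldy, a clean alternative is to first replace $\mathcal{M}(\ell)$ by the weakly equivalent --- and, by the same reasoning, $H$-equivalent --- subspace of those $(W,t)$ whose modification lies in some $\bR_{x_1}\times\bR_{x_2}\times\bR^{N}$, filtered over $N$; on each such piece the coordinates above $\bR^{N}$ are genuinely unused and the braiding in stages (iii)--(iv) is entirely routine.
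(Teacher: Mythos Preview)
Your reduction to the model $\mathcal{M}(\ell)$ is fine, but the proposed ambient isotopy in stages (i)--(iv) cannot work. The obstruction is this: any diffeomorphism $\phi$ of $\bR^\infty$ fixing $B_\ell$ pointwise preserves the set $W\cap B_\ell$ and, since $D\phi$ restricts to the identity on $TB_\ell$, it also preserves the locus where $W$ agrees with $B_\ell$ as a $\theta$-submanifold. In particular the ``deleted region'' $D_W\subset B_\ell$ (the closure of the complement of that locus) satisfies $D_{\phi^{-1}(W)}=D_W$. But $D_{W_1\cdot W_2}=D_{W_1}\cup(D_{W_2}+t_1e_1)$ while $D_{W_2\cdot W_1}=D_{W_2}\cup(D_{W_1}+t_2e_1)$, and these are different subsets of $B_\ell$ for generic $(W_1,t_1),(W_2,t_2)$. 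Freeing an extra coordinate $y_1$ in stage~(ii) changes nothing: the deleted region sits entirely in the plane $\{y=0\}$, which is fixed, so stage~(iii) cannot move it. Put differently, each bead is glued to $B_\ell$ along a specific region of the $(x_1,x_2)$-plane, and an isotopy fixing that plane pointwise cannot relocate the attaching region. Your fallback of filtering by $\bR^N$ does not help either, since the obstruction lives in the base plane, not in the fibre directions.

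The paper proceeds by a different mechanism. For $\ell=*$ the structure $\tilde\ell$ is constant, so diffeomorphisms of the $(x_1,x_2)$-plane---which do \emph{not} fix $B_*$ pointwise---still carry $B_*$ to itself as a $\theta$-manifold; this yields a little $2$-cubes action on $\mathcal{M}(*)$ and hence homotopy commutativity. For arbitrary $\ell$ one constructs a monoid homomorphism $\ell_*\colon\mathcal{M}(\ell_1)\to\mathcal{M}(\ell\cdot\ell_1)$ by inserting a strip carrying $\ell$ in the $x_2$-direction; each such map is a weak equivalence, and the composite $\mathcal{M}(\ell)\to\mathcal{M}(\ell^{-1}\cdot\ell)\simeq\mathcal{M}(*)$ is then a monoid map and a weak equivalence, so homotopy commutativity pulls back from $\mathcal{M}(*)$. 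The essential point you are missing is that braiding the beads requires moving their footprints within $B_\ell$, and this is only possible once the background structure has been made invariant in a second planar direction---something available only for $\ell=*$.
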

\begin{proof}
By the above discussion, it is enough to prove the homotopy commutativity of $\mathcal{M}(\ell)$. Consider first the case where $\ell$ is the constant map to $* \in \Fib(\theta)$, the basepoint which determines the standard tangential structure. Then $\tilde{\ell}=*$ also and so $B_\ell = \bR^2 \times \{0\}$ with the standard $\theta$-structure induced by the framing given by the coordinate directions. Thus $\mathcal{M}(*)$ has an obvious action of the little 2-cubes operad. Its usual monoid multiplication is homotopic to that induced by the operad, so its usual multiplication is homotopy commutative (in fact, $E_2$).

There is a map $\ell_* : \mathcal{M}(\ell_1) \to \mathcal{M}(\ell \cdot \ell_1)$ given on $(W, t)$ by scaling $W-B_{\ell_1}$ by $\tfrac{1}{2}$ in the $e_2$ direction, and then taking the union with $B_{\ell \cdot \ell_1}$. This is a map of monoids, and composing with $\ell^{-1}_*$ gives a map $\mathcal{M}(\ell_1) \to \mathcal{M}(\ell^{-1} \cdot \ell \cdot \ell_1)$. This is easily seen to have a homotopy inverse, by picking a null homotopy of $\ell^{-1} \cdot \ell$ (though the inverse is not a map of monoids). Thus the maps $\ell_*$ are homotopy equivalences. In particular $\mathcal{M}(\ell) \to \mathcal{M}(\ell^{-1} \cdot \ell) \simeq \mathcal{M}(*)$ is a monoid map and an equivalence, so $\mathcal{M}(\ell)$ is equivalent as a monoid to a homotopy commutative monoid, so is homotopy commutative itself. 
\end{proof}

\section{Applications}\label{sec:Applications}

As mentioned in the introduction, taking the tangential structure $\theta : BSO(2) \to BO(2)$ gives a category $\mathcal{C}_{SO(2)}\bp$ having a unique path component, and hence cobordism class, of objects. The endomorphism monoid of any object is homotopy equivalent to $\mathcal{M} = \coprod_{g \geq 0} B\Diff^+(F_g, \partial F_g = S^1)$ with the pair of pants product. By \cite{EE}, components of the group $\Diff^+(F_g, \partial F_g = S^1)$ are contractible, so we can replace it by the associated mapping class group $\Gamma_{g,1} = \pi_0 \Diff^+(F_g, \partial F_g = S^1)$. The discussion in \S\ref{sec:harer-stab-mads} proves

\begin{MainCor}[Madsen--Weiss \cite{MW}]\label{cor:MadsenWeiss}
There is a homology equivalence
\begin{align*}
  \bZ \times B\Gamma_{\infty,1} \to \Omega^\infty \MT{SO}{2},
\end{align*}
where $\Gamma_{\infty,1}$ is the limit of the mapping class groups
$\Gamma_{g,1}$ as $g \to \infty$.
\end{MainCor}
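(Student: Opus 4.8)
The plan is to assemble Corollary~\ref{cor:MadsenWeiss} from the results already established, specializing to $\theta : BSO(2) \to BO(2)$. First I would verify that this $\theta$ satisfies the hypotheses of Theorem~\ref{thm:SubcategoryMonoids}: $\X = BSO(2)$ is path connected, and $S^2$ admits an orientation, hence a $\theta$-structure. Moreover the fibre $\Fib(\theta) \simeq SO(2)/SO(2)$ is connected (or one notes $\pi_1\Fib(\theta) = \pi_0 SO(2) = *$), so there is a \emph{unique} cobordism class of objects in $\mathcal{C}_{SO(2)}^\bullet$, and therefore a full subcategory $\mathcal{D}$ with a single object exists and qualifies. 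By the last sentence of Theorem~\ref{thm:SubcategoryMonoids}, such a $\mathcal{D}$ is a homotopy commutative monoid, and $B\mathcal{D} \to B\mathcal{C}_\theta^\bullet$ is a weak homotopy equivalence onto its image; since there is only one cobordism class, it is onto all of $B\mathcal{C}_\theta^\bullet$. Combined with Theorem~\ref{thm:SubcategoryConnected}, which gives $B\mathcal{C}_\theta^\bullet \simeq B\mathcal{C}_\theta \simeq \Omega^{\infty-1}\MT{SO}{2}$, we obtain $B\mathcal{D} \simeq \Omega^{\infty-1}\MT{SO}{2}$, equivalently $\Omega B\mathcal{D} \simeq \Omega^\infty\MT{SO}{2}$.

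Next I would identify the monoid $\mathcal{D}$ explicitly. Its endomorphism space is, by~\eqref{eq:CdotDefn}, the disjoint union over connected compact oriented surfaces $W$ with one boundary circle (after cutting along the arc $L$) of $B\Diff_\theta(W, \partial W \cup L)$. Since $\theta$ is the identity-on-$BSO(2)$ structure, $\Bun^\partial(TW, \theta^*\gamma)$ is the space of orientations of $W$ fixed at the boundary, which is contractible once an orientation is chosen, so $B\Diff_\theta(W,\partial W\cup L) \simeq B\Diff^+(F_g, \partial F_g)$ where $g$ is the genus of $W$. By the theorem of Earle--Eells \cite{EE}, the identity component of $\Diff^+(F_g,\partial F_g)$ is contractible for $g \geq 1$ (and the group is already contractible for $g=0$), so $B\Diff^+(F_g,\partial F_g) \simeq B\Gamma_{g,1}$. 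Hence $\mathcal{D} \simeq \coprod_{g\geq 0} B\Gamma_{g,1}$ as a space, with the pair-of-pants composition as monoid structure, and $\pi_0\mathcal{D} = \bN$.

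Finally I would apply the group-completion discussion of \S\ref{sec:harer-stab-mads}. The monoid $\pi_0\mathcal{D} = \bN$ is group-completed by inverting the single generator $1 \in \bN$ corresponding to the genus-one surface, so the localisation map is a sequential direct limit over stabilisation $B\Gamma_{g,1} \to B\Gamma_{g+1,1}$, whose mapping telescope has the homology of $B\Gamma_{\infty,1}$; the $\bZ$ factor records $\pi_0$ of the group completion $\Omega B\mathcal{D}$, which is the group completion of $\bN$. The McDuff--Segal theorem for homotopy commutative monoids gives that $H_*(\mathcal{D})[\pi_0\mathcal{D}^{-1}] \to H_*(\Omega B\mathcal{D})$ is an isomorphism, so the map
\begin{align*}
  \bZ \times B\Gamma_{\infty,1} \to \Omega B\mathcal{D} \simeq \Omega^\infty \MT{SO}{2}
\end{align*}
is a homology equivalence, which is the claim. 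The main obstacle is not really a single hard step but rather the bookkeeping of the three identifications --- that the single-object subcategory exists (uniqueness of cobordism class), that its endomorphism space is $\coprod_g B\Gamma_{g,1}$ with the pair-of-pants product (requiring the Earle--Eells input), and that the relevant direct limit in~\eqref{eq:GpCompletion} is exactly stabilisation of mapping class groups --- all of which are, however, either standard or already done in the body of the paper.
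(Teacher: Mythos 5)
Your proposal is correct and follows essentially the same route as the paper: apply Theorem~\ref{thm:SubcategoryMonoids} to the one-object subcategory of $\mathcal{C}_{SO(2)}^\bullet$, identify its endomorphism monoid as $\coprod_g B\Gamma_{g,1}$ via the Earle--Eells theorem, and invoke the group-completion discussion of \S\ref{sec:harer-stab-mads}. One small slip worth fixing: the homotopy fibre of $\theta: BSO(2) \to BO(2)$ is $O(2)/SO(2) \cong \bZ/2$, not $SO(2)/SO(2)$, and it is \emph{not} path connected; the relevant fact is $\pi_1 \Fib(\theta) = 0$ (equivalently $\pi_1 BSO(2) = 0$ together with the isomorphism $\pi_2 BSO(2) \to \pi_2 BO(2)$), which is what actually gives the unique cobordism class of objects.
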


A similar result holds for non-orientable or spin surfaces, though the components of the relevant monoid are slightly more complicated.

\begin{MainCor}[Wahl \cite{Wahl} Theorem B]\label{Cor:Unoriented}
Let $\mathcal{N}_g$ be the mapping class group of a non-orientable surface of genus $g$. Then there is a homology equivalence
$$\bZ \times B \mathcal{N}_\infty \to \Omega^\infty \MT{O}{2}.$$
\end{MainCor}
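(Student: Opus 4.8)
The plan is to deduce Corollary~\ref{Cor:Unoriented} from Theorem~\ref{thm:SubcategoryMonoids} in exactly the way Corollary~\ref{cor:MadsenWeiss} was deduced, now with the tangential structure $\theta: \X = BO(2) \to BO(2)$ taken to be the identity, so that $\MTtheta = \MT{O}{2}$. First I would check the hypotheses: $\X = BO(2)$ is path connected, and $S^2$ carries a $\theta$-structure because for $\theta = \mathrm{id}$ a $\theta$-structure is nothing but a classifying map for the tangent bundle, which always exists. Hence both Theorem~\ref{thm:SubcategoryConnected} and Theorem~\ref{thm:SubcategoryMonoids} apply. Since the homotopy fibre $\Fib(\theta)$ of the identity map is contractible, $\pi_1\Fib(\theta) = 0$, so the object space of $\mathcal{C}_\theta\bp$ is connected and in particular there is a single cobordism class of objects. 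I would fix an object $C$, let $\mathcal{D} \subseteq \mathcal{C}_\theta\bp$ be the full subcategory on $C$, and conclude from Theorem~\ref{thm:SubcategoryMonoids} that $\mathcal{D}$ is a homotopy commutative monoid and that $B\mathcal{D} \to B\mathcal{C}_\theta\bp \simeq \Omega^{\infty-1}\MT{O}{2}$ is a weak equivalence of connected spaces.

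Next I would identify $\mathcal{D}$ as a space. By~\eqref{eq:CdotDefn} its components are the moduli spaces $B\Diff_\theta(W, L\cup\partial W)$ of connected surfaces $W$ with two boundary circles and a joining arc $L$. Because $\theta$ is the identity, the frame bundle of $\theta^*\gamma_2$ is $EO(2)$, so $\Bun^\partial(TW, \theta^*\gamma_2)$ is contractible and $B\Diff_\theta(W, L\cup\partial W) \simeq B\Diff(W, L\cup\partial W)$; cutting along $L$ identifies this with $B\Diff(W',\partial W')$ for $W'$ a connected surface with one boundary component, i.e.\ $W' = F_{h,1}$ ($h \geq 0$) or $W' = N_{g,1}$ ($g \geq 1$). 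Invoking \cite{EE} for the orientable pieces, and its non-orientable analogue, so that the identity components of these diffeomorphism groups are contractible, I obtain
\begin{align*}
  \mathcal{D} \simeq \Big(\coprod_{h\geq0} B\Gamma_{h,1}\Big) \sqcup \Big(\coprod_{g\geq1} B\mathcal{N}_{g,1}\Big),
\end{align*}
with product the boundary connected sum, for which $F_{h,1}\natural F_{h',1} = F_{h+h',1}$, $F_{h,1}\natural N_{g,1} = N_{2h+g,1}$, and $N_{g,1}\natural N_{g',1} = N_{g+g',1}$.

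Finally I would run the group-completion machinery of \S\ref{sec:harer-stab-mads}. The monoid $\pi_0\mathcal{D}$ is generated by $[F_{1,1}]$ and $[N_{1,1}]$, and Dyck's relation $F_{1,1}\natural N_{1,1} \cong N_{3,1}$ forces $[F_{1,1}] = 2[N_{1,1}]$ after group completion, so the group completion is $\bZ$, obtained by inverting the single element $[N_{1,1}]$ (the Möbius band). Therefore $\mathcal{D}_\infty \to \Omega B\mathcal{D}$ is a homology isomorphism, where $\mathcal{D}_\infty$ is the mapping telescope of $\mathcal{D} \xrightarrow{\cdot[N_{1,1}]} \mathcal{D} \to \cdots$. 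Multiplication by $[N_{1,1}]$ is the operation \emph{add a crosscap}: it sends $B\Gamma_{h,1} \to B\mathcal{N}_{2h+1,1}$ and $B\mathcal{N}_{g,1} \to B\mathcal{N}_{g+1,1}$, so $\mathcal{D}_\infty \simeq \bZ \times \hocolim_g B\mathcal{N}_{g,1} = \bZ \times B\mathcal{N}_{\infty,1}$, while $\Omega B\mathcal{D} \simeq \Omega B\mathcal{C}_\theta\bp \simeq \Omega\,\Omega^{\infty-1}\MT{O}{2} = \Omega^\infty\MT{O}{2}$. Combining gives the asserted homology equivalence (identifying, if one insists on the closed surface, $B\mathcal{N}_{\infty,1}$ with the classifying space of the closed non-orientable mapping class group up to homology by capping off the boundary).

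The step I expect to be the main obstacle is the bookkeeping in the last two paragraphs: verifying carefully that the $\theta$-structures contribute nothing (contractibility of $\Bun^\partial$ and of the diffeomorphism components, now including the non-orientable surfaces), and pinning down $\pi_0\mathcal{D}$ precisely enough to see both that inverting one element group-completes it and that the resulting telescope is exactly $\bZ \times B\mathcal{N}_{\infty,1}$ with the crosscap-stabilisation maps. Everything else is a direct application of Theorem~\ref{thm:SubcategoryMonoids} together with the group-completion discussion already in place.
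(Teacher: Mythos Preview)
Your argument is correct and follows the same route as the paper: apply Theorem~\ref{thm:SubcategoryMonoids} with $\theta=\mathrm{id}$, identify the endomorphism monoid with $\coprod_F B\Diff(F,\partial F)$, analyse $\pi_0$, and group-complete via a telescope. The only cosmetic difference is that the paper inverts the product of the two generators, namely $[F_{1,1}]\cdot[N_{1,1}]=[N_{3,1}]$, and then uses a cofinality argument with the submonoid of non-orientable surfaces, whereas you invert $[N_{1,1}]$ directly; both choices group-complete $\pi_0\mathcal{D}$ and give the same telescope $\bZ\times B\mathcal{N}_{\infty,1}$.
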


\begin{proof}
We apply Theorem \ref{thm:SubcategoryMonoids} with no tangential structure. The category $\mathcal{C}_{O(2)}\bp$ has a unique cobordism class of objects. The relevant homotopy commutative monoid is
$$\mathcal{M} = \coprod_{F} B\Diff(F, \partial F = S^1)$$
where the disjoint union is over all diffeomorphism types of unoriented surfaces with one boundary component, and Theorem \ref{thm:SubcategoryMonoids} identifies its classifying space as $\Omega^{\infty-1}\MT{O}{2}$.

The monoid $\pi_0\mathcal{M}$ is in bijection with the set of pairs $(g_1, g_2) \in \bN^2$ where one entry is always 0. The element $(g,0)$ corresponds to $F_{g, 1} = \#^g T - D^2$ and $(0,g)$ corresponds to $N_{g,1} = \#^g \bR \bP^2 - D^2$. Then addition in this monoid is given by usual addition if both elements are in the same factor, and by the formula $(g, 0) + (0, h > 0) = (0, h+2g)$ otherwise.

By the discussion in \S\ref{sec:harer-stab-mads}, the homology of the group completion of $\mathcal{M}$ is the same as that of the telescope $\mathcal{M}_\infty$ of
$$\mathcal{M} \overset{\cdot(0,3)} \lra \mathcal{M} \overset{\cdot(0,3)} \lra \mathcal{M} \overset{\cdot(0,3)} \lra \cdots$$
as $\pi_0\mathcal{M}$ is generated by $(1,0)$ and $(0,1)$, whose product is $(0,3)$.

There is a submonoid $\mathcal{M}'$ of $\mathcal{M}$ consisting only of the non-orientable surfaces (and the disc), and translation by $(0,1)$ sends $\mathcal{M}$ into $\mathcal{M}'$, so $\mathcal{M}'$ is cofinal. Thus $\mathcal{M}_\infty \simeq \mathcal{M}'_\infty$ and this is equivalent to $\bZ \times B \mathcal{N}_\infty$, the classifying-space of the stable non-orientable mapping class group.
\end{proof}

\begin{MainCor}[Harer \cite{HSpin}, Bauer \cite{Bauer}]\label{CorSpin}
Let $\hat{\Gamma}^{\mathfrak{s}}_{g, 1}$ be the (extended) spin mapping class group of an orientable surface of genus $g$ with one boundary circle, and with spin structure $\mathfrak{s}$ that agrees with the trivial spin structure on the boundary circle.

There is a homology equivalence
$$\bZ \times \bZ/2 \times B\hat{\Gamma}^{\sigma}_\infty \lra \Omega^\infty \MT{Spin}{2}$$
where $\hat{\Gamma}^{\sigma}_{\infty,1}$ is the stable spin mapping class group.
\end{MainCor}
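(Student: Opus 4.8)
The plan is to apply Theorem \ref{thm:SubcategoryMonoids} with the tangential structure $\theta : B\Spin(2) \to BO(2)$ and then identify the resulting monoid and Thom spectrum explicitly, exactly as in the proofs of Corollaries \ref{cor:MadsenWeiss} and \ref{Cor:Unoriented}. First I would check the hypotheses: $\X = B\Spin(2)$ is path connected, and $S^2$ admits a spin structure, so Theorem \ref{thm:SubcategoryMonoids} applies and produces a homotopy commutative monoid $\mathcal{D} \subseteq \mathcal{C}_\theta\bp$ with at most one object in each cobordism class, satisfying $B\mathcal{D} \simeq B\mathcal{C}_\theta\bp \simeq \Omega^{\infty-1}\MT{Spin}{2}$. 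By equation \eqref{eq:CdotDefn} each component of the endomorphism monoid of an object is $B\Diff_\theta(W, L\cup\partial W)$ for $W$ a connected spin surface with one boundary arc, which after cutting along $L$ is the moduli space of a spin surface with one boundary circle carrying the trivial spin structure near the boundary; since the components of the spin diffeomorphism group are contractible (by \cite{EE} together with the fact that spin structures on a fixed surface form a torsor over $H^1(-;\bZ/2)$), this moduli space is homotopy equivalent to $B\hat\Gamma^{\mathfrak{s}}_{g,1}$.

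Next I would analyse $\pi_0$ of the relevant monoid. An object of $\mathcal{C}_\theta\bp$ is a connected spin $1$-manifold with a fixed germ near the basepoint arc, i.e.\ a circle with one of its two spin structures (bounding or non-bounding), so $\pi_0\ob(\mathcal{C}_\theta\bp)$ has two elements, matching $\pi_1\Fib(\theta) = \pi_1 \Spin(2) = \bZ$ modulo the kernel of $\bZ \to \bZ/2$ coming from $\Spin(2)\to SO(2)$ — more precisely $\pi_0\ob = \pi_1\Fib(\theta) = \bZ$, explaining the extra $\bZ/2$ and $\bZ$ factors appearing in the statement (one $\bZ$ for genus, the $\bZ/2$ for Arf invariant / which cobordism class of boundary). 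Restricting to a single cobordism class of objects, say the one represented by the boundary-trivial circle, the monoid $\mathcal{M} = \coprod B\hat\Gamma^{\mathfrak{s}}_{g,1}$ has $\pi_0\mathcal{M}$ generated by finitely many elements (genus-one pieces with either Arf invariant), so by the group-completion discussion in \S\ref{sec:harer-stab-mads} the telescope $\mathcal{M}_\infty$ over multiplication by a suitable generator satisfies $H_*(\mathcal{M}_\infty) \cong H_*(\Omega B\mathcal{M})$, and $\mathcal{M}_\infty \simeq \bZ \times B\hat\Gamma^{\sigma}_\infty$ where $\hat\Gamma^{\sigma}_\infty$ is the stable spin mapping class group. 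Summing over the two cobordism classes of objects (contributing the $\bZ/2$) and the two components of $\Omega^\infty\MT{Spin}{2}$ indexed by genus parity gives the full statement $\bZ\times\bZ/2\times B\hat\Gamma^{\sigma}_\infty \to \Omega^\infty\MT{Spin}{2}$.

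The main obstacle will be the bookkeeping of components: correctly matching up $\pi_0\ob(\mathcal{C}_\theta\bp) = \pi_1\Fib(\theta)$, the cobordism classes $\pi_0 B\mathcal{C}_\theta\bp = \pi_{-1}\MT{Spin}{2}$, and the monoid $\pi_0\mathcal{M}$ with the indexing by genus and Arf invariant used in \cite{HSpin} and \cite{Bauer}, so that the stated $\bZ\times\bZ/2$ factors come out right and the cofinality argument (choosing which generator to stabilise by, cf.\ the submonoid trick in Corollary \ref{Cor:Unoriented}) is applied to the correct submonoid. The genuinely geometric input — homotopy commutativity of the monoid and the identification $B\mathcal{D}\simeq\Omega^{\infty-1}\MT{Spin}{2}$ — is already supplied by Theorem \ref{thm:SubcategoryMonoids}, so beyond the component bookkeeping the argument is a routine transcription of the Madsen--Weiss case.
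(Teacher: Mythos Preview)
Your overall strategy is right, but the component bookkeeping --- which you correctly identified as the main obstacle --- is genuinely wrong in two places.

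First, the identification $\Fib(\theta) = \Spin(2)$ is incorrect: the homotopy fibre of $B\Spin(2) \to BO(2)$ is not $\Spin(2)$. The long exact sequence gives $\pi_1\Fib(\theta) \cong \bZ/2$ (not $\bZ$) and $\pi_0\Fib(\theta) \cong \bZ/2$, consistent with there being two isomorphism classes of spin circles.

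Second, and more seriously, your attribution of the $\bZ/2$ factor is wrong. You propose to obtain it by ``summing over the two cobordism classes of objects'', but this is not how Theorem~\ref{thm:SubcategoryMonoids} is applied. One chooses a \emph{single} object (the paper takes the circle with trivial spin structure) and works with its endomorphism monoid $\mathcal{M}$; looping $B\mathcal{M}$ already gives all of $\Omega^\infty\MT{Spin}{2}$, regardless of which component of $B\mathcal{C}_\theta\bp$ the object lies in. The $\bZ/2$ arises \emph{inside} this single monoid: for fixed $g$, the diffeomorphism group $\Diff^+(F_{g,1},\partial)$ acts on the set of spin structures $H^1(F_{g,1},\partial;\bF_2)$ with exactly two orbits, distinguished by the Arf invariant, so $\pi_0\mathcal{M} \cong \bN \times \bZ/2$. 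Since the Arf invariant is additive under the pair-of-pants product, inverting the element $(1,0)$ (genus one, Arf invariant zero) group-completes $\pi_0\mathcal{M}$ to $\bZ \times \bZ/2$, and the telescope is $\mathcal{M}_\infty \simeq \bZ \times \bZ/2 \times B\hat\Gamma^\sigma_{\infty,1}$, not $\bZ \times B\hat\Gamma^\sigma_{\infty,1}$ as you wrote. The other cobordism class of objects (the non-bounding spin circle) plays no role in the argument.
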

\begin{proof}
We apply Theorem \ref{thm:SubcategoryMonoids} with the tangential structure $\theta : B\mathit{Spin}(2) \to BO(2)$. Write $\gamma^s$ for the universal vector bundle over $B\mathit{Spin}(2)$. Note that $B\mathit{Spin}(2) \simeq BSO(2)$ is connected and $S^2$ admits a spin structure, so the hypotheses of the theorem are fulfilled. The category $\mathcal{C}_{\mathit{Spin}(2)}\bp$ has two cobordism classes of objects, corresponding to a trivial and a nontrivial spin structure on the circle. The monoid of endomorphisms of the trivial circle is
$$\mathcal{M} = \coprod_{F} B\Diff^{\mathit{Spin}}(F, \partial F) = \coprod_{F} \Bun^\partial(TF, \gamma^s) \moddd \Diff(F, \partial F).$$
where the disjoint union is over all diffeomorphism types of surfaces with one boundary component. If $F$ is not orientable, $\Bun^\partial(TF, \gamma^s) = \emptyset$ and it plays no role. If $F$ is an oriented surface, the space $\Bun^\partial(TF, \gamma^s)$ is homotopy equivalent to the space of pairs $(\tau, \ell)$ of a map $\tau : F \to BSO(2)$ classifying the tangent bundle and a relative lift $\ell$
\begin{diagram}
 & & K(\bZ/2\bZ, 1) \\
& & \dTo \\
S^1& \rTo^* & B\mathit{Spin}(2) \\
\dTo & \ruTo^\ell[dotted] & \dTo \\
F& \rTo^{\tau} & BSO(2).
\end{diagram}
This space can be (non-canonically, and non-equivariantly, for the obvious action of $\Diff^+(F, \partial F)$ on $H^1(F, \partial F; \bF_2)$) identified with $\Map(F, \partial F ; K(\bZ/2\bZ, 1), *) \simeq H^1(F, \partial F; \bF_2) \times K(\bZ/2\bZ, 1)$ as there is a contractible space of such maps $\tau$. Thus
$$B\Diff^{\mathit{Spin}}(F, \partial F) \simeq H^1(F, \partial F; \bF_2) \times K(\bZ/2\bZ, 1) \moddd \Diff^+(F, \partial F).$$
The group $\Diff^+(F, \partial F)$ acts on $\pi_0(\Map(F, \partial F ; K(\bZ/2\bZ, 1), *)) \cong H^1(F, \partial F ; \bF_2)$ with two orbits (distinguished by their Arf invariant). Pick representatives $\mathfrak{s}_0$ and $\mathfrak{s}_1$ for these orbits. If $\Diff^{\mathfrak{s}_i}(F, \partial F)$ is the stabiliser of $\mathfrak{s}_i \in H^1(F, \partial F ; \bF_2)$, then there are fibre sequences
$$K(\bZ/2\bZ, 1) \lra \Bun^\partial_{\mathfrak{s}_i}(TF, \gamma^s) \moddd \Diff^{\mathfrak{s}_i}(F, \partial F) \lra B\Diff^{\mathfrak{s}_i}(F, \partial F).$$
The contractibility \cite{EE} of the components of $\Diff(F, \partial F)$ means that all three spaces are $K(\pi,1)$'s. Write $\hat{\Gamma}^{\mathfrak{s}_i}_{g,1}$ for the fundamental group of the middle space, when $F$ is an oriented surface of genus $g$. This is by definition the \textit{extended spin mapping class group}.

Now $B\Diff^{\mathit{Spin}}(F, \partial F) = \coprod_{i=0,1} \Bun^\partial_{\mathfrak{s}_i}(TF, \gamma^s) \moddd \Diff^{\mathfrak{s}_i}(F, \partial F)$ and so
$$\mathcal{M} = \coprod_{g \geq 0} \coprod_{i=0,1} B \hat{\Gamma}^{\mathfrak{s}_i}_{g,1}.$$
The components of this monoid are $\bN \times \bZ/2\bZ$, so it can be group-completed by inverting just $(1,0)$: this corresponds to gluing on a torus with a spin structure of Arf invariant 0. The telescope $\mathcal{M}_\infty$ is then $\bZ \times \bZ/2\bZ \times B \hat{\Gamma}^{\sigma}_{\infty,1}$.
\end{proof}

If $\theta : \X \to BO(2)$ is a tangential structure with $\X$ connected and such that $S^2$ admits a $\theta$-structure, and $Y$ is a path connected space, then $\theta \times Y = \theta \circ \pi_{\X} : \X \times Y \to BO(2)$ again has these properties. This allows us to add maps to a background space $Y$ to any such tangential structure.

Let us develop this in the case of ordinary orientations, where the tangential structure is $\theta : BSO(2) \to BO(2)$. The space $\mathcal{S}_{g,1}(Y; *)$ of Cohen--Madsen \cite{CM} is precisely the space $B\Diff^{\theta \times Y}(F_{g, 1}, \partial F_{g, 1})$, where the map to $Y$ is fixed to be the constant map to the basepoint on the boundary. Cobordism classes of objects in $\mathcal{C}_{\theta \times Y}\bp$ are in natural bijection with $H_1(Y, \bZ)$, via the map that sends  a circle in $Y$ to the homology class it represents. Let $C$ be a representative of the trivial homology class.

The monoid of endomorphisms of $C$ in $\mathcal{C}_{\theta \times Y}^\bullet$ is $\mathcal{M} = \coprod_{g \geq 0} \mathcal{S}_{g,1}(Y; *)$. For a general space $Y$ this monoid has tremendously many components, and so it is not clear that one can form the stabilisation $\mathcal{M}_\infty$, as one may not be able to group complete $\pi_0(\mathcal{M})$ by inverting finitely many elements. However, in several interesting one \textit{can} form $\mathcal{M}_\infty$.

Note that there is a homotopy fiber sequence
\begin{equation}\label{MappingSpaceFibration}
\Omega^2 Y \to \map((F_{g,1}, \partial F_{g,1}), (Y, *)) \to (\Omega Y)^{2g}
\end{equation}
onto those components of $(\Omega Y)^{2g}$ represented by tuples $(a_1, b_1, ..., a_g, b_g) \in \pi_1(Y)^{2g}$ such that $\prod_{i=1}^g [a_i, b_i] = 1$.

\subsection{$Y$ simply connected} In this case $\pi_0(\mathcal{S}_{g,1}(Y; *)) \cong \pi_2(Y) \cong H_2(Y ; \bZ)$, and $\pi_0(\mathcal{M}) = \bN \times H_2(Y ; \bZ)$ as a monoid. This can be group completed by inverting just $(1,0) \in \bN \times H_2(Y ; \bZ)$, and so we can form $\mathcal{M}_\infty$ as the mapping telescope of
$$\mathcal{M} \overset{\cdot(1,0)}\to \mathcal{M} \overset{\cdot(1,0)}\to \mathcal{M} \overset{\cdot(1,0)}\to \cdots.$$
The discussion in \S\ref{sec:harer-stab-mads} and Theorem \ref{thm:SubcategoryMonoids} gives

\begin{MainCor}[Cohen--Madsen \cite{CM}]
Let $Y$ be a simply connected space. Then there is a homology equivalence
$$\bZ \times \hocolim_{g \to \infty}\mathcal{S}_{g,1}(Y; *) \to \Omega^\infty \MT{SO}{2} \wedge Y_+$$
where the colimit is formed by the map $\cdot(1,0)$ above.
\end{MainCor}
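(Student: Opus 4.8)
The plan is to realise the left-hand side as the mapping telescope of the homotopy commutative monoid produced by Theorem \ref{thm:SubcategoryMonoids}, applied to the tangential structure $\theta \times Y \colon BSO(2) \times Y \to BO(2)$, and then to run the group-completion argument of \S\ref{sec:harer-stab-mads}. First I would verify the hypotheses of Theorem \ref{thm:SubcategoryMonoids} for $\theta \times Y$: the total space $BSO(2) \times Y$ is path connected because $Y$ is (being simply connected), and $S^2$ admits a $(\theta \times Y)$-structure because it is orientable and any constant map $S^2 \to Y$ serves for the $Y$-coordinate. Since $Y$ is simply connected, $H_1(Y;\bZ) = 0$, so $\mathcal{C}_{\theta \times Y}\bp$ has a single cobordism class of objects; pick a representative $C$ and let $\mathcal{D} \subseteq \mathcal{C}_{\theta\times Y}\bp$ be the full subcategory on $C$. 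Theorem \ref{thm:SubcategoryMonoids} then says $\mathcal{D}$ is a homotopy commutative monoid and that $B\mathcal{D} \to B\mathcal{C}_{\theta \times Y}\bp$ is a weak equivalence (onto all of $B\mathcal{C}_{\theta\times Y}\bp$, by connectivity).

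Next I would identify the pieces. As recalled above, the endomorphism monoid $\mathcal{M} = \mathcal{D}$ is $\coprod_{g \geq 0} \mathcal{S}_{g,1}(Y;*)$ with the pair-of-pants product, using that a bundle map to $(\theta \times Y)^*\gamma$ is an orientation-compatible bundle map to $\theta^*\gamma$ together with a map to $Y$, that the orientation is pinned down near the arc $L$, and that $\Diff(F,L\cup\partial F)$ thereby reduces to $\Diff^+(F,\partial F)$. From the fibration \eqref{MappingSpaceFibration} and simple connectivity of $Y$ one has $\pi_0(\Omega Y)^{2g} = 0$ and $\pi_0 \Omega^2 Y = \pi_2 Y$, so $\pi_0 \mathcal{S}_{g,1}(Y;*) \cong \pi_2 Y \cong H_2(Y;\bZ)$ by Hurewicz, and the product gives $\pi_0 \mathcal{M} = \bN \times H_2(Y;\bZ)$ with componentwise addition. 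Crucially, this monoid is group-completed by inverting the single element $(1,0)$, which corresponds to gluing on a genus-one surface carrying the constant map to the basepoint; this is exactly the point at which simple connectivity is used.

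Now I would invoke \S\ref{sec:harer-stab-mads}: the localisation $H_*(\mathcal{M})[\pi_0\mathcal{M}^{-1}]$ is the colimit over multiplication by $(1,0)$, hence equals $H_*(\mathcal{M}_\infty)$ for the mapping telescope $\mathcal{M}_\infty$, and McDuff--Segal gives a map $\mathcal{M}_\infty \to \Omega B\mathcal{M}$ inducing an isomorphism on homology. By Theorem \ref{thm:SubcategoryMonoids}, Theorem \ref{thm:SubcategoryConnected} and Theorem \ref{thm:MainThmGMTW}, $B\mathcal{M} \simeq B\mathcal{C}_{\theta\times Y}\bp \simeq \Omega^{\infty-1}\MT{\theta\times Y}$, so $\Omega B\mathcal{M} \simeq \Omega^\infty \MT{\theta\times Y}$. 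Since $(\theta \times Y)^*\gamma$ is pulled back from $BSO(2)$ along the projection, its inverse Thom spectrum splits as $BSO(2)^{-\theta^*\gamma} \wedge Y_+ = \MT{SO}{2} \wedge Y_+$. Finally, multiplication by $(1,0)$ only shifts the genus index, so the mapping telescope decomposes over the residual $\bZ$ of genus classes and over $H_2(Y;\bZ)$ into genus-stabilisation telescopes, giving $\mathcal{M}_\infty \simeq \bZ \times \hocolim_{g} \mathcal{S}_{g,1}(Y;*)$. Assembling these identifications, the resulting map $\bZ \times \hocolim_{g} \mathcal{S}_{g,1}(Y;*) \to \Omega^\infty(\MT{SO}{2} \wedge Y_+)$ is a homology isomorphism.

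I expect essentially no geometric difficulty here, as all the substantial input is Theorem \ref{thm:SubcategoryMonoids} and the McDuff--Segal machinery; the only friction is bookkeeping. The step most in need of care is the determination of $\pi_0\mathcal{M}$ \emph{as a monoid} together with the observation that it can be group-completed by inverting finitely many elements (here, one) — this is precisely what allows $\mathcal{M}_\infty$ to be formed as a telescope and what fails for a general space $Y$. The identification $\mathcal{M}_\infty \simeq \bZ \times \hocolim_{g}\mathcal{S}_{g,1}(Y;*)$ requires only an elementary remark about mapping telescopes of coproducts, and the Thom-spectrum splitting $\MT{\theta\times Y} \simeq \MT{SO}{2}\wedge Y_+$ is formal.
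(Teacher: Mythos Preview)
Your proposal is correct and follows essentially the same approach as the paper: the paper's proof is nothing more than the sentence ``The discussion in \S\ref{sec:harer-stab-mads} and Theorem \ref{thm:SubcategoryMonoids} gives'' the result, together with the preceding paragraph computing $\pi_0\mathcal{M} = \bN \times H_2(Y;\bZ)$ and observing that inverting $(1,0)$ suffices. You have simply spelled out in full the bookkeeping that the paper leaves implicit, including the Thom spectrum splitting and the identification of the telescope.
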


\subsection{$Y=BG$, $G$ a cyclic group} In this case
$$\pi_0(\mathcal{S}_{g,1}(BG; *)) = \pi_0(\Bun^\partial(TF_{g,1}, \theta^*\gamma)) \times_{\Diff(F_{g,1})} \pi_0(\map^\partial(F_{g,1}, BG))$$
and the homotopy fiber sequence (\ref{MappingSpaceFibration}) gives that $\map^\partial(F_{g,1}, BG) \simeq G^{2g} \simeq H^1(F_{g,1}, \partial F_{g,1} ; G)$ so
\begin{equation}\label{eq:ComponentsOfMonoid}
\pi_0(\mathcal{M}) = \coprod_{g \geq 0} G^{2g} / \Diff^+(F_{g,1})
\end{equation}
where composition is by concatenating (representatives of equivalence classes of) tuples of elements of $G$. Note that this monoid is generated by the $g=1$ elements. This much holds for any abelian group $G$.

\begin{lemma}
Suppose that $G$ is a cyclic group generated by an element $1$. Then in $G^2/\Diff^+(F_{1,1})$ there are relations
$$[a \pm b,b] = [a,b] = [a, b \pm a].$$
In $G^4/\Diff^+(F_{2,1})$ there are relations
$$[0,1]\cdot[a,b] = [0,1]\cdot[a-1, b]$$
and
$$[0,1]\cdot[0,b] = [0,1]\cdot[0,1]$$
in the monoid $\pi_0(\mathcal{M})$.
\end{lemma}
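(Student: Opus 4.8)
The plan is to use the description $\pi_0(\mathcal{M})=\coprod_{g\ge 0}G^{2g}/\Diff^+(F_{g,1})$ from \eqref{eq:ComponentsOfMonoid}, in which the monoid product is concatenation of representative tuples, so that for instance $[0,1]\cdot[a,b]$ is the $\Diff^+(F_{2,1})$-orbit of $(0,1,a,b)\in G^4$. The essential input is the classical fact that the action of $\Diff^+(F_{g,1})$ on $H^1(F_{g,1},\partial F_{g,1};G)=G^{2g}$ factors through the symplectic group $Sp(2g,\bZ)$ acting on $H_1(F_{g,1};\bZ)\otimes G$, together with surjectivity of $\Gamma_{g,1}\to Sp(2g,\bZ)$. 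Hence two tuples represent the same element iff they lie in one $Sp(2g,\bZ)$-orbit, and each asserted relation amounts to exhibiting a symplectic matrix carrying one tuple to the other. I would assemble these from transvections $\sigma_u(v)=v+\langle\cdot,\cdot\rangle\text{-pairing}\cdot u$, precisely $\sigma_u(v)=v+\langle u,v\rangle u$ for $u\in\bZ^{2g}$ with $\langle\cdot,\cdot\rangle$ the standard symplectic form, and their integral powers, each realised by a power of a Dehn twist about a curve representing $u$.

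For $g=1$, where $Sp(2,\bZ)=SL_2(\bZ)$, the transvections $\sigma_{e_1}^{\pm1}$ and $\sigma_{e_2}^{\pm1}$ realise the four elementary matrices of $SL_2(\bZ)$, which act on $(a,b)\in G^2$ by $(a,b)\mapsto(a\pm b,b)$ and $(a,b)\mapsto(a,b\pm a)$; this is exactly the first group of relations.

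For $[0,1]\cdot[a,b]=[0,1]\cdot[a-1,b]$, i.e.\ $(0,1,a,b)\sim(0,1,a-1,b)$ in $G^4/Sp(4,\bZ)$, I would run the chain
\begin{align*}
(0,1,a,b) &\xrightarrow{\ \sigma_{e_1-e_3}\ } (1-b,\,1,\,a-1+b,\,b)\\
&\xrightarrow{\ \sigma_{e_3}^{-1}\ } (1-b,\,1,\,a-1,\,b)\\
&\xrightarrow{\ \sigma_{e_1}^{k}\ } (0,\,1,\,a-1,\,b),
\end{align*}
where $\sigma_{e_1}$ adds the second coordinate (the fixed generator $1\in G$) to the first, and $k\in\bZ$ is chosen with $k\cdot 1=b-1$ in $G$. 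Such a $k$ exists \emph{exactly because $G$ is cyclic with generator $1$}; this is the only place the hypothesis is used. The last relation then follows formally: iterating the one just proved (again using that every $a\in G$ is an integral multiple of $1$) gives $[0,1]\cdot[a,b]=[0,1]\cdot[0,b]$ for all $a$; and the shorter chain $(0,1,0,b)\xrightarrow{\sigma_{e_1+e_4}}(1,1,0,b+1)\xrightarrow{\sigma_{e_1}^{-1}}(0,1,0,b+1)$ shows $[0,1]\cdot[0,b]=[0,1]\cdot[0,b+1]$, which iterated (stopping when the last entry reaches $1$, possible by cyclicity) yields $[0,1]\cdot[0,b]=[0,1]\cdot[0,1]$.

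Essentially all of this is routine bookkeeping with the symplectic pairing; the only real decision is the first chain. The point that requires care — and the conceptual reason the cyclicity hypothesis appears — is that the transvection vectors $u$ must be fixed \emph{integer} vectors and cannot be allowed to depend on the group element $b$, so $\sigma_{e_1-e_3}$ inevitably introduces a $b$-dependent residue $1-b$ in the first coordinate, which can be absorbed by powers of the handle-one twist $\sigma_{e_1}$ exactly when $G$ is generated by the entry $1$ sitting in the second coordinate. A minor verification is that the maps used (notably $\sigma_{e_1-e_3}$, the twist about a curve dual to $a_1-a_2$) lie in the image of $\Gamma_{2,1}\to Sp(4,\bZ)$, which is immediate from surjectivity of that homomorphism.
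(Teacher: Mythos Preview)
Your approach is essentially identical to the paper's: both reduce to the $Sp(2g,\bZ)$-action via surjectivity of $\Gamma_{g,1}\to Sp(2g,\bZ)$, and your transvection $\sigma_{e_1-e_3}$ is precisely the explicit matrix $t$ the paper writes down, with the same first step $(0,1,a,b)\mapsto(1-b,1,a-1+b,b)$. The only cosmetic difference is in the final relation, where the paper reuses $t$ on $(0,1,0,b)$ and then simplifies with the $g=1$ relations, whereas you instead iterate the relation already proved and then use $\sigma_{e_1+e_4}$; both routes are equally short and valid.
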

\begin{proof}
$\Diff^+(F_{1,1})$ acts on $G^2$ through the group $Sp(2, \bZ)$, to which it surjects. The obvious group elements give the relations claimed.
$\Diff^+(F_{2, 1})$ acts on $G^4$ through the group $Sp(4, \bZ)$, to which it surjects. Define an element
\[ t = \left( \begin{array}{cccc}
1 & 1 & 0 & -1 \\
0 & 1 & 0 & 0 \\
0 & -1 & 1 & 1 \\
0 & 0 & 0 & 1 \end{array} \right)\] 
in this group. Then
$$t(0,1,a,b) = (1-b, 1, a-1+b, b) \sim (0,1, a-1, b).$$
We also have
$$t(0,1,0,b) = (1-b, 1, b-1, b) \sim (0,1,b-1, 1) \sim (0,1,0,1).$$
\end{proof}
Thus inverting $[0,1]$ gives $[a,b] = [0,b]$ in the group-completion, and implies $[0,b]$ is invertible. Thus it is enough to just invert $[0,1]$ to group-complete the monoid, so we may form
$$\mathcal{M}_\infty = \hocolim(\mathcal{M} \overset{\cdot [0,1]}\to \mathcal{M} \overset{\cdot [0,1]}\to \mathcal{M} \overset{\cdot [0,1]}\to \cdots)$$
whereby Theorem B gives

\begin{MainCor}
Let $G$ be a cyclic group. Then there is a homology equiavalence
$$\bZ \times \hocolim_{g \to \infty} \mathcal{S}_{g,1}(BG; *) \to \Omega^\infty \MT{SO}{2} \wedge BG_+$$
where the colimit is formed by the map $\cdot [0,1]$ above.
\end{MainCor}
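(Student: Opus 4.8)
The plan is to combine Theorem \ref{thm:SubcategoryMonoids} with the group-completion discussion of \S\ref{sec:harer-stab-mads}, using the lemma just proved to pin down which single element needs to be inverted. First I would recall that for the tangential structure $\theta \times BG : BSO(2) \times BG \to BO(2)$ the hypotheses of Theorem \ref{thm:SubcategoryMonoids} hold (since $BSO(2) \times BG$ is path connected and $S^2$ admits a $\theta$-structure, as noted before the statement), so the theorem applies and identifies $B\mathcal{C}^\bullet_{\theta \times BG} \simeq \Omega^{\infty-1}\MT{SO}{2}\wedge BG_+$. Here the Thom spectrum of $BSO(2)\times BG \to BO(2)$ is $\MT{SO}{2}\wedge BG_+$ because $BG \to BO(2)$ is the trivial map and the $\theta$-bundle is pulled back from the $BSO(2)$ factor.

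Next I would take $\mathcal{D}$ to be the full subcategory on the single object $C$ representing the trivial class in $H_1(BG;\bZ)=G$; by Theorem \ref{thm:SubcategoryMonoids} its classifying space is weakly equivalent to the component of $B\mathcal{C}^\bullet_{\theta\times BG}$ containing $C$, and $\mathcal{D}$ is a homotopy commutative monoid $\mathcal{M}$. Since every object of $\mathcal{C}^\bullet_{\theta\times BG}$ lying in the same cobordism class as $C$ gives the same component, and translating by the telescope identifies all components of $B\mathcal{M}$, the upshot (exactly as in \S\ref{sec:harer-stab-mads}) is an isomorphism $H_*(\mathcal{M}_\infty)\xrightarrow{\cong} H_*(\Omega B\mathcal{M})$ provided $\pi_0\mathcal{M}$ can be group-completed by inverting finitely many elements, and then a homology equivalence $\mathcal{M}_\infty \to \Omega^\infty(\MT{SO}{2}\wedge BG_+)$ after adjoining a copy of $\bZ$ to record all path components of $\Omega B\mathcal{M}$.

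The heart of the argument, and the place where cyclicity of $G$ is essential, is the claim that $\pi_0\mathcal{M}$ is group-completed by inverting the single element $[0,1]$ (the class of the genus-one surface carrying the tuple $(0,1)\in G^2$). Using the identification \eqref{eq:ComponentsOfMonoid} of $\pi_0\mathcal{M}$ as $\coprod_{g\ge 0} G^{2g}/\Diff^+(F_{g,1})$, with composition by concatenation, the just-proved lemma gives, after inverting $[0,1]$, the relations $[a,b]=[0,b]$ and $[0,b]\cdot[0,1]=[0,1]\cdot[0,1]$, hence every $[a,b]$ becomes invertible and equal (up to the unit $[0,1]$) to $[0,1]$; since $\pi_0\mathcal{M}$ is generated by genus-one classes this shows the whole monoid is group-completed by inverting $[0,1]$ alone. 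Therefore $\mathcal{M}_\infty$ may be formed as the mapping telescope of multiplication by $[0,1]$, and by \eqref{eq:GpCompletion} its homology computes $H_*(\Omega B\mathcal{M})$. Finally I would identify a component of $\mathcal{M}_\infty$ with $\hocolim_{g\to\infty}\mathcal{S}_{g,1}(BG;*)$, using that $\mathcal{S}_{g,1}(BG;*) = B\Diff^{\theta\times BG}(F_{g,1},\partial F_{g,1})$ is the relevant morphism/endomorphism space and that multiplication by $[0,1]$ is the map gluing on a genus-one piece, which gives the asserted homology equivalence
$$\bZ \times \hocolim_{g\to\infty}\mathcal{S}_{g,1}(BG;*) \to \Omega^\infty(\MT{SO}{2}\wedge BG_+).$$

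The main obstacle is the combinatorial lemma controlling $\pi_0\mathcal{M}$: one must check that the $Sp(4,\bZ)$-action really does collapse $G^4/\Diff^+(F_{2,1})$ enough — this is exactly where cyclicity enters, since for a general abelian $G$ one cannot reduce to inverting a single element, and the relations $t(0,1,a,b)\sim(0,1,a-1,b)$ and $t(0,1,0,b)\sim(0,1,0,1)$ need the explicit symplectic matrix exhibited in the lemma. Everything else is a routine transcription of the orientable Madsen--Weiss argument of \S\ref{sec:harer-stab-mads} with the background space $BG$ carried along.
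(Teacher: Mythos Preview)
Your proposal is correct and follows essentially the same approach as the paper: apply Theorem~\ref{thm:SubcategoryMonoids} to the tangential structure $\theta\times BG$, use the preceding lemma to see that inverting the single element $[0,1]$ group-completes $\pi_0\mathcal{M}$, and then invoke the group-completion discussion of \S\ref{sec:harer-stab-mads} to identify the homology of the resulting telescope with that of $\Omega^\infty\MT{SO}{2}\wedge BG_+$. The paper's own proof is in fact just the two sentences preceding the corollary, so your write-up is a faithful (and more detailed) expansion of exactly that argument.
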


Let us spell this result out in certain cases. Recall that $\Diff^+(F_{g,1}) \simeq \pi_0(\Diff^+(F_{g,1}))$ which is the mapping class group $\Gamma_{g,1}$ of $F_{g,1}$. When $G=\bZ$, we have $\Gamma_{g,1}$ acting on $\bZ^{2g}$, and let us write $\Gamma'_{g,1}$ for the subgroup of $\pi_0(\Diff^+(F_{g,1}))$ which fixes the primitive element $(0,1,0,1,\dots,0,1) \in \bZ^{2g}$. The statement of the corollary is then that
$$\bZ \times \hocolim(\cdots \to B\Gamma'_{g,1} \overset{\cdot [0,1]}\to B\Gamma'_{g+1,1} \to \cdots) \to \Omega^\infty \MT{SO}{2} \wedge S^1_+$$
is a homology equivalence. This identifies the stable homology of the subgroup $\Gamma'_{g,1} < \Gamma_{g,1}$ which fixes a primitive element of $H^1(F_{g,1}, \partial F_{g,1} ; \bZ)$.

When $G= \bZ/n\bZ$, we have $\Gamma_{g,1}$ acting on $(\bZ/n\bZ)^{2g}$, and let us write $\Gamma'_{g,1}(n)$ for the subgroup of $\pi_0(\Diff^+(F_{g,1}))$ which fixes the primitive element $(0,1,0,1,\dots,0,1) \in (\bZ/n\bZ)^{2g}$. The statement of the corollary is then that
$$\bZ \times \hocolim(\cdots \to B\Gamma'_{g,1}(n) \overset{\cdot [0,1]}\to B\Gamma'_{g+1,1}(n) \to \cdots) \to \Omega^\infty \MT{SO}{2} \wedge B\bZ/n\bZ_+$$
is a homology equivalence.  On the other hand, $B\Gamma_{g,1}'(n)$ can be identified with the moduli space of Riemann surfaces of genus $g$ with a single framed point, equipped with an $n$-fold cyclic unbranched cover. This identifies the stable homology of these moduli spaces.


A similar analysis holds for any space $Y$ with cyclic fundamental group. The question of whether $\pi_0(\mathcal{M})$ may be group completed by inverting finitely many elements for general groups $G=\pi_1(Y)$ seems to be difficult.

\bibliographystyle{amsalpha}
\bibliography{MMM}

\end{document}